\documentclass[a4paper,12pt]{article}
\usepackage[utf8]{inputenc}

\usepackage{amssymb}
\usepackage{amsmath}
\usepackage{amsthm}
\usepackage{tikz}
\usepackage{pgfplots}
\pgfplotsset{compat=1.8}
\usepackage{tikz-cd}
\usepackage{hyperref}
\usepackage{cleveref}
\usepackage{listings}
\usepackage[maxnames=50]{biblatex}
\title{Intermediate hyperbolicity of varieties supporting a variation of Hodge structure}
\author{\'Eloan Rapion}
\date{}

\AtBeginBibliography{\small}

\newcommand{\dif}{\mathop{}\mathopen{}\mathrm d}

\newcommand*{\application}[5]{\begin{array}{lrcl}
		#1: & #2 & \to & #3 \\
		& #4 & \mapsto & #5
	\end{array}
}
\newcommand*{\app}[4]{\begin{array}{rcl}
	#1 & \to & #2 \\
	#3 & \mapsto & #4
\end{array}
}
\newcommand{\ext}[1]%
{{\vphantom{#1}}^{\mathit \diamond}{#1}}

\newtheorem{thm}{Theorem}[section]

\newtheorem{defi}[thm]{Definition}

\newtheorem{exs}[thm]{Examples}
\newtheorem{lem}[thm]{Lemma}
\newtheorem{prop}[thm]{Proposition}
\newtheorem*{ack}{Acknowledgments}
\theoremstyle{remark}
\newtheorem{nota}[thm]{Notation}
\newtheorem{rem}[thm]{Remark}
\newtheorem{rems}[thm]{Remarks}

\crefname{thm}{Theorem}{Theorems}
\Crefname{thm}{Theorem}{Theorems}
\crefname{cor}{Corollary}{Corollaries}
\Crefname{cor}{Corollary}{Corollaries}
\crefname{defi}{Definition}{Definitions}
\Crefname{defi}{Definition}{Definitions}
\crefname{defis}{Definitions}{Definitions}
\Crefname{defis}{Definitions}{Definitions}
\crefname{ex}{Example}{Examples}
\Crefname{ex}{Example}{Examples}
\crefname{exs}{Examples}{Examples}
\Crefname{exs}{Examples}{Examples}
\crefname{lem}{Lemma}{Lemmas}
\Crefname{lem}{Lemma}{Lemmas}
\crefname{prop}{Proposition}{Propositions}
\Crefname{prop}{Proposition}{Propositions}
\crefname{nota}{Notation}{Notations}
\Crefname{nota}{Notation}{Notations}
\crefname{rem}{Remark}{Remarks}
\Crefname{rem}{Remark}{Remarks}
\crefname{rems}{Remarks}{Remarks}
\Crefname{rems}{Remarks}{Remarks}

\DeclareMathOperator{\Ad}{Ad}
\DeclareMathOperator{\AugBL}{\mathbb{B}_+}

\DeclareMathOperator{\Aut}{Aut}

\DeclareMathOperator{\Charac}{\mathcal{S}}

\DeclareMathOperator{\Deg}{Deg}
\DeclareMathOperator{\End}{End}

\DeclareMathOperator{\GL}{GL}
\DeclareMathOperator{\Gr}{\mathbf{Gr}}
\DeclareMathOperator{\Hol}{Hol}
\DeclareMathOperator{\Hom}{Hom}

\DeclareMathOperator{\rk}{rk}
\DeclareMathOperator{\PGL}{PGL}
\DeclareMathOperator{\Proj}{\mathbb{P}}

\DeclareMathOperator{\StBL}{\mathbb{B}}
\DeclareMathOperator{\Sym}{Sym}

\renewcommand{\epsilon}{\varepsilon}

\def\ad{\text{ad}}

\def\Amp{A}
\def\Anman{M}

\def\C{\mathbb{C}}
\def\Came{g}
\def\came{\hat{\Came}}
\def\Cat{\mathcal{C}}
\def\Charac{\mathcal{S}}

\def\Comp{\bar{\Qpv}}

\def\di{d}
\def\Disk{\Delta}

\def\Div{D}

\def\Dom{\Sigma}
\def\Dual{\check{\Dom}}

\def\glie{\mathfrak{g}}
\def\Group{G}
\def\hlie{\mathfrak{h}}
\def\Hprim{H_{\text{prim}}}

\def\I{\mathcal{I}}
\def\id{\text{id}}

\def\Isot{K}
\def\Latt{\Gamma}
\def\LB{L}
\def\llie{\mathfrak{l}}

\def\Lsav{X}

\def\mlie{\mathfrak{m}}
\def\N{\mathbb{N}}
\def\Ns{\N_{>0}}

\def\plie{\mathfrak{p}}

\def\Prv{W}
\def\Q{\mathbb{Q}}

\def\Qpv{V}
\def\R{\mathbb{R}}

\def\Ranexp{\R_{\text{an, exp}}}
\def\reg{0}
\def\Ricu{R}
\def\Siegel{\mathfrak{S}}

\def\sl{\mathfrak{sl}}

\def\Stru{\mathcal{O}}
\def\su{\mathfrak{su}}
\def\Taut{\Stru(1)}
\def\Tautp{\Stru_p(1)}
\def\Tautpp{\Stru'_p(1)}

\def\tV{\tilde{V}}
\def\U{\mathbb{U}}
\def\V{\mathbb{V}}

\def\VB{E}

\def\Z{\mathbb{Z}}

\begin{document}

\maketitle

\begin{abstract}
	Let $\Comp$ be a connected smooth complex projective variety. Let $\Div \subset \Comp$ be a normal crossing divisor. Let $\V$ be a complex polarizable variation of Hodge structure on $\Qpv := \Comp \setminus \Div$. Suppose that the period map of $\V$ is immersive at a point of $\Qpv$.
	
	We prove that for every integer $p$ with $1 \leq p \leq \dim \Qpv$, the vector bundle $\Omega_{\Comp}^p(\log \Div)$ is L-big (i.e. the tautological line bundle on $\Proj\Omega_{\Comp}^p(\log\Div)$ is big). If the local monodromy is quasi-unipotent, we give a method to determine an $m \in \N$ such that if $p > m$, then $\Omega_{\Comp}^p(\log \Div)$ is moreover Viehweg-big. We give the optimal value of $m$ explicitly when $\Qpv$ is a locally symmetric variety. We prove that if $\Qpv$ is a finite étale cover of the fine moduli space of smooth quintic threefolds, the result holds for $m = 90$ (in this case $\dim \Qpv = 101$).
	
	The proof of the previous results is based on a study of an augmented base locus associated with $\Omega_{\Comp}^p(\log \Div)$. In the case of locally symmetric varieties, we introduce ``higher degree characteristic subvarieties'', generalizing the characteristic subvariety defined by Mok in the case $p = 1$, and prove that they coincide with these augmented base loci.
\end{abstract}

\section{Introduction}

If a smooth complex quasiprojective variety admits a polarizable variation of Hodge structure, then the logarithmic cotangent bundle and the logarithmic canonical bundle on a smooth projective logarithmic compactification are known to have some positivity properties (see \cite{zuo2000negativity, brunebarbe2013symmetric, brunebarbe2020hyperbolicity, brunebarbe2018symmetric}). In this article, we investigate the positivity of the vector bundles of logarithmic differential forms of every degree.

\subsection{Bigness}

A first notion of positivity of vector bundles is the L-bigness. A vector bundle $\VB$ on an irreducible proper complex algebraic variety $\Prv$ is said to be \emph{L-big} if there exists $\epsilon \in \R_{>0}$ such that for every $k \gg 1$, we have $\dim H^0(\Prv,\Sym^k\VB) \geq \epsilon k^{\dim(\Prv)+\rk(\VB)-1}$. In particular, a line bundle is L-big if and only if it is big.
\begin{thm}\label{thm:lbig}
	Let $\Comp$ be a connected smooth complex projective variety. Let $\Div \subset \Comp$ be a normal crossing divisor. Let $\V$ be a complex polarizable variation of Hodge structure ($\C$-PVHS) on $\Qpv := \Comp \setminus \Div$. Let $p$ be an integer such that $1 \leq p \leq \dim \Qpv$. Suppose there exists a point of $\Qpv$ at which the period map of $\V$ is immersive.
	
	Then the vector bundle $\Omega_{\Comp}^p(\log \Div)$ is L-big.
\end{thm}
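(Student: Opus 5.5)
Write $n := \dim \Qpv$. The plan is to reduce L-bigness of $\Omega_{\Comp}^p(\log \Div)$ to bigness of the tautological line bundle $\Taut$ on $\Proj := \Proj\Omega_{\Comp}^p(\log\Div)$ (projectivization of lines in the dual, i.e. of hyperplanes of $T^p$). Bigness will come from a singular Hermitian metric on $\Taut$ with semipositive curvature current that is strictly positive at a general point. By Boucksom's criterion, such a metric shows that the volume of $\Taut$ is positive; here strictly positive means the absolutely continuous part has positive top power on a nonempty open set. The source of positivity is the Griffiths curvature computation for the Higgs bundle $(E=\bigoplus E^{a},\theta)$ attached to $\V$. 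The Higgs field $\theta: E^{a}\to E^{a-1}\otimes \Omega^1_{\Comp}(\log \Div)$ has, by Griffiths' formula, Hodge norm whose curvature is seminegative in the horizontal directions. This gives a negatively curved Finsler pseudometric on $T_{\Comp}(-\log\Div)$, and hence positivity of $\Omega^1$ where $\theta$ is injective. This is the mechanism of Zuo and Brunebarbe for $p=1$.

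To pass to degree $p$, I iterate the Higgs field. Consider the map $\theta^{p}: \Sym^p T_{\Comp}(-\log\Div)\otimes E^{a}\to E^{a-p}$, or better its antisymmetrised analogue. We use that $\theta\wedge\theta=0$, so the natural object is the $p$-fold iterate $\theta^{(p)}:\Lambda^p$-type maps. Concretely, I would construct a morphism of vector bundles
\[
\Lambda^p T_{\Comp}(-\log\Div)\to \bigoplus_{a}\Hom(E^{a}, E^{a-1})\otimes\cdots
\]
built from $\theta$ and its adjoint. For example, send $v_1\wedge\dots\wedge v_p$ to the antisymmetrisation of $\theta_{v_1}\theta^*_{v_2}\cdots$, so that the target is a Hodge bundle of the endomorphism VHS $\End(\V)$ of suitable weight. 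The natural candidate is the Higgs bundle of $\Lambda^p$ of the $\mathfrak{g}$-valued VHS. Indeed, the derivative of the period map identifies $T_{\Qpv}$ with a subbundle of $\End(E)^{-1,1}$, and $\Lambda^p$ of it maps to $\Lambda^p \End(E)$, whose Higgs field is again Griffiths-seminegative on the relevant graded piece. Pulling back the Hodge metric gives a Finsler pseudometric on $\Lambda^pT_{\Comp}(-\log\Div)$, i.e. a singular metric on $\Taut$ on $\Proj$. Immersivity at a point implies immersivity on a Zariski dense open set. There the induced map on $\Lambda^p$ is injective, so the pseudometric is nondegenerate.

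The curvature computation is as follows. On the open set, the curvature of the induced metric on $\Taut$ splits into a horizontal part, which is positive by Griffiths' seminegativity of the image subbundle's kernel part (restricting a seminegative metric to a subbundle keeps it seminegative), and a vertical part along the fibres of $\Proj\to\Comp$. The vertical part is the Fubini--Study-type form of the Finsler norm, positive where the norm is a genuine Hermitian norm on fibres. Near $\Div$, Schmid's norm estimates (or Cattani--Kaplan--Schmid) give that the local weights are bounded above up to $\log$-growth. So the metric extends as a singular metric with psh weights across $\Proj|_{\Div}$, and the current is closed positive on all of $\Proj$. Then $\int \langle T^{\dim \Proj}\rangle>0$ gives bigness.

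The main obstacle is the horizontal positivity for $p\ge 2$. The Griffiths curvature of $\Lambda^p$ of a subbundle of a Hodge bundle is not automatically seminegative in all decomposable-free directions. Here the Finsler metric is only defined on the cone of decomposable vectors if one does not work on the full $\Lambda^p$. I would first try to avoid this by working on the full $\Lambda^p$ via the Higgs subsheaf $\Lambda^p\theta(T)\subset \Lambda^p \End(E)$, using that the kernel of a Higgs field is seminegative. If that fails, a fallback is to only prove positivity generically on $\Proj$, which suffices for bigness. To do so, I would restrict to the subvariety of decomposable hyperplanes and use a degeneration or induction on $p$, with a sectional-curvature estimate of the period domain.
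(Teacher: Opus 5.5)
\textbf{Gap: no point of strict positivity.} Your final choice of metric is the right one. Pulling back the Hodge metric of $\Lambda^p\End(\VB)$ along $\Lambda^p\theta$ gives the metric induced on $\Lambda^p T_{\Qpv}$ by the Kähler metric $h$, where $h$ is the pull-back of the Hodge metric of $\End(\VB)$ along $\theta$. It is Griffiths seminegative because $\theta(T_{\Qpv})$ lies in the kernel of the Higgs field of $\End(\V)$. Drop your earlier candidates: the $\theta(v_i)$ commute, so the antisymmetrised iterate of $\theta$ vanishes identically, and expressions involving $\theta^*$ are antilinear in the tangent vectors, so they are not holomorphic bundle maps.

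The gap is the claim that the horizontal part of $C_1(\Tautp,\hat h)$ is \emph{positive} ``by Griffiths' seminegativity''. Seminegativity only gives \emph{semi}positivity. At $[\alpha]$ with $\lvert\alpha\rvert=1$, the horizontal part is the Hermitian form $v\mapsto -\langle \Ricu_p(v,\bar v)\alpha,\alpha\rangle$. It is positive definite if and only if there is no $v\neq 0$ with $\alpha\in\ker \Ricu_p(v,\bar v)=\Lambda^p\ker \Ricu(v,\bar v)$. Nothing in your argument excludes that every point of $\Proj\Omega^p$ is degenerate in this sense. Seminegativity alone cannot suffice: a flat metric on a complex torus is Griffiths seminegative, yet $\Omega^p$ is trivial there and not L-big. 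Without a point where the curvature is strictly positive, Boucksom's criterion gives nothing.

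\textbf{The missing input.} This is the Brunebarbe--Klingler--Totaro lemma, which the paper uses. At every point $x$ where the period map is immersive, there is $v\in T_{\Qpv,x}\setminus\{0\}$ with $\ker\Ricu(v,\bar v)=\{0\}$. For a Kähler metric with Griffiths seminegative curvature, $w\in\ker\Ricu(v,\bar v)$ if and only if $v\in\ker\Ricu(w,\bar w)$, since both say the bisectional curvature $\langle\Ricu(v,\bar v)w,w\rangle$ vanishes. Hence $v\notin\ker\Ricu(w,\bar w)$ for every $w\neq 0$. Then $\alpha:=v\wedge v_2\wedge\dots\wedge v_p\notin\Lambda^p\ker\Ricu(w,\bar w)$, so the curvature is Kähler on a neighbourhood of $[\alpha]$, and Boucksom's theorem applies.

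\textbf{Your ``main obstacle'' is not one.} For fixed $v$, $\Ricu_p(v,\bar v)$ is the derivation extension of the seminegative Hermitian endomorphism $\Ricu(v,\bar v)$. Its eigenvalues are sums of $p$ eigenvalues of $\Ricu(v,\bar v)$. So $\Lambda^p$ of a Griffiths seminegative bundle is Griffiths seminegative on all of $\Lambda^p$. Your fallback (restricting to decomposables, induction on $p$, sectional curvature estimates) is therefore unnecessary, and as written it is not an argument.

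\textbf{A smaller point.} For the metric to extend with positive curvature current across $\Div$ and the degeneracy locus, the local weights must be bounded above, not merely ``bounded above up to log-growth''. The paper takes this extension from Brunebarbe--Cadorel.
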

The cases of $p = 1$ and $p = \dim \Qpv$ of the previous theorem were proven by Brunebarbe and Cadorel \cite{brunebarbe2020hyperbolicity} (generalizing results of Zuo \cite{zuo2000negativity}, Brunebarbe-Klingler-Totaro \cite{brunebarbe2013symmetric} and Brunebarbe \cite{brunebarbe2018symmetric}). A similar proof gives the general case.

\bigskip

A stronger notion of positivity is the Viehweg-bigness. A vector bundle $\VB$ on an irreducible complex projective variety $\Prv$ is said to be \emph{ample with respect to} a Zariski open subset $U \subset \Prv$ if there exists an ample line bundle $\Amp$ on $\Prv$, two integers $a, b \in \Ns$ and a morphism $\Amp^{\oplus a} \to \Sym^b\VB$ that is surjective over $U$. If $U \neq \emptyset$, then $\VB$ is said to be \emph{Viehweg-big}. This notion is stronger than L-bigness, and is equivalent to it if $\VB$ is a line bundle. In particular, in the setting of \Cref{thm:lbig}, $\Omega_{\Comp}^{\dim \Comp}(\log \Div)$ is Viehweg-big. We prove that it is actually the case for every degree high enough.

\bigskip

To give a degree from which $\Omega_{\Comp}^{p}(\log \Div)$ is Viehweg-big, we consider some restrictions of iterations of the Higgs field. For a $\C$-PVHS $\V$ on a smooth complex algebraic variety $\Qpv$ and a point $x \in \Qpv$, we have a grading $\V_x = \oplus_{i \in \Z} \V_x^i$. For every $i, j \in \Z$ with $j \leq i$, we can restrict the $(i-j)$-th iterate $\theta^{i-j}$ of the Higgs field $\theta$ associated with $\V$ to define a map $\theta_x^{i,j}: T_{\Qpv,x} \to \Hom_\C(\V_x^i, \V_x^j)$.

\bigskip

In order to simplify the statement, we add here a ``Torelli type'' hypothesis $(*)$ and a ``Calabi-Yau type'' hypothesis $(**)$ (for the general statement without these hypotheses, see \Cref{thm:general}).

\begin{thm}\label{thm:vbig}
	Let $\Comp$ be a connected smooth complex projective variety. Let $\Div \subset \Comp$ be a normal crossing divisor. Let $\V$ be a $\C$-PVHS on $\Qpv := \Comp \setminus \Div$. Suppose $\V$ has quasi-unipotent local monodromy (around every irreducible component of $\Div$). Let $x \in \Qpv$ be a point where the period map of $\V$ is immersive. Let $i \in \Z$ be the biggest integer such that $\theta_x^{i,i-1}$ is nonzero. Suppose that:

	$(*)$ $\theta_x^{i,i-1}$ is injective,

	$(**)$ $\dim \V_x^i = 1$.

	Let $j$ be the smallest integer with $j < i$ such that $\theta_x^{i,j}$ is nonzero. Let $m$ be the maximum dimension for a vector subspace $S \subset T_{\Qpv,x}$ such that for every $v \in S$, $\theta_x^{i,j}(v) = 0$. Let $p \in \N$ be such that $m < p \leq \dim \Qpv$.
	
	Then $\Omega_{\Comp}^p(\log \Div)$ is Viehweg-big.
\end{thm}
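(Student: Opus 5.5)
We prove \Cref{thm:vbig} by producing enough sections of $\Sym^b\Omega_{\Comp}^p(\log\Div)\otimes\Amp^{-1}$ from the Hodge bundles, following the strategy of Brunebarbe--Cadorel for $p=1$.

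\textbf{Step 1: the Hodge line bundle.} Let $\U$ be the Hodge subbundle of $\V$ of degree $i$. By $(**)$ it has rank one near $x$, and after restricting to a Zariski open set it has rank one everywhere. Since the local monodromy is quasi-unipotent, we take the Deligne canonical extension. After passing to a finite cover if needed, which does not affect Viehweg-bigness of $\Omega^p(\log\Div)$, the iterated Higgs field
\[
\theta^{i-j}\colon \U \to \V^{j}\otimes \Sym^{i-j}\Omega_{\Comp}^1(\log\Div)
\]
extends logarithmically to $\Comp$. The Griffiths curvature computation, together with the semi-negativity of kernels of the Higgs field, shows that the dual of the extended Hodge line bundle $\U^{\vee}$ (which becomes $\U^{-1}$ in the appropriate convention) is nef and big on the locus where the period map is immersive. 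More precisely, $\det$ of the relevant Hodge bundle is big, and hypothesis $(*)$ gives immersivity on the top piece. Alternatively, we use the Brunebarbe--Cadorel result that $\Omega^1$ twisted by negative Hodge bundles admits sections.

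\textbf{Step 2: a morphism to $p$-forms.} For $v_1,\dots,v_p\in T_{\Qpv,x}$ we consider the composition $\theta^{i,j}(v_1)\cdots$. The key linear-algebra step is to build, from $\theta^{i,j}_x$, a map
\[
\U^{\otimes N}\otimes(\text{dual Hodge data}) \to \Sym^{b}\Omega^p_{\Comp}(\log\Div)
\]
whose fibre at $x$ is surjective. Concretely, $\theta^{i,j}_x$ is a quadratic (degree $i-j$) form on $T_x$ with values in $\Hom(\V^i_x,\V^j_x)$. Pulling back via the Plücker map, we obtain sections of $\Sym\Omega^p$ at $x$ that do not vanish on a decomposable $p$-vector $v_1\wedge\dots\wedge v_p$ as soon as the span $S=\langle v_i\rangle$ is not contained in $\ker\theta^{i,j}_x$ (in the sense of the statement). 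The condition $p>m$ guarantees exactly this for every $p$-dimensional $S$. By a homogeneity and Nullstellensatz argument on the Grassmannian cone, finitely many such sections have no common zero on the decomposable cone. We then need to treat non-decomposable $p$-vectors as well, to get the full base locus of $\Taut$ on $\Proj\Omega^p$. The plan is to show that the augmented base locus $\AugBL(\Tautp)$ meets the fibre over $x$ only in the set of $p$-vectors annihilated by these sections. We would then show this set is empty using the $\Sym$ / determinant structure, by taking wedge products of $p$ copies of the Higgs forms $\U\to\V^{j}\otimes\Omega^1$.

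\textbf{Step 3: globalize.} Twisting by the negative of the big Hodge line bundle from Step 1 produces sections of $\Sym^b\Omega^p(\log\Div)\otimes\Amp^{-1}$ generating at a general point. We obtain these with Kodaira's lemma (big $=$ ample $+$ effective), absorbing the effective part on a Zariski open set. This yields $\Amp^{\oplus a}\to\Sym^b\Omega^p(\log\Div)$ surjective over a nonempty open set, i.e.\ Viehweg-bigness.

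\textbf{Main obstacle.} The main obstacle is Step 2 for non-decomposable elements. Viehweg-bigness requires surjectivity onto the whole fibre, not just emptiness of base locus on the Plücker image. I would first try to show that the augmented base locus is a cone stable under the relevant group action, so that it is controlled by its decomposable points, in the spirit of Mok's characteristic subvarieties.
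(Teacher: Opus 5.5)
Your proposal has a genuine gap, and it is essentially the one you flag yourself; the remedy you suggest is not available here. For an arbitrary $\C$-PVHS there is no group action that would make $\AugBL(\Tautp)$ ``controlled by its decomposable points''.

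\textbf{The missing description of the base locus.} What you need is a description of the fibre of $\AugBL(\Tautp)$ over $x$ of the form ``$\alpha\in\Lambda^pC$ for some subspace $C$ in a controlled family''. Once you have that, decomposability is irrelevant, because $\dim C\le m<p$ forces $\Lambda^pC=0$. The paper obtains exactly this from \Cref{thm:main}:
\begin{itemize}
\item Pulling back the Hodge metric of $\End(\V)$ by $\theta$ gives a metric on $\Tautp$ with semipositive curvature.
\item Since $R_p(v,\bar v)$ is seminegative Hermitian, this curvature degenerates at $[\alpha]$ only if $\alpha\in\ker R_p(v,\bar v)=\Lambda^p\ker\Ricu(v,\bar v)\subset\Lambda^pC_{\V}(v)$ for some $v\neq0$ (\Cref{lem:calcnkl}, \Cref{lem:knc}).
\item This pointwise statement is transferred to $\AugBL(\Tautp)$ through Boucksom's and Tosatti's results on non-K\"ahler loci.
\item The required nefness comes from Zuo--Brunebarbe. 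It is applied to the saturation, on a modification of $\Proj\Omega^p_{\Comp}(\log\Div)$, of the image of $\Stru_p(-1)$ in $\pi^*\Lambda^p\End_{\Stru_{\Comp}}(\ext{\VB})$, on which the log-Higgs field vanishes.
\end{itemize}

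\textbf{Why Steps 1 and 3 do not replace this.} The nefness theorems for duals apply to subsheaves killed by the log-Higgs field. By the very choice of $i$, $\V^i$ is not such a subsheaf. In Calabi--Yau type examples such as the quintic family, $\V^i$ is the top Hodge line bundle, which is the positive one, so its dual is not nef. Moreover, a morphism from a single big line bundle into $\Sym^b\Omega^p_{\Comp}(\log\Div)$ has rank-one image. It therefore cannot give the generically surjective $\Amp^{\oplus a}\to\Sym^b\Omega^p_{\Comp}(\log\Div)$ that Viehweg-bigness requires. Finally, the pointwise sections you build at $x$ in Step 2 are never shown to come from global sections of $\Sym^b\Omega^p_{\Comp}(\log\Div)\otimes\Amp^{-1}$.

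\textbf{The missing link from $C_{\V}(v)$ to $\theta^{i,j}_x$.} You also need that $\theta^{i,j}_x$ vanishes identically on $C_{\V}(v)$, so that $\dim C_{\V}(v)\le m$. This does not follow from $p>m$, which only says that no $p$-plane lies in the zero cone of $\theta^{i,j}_x$. It requires the conjugate Higgs field, which your proposal never uses; correspondingly, $(*)$ and $(**)$ have no identifiable role in your argument. The paper proceeds as follows:
\begin{itemize}
\item Complex polarizable infinitesimal variations of Hodge structure form a semisimple category.
\item The relations $[\theta_x^*(\bar v),\theta_x(w)]=0$ for $w\in C_{\V}(v)$ then give, via Schur's lemma, a decomposition $\V_x\cong\bigoplus_a V_a\otimes W_a$. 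In it, $v$ acts on the $V_a$ and $C_{\V}(v)$ acts on the $W_a$ (\Cref{prop:decomp}).
\item Look at extremal degrees in a summand on which $\theta_x(v)$ is nonzero from degree $i$ to $i-1$; this is where $(*)$ enters. This produces $z$ with $\rk\theta^{i,j'}_x(z)>\rk\theta^{i,j'}_x(w)$ for some $j'<i$ and all $w\in C_{\V}(v)$.
\item Hypothesis $(**)$ upgrades this to $\theta^{i,j}_x(w)=0$ (\Cref{lem:rk}, \Cref{prop:rkcy}).
\end{itemize}
Combined with \Cref{thm:main}, no point of $\AugBL(\Tautp)$ lies over $x$. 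This is Viehweg-bigness by the characterization recalled in \Cref{subsection:augpos}, with no sections ever written down explicitly.
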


\begin{rems}
	\begin{itemize}
		\item The hypotheses $(*)$ and $(**)$ are close to ``PVHS of type II'' defined by Sheng and Zuo in \cite[Definition 1.1]{sheng2010polarized}. The difference is that they require the bijectivity of $\theta_x^{i,i-1}$, at every point $x \in \Qpv$.
		\item Let $\Comp$ be a connected smooth complex projective variety, $\Div \subset \Comp$ a normal crossing divisor, $\V$ a $\C$-PVHS on $\Qpv := \Comp \setminus \Div$ with quasi-unipotent local monodromy, and $x \in \Qpv$ a point where the period map of $\V$ is immersive. Then the $\C$-PVHS $\bigotimes_{i = a}^b \Lambda^{r_i} \V$ verifies the hypotheses $(*)$ and $(**)$, where $a \leq b$ are two integers such that $\V_x = \oplus_{i = a}^b \V_x^i$ and $r_i := \sum_{j = i}^b \dim \V_x^j$.
	\end{itemize}
\end{rems}

The previous theorem applies to the moduli space of quintic threefolds. In the following theorem, the ``$91$'' stems from the limits of our computing power and could probably be improved.

\begin{thm}\label{thm:quintthree}
	Consider $\mathcal{M}$ the fine moduli space of smooth complex quintic threefolds, that is a smooth Deligne-Mumford stack that admits a finite étale cover $M \twoheadrightarrow \mathcal{M}$ with $M$ a connected smooth complex quasiprojective variety. Let $(\bar{M}, \Div)$ be a smooth projective logarithmic compactification of $M$.
	
	Then $\Omega_{\bar{M}}^p(\log \Div)$ is Viehweg-big if $91 \leq p \leq 101 = \dim M$.
\end{thm}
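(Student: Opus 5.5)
We will deduce the statement from \Cref{thm:vbig}, applied to the $\C$-PVHS $\V$ given by the primitive middle cohomology $H^3$ of the quintic threefolds, pulled back along $M \twoheadrightarrow \mathcal{M}$. The argument has three parts: check the hypotheses of \Cref{thm:vbig}, identify $\theta_x^{i,j}$ concretely, and bound the invariant $m$ so that $m \leq 90$.

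\textbf{Hypotheses.} First, the local monodromy of a geometric variation is quasi-unipotent by the monodromy theorem. Second, the period map of the universal family is immersive at every point by infinitesimal Torelli for quintic threefolds, via Griffiths' Jacobian ring description. The grading is $\V_x = \V_x^3 \oplus \V_x^2 \oplus \V_x^1 \oplus \V_x^0$ with dimensions $1,101,101,1$. Since $\V_x^3 = H^{3,0}$ is one-dimensional, $(**)$ holds. The map $\theta_x^{3,2}: T_{M,x} \to \Hom(H^{3,0},H^{2,1})$ is the Kodaira--Spencer contraction with the holomorphic volume form, which is an isomorphism for Calabi--Yau manifolds. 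So $(*)$ holds and $i=3$.

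\textbf{Identifying $\theta_x^{3,j}$.} Let $F$ be a Fermat-type or generic quintic and $R = \C[x_0,\dots,x_4]/J_F$ its Jacobian ring. Griffiths' theory identifies
\begin{itemize}
\item $T_{M,x} \simeq R_5$,
\item $H^{3-q,q}_{\text{prim}} \simeq R_{5q}$,
\item the iterated Higgs field with multiplication in $R$.
\end{itemize}
Thus $\theta_x^{3,2}(v)$ is multiplication by $v$ from $R_0$ to $R_5$, and it is nonzero. Hence $j = 2$ would be the smallest index only if $\theta^{3,1}$ and $\theta^{3,0}$ vanished. They do not vanish. Macaulay's theorem says $R_{15} \simeq \C$ is the socle and the pairing $R_5 \times R_{10} \to R_{15}$ is perfect. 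So $\theta^{3,0}_x(v)$ is multiplication by $v^3$ into $R_{15}$, the smallest $j$ is $0$, and
\[ m = \max\{\dim S : S \subset R_5 \text{ linear},\ v^3 = 0 \in R_{15} \ \forall v \in S\}. \]
In other words, $m$ is the maximal dimension of a linear subspace contained in the cubic hypersurface $\{v : v^3=0\} \subset \Proj(R_5) \simeq \Proj^{100}$, shifted by one. This is a linear space on the cubic given by the Yukawa coupling.

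\textbf{Bounding $m$ (main obstacle).} An upper bound $m \leq 90$ must hold at one well-chosen point $x$. \Cref{thm:vbig} allows any immersive point, and a smaller $m$ at one point suffices. For the Fermat quintic, $R$ is $\bigotimes_{k=0}^4 \C[x_k]/(x_k^4)$. Here $R_5$ has the monomial basis $x^a$ with $\sum a_k = 5$ and each $a_k \leq 3$, and the cubic form $Y(v) = [v^3]_{x^3_0\cdots x^3_4}$ is explicit.

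Bounding the maximal linear subspaces of a cubic in $100$ variables is the hard step, since the variety of linear subspaces is not directly computable. I would try two approaches.
\begin{enumerate}
\item Use a degeneration or semicontinuity argument. The locus in the Grassmannian $\Gr(k,R_5)$ where $Y$ vanishes identically is closed. Its nonemptiness can be tested via a lower bound on the rank of symmetric forms: for instance, if $S$ is isotropic for $Y$, then for each $w \in S$ the quadric $Y(w,\cdot,\cdot)$ vanishes on $S$.
\item Obtain a bound from the Hessian-type linear maps $R_5 \to R_{10} \simeq R_5^\vee$, $u \mapsto wu$, for suitable $w$.
\end{enumerate}
Concretely, I would use computer algebra to bound the dimension of linear subspaces on the Fermat Yukawa cubic. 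I would reduce to the case where $S$ is spanned by a torus-adapted basis, using the action of the torus $(\mu_5)^5$ and degenerating $S$ to a monomial subspace via a Borel-type fixed point argument on the Grassmannian. For a monomial subspace, $v^3=0$ for all $v\in S$ means no three monomials in $S$, with repetition allowed, multiply to $x_0^3\cdots x_4^3$. Maximizing such a set is then a finite combinatorial computation, which I expect to give at most $90$.

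The delicate point is justifying the degeneration: the torus-fixed limit of an isotropic subspace stays isotropic because the condition is closed, so the maximal dimension is attained by a monomial subspace. Granting this, \Cref{thm:vbig} applies with $m \le 90$ and gives Viehweg-bigness for $91 \le p \le 101$.
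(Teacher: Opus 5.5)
Your reduction matches the paper's: apply \Cref{thm:vbig} at the Fermat point with $i=3$, $j=0$, so that $m$ is the largest dimension of a subspace $S\subset R_5$ with $g^3=0$ in $R_{15}$ for all $g\in S$. The gap is that the inequality $m\le 90$, which is the whole content of the statement, is only ``expected''. In addition, the degeneration as you wrote it cannot work, since $(\mu_5)^5$ is a finite group. Both problems are easy to repair.

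Use instead the torus $(\mathbb{C}^*)^5$ rescaling the variables. Because the Fermat Jacobian ideal $(x_0^4,\dots,x_4^4)$ is monomial, this torus acts on $R$ by graded algebra automorphisms, even though it does not preserve the Fermat polynomial. Hence the closed subset $Z_k=\{S\in\mathrm{Gr}_k(R_5) : g^3=0 \text{ for all } g\in S\}$ is torus-stable. The $101$ monomials of $R_5$ have pairwise distinct weights, so Borel's fixed point theorem gives a monomial subspace in $Z_k$ whenever $Z_k\neq\emptyset$.

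You then need an actual count. An admissible set of exponents $a\in\{0,1,2,3\}^5$ with $|a|=5$ must omit $(1,1,1,1,1)$ and contain no $a,b,c$ with $a+b+c=(3,3,3,3,3)$; the only solution with repetitions is $a=b=c=(1,1,1,1,1)$. The $30$ monomials $x_i^3x_kx_l$ split into $10$ disjoint forbidden triples $x_kx_l\cdot\{x_i^3,x_j^3,x_r^3\}$, one for each pair $\{k,l\}$, with $\{i,j,r\}$ the complementary indices. So at least $11$ monomials are missing, and $m\le 90$.

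With this completion your route is genuinely different from the paper's. The paper bounds linear subspaces of the cubic $\{g^3=0\}\subset\mathbb{P}(R_5)$ through the inequality $c\le 2c'$, which relates them to the singular locus. It identifies that singular locus with $\{g^2=0\}$ via the Jacobian criterion and Macaulay's theorem. It then needs a Gr\"obner basis computation on $22$ of the $101$ quadrics to get $\dim\{g^2=0\}\le 79$, whence $m\le(79+101)/2=90$.

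Your torus degeneration instead computes $m$ at the Fermat point exactly, as a purely combinatorial invariant. It needs no computer and gives more. For instance, the $20$ monomials of exponent type $(2,1,1,1,0)$ correspond to the arcs $r\to i$ ($2$ in position $r$, $0$ in position $i$) of the complete digraph on $5$ vertices. Three of them multiply to $(x_0\cdots x_4)^3$ exactly when their arcs form a directed triangle. Six arc-disjoint directed triangles exist, for example $0\to2\to3$, $0\to3\to4$, $0\to4\to2$, $1\to3\to2$, $1\to4\to3$, $1\to2\to4$, so in fact $m\le 84$. The one thing your argument uses that the paper's does not is the monomiality of the Jacobian ideal at the chosen point.
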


Classical examples of connected smooth complex quasiprojective varieties supporting a $\C$-PVHS are \emph{locally symmetric varieties}, i.e. the quotients of bounded symmetric domains $\Dom \subset \C^\di$ ($\di \in \N$) by the action of a torsion-free lattice. The Bergman metric of $\Dom$ induces a Kähler metric on the quotients of $\Dom$ called the \emph{canonical metric}. Zucker describes in \cite{zucker1981locally} a way to construct $\C$-PVHSs on locally symmetric varieties. We can apply the foregoing to those $\C$-PVHSs to obtain intermediate hyperbolicity properties of locally symmetric varieties.

\bigskip

In this case, we also prove a ``non-hyperbolicity'' result (note that the following statement is an equivalence). The values of $l$ are given in Remark \ref{rems:end}.\ref{item:tab}.

\begin{thm}\label{thm:lsavbig}
	Let $\Lsav$ be the quotient of a bounded symmetric domain by the action of a torsion-free lattice. Denote by $\Ricu$ the Riemann curvature tensor of the canonical metric of $\Lsav$. Let $(\bar{\Lsav}, \Div)$ be a smooth projective logarithmic compactification of $\Lsav$. Let $1 \leq p \leq \dim \Lsav$.
	
	Then the vector bundle $\Omega_{\bar{\Lsav}}^p(\log \Div)$ is L-big. Moreover, the following are equivalent:
	\begin{itemize}
		\item $\Omega_{\bar{\Lsav}}^p(\log \Div)$ is Viehweg-big,
		\item $\Omega_{\bar{\Lsav}}^p(\log \Div)$ is ample with respect to $\Lsav$,
		\item $p > l$ with $l$ the maximal dimension of $\ker \Ricu(v,\bar{v})$ for $v \in T_{\Lsav}$ with $v \neq 0$.
	\end{itemize}
\end{thm}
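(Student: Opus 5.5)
The L-bigness is a special case of \Cref{thm:lbig}. Zucker's construction gives a $\C$-PVHS on $\Lsav$ whose period map is the identity of $\Dom$, hence immersive everywhere. So the work is in the equivalence. The implications ``ample with respect to $\Lsav$'' $\Rightarrow$ ``Viehweg-big'' are formal. It remains to show ``$p > l$'' $\Rightarrow$ ``ample with respect to $\Lsav$'' and ``Viehweg-big'' $\Rightarrow$ ``$p > l$''. Both go through the augmented base locus $\AugBL$ of the tautological bundle $\Taut$ on $\Proj\Omega^p_{\bar\Lsav}(\log\Div)$, since Viehweg-bigness and ampleness with respect to $U$ can be read off from the image of $\AugBL(\Taut)$ in $\bar\Lsav$.

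\emph{Sufficiency.} I would equip $\Omega^p$ with the metric induced by the canonical metric. Its curvature is the natural extension of $\Ricu$ to $\Lambda^p$: for $\alpha$ a $p$-covector, $\langle \Ricu(v,\bar v)\alpha,\alpha\rangle$. Nonpositivity of the holomorphic bisectional curvature makes $\Taut$ semipositive on $\Proj\Omega^p_{\Lsav}$. The degenerate directions at a point $[\alpha]$ are the $v$ with $\Ricu(v,\bar v)$ killing $\alpha$ in the $\Lambda^p$-action. I would then define the ``higher degree characteristic subvariety'' $\Charac_p \subset \Proj\Omega^p_{\Lsav}$ as the locus of $[\alpha]$ for which some $v\neq 0$ makes $\Ricu(v,\bar v)\cdot\alpha$ vanish. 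Computing via the root decomposition of $\glie$, one sees that $\alpha$ is such a point iff $\alpha$ is a decomposable-type form supported in $\ker\Ricu(v,\bar v)$, i.e. $\alpha \in \Lambda^p(\ker\Ricu(v,\bar v))^\vee$ up to the Hermitian identification. So $\Charac_p$ dominates $\Lsav$ iff $p \le l$.

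Away from $\Charac_p$ the metric on $\Taut$ is strictly positive in all directions. The growth near $\Div$ is controlled by Mumford's good-singularity estimates, as in the Brunebarbe--Cadorel argument underlying \Cref{thm:lbig}. Together these give $\AugBL(\Taut) \cap \Proj\Omega^p_{\Lsav} \subset \Charac_p$. I would prove this by a Boucksom-type criterion: $\AugBL$ equals the non-Kähler locus, so strict positivity of a singular metric off a closed set bounds $\AugBL$. When $p>l$, $\Charac_p$ is empty, so $\AugBL(\Taut)$ lies over $\Div$. Hence $\Omega^p$ is ample with respect to $\Lsav$.

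\emph{Necessity.} Suppose $p \le l$, and take $v$ with $\dim\ker\Ricu(v,\bar v) = l$. By homogeneity of $\Dom$, such $v$ exist at every point. Through each point there is a totally geodesic subvariety, for instance a product factor $\Disk\times\Dom'$ as in Mok's characteristic-vector picture. Along it, $\Taut$ restricted to the $p$-forms built from $\ker\Ricu(v,\bar v)$ is flat in the $v$-direction. I would show that every section of $\Sym^b\Omega^p\otimes\Amp^{-1}$ vanishes along $\Charac_p$. The method is to restrict to the holomorphic disks $\Disk\to\Lsav$ tangent to $v$, where the relevant quotient of $\Omega^p$ is a Hermitian flat, hence non-ample, line bundle. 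An Ahlfors--Schwarz/maximum-principle argument (or the fact that $\Taut$ has zero degree on curves in $\Charac_p$ lifted from these disks) then shows that $\Charac_p \subset \AugBL(\Taut)$. Since $\Charac_p$ dominates $\Lsav$, the bundle is not Viehweg-big.

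\emph{Main obstacle.} I expect the main difficulty to be proving equality of $\AugBL$ with $\Charac_p$, in particular the inclusion $\Charac_p \subset \AugBL$, since the disks are not algebraic. I would first try the curvature approach on the Kähler side: a section of $\Taut^{b}\otimes\Amp^{-1}$ would give a plurisubharmonic function strictly increasing along the lifted disk. That contradicts boundedness via the Schwarz lemma for the canonical metric. A second difficulty is the Lie-theoretic computation identifying $\Charac_p$ with the forms supported in some $\ker\Ricu(v,\bar v)$. This is what produces the explicit values of $l$.
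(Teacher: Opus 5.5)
Your L-bigness step, your reduction to the image of $\AugBL(\Tautp)$, and your identification of the degenerate directions are fine. The last one is just the spectral theorem for the seminegative Hermitian $\Ricu(v,\bar v)$ acting as a derivation on $\Lambda^p$; no root decomposition is needed. The sufficiency step, however, rests on a misstated criterion. Boucksom's theorem computes $\AugBL$ as the non-Kähler locus, and that locus is defined through \emph{Kähler currents}, i.e.\ currents bounded below by a Kähler form on all of $\Proj\Omega^p_{\bar\Lsav}(\log\Div)$. The curvature current of the canonical metric degenerates along $\Charac^p$ and near $\pi^{-1}(\Div)$, so it is not a Kähler current. Going from ``a positive current that is Kähler on an open set $U$'' to ``$U$ misses $\AugBL$'' is Tosatti's \Cref{thm:tosnak}, which requires the class to be nef. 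You never show that $\Tautp$ is nef. In the paper this comes from Hodge theory: after a finite étale cover, the log-Higgs field of the adjoint $\R$-PVHS is locally split injective (\Cref{lem:locsplitinj}), and Zuo's \Cref{thm:nef} then gives nefness (\Cref{lem:nef}). Alternatively, \Cref{thm:main} avoids nefness of $\Tautp$ altogether by comparing it with a nef line bundle on a blow-up. This direction is repairable by citing \Cref{thm:main}, \Cref{prop:lsavcharac} and \Cref{prop:fecover}.

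The real gap is in the necessity direction. Your Schwarz-lemma argument works when $\Lsav$ is compact. There $u=\log|s|^2$ is bounded above on a lifted totally geodesic disk, while $\omega_\Amp$ restricted to the disk dominates a multiple of the Poincaré form, which no bounded subharmonic function allows. For non-compact $\Lsav$ both inputs fail. First, the canonical metric on $\Tautp$ degenerates along $\Div$, so Mumford's estimate only gives $u\le C+N\log\frac{1}{1-|w|}$ on the disk. That bound is compatible with $dd^c u$ of Poincaré size: take $u=-N\log(1-|w|^2)$. Second, the smooth form $\omega_\Amp$ does not dominate the Poincaré metric where the disk runs into the cusps. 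Your fallback ``zero degree on curves'' needs compact algebraic curves, which these disks generally are not, and you give no density argument for the special ones.

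The missing idea is to make $\Charac^p$ an algebraic cycle on which an intersection number can be computed. The paper does this in four steps. It proves $\Charac^p$ is Zariski closed, using definability in $\Ranexp$ via Klingler--Ullmo--Yafaev and the Peterzil--Starchenko o-minimal Chow lemma. It notes that $C_1(\Tautp,\came)$ restricted to the smooth locus of $\Charac^p$ is everywhere degenerate, so its top power vanishes. It turns this into $c_1(\Tautp)^{\dim\Charac^p}\cdot\overline{\Charac}^p=0$ using Kollár's extension of Chern--Weil theory to Hodge metrics with unipotent monodromy. Finally, nefness of $\Tautp$ and Nakamaye's \Cref{thm:cas} give $\Charac^p\subset\AugBL(\Tautp)$.
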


\begin{exs}
	\begin{itemize}
		\item If $\Lsav$ is the quotient of a complex ball, then $l = 0$.
		\item If $\Lsav$ is the quotient of a polydisc of dimension $\di \in \Ns$, then $l = \di - 1$.
		\item If $\Lsav$ is the moduli space of principally polarized abelian varieties of dimension $g \in \Ns$ with level $n \geq 3$ structure, then $l = \frac{g(g-1)}{2}$ (in this case $\dim \Lsav = \frac{g(g+1)}{2}$).
	\end{itemize}
\end{exs}

When $\Lsav$ is compact, the ampleness of $\Omega_{\Lsav}^p$ when $p > l$ was proven by Noguchi and Sunada (see \cite{noguchi1982finiteness}), without using variational Hodge theory. See Remark \ref{rems:end}.\ref{item:nopvhs} about our use of variational Hodge theory in the general case.

\subsection{Augmented base locus}

Consider $(\Comp, \Div)$ a smooth projective logarithmic compactification of a smooth quasiprojective variety $\Qpv$. Consider the augmented base locus $\AugBL(\Tautp)$ of the tautological line bundle $\Tautp$ on $\Proj \Omega_{\Comp}^p(\log \Div)$ (we use Grothendieck's convention: the set of points of $\Proj \Omega_{\Comp}^p(\log \Div)$ above a point $x \in \Comp$ is the set of vector hyperplanes of $\Omega_{\Comp}^p(\log \Div)_x$, i.e. the set of vector lines of $\Lambda^p T_{\Comp}(-\log \Div)_x$). If $\Qpv$ is connected, this augmented base locus controls the positivity of $\Omega_{\Comp}^p(\log \Div)$ (see \cite{bauer2015positivity}):
\begin{itemize}
	\item $\Omega_{\Comp}^p(\log \Div)$ is L-big if and only if $\AugBL(\Tautp) \neq \Proj \Omega_{\Comp}^p(\log \Div)$,
	\item $\Omega_{\Comp}^p(\log \Div)$ is ample with respect to a Zariski open subset $U \subset \Comp$ if and only if the projection of $\AugBL(\Tautp)$ on $\Comp$ does not intersect $U$,
	\item $\Omega_{\Comp}^p(\log \Div)$ is Viehweg-big if and only if the projection of $\AugBL(\Tautp)$ on $\Comp$ is not $\Comp$.
\end{itemize}
We prove that when $\Qpv$ supports a $\C$-PVHS $\V$ with quasi-unipotent local monodromy, $\AugBL(\Tautp)$ is controlled by the Higgs field and the conjugate Higgs field.
\begin{nota}\label{nota:c}
	Let $\Anman$ be a complex manifold, $x \in \Anman$, $\V$ a $\C$-PVHS on $\Anman$. Let $\theta_x: T_{\Anman,x} \to \End_\C(\V_x)$ be the Higgs field and $\theta_x^*: T_{\Anman,x}^{0,1} \to \End_\C(\V_x)$ the conjugate Higgs field at $x$. For $v \in T_{\Anman,x}$, we denote $C_{\V}(v) := \{w \in T_{\Anman,x}, [\theta_x^*(\bar{v}), \theta_x(w)] = 0\}$.
\end{nota}

\begin{thm}\label{thm:main}
	Let $\Comp$ be a smooth complex projective variety. Let $\Div \subset \Comp$ be a normal crossing divisor. Let $\V$ be a $\C$-PVHS on $\Qpv := \Comp \setminus \Div$ with quasi-unipotent local monodromy. Let $x \in \Qpv$ be a point where the period map of $\V$ is immersive. Let $p$ be an integer such that $1 \leq p \leq \dim \Qpv$. Let $\alpha \in \Lambda^p T_{\Qpv,x}$ be such that $[\alpha] \in \AugBL(\Tautp)$. Then there exists $v \in T_{\Qpv,x} \setminus \{0\}$ such that $\alpha \in \Lambda^p C_{\V}(v)$ (\Cref{nota:c}).
\end{thm}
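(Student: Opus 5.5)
The strategy is to produce a singular hermitian metric on $\Tautp$ whose curvature is strictly positive in the direction of $[\alpha]$ unless $\alpha$ lies in some $\Lambda^p C_{\V}(v)$. We then conclude by a known characterization of the augmented base locus. Over $\Qpv$, the $\C$-PVHS gives a natural Finsler-type object: the Higgs field $\theta$ induces a map $T_{\Qpv}\to \End(\V)$. By iterating it in a wedge-compatible way, it induces a map
\[
\Lambda^p T_{\Qpv} \to \Lambda^p\End(\V)\ \text{(or } \End(\Lambda^p \V)\text{)},\qquad w_1\wedge\dots\wedge w_p \mapsto \theta(w_1)\wedge\dots\wedge\theta(w_p).
\]
Concretely, I would consider the $\C$-PVHS $\Lambda^p\V$, or a suitable sub-VHS of $\End(\V)^{\otimes p}$. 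From it I would extract a morphism of Higgs bundles $\Lambda^p T_{\Comp}(-\log\Div)\to \mathcal{E}$ into a Hodge-graded piece of a PVHS. Immersivity at $x$ makes this map injective at the generic point. Pulling back the Hodge metric then gives a (possibly degenerate) metric $h$ on $\Lambda^p T_{\Comp}(-\log \Div)$ over $\Qpv$, hence a metric on $\Stru_{\Proj}(-1)$, and dually one on $\Tautp$.

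First, by Griffiths' curvature formula, the curvature of a Hodge bundle restricted to a subsheaf killed by $\theta$ is seminegative, which makes $\Tautp$ seminegative-dual, i.e. semipositive. The curvature at $[\alpha]$, in a direction $v\in T_{\Qpv,x}$, is then computed via $[\theta^*(\bar v),\theta(\cdot)]$. Second, I would compute this curvature explicitly: for $\alpha=\sum w_{1}\wedge\cdots\wedge w_p$, the curvature term in direction $v$ vanishes iff $[\theta^*_x(\bar v),\theta_x(w)]$ acts trivially on the relevant components of $\alpha$. The goal is to show this forces $\alpha\in\Lambda^p C_\V(v)$. Third, quasi-unipotence together with Schmid's and Cattani–Kaplan–Schmid's norm estimates shows that the metric has at most logarithmic growth near $\Div$. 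So the induced singular metric on $\Tautp$ has zero Lelong numbers along the preimage of $\Div$, and its curvature current extends across. Finally, I would invoke the analytic characterization of $\AugBL$ (Boucksom's / Collins–Tosatti-type results, as used by Brunebarbe–Cadorel for $p=1$): if at $[\alpha]$ the curvature current is a Kähler current near $[\alpha]$ modulo an analytic set not containing it, then $[\alpha]\notin\AugBL(\Tautp)$. The contrapositive gives that some $v\neq0$ makes the curvature degenerate at $[\alpha]$, hence $\alpha\in\Lambda^p C_\V(v)$.

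Strict positivity only holds where the curvature is definite, and the degeneracy set is exactly the "characteristic" locus. So rather than needing a Kähler current everywhere, I would use the refined statement that $\AugBL$ is contained in the non-Kähler locus. For this I would perturb the metric, adding $\epsilon$ times a pullback of an ample class from $\Comp$ in a Brunebarbe–Cadorel fashion, to get strict positivity off the degeneracy locus.

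The main obstacle is step two: identifying the curvature kernel of the induced metric on $\Stru_{\Proj\Lambda^pT}(1)$ with $\bigcup_v \Lambda^p C_\V(v)$. The worry is that a general, non-decomposable $\alpha$ could be degenerate in a way that the single-$v$ description misses. I would first treat the curvature as a hermitian form in $v$, namely $v\mapsto \|\,[\theta^*(\bar v),\theta(\cdot)]\cdot\alpha\,\|^2$ with the derivation action on $\Lambda^p$. Showing it vanishes for some $v\neq0$ exactly when $\alpha\in\Lambda^pC_\V(v)$ should reduce to a positivity and orthogonality argument using the Hodge grading to separate terms.
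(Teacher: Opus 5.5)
Your proposal has a genuine gap in the last step. What your construction produces is a closed positive current $T=C_1(\Tautp,\hat h)$ in $c_1(\Tautp)$. It is a Kähler form near $[\alpha]$ but only semipositive elsewhere: it is degenerate at every $[\beta]$ with $\beta\in\Lambda^p\ker\Ricu(v,\bar v)$ for some $v\neq 0$, and possibly singular over $\Deg(\V)$ and $\Div$. Such a $T$ is not a Kähler current, so Boucksom's identification of $\AugBL$ with the non-Kähler locus (\Cref{thm:augnkl}) does not apply to it. Averaging with a genuine Kähler current $T'$ in the class does not help either: $(1-\delta)T+\delta T'$ has Lelong number $\delta\,\nu(T',[\alpha])$ at $[\alpha]$, so the argument is circular.

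The statement you need is ``a positive current which is Kähler on an open set $U$ forces $U$ to avoid the non-Kähler locus''. That is Tosatti's analytic Nakamaye theorem (\Cref{thm:tosnak}), and it requires the class to be nef and big. You never address nefness, and $\Tautp$ is in general not nef (for instance when $\Deg(\V)$ contains a rational curve). Your proposed remedy, adding $\epsilon\pi^*\omega$ for an ample class $\Amp$ on $\Comp$, changes the class. Since $\pi^*\Amp$ carries a smooth semipositive metric, $\AugBL$ of the perturbed class is \emph{contained in} $\AugBL(\Tautp)$. Controlling it therefore gives no upper bound on $\AugBL(\Tautp)$, which is what the theorem asserts.

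The missing idea is to trade $\Tautp$ for a nef line bundle that agrees with it over $\Qpv\setminus\Deg(\V)$. The paper blows up $\Proj\Omega^p_{\Comp}(\log\Div)$ so that the saturation $\LB$ of $b^*\ext{\theta}_p(\Stru_p(-1))$ inside the log-Higgs bundle $b^*\pi^*\Lambda^p\End(\ext{\VB})$ is locally a direct factor (\Cref{lem:lb}). The log-Higgs field kills $\LB$ because the $\theta(w)$ commute (\Cref{lem:vanish}). Hence $\LB^\vee$ is nef by Zuo--Brunebarbe (\Cref{thm:nef}), and the Hodge metric is a singular metric of semipositive curvature on it (\Cref{thm:metsing}). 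Boucksom's criterion then gives bigness. Tosatti's theorem with \Cref{thm:augnkl} shows that $\AugBL(\LB^\vee)$ misses the locus where the curvature is Kähler. Finally, \Cref{prop:birat} and \Cref{lem:twolb} transfer this to $\AugBL(\Tautp)$ over $\Qpv\setminus\Deg(\V)$ (\Cref{lem:lbtaut}).

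This route also makes your step three unnecessary, which is fortunate: the claim of zero Lelong numbers along $\pi^{-1}(\Div)$ is unjustified. The log-Higgs field need not be injective along $\Div$, so the Hodge norm of $\ext{\theta}_p$ applied to a local frame can decay polynomially.

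Your step two, by contrast, is less delicate than you fear. You only need the implication from degeneracy at $[\alpha]$ to $\alpha\in\Lambda^pC_{\V}(v)$, not an equivalence. Degeneracy amounts to $R_p(v,\bar v)\alpha=0$ for the seminegative Hermitian derivation $R_p(v,\bar v)$ induced by $\Ricu(v,\bar v)$. By the spectral theorem its kernel is $\Lambda^p\ker\Ricu(v,\bar v)$, whether or not $\alpha$ is decomposable (\Cref{lem:calcnkl}). Finally, $\ker\Ricu(v,\bar v)\subset C_{\V}(v)$ (\Cref{lem:knc}).
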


We give another description of the augmented base locus that does not involve $\theta^*$ in \Cref{subsubsection:augblrk}.

\bigskip

We prove more precise results for locally symmetric varieties. First, we define higher degree characteristic subvarieties, that generalize the characteristic subvariety $\Charac_{r-1}$ defined by Mok in the case $p = 1$ and $\Dom$ irreducible in \cite[Appendix III]{mok1989metric}.

\begin{defi}
	Let $\Lsav$ be the quotient of a bounded symmetric domain by the action of a torsion-free lattice. Let $\Ricu$ be the Riemann curvature tensor of its canonical metric, let $1 \leq p \leq \dim \Lsav$. The \emph{higher degree characteristic subvariety} $\Charac^p \subset \Proj \Omega_{\Lsav}^p$ is the set of $[\alpha] \in \Proj \Omega_{\Lsav}^p$ such that there exists $v \in T_{\Lsav}$ with $v \neq 0$ such that $\alpha \in \Lambda^p \ker \Ricu(v,\bar{v})$.
\end{defi}

These higher degree characteristic subvarieties can also be defined using the Lie algebra associated with the biholomorphism group of $\Dom$, see \Cref{defi:hcsv}. They are closely linked with $\C$-PVHS because of the following proposition.

\begin{prop}\label{prop:lsavcharac}
	Let $\Lsav$ be the quotient of a bounded symmetric domain by the action of a torsion-free lattice.
	 
	Then there exists a finite étale cover $\Lsav'$ of $\Lsav$, a smooth projective logarithmic compactification of $\Lsav'$ and a $\C$-PVHS $\V$ on $\Lsav'$ with immersive period map and unipotent local monodromy and such that for every $1 \leq p \leq \dim \Lsav$, $\Charac^p \subset \Proj \Omega_{\Lsav'}^p$ is the set of $[\alpha] \in \Proj \Omega_{\Lsav'}^p$ such that there exists $x \in \Lsav'$ and $v \in T_{\Lsav',x} \setminus \{0\}$ such that $\alpha \in \Lambda^p C_{\V}(v)$ (\Cref{nota:c}).
\end{prop}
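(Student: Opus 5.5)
The plan is to use Zucker's construction of $\C$-PVHSs on locally symmetric varieties and then compare the Higgs field commutators with the curvature tensor. Write $\Lsav = \Latt \backslash \Dom$ with $\Dom = \Group/\Isot$, where $\Group$ is the identity component of the real group of biholomorphisms. Let $\glie = \llie \oplus \mlie$ be the Cartan decomposition, with $\mlie_\C = \mlie^+ \oplus \mlie^-$ and $\mlie^+ \cong T_{\Dom,o}$.

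\textbf{Choosing the PVHS.} First, after replacing $\Latt$ by a finite-index subgroup (Selberg's lemma plus a neat subgroup), I arrange that $\Latt$ is neat. By Borel's results, local monodromy around the boundary of a toroidal (smooth projective logarithmic) compactification is then unipotent; this produces the cover $\Lsav'$ and the compactification. Next I take a faithful representation $\rho$ of $\Group$, for instance the adjoint representation on $\glie_\C$. Following Zucker, the Hodge decomposition at $o$ is given by the eigenvalues of the central element of $\llie$ defining the complex structure. On $\glie_\C$ these eigenvalues give the grading $\mlie^+ \oplus \llie_\C \oplus \mlie^-$ in degrees $1,0,-1$. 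The resulting $\C$-PVHS $\V$ has period map equal to the identity of $\Dom$ composed into a period domain, so it is immersive. Its Higgs field at $o$ is $\theta_o(w) = \ad(w)$ for $w \in \mlie^+$, and its conjugate Higgs field satisfies $\theta_o^*(\bar v) = \ad(\bar v)$ with $\bar v \in \mlie^-$, up to sign conventions.

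\textbf{Identifying the commutator with curvature.} By homogeneity under $\Group$ (and the fact that both sides are $\Group$-equivariant), it suffices to work at $o$. There I compute
$$[\theta_o^*(\bar v), \theta_o(w)] = [\ad \bar v, \ad w] = \ad([\bar v, w]).$$
Since $\rho$ is faithful, and $\ad$ restricted to $\llie_\C$ is injective when $\glie$ is semisimple without compact factors, this vanishes exactly when $[\bar v, w] = 0$ in $\llie_\C$. On the other hand, the curvature of the invariant Bergman metric on $\Dom$ at $o$ is given, up to a positive constant on each irreducible factor, by
$$R(v,\bar v)w = -[[v,\bar v],w], \qquad \langle R(v,\bar v)w,\bar w\rangle = c\,\|[v,\bar w]\|^2.$$

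\textbf{Main obstacle.} The key step, and the one I expect to be hardest, is showing that $\ker R(v,\bar v) = \{w : [\bar v,w] = 0\}$. The inclusion $\supseteq$ is immediate from the Jacobi identity, since $[v,w] = 0$ because $\mlie^+$ is abelian. For $\subseteq$, suppose $R(v,\bar v)w = 0$. Then $\langle R(v,\bar v)w, \bar w\rangle = 0$, and positivity of the Killing form pairing (the nonpositive bisectional curvature formula) gives $[\bar v, w] = 0$.

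One must be careful with the normalizations on the different irreducible factors of $\Dom$. The canonical metric is the product of Bergman metrics, possibly with different constants on each factor. However, each bracket decomposes factorwise, so the kernel condition is unaffected by these constants.

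\textbf{Conclusion.} Combining the two identifications, I get $C_{\V}(v) = \ker R(v,\bar v)$ for every $x$ and every $v \in T_{\Lsav',x}\setminus\{0\}$. Taking $\Lambda^p$ of both sides then yields the description of $\Charac^p$ stated in the proposition.
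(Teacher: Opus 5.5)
Your proposal is correct and follows essentially the paper's route. You pass to a finite étale cover with a log compactification on which the adjoint $\R$-PVHS $V(\Hol(\Dom)^+, \hlie, \Latt')$ has unipotent monodromy, then use $\theta_x(w)=\ad(\iota_x(w))$, $\theta_x^*(\bar v)=\ad(\iota_x(\bar v))$ (\Cref{lem:higgslsav}) and faithfulness of $\ad$ to get $C_{\V}(v)=\{w : [\iota_x(\bar v),\iota_x(w)]=0\}$. There are two differences. First, you additionally prove $\ker \Ricu(v,\bar v)=\{w : [\bar v,w]=0\}$ by the Killing-form computation, whereas the paper builds the bracket condition into \Cref{defi:hcsv} and takes the curvature equality from Kobayashi (\Cref{rem:berg}). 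Second, your compactification step as phrased (toroidal compactifications plus Borel) only covers arithmetic lattices; non-arithmetic ball quotients and reducible lattices also need \Cref{lem:reduc} and Mok's compactification, as in \Cref{lem:unipmonod}.
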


\begin{rem}
	If moreover $\Dom$ is irreducible and has rank at least two, then the previous statement holds for every non-unitary $\C$-PVHS on $\Lsav$ (see \Cref{prop:lsavcharac2}).
\end{rem}

Thus \Cref{thm:main} gives the inclusion $\AugBL(\Tautp) \cap \Proj \Omega_{\Lsav}^p \subset \Charac^p$. We prove that it is actually an equality.

\begin{thm}\label{thm:lsav}
	Let $\Lsav$ be the quotient of a bounded symmetric domain by the action of a torsion-free lattice. Let $(\bar{\Lsav}, \Div)$ be a smooth projective logarithmic compactification of $\Lsav$. Let $1 \leq p \leq \dim \Lsav$. Let $\Tautp$ be the tautological line bundle on $\Proj \Omega_{\bar{\Lsav}}^p(\log \Div)$.
	
	Then $\AugBL(\Tautp) \cap \Proj \Omega_{\Lsav}^p = \Charac^p$.
\end{thm}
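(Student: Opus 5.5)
The plan is to prove the two inclusions separately.

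\emph{The inclusion $\subset$.} It follows from \Cref{thm:main} and \Cref{prop:lsavcharac}, once we pass to a finite étale cover. By \Cref{prop:lsavcharac} we get a cover $\Lsav' \to \Lsav$, a log compactification $(\bar{\Lsav}', \Div')$ and a $\C$-PVHS $\V$ with immersive period map and unipotent monodromy. We may choose $\bar{\Lsav}'$ so that $\Lsav'\to\Lsav$ extends to a generically finite morphism $f:\bar{\Lsav}'\to\bar{\Lsav}$. Then $f$ induces a rational map $F:\Proj\Omega^p_{\bar{\Lsav}'}(\log\Div')\dashrightarrow\Proj\Omega^p_{\bar{\Lsav}}(\log\Div)$. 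This map is a morphism, étale and finite over $\Lsav$, and it pulls $\Tautp$ back to $\Tautpp$ because $f^*\Omega^p_{\bar{\Lsav}}(\log\Div)\to\Omega^p_{\bar{\Lsav}'}(\log\Div')$ is an isomorphism over $\Lsav'$. The standard behaviour of augmented base loci under generically finite maps gives $\AugBL(\Tautp)\cap\Proj\Omega^p_{\Lsav}=F(\AugBL(\Tautpp)\cap\Proj\Omega^p_{\Lsav'})$, for instance via the characterization of $\AugBL$ as the union of subvarieties on which the bundle is not big. Then \Cref{thm:main} applied on $\Lsav'$, combined with \Cref{prop:lsavcharac}, gives the inclusion into $\Charac^p$, since $\Charac^p$ is defined by the locally homogeneous tensor $\Ricu$ and is therefore compatible with $F$.

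\emph{The inclusion $\supset$.} I would use Nakamaye's theorem: for a nef and big line bundle on a projective variety, $\AugBL$ equals the null locus, i.e.\ the union of the subvarieties $Z$ with $(L|_Z)^{\dim Z}=0$. Bigness of $\Tautp$ is \Cref{thm:lsavbig}/\Cref{thm:lbig}.

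For nefness, I would endow $\Lambda^pT_{\Lsav}$ with the metric induced by the canonical metric. Since the holomorphic bisectional curvature of $\Lsav$ is nonpositive, $T_\Lsav$ is Griffiths seminegative, and hence so is $\Lambda^pT_\Lsav$. Consequently the induced metric on $\Tautp|_{\Proj\Omega^p_\Lsav}$ has semipositive curvature. By Mumford's results, this metric is good (it has Poincaré growth) along $\Div$. This should give nefness of $\Tautp$ on $\Proj\Omega^p_{\bar{\Lsav}}(\log\Div)$, and it lets intersection numbers on closures of subvarieties of $\Proj\Omega^p_\Lsav$ be computed by integrating the Chern form.

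Next, let $Z$ be the closure of an irreducible component of $\Charac^p$. I want to show that the restriction to $Z$ of the Chern form $\omega$ of $\Tautp$ is degenerate at every point of $Z\cap\Proj\Omega^p_\Lsav$; then $\omega^{\dim Z}|_Z\equiv0$, so $(\Tautp|_Z)^{\dim Z}=0$ and $Z\subset\AugBL(\Tautp)$. Take $[\alpha]\in\Charac^p$ with $\alpha\in\Lambda^p\ker\Ricu(v,\bar v)$, and let $\tilde v$ be the horizontal lift of $v$ (for the Chern connection) at $[\alpha]$.

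\begin{itemize}
\item The value $\omega(\tilde v,\bar{\tilde v})$ is, up to sign, $\langle\Ricu(v,\bar v)\alpha,\alpha\rangle/|\alpha|^2$, where $\Ricu(v,\bar v)$ acts on $\Lambda^p$ as a derivation. Each factor of $\alpha$ lies in $\ker\Ricu(v,\bar v)$, so this vanishes. Since $\omega\ge0$, the direction $\tilde v$ is then in the kernel of $\omega$.
\item The direction $\tilde v$ is tangent to $Z$. The symmetric space is locally homogeneous and $\nabla\Ricu=0$, so parallel transport along the geodesic $\exp(tv)$ sends $v$ to $\gamma'(t)$ and sends $\ker\Ricu(v,\bar v)$ to $\ker\Ricu(\gamma',\bar\gamma')$. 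Hence the parallel transport of $\alpha$ stays in $\Charac^p$, and the horizontal curve it defines lies in $Z$.
\end{itemize}

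The main obstacle is the analytic step. One must justify that, in this Mumford-good setting, $(\Tautp|_Z)^{\dim Z}$ equals $\int_{Z\cap\Proj\Omega^p_\Lsav}\omega^{\dim Z}$, with no mass concentrated on the boundary. One must also justify nefness of $\Tautp$ on the whole compactification. I would first try to obtain both by applying Mumford's good-metric theory directly to the pullback of $\Tautp$ to a resolution of $Z$. If that fails, I would instead use a closed positive current argument: the Lelong numbers vanish because of the Poincaré growth, so the current extends trivially across the boundary.
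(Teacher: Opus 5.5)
Your $\subset$ argument and your pointwise degeneracy computation are fine, and your skeleton for $\supset$ is the paper's: nefness, degeneracy of the Chern form along $\Charac^p$, a Chern--Weil identity, then \Cref{thm:cas}. However, $\supset$ has two genuine gaps.

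\textbf{First gap: algebraicity of $\Charac^p$.} You take $Z$ to be ``the closure of an irreducible component of $\Charac^p$'' and never show that $\Charac^p$ is algebraic. \Cref{thm:cas} only involves closed \emph{algebraic} subvarieties of $\Proj\Omega^p_{\bar\Lsav}(\log\Div)$. By contrast, $\Charac^p$ is defined transcendentally, through the uniformization $\Dom\to\Lsav$. A priori it is only a closed analytic subset of $\Proj\Omega^p_\Lsav$, and its closure in the compactification need not be analytic. Its Zariski closure may even have larger dimension. In that case the vanishing of $\omega^{\dim\Charac^p}$ along $\Charac^p$ says nothing about the intersection number you need, since your degeneracy argument only works at points of $\Charac^p$. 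This is exactly \Cref{prop:charac}, which the paper calls crucial. Its proof runs as follows:
\begin{itemize}
\item $\Charac^p_\Dom$ is semi-algebraic via the Borel embedding $\Dom\subset\Dual$;
\item the Klingler--Ullmo--Yafaev fundamental set (\Cref{thm:kuy}) makes $\Charac^p$ definable in $\Ranexp$;
\item the Peterzil--Starchenko definable Chow theorem then makes $\Charac^p$ Zariski closed.
\end{itemize}
An argument of this kind is indispensable, and it is missing from your plan.

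\textbf{Second gap: the analytic step.} You flag this step, but neither of your proposed tools works.

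Mumford's theorem gives goodness of the canonical metric on $\Omega^1(\log\Div)$ over a toroidal compactification. Goodness does not pass to the tautological line bundle of the projectivization. Take a product of two modular curves, near a corner with cusp coordinates $z_1,z_2$. There $\Stru_1(-1)$ is spanned by $s=z_2\partial_{z_2}+u\,z_1\partial_{z_1}$, with $\lvert s\rvert^2=(\log\lvert z_2\rvert)^{-2}+\lvert u\rvert^2(\log\lvert z_1\rvert)^{-2}$. The $\dif u$-coefficient of $\partial\log\lvert s\rvert^2$ equals $\tfrac12\lvert\log\lvert z_2\rvert\rvert/\lvert\log\lvert z_1\rvert\rvert$ at $\lvert u\rvert=\lvert\log\lvert z_1\rvert\rvert/\lvert\log\lvert z_2\rvert\rvert$. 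This is unbounded on every neighbourhood of $(0,0,0)$, although $u$ is not a boundary coordinate, so the metric is not good.

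Your fallback does not close the gap either. Vanishing Lelong numbers only say that the curvature current charges no boundary divisor. The identity $\int\omega^{\dim Z}=(\Tautp\vert_Z)^{\dim Z}$ on a resolution of $Z$ is a full Monge--Ampère mass statement, and that does not follow from zero Lelong numbers alone.

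The paper obtains both this identity and nefness from Hodge theory:
\begin{itemize}
\item By \Cref{prop:existspvhs}, after a finite étale cover (harmless by \Cref{prop:fecover}) and on a suitable compactification, the canonical metric is the pull-back of a Hodge metric by a locally split injective log-Higgs field.
\item Hence $\Stru_p(-1)$ is a sub-line bundle, killed by the Higgs field, of the Deligne extension of $\Lambda^p\End_\C(\V)$.
\item Kollár's Chern--Weil theorem for such subquotients (based on the Cattani--Kaplan--Schmid estimates) gives (\ref{eq:cw}).
\item The Zuo--Brunebarbe theorem gives nefness (\Cref{lem:nef}).
\end{itemize}
Your nefness claim rests on the same incorrect goodness assertion. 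It might be repaired using log-polynomial bounds on the metric together with Demailly's zero-Lelong-number criterion, but as written it is unsupported.
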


This theorem generalizes to higher degree the equality $\AugBL(\Stru_1(1)) \cap \Proj \Omega_{\Lsav}^1 = \Charac^1$ that Mok proved in the case where $\Dom$ is irreducible and $\Lsav$ is compact in \cite[Appendix IV]{mok1989metric} and that was proven in the case where $\Dom$ is irreducible and of rank at least $2$ in \cite[Theorem 1.2]{rapion2025isotrivialityfamiliescurvesparametrized}.

\subsection{Outline of the proofs}

We begin by the proofs of \Cref{thm:lbig,thm:main}, which are based on the study of the positivity of the curvature of the Hodge metric which comes from a polarization of the $\C$-PVHS. Indeed, a polarization of $\V$ induces a polarization of $\End_{\C}(\V)$, that gives a Hodge metric on $\End_{\C}(\V)$. Outside of the Zariski closed subset $\Deg(\V) \subset \Qpv$ of points where the period map is not immersive, we can pull back this Hodge metric by the Higgs field $\theta$ to obtain a Kähler metric $h$ on $\Qpv \setminus \Deg(\V)$. It induces a Hermitian metric on $\Lambda^p T_{\Qpv \setminus \Deg(\V)}$, and then a Hermitian metric $\hat{h}$ on $\Tautp$. The negativity properties of the Hodge metric imply positivity properties of $\hat{h}$. In particular, this allows to prove \Cref{thm:lbig}, in \Cref{subsection:proof1}. We also obtain the following proposition, that is proven in \Cref{subsection:curvhodge}.
\begin{prop}\label{prop:curvhodge}
	Let $\Anman$ be a complex manifold. Let $\V$ be a complex polarized variation of Hodge structure on $\Anman$ with an immersive period map. Let $p$ be an integer such that $1 \leq p \leq \dim \Anman$. Let $C_1(\Tautp, \hat{h})$ be the first Chern form of $\hat{h}$ (notation given above). Let $x \in \Anman$, let $\alpha \in \Lambda^p T_{\Anman,x} \setminus \{0\}$.
	
	Then the real $(1,1)$-form $C_1(\Tautp, \hat{h})$ is semipositive. Moreover, if it is not positive definite at $[\alpha]$ then there exists $v \in T_{\Anman,x} \setminus \{0\}$ such that $\alpha \in \Lambda^p C_{\V}(v)$ (\Cref{nota:c}).
\end{prop}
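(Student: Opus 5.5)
The plan is to compute the curvature of $(\Lambda^p T_{\Anman}, \hat h)$ from the Hodge curvature, then transfer it to $\Tautp$ using the standard formula for the tautological bundle.

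\textbf{Step 1 (metric and curvature on $T_{\Anman}$).} Since the period map is immersive, $\theta: T_{\Anman} \to \End(\V)$ is an injective bundle map. Hence $h$ is the metric induced on the holomorphic subbundle $\theta(T_{\Anman})$. More precisely, $\theta(T_{\Anman})$ is a holomorphic subbundle of $\End(\V)^{-1,1}$, the degree $-1$ part of the graded Higgs bundle of $\End(\V)$.

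Griffiths' formula for the Hodge bundles of $\End(\V)$ gives, for $u \in \End(\V)^{-1,1}$, the curvature as $\Theta(u) = -[\theta \wedge \theta^* + \theta^*\wedge\theta, u]$, up to the standard sign conventions. Curvature decreases on subbundles, so
\[
h(\Theta_{T}(v,\bar v)w,w) \leq -\|[\theta^*(\bar v),\theta(w)]\|^2 + \|[\theta(v),\theta^*(\bar w)]\|^2\text{-type terms}.
\]
Concretely, the Higgs term from $\End(\V)^{-1}$ to $\End(\V)^{-2}$ contributes $-\|[\theta(v),\theta(w)]\|^2$, and this vanishes because $\theta\wedge\theta=0$. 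The term coming from $\End(\V)^{0}$ gives $+$ with the adjoint. I will carefully check the sign: the outcome I expect is that $T_{\Anman}$ with $h$ is Griffiths seminegative, with
\[
-h(\Theta(v,\bar v) w, w) \geq \|[\theta^*(\bar v), \theta(w)]\|^2_{\text{Hodge}} .
\]
This is the same computation as in the $p=1$ case of Brunebarbe--Cadorel, and I will follow it.

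\textbf{Step 2 (pass to $\Lambda^p$).} The curvature of $\Lambda^p T$ acts as a derivation: $\Theta_{\Lambda^p}(v,\bar v)(w_1\wedge\dots\wedge w_p)=\sum_k w_1\wedge\dots\wedge\Theta(v,\bar v)w_k\wedge\dots\wedge w_p$. Note that $\Theta(v,\bar v)=-(\ad\theta^*(\bar v))^{*}\ad\theta^*(\bar v)$ restricted appropriately, plus a projection correction. It therefore equals $-A^*A$ with $A=\ad\theta^*(\bar v)\circ\theta$, minus a positive second fundamental form term.

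Then $\Theta_{\Lambda^p}(v,\bar v)$ equals $-\sum_k$ of the induced operators $A_k^*A_k$, where $A_k$ denotes $A$ acting in the $k$-th slot. This yields
\[
-\hat h(\Theta_{\Lambda^p}(v,\bar v)\alpha,\alpha)\geq \|(A\otimes 1\cdots + \cdots)\alpha\|^2 .
\]
More precisely, I will get $\sum$ of norms of the contraction of $\alpha$ by $A$ in one slot, i.e. of $(A\wedge \id^{p-1})\alpha \in \End(\V)\otimes\Lambda^{p-1}T$. This is nonnegative, and it vanishes if and only if $\alpha \in \Lambda^p \ker A = \Lambda^p C_{\V}(v)$. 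The last equivalence is linear algebra: $\iota$-contraction with all functionals of $\mathrm{im}A$ kills $\alpha$ if and only if $\alpha$ lies in $\Lambda^p$ of the kernel.

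\textbf{Step 3 (tautological bundle).} On $\Proj\Omega^p$, with the Grothendieck convention, $\Tautp$ restricted to a fibre is the dual of the tautological line $[\alpha]$. Its Chern form at $[\alpha]$ splits into a fibre-direction part, which is the Fubini--Study form and positive definite on vertical vectors, and a horizontal part equal to $-\hat h(\Theta_{\Lambda^p}(v,\bar v)\alpha,\alpha)/\|\alpha\|^2$. Using Chern-connection normal coordinates at $x$, the mixed terms vanish. Therefore $C_1(\Tautp,\hat h)$ is semipositive, and a null vector must be horizontal, given by some $v\neq 0$ with vanishing curvature term. By Step 2 this gives $\alpha\in\Lambda^pC_{\V}(v)$.

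\textbf{Main obstacle.} The main obstacle is Step 2. It is not enough to use Griffiths negativity of $T$, since Griffiths negativity does not pass to $\Lambda^p$ in the needed dual-Nakano sense. One needs the explicit form $\Theta=-A^*A-(\text{second fundamental form})$, and must verify that the second fundamental form term also has the right sign on $\Lambda^p$. My first attempt will be to write both terms as $-B^*B$ with $B$ depending linearly on $w$. A curvature of the form $-\sum B_i^*B_i$ with $B_i$ linear in $w$ and antilinear in $v$ is exactly dual-Nakano seminegative, and this property is preserved under taking $\Lambda^p$ via the derivation formula.
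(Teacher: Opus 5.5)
Your proof is correct and follows essentially the paper's route. The paper combines \Cref{lem:knc} (the curvature of $\End_{\C}(\V)$ restricted to $\theta(T_{\Anman})$, plus the fact that curvature decreases on subbundles, giving $\ker \Ricu(v,\bar{v}) \subset C_{\V}(v)$) with \Cref{lem:calcnkl} (the derivation formula on $\Lambda^p$ together with formula (\ref{eq:cherntaut})), and your bound $-h(\Ricu(v,\bar{v})w,w) \geq \lVert[\theta^*(\bar{v}),\theta(w)]\rVert^2$ is a more direct substitute for its Killing-form \Cref{lem:crocurv}. The one thing to correct is your ``main obstacle'': for fixed $v$, $\Ricu(v,\bar{v})$ is a seminegative Hermitian endomorphism, so its derivation extension to $\Lambda^p T_{\Anman,x}$ is seminegative with kernel $\Lambda^p \ker \Ricu(v,\bar{v})$ by the spectral theorem, and this is all that (\ref{eq:cherntaut}) tests against the fixed $\alpha$; Griffiths seminegativity therefore does pass to $\Lambda^p$ in the required sense, and no dual-Nakano property is needed (for fixed $v$ the second fundamental form term is just another seminegative Hermitian operator).
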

We can deduce information on the augmented base locus from it thanks to Tosatti's analytic version of Nakamaye's theorem (\cite[Theorem 4.5]{tosatti2018}, see \Cref{thm:tosnak}). The latter theorem has a nefness hypothesis. We do not prove that $\Tautp$ is nef, but we compare it with the dual of a subquotient of a log-Higgs bundle, that is nef (see \Cref{thm:nef}). Then we obtain the proof of \Cref{thm:main} (\Cref{subsection:augpvhs}).

\bigskip

The key result to deduce \Cref{thm:vbig} from \Cref{thm:main} is that a tangent vector $v \in T_{\Qpv,x}$ induces a decomposition of the complex Hodge structure $\V_x$ compatible with the endomorphisms $\theta_x(w)$ for every $w \in C_{\V}(v)$, where $\theta$ is the Higgs field (\Cref{prop:decomp}). The proof of this decomposition is based on Schur’s lemma, applied in a semisimple category of ``complex polarizable infinitesimal variations of Hodge structure'' ($\C$-PIVHS, see \Cref{defi:pivhs}). We deduce \Cref{thm:vbig} from \Cref{prop:decomp} in \Cref{subsection:conseqdec}.

\bigskip

In the last section, we treat the case of locally symmetric varieties. We recall in \Cref{subsection:zucker} how to construct $\C$-PVHSs on a locally symmetric variety $\Lsav$. In \Cref{subsection:pvhslsav}, we use this construction to prove in \Cref{prop:existspvhs} the existence of a real polarized variation of Hodge structure with good properties on $\Lsav$ (we will use it in the proof of \Cref{prop:lsavcharac} and \Cref{thm:lsav}). In \Cref{subsection:higher}, we explain that if the universal cover $\Dom$ of $\Lsav$ is irreducible and of rank at least $2$, then thanks to Margulis's superrigidity theorem, every $\C$-PVHS on $\Lsav$ comes from the construction recalled in \Cref{subsection:zucker}, up to a finite étale cover (see \Cref{prop:classification}).

\bigskip

In \Cref{subsection:hcsv}, we define the higher degree characteristic subvarieties $\Charac^p$ (see \Cref{defi:hcsv}). We prove that they are Zariski closed, see \Cref{prop:charac}. The key tool used to prove the latter proposition is Peterzil and Starchenko's o-minimal Chow lemma \cite[Corollary 4.5]{peterzil2009complex}. To apply it, we have to prove that the $\Charac^p$'s are definable in an o-minimal structure. It is proven as a consequence of a theorem of Klingler, Ullmo and Yafaev \cite[Theorem 1.9]{klingler2016hyperbolic}. In \Cref{subsection:lsavaug}, we prove \Cref{prop:lsavcharac} as an application of \Cref{prop:existspvhs}, and we prove \Cref{thm:lsav} by applying Nakamaye's theorem \cite[Theorem 1.1]{nakamaye2000stable} (see \Cref{thm:cas}). Knowing that the $\Charac^p$'s are Zariski closed is crucial for our application of this theorem. Finally, \Cref{thm:lsavbig} is an immediate corollary of \Cref{thm:lsav}.

\begin{ack}
	I would like to thank my PhD advisor, Yohan Brunebarbe, for the fruitful discussions, many suggestions and constant support throughout the preparation of this paper. Thanks to Benoît Cadorel and Ariyan Javanpeykar for their careful reading and their feedback as referees of my thesis manuscript.
\end{ack}

\section{Augmented base locus and positivity}

In this section, we recall the definition and the properties of the augmented base locus, and its link with the positivity of vector bundles. Then we give a method to compute it and to prove bigness in the case of the line bundles $\Tautp$, using metric information. We give the proof of \Cref{thm:lbig}.

\subsection{Augmented base locus}\label{subsection:augdef}

The augmented base locus was first defined in \cite{ein2006asymptotic}, in continuation of \cite{nakamaye2000stable}. Let $\LB$ be a line bundle over a smooth complex projective variety $\Prv$. Denote by $\StBL(\LB)$ its stable base locus, that is the set of points of $\Prv$ on which every global section of $L^m$ vanishes for every $m \in \Ns$. One can easily check that $\bigcap_{m \in \Ns} \StBL(\LB^m \otimes \Amp^{-1})$ does not depend on the choice of an ample line bundle $\Amp$ on $\Prv$. So we can define:

\begin{defi}\label{defi:augbl}
	The \emph{augmented base locus} of $\LB$ is:
	\[
	\AugBL(\LB) := \bigcap_{m \in \Ns} \StBL(\LB^m \otimes \Amp^{-1})
	\]
	where $\Amp$ is any ample line bundle.
\end{defi}

The augmented base locus of a nef line bundle can be computed thanks to Nakamaye's following theorem.

\begin{thm}[see {\cite[Theorem 1.1]{nakamaye2000stable}}]\label{thm:cas}
	If $\LB$ is nef, then $\AugBL(\LB)$ is the union of the irreducible closed subsets $Z \subset \Prv$ of positive dimension such that the intersection number $\LB^{\dim Z} \cdot Z$ is $0$.
\end{thm}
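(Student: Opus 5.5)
This is Nakamaye's theorem: for $\LB$ nef on a smooth projective $\Prv$, $\AugBL(\LB)$ is exactly the union of the positive-dimensional irreducible subvarieties $Z$ with $\LB^{\dim Z}\cdot Z = 0$ (the null locus). I would prove the two inclusions separately, working with $\Q$-divisors and using the fact that $\AugBL(\LB) = \bigcap_E \mathrm{Supp}(E)$, where $E$ runs over effective $\Q$-divisors such that $\LB - E$ is ample. This description follows from \Cref{defi:augbl} by writing $m\LB - \Amp \sim E$ with $E$ effective.

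\textbf{Easy inclusion.} Let $Z$ be positive-dimensional with $\LB^{\dim Z}\cdot Z=0$, and suppose $Z\not\subset\AugBL(\LB)$. Then there is an effective $E$ with $\LB-E=\Amp'$ ample and $Z\not\subset\mathrm{Supp}(E)$. Put $d=\dim Z$ and expand
\[
\LB^{d}\cdot Z = \LB^{d-1}\cdot(\Amp'+E)\cdot Z.
\]
The term $\LB^{d-1}\cdot E\cdot Z$ is nonnegative: $E|_Z$ is an effective divisor and $\LB$ is nef. Iterating the substitution $\LB = \Amp' + E$ in the remaining factors gives $\LB^d\cdot Z\ge \Amp'^d\cdot Z>0$, a contradiction. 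So $Z\subset\AugBL(\LB)$.

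\textbf{Hard inclusion.} Here we must show that every irreducible component $W$ of $\AugBL(\LB)$ satisfies $\LB^{\dim W}\cdot W=0$.

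First dispose of two cases. If $\LB$ is not big, then $\LB^{\dim\Prv}=0$ and $\AugBL(\LB)=\Prv$, so $Z=\Prv$ works. If $\LB$ is big, we need to show that the null locus contains the whole of $\AugBL(\LB)$.

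For the main case, I would argue by contradiction. Suppose $W$ is a component of $\AugBL(\LB)$ with $\LB|_W$ big, meaning $\LB^{\dim W}\cdot W>0$ (since $\LB|_W$ is nef). The aim is to produce sections of $m\LB-\Amp$ that do not vanish identically on $W$, contradicting $W\subset\StBL(m\LB-\Amp)$ for all $m$. This has two steps.

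1. Since $\LB|_W$ is nef and big, $m\LB|_W-\Amp|_W$ has many sections by asymptotic Riemann–Roch. Pass to a resolution of $W$ if it is singular.
2. Lift these sections from $W$ to $\Prv$. The plan is to use the vanishing $H^1\big(\Prv, \I_W\otimes\Stru(m\LB-\Amp)\big)=0$ for $m\gg0$. I would try to obtain it from Kawamata–Viehweg or Nadel vanishing on a log resolution. The setup would be to write $m\LB - \Amp = K_\Prv + (\text{nef and big}) + (\text{multiplier ideal correction})$ after adding a suitable $\epsilon E$. This is where it matters that $W$ is a component of $\AugBL(\LB)$: that lets us arrange the multiplier ideal to cut out exactly $W$ near its generic point.

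This lifting step, where the multiplier ideal must be tailored to $W$, is Nakamaye's key step. I expect it to be the main obstacle. An alternative is to induct on dimension, using restriction to $W$ and the fact that $\AugBL(\LB|_W)\ne W$. For the purposes of this paper, however, I would simply cite \cite{nakamaye2000stable}, as the statement does.
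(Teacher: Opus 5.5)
The paper gives no argument for this statement: it is quoted from \cite{nakamaye2000stable}. Since you also fall back on that reference for the hard inclusion, your proposal rests on the same footing.

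What you add is a self-contained proof of the inclusion of the null locus in $\AugBL(\LB)$, and it is correct. If $z\in Z\setminus\AugBL(\LB)$, \Cref{defi:augbl} gives a section $s$ of $(\LB^{m}\otimes\Amp^{-1})^{k}$ with $s(z)\neq 0$. Then $\LB\equiv\frac{1}{m}\Amp+E$ with $E=\frac{1}{km}\mathrm{div}(s)$ effective and $Z\not\subset\mathrm{Supp}(E)$. Your expansion only uses that mixed products of $\LB$ and an ample class are nonnegative on the effective cycle $E\cdot Z$, and it gives $\LB^{\dim Z}\cdot Z\geq(\frac{1}{m}\Amp)^{\dim Z}\cdot Z>0$.

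This matters because it is the only direction of \Cref{thm:cas} that the paper uses. At the end of the proof of \Cref{thm:lsav}, the theorem serves only to deduce $\Charac^p\subset\AugBL(\Tautp)$ from the nefness of $\Tautp$ and the vanishing of $\int_{\overline{\Charac}^p}c_1(\Tautp)^{\dim\Charac^p}$. The reverse inclusion there comes from \Cref{thm:main}. For this paper, your elementary argument could therefore replace the citation.

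Your sketch of the other inclusion is not a proof, and its stated target is not what the tools deliver. Nadel vanishing (or Kawamata--Viehweg on a log resolution) does not give $H^1(\Prv,\I_W\otimes\Stru(m\LB-\Amp))=0$. It gives only the vanishing of $H^1$ twisted by a multiplier ideal $\mathcal{J}$ of some $cE$. Even after tie-breaking, $\mathcal{J}$ coincides with $\I_W$ only near the generic point of $W$.

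Two further things would then be needed:
\begin{itemize}
\item sections of $\Stru_{\Prv}/\mathcal{J}$, twisted by $m\LB-\Amp$, that are nonzero at the generic point of $W$;
\item $m\LB-\Amp-K_{\Prv}-cE$ nef and big.
\end{itemize}
Taking $m\gg 0$ does not give the second, because $\LB$ is only nef. The positivity therefore has to come from $c(\LB-E)$, with $c$ constrained by the singularities of $E$ along $W$. Making these requirements compatible is the real content of Nakamaye's theorem, so deferring to \cite{nakamaye2000stable} there, as the paper does, is the right call.
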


The following lemma is easy to deduce from the definition of $\AugBL$.

\begin{lem}\label{lem:twolb}
	Let $\LB$ and $\LB'$ be two line bundles on a complex projective variety $\Prv$. Let $f: \LB \to \LB'$ be a morphism of $\Stru_{\Prv}$-modules. Let $U \subset \Prv$ be an open subset in restriction to which $f$ is an isomorphism. Then $\AugBL(\LB') \cap U \subset \AugBL(\LB)$.
\end{lem}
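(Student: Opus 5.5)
We argue by contraposition. Fix an ample line bundle $\Amp$ on $\Prv$ and take $x \in U \setminus \AugBL(\LB)$; we show $x \notin \AugBL(\LB')$. Since $\AugBL(\LB)$ does not depend on the choice of ample bundle, the same $\Amp$ can be used in \Cref{defi:augbl} for both $\LB$ and $\LB'$.

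Because $x \notin \AugBL(\LB) = \bigcap_{m} \StBL(\LB^m \otimes \Amp^{-1})$, there is some $m \in \Ns$ with $x \notin \StBL(\LB^m \otimes \Amp^{-1})$. By the definition of the stable base locus, there is then some $k \in \Ns$ and a global section $s$ of $(\LB^m \otimes \Amp^{-1})^k = \LB^{mk} \otimes \Amp^{-k}$ with $s(x) \neq 0$.

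Now use the morphism $f$. The map $f^{\otimes mk} \otimes \id_{\Amp^{-k}} : \LB^{mk} \otimes \Amp^{-k} \to \LB'^{mk} \otimes \Amp^{-k}$ is a morphism of $\Stru_{\Prv}$-modules. It is an isomorphism over $U$, because $f$ is an isomorphism of invertible sheaves there. So $s' := (f^{\otimes mk} \otimes \id)(s)$ is a global section of $\LB'^{mk} \otimes \Amp^{-k} = (\LB'^{m} \otimes \Amp^{-1})^k$. Since $x \in U$, its value $s'(x)$ is the image of $s(x) \neq 0$ under an isomorphism of fibres, so $s'(x) \neq 0$.

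Hence $x \notin \StBL(\LB'^m \otimes \Amp^{-1})$, and therefore $x \notin \AugBL(\LB')$. This gives $\AugBL(\LB') \cap U \subset \AugBL(\LB)$.

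There is no real obstacle here. The only points to check are that the same ample bundle may be used for both line bundles (which holds by the independence noted before \Cref{defi:augbl}), and that tensor powers of $f$ remain isomorphisms on $U$, which is immediate.
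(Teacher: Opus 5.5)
Your argument is correct and is exactly the direct verification from \Cref{defi:augbl} that the paper intends; the paper gives no written proof and only remarks that the lemma is easy to deduce from the definition of the augmented base locus. Pushing a section of $\LB^{mk}\otimes\Amp^{-k}$ that is nonzero at $x\in U$ forward along $f^{\otimes mk}\otimes\mathrm{id}$, which is an isomorphism over $U$, is precisely that deduction.
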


We will also need the following proposition of Boucksom, Broustet and Pacienza.

\begin{prop}[see {\cite[Proposition 2.3]{boucksom2013uniruledness}}]\label{prop:birat}
	Let $\Prv$ and $\Prv'$ be two smooth complex projective varieties. Let $f: \Prv \to \Prv'$ be a birational morphism, let $C \subset \Prv$ be its exceptional locus. Let $\LB$ be a line bundle on $\Prv'$. Then $\AugBL(f^*\LB) = f^{-1}(\AugBL(\LB)) \cup C$.
\end{prop}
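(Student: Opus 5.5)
We prove the two inclusions separately, using \Cref{defi:augbl} together with the facts that $f^*$ preserves ampleness up to the exceptional divisor and that $\AugBL$ is insensitive to passing to positive multiples: $\AugBL(\LB^k)=\AugBL(\LB)$ for $k \in \Ns$.

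\textbf{Inclusion $C \subset \AugBL(f^*\LB)$.} Fix an ample $\Amp$ on $\Prv$. Suppose some $x \in C$ lies outside $\AugBL(f^*\LB)$. Then for some $m$ there is a section $s$ of $f^*\LB^m \otimes \Amp^{-1}$ with $s(x) \neq 0$. Let $F$ be a positive-dimensional fibre component of $f$ through $x$; such an $F$ exists by Zariski's main theorem, since $\Prv'$ is normal and $f$ is not a local isomorphism at $x$. Choose an irreducible curve $\Gamma \subset F$ through $x$. Then $f^*\LB|_\Gamma$ is trivial, so $s|_\Gamma$ is a nonzero section of $\Amp^{-1}|_\Gamma$. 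This is impossible, since $\deg \Amp^{-1}|_\Gamma < 0$. Hence $C \subset \AugBL(f^*\LB)$.

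\textbf{Inclusion $f^{-1}(\AugBL(\LB)) \subset \AugBL(f^*\LB)$.} Let $\Amp'$ be ample on $\Prv'$. Since $\Amp$ is ample, $\Amp^{k}\otimes f^*\Amp'^{-1}$ is globally generated for some $k$. Suppose $x \notin \AugBL(f^*\LB)$. Then there is a section of $f^*\LB^m\otimes\Amp^{-1}$ not vanishing at $x$; raising it to the $k$-th power and multiplying by a section of $\Amp^k\otimes f^*\Amp'^{-1}$ not vanishing at $x$ gives a section $t$ of $f^*(\LB^{mk}\otimes\Amp'^{-1})$ with $t(x)\neq 0$. Since $\Prv'$ is normal and $f$ is birational, $f_*\Stru_\Prv=\Stru_{\Prv'}$, so by the projection formula $t=f^*t'$ for a section $t'$ of $\LB^{mk}\otimes\Amp'^{-1}$. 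Then $t'(f(x))\neq 0$, so $f(x)\notin\AugBL(\LB)$.

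\textbf{Inclusion $\AugBL(f^*\LB) \subset f^{-1}(\AugBL(\LB)) \cup C$.} This is the main step. Let $x \notin C$ with $y=f(x)\notin\AugBL(\LB)$. Take a section $s'$ of $\LB^m\otimes\Amp'^{-1}$ with $s'(y)\neq 0$. We need an ample $\Amp$ on $\Prv$ whose inverse is absorbed by $f^*\Amp'$ away from $C$. Choose an effective divisor $E$ supported on $C$ such that $f^*\Amp'^{a}(-E)$ is ample for some $a\in\Ns$. Such an $E$ exists because $f$ is a projective birational morphism, hence the blow-up of an ideal cosupported in $f(C)$; the inverse image of that ideal is $\Stru(-E)$ with $-E$ $f$-ample, and we twist by a large power of $f^*\Amp'$. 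Set $\Amp := f^*\Amp'^{a}(-E)$. Then $(f^*s')^{a}\cdot s_E$ is a section of $f^*\LB^{ma}\otimes\Amp^{-1}$, where $s_E$ is the canonical section of $\Stru(E)$. This section does not vanish at $x$, because $x\notin C\supset\operatorname{Supp}E$. Hence $x\notin\AugBL(f^*\LB)$.

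The expected obstacle is the construction of $E$, namely knowing that $f$ is a blow-up with exceptional support exactly $C$. A fallback is to use the characterisation $\AugBL(\LB)=\bigcap\operatorname{Supp}(E')$ over decompositions $\LB^{m}\equiv \Amp'+E'$ with $E'$ effective. Pulling such a decomposition back gives $f^*\LB^{m}\equiv f^*\Amp'+f^*E'$, and $f^*\Amp'$ is big and semiample with augmented base locus contained in $C$, which yields the same inclusion.
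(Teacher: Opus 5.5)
The paper does not prove this statement; it only quotes Boucksom--Broustet--Pacienza. Judged on its own terms, your first two inclusions are sound. For the first, a curve in a positive-dimensional fibre through $x\in C$ forces every section of a power of $f^*\LB^m\otimes\Amp^{-1}$ to vanish at $x$. For the second, $f_*\Stru_{\Prv}=\Stru_{\Prv'}$ together with a globally generated $\Amp^k\otimes f^*\Amp'^{-1}$ pushes non-vanishing sections down to $\Prv'$. (A bookkeeping point: $x\notin\StBL(M)$ only gives a section of some power $M^k$ that is non-zero at $x$. You absorb that exponent by replacing $\Amp$ with $\Amp^k$, i.e.\ by the independence of \Cref{defi:augbl} from the choice of ample bundle, not by $\AugBL(\LB^k)=\AugBL(\LB)$.)

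The real gap is the existence of $E$ in the third inclusion, and the whole argument rests on it. Knowing that $f$ is the blow-up of \emph{some} ideal $I$ (Hartshorne) does not let you take $I$ cosupported on $f(C)$. The set $f^{-1}(V(I))$ is the support of a Cartier divisor and can be strictly larger than $C$. Worse, for a small contraction onto a normal non-factorial $\Prv'$ (e.g.\ a projective small resolution of a threefold node), $C$ has codimension $\geq 2$, so no effective divisor is supported on it at all. Your justification never invokes a hypothesis that excludes this, so you must use that $\Prv'$ is smooth.

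One way to do this: take $H$ a general very ample divisor on $\Prv$, so that no component of $H$ lies in $C$. Since $\Prv'$ is smooth, $f_*H$ is Cartier. Since $f$ maps $\Prv\setminus C$ isomorphically onto $\Prv'\setminus f(C)$, the divisor $E:=f^*f_*H-H$ is effective with support in $C$. Moreover $f^*\Amp'^{a}\otimes\Stru_{\Prv}(-E)\cong\Stru_{\Prv}(H)\otimes f^*(\Amp'^{a}\otimes\Stru_{\Prv'}(-f_*H))$ is ample for $a\gg0$. With this $E$, your argument goes through verbatim.

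Your fallback, as written, just assumes the key input $\AugBL(f^*\Amp')\subset C$. It becomes a proof via \Cref{thm:cas}. The bundle $f^*\Amp'$ is nef on the smooth $\Prv$, and every positive-dimensional irreducible $Z\not\subset C$ maps birationally onto $f(Z)$, so $(f^*\Amp')^{\dim Z}\cdot Z=\Amp'^{\dim Z}\cdot f(Z)>0$. Hence, for $x\notin C$, some section of $f^*\Amp'^{N}\otimes\Amp^{-k}$ is non-zero at $x$, and it can replace $s_E$. The first repair uses factoriality of $\Prv'$; the second uses smoothness of $\Prv$ and would also work for a merely normal $\Prv'$.
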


In this article, we will consider the augmented base locus of the tautological line bundle $\Tautp$ on $\Proj \Omega_{\Comp}^p(\log \Div)$ with $(\Comp, \Div)$ a smooth projective logarithmic compactification of a quasi-projective variety $\Qpv$ and $1 \leq p \leq \dim \Qpv$. For $p=1$, \cite[Proposition 2.20]{rapion2025isotrivialityfamiliescurvesparametrized} states that this augmented base locus is compatible with finite étale covering maps. The exact same proof works for higher degrees, hence we have the following proposition.

\begin{prop}\label{prop:fecover}
	Let $f: \Qpv \to \Qpv'$ be a finite étale covering map between smooth complex quasi-projective varieties. Let $f_{\Proj}: \Proj \Omega_{\Qpv}^p \to \Proj \Omega_{\Qpv'}^p$ be the morphism induced by $f$. Let $(\Comp, \Div)$ (resp. $(\Comp', \Div')$) be a smooth projective logarithmic compactification of $\Qpv$ (resp. $\Qpv'$). Let $\Tautp$ (resp. $\Tautpp$) be the tautological line bundle of $\Proj \Omega_{\Comp}^p(\log \Div)$ (resp. $\Proj \Omega_{\Comp'}^p(\log \Div')$). Then $\AugBL(\Tautp) \cap \Proj \Omega_{\Qpv}^p = f_{\Proj}^{-1}(\AugBL(\Tautpp) \cap \Proj \Omega_{\Qpv'}^p)$.
\end{prop}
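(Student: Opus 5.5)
We plan to reduce to the situation where the finite étale cover extends to a generically finite morphism of compactifications, and then combine \Cref{prop:birat} with \Cref{lem:twolb}. The augmented base locus on the open part does not depend on the chosen log compactification; this follows from \Cref{prop:birat} and the fact that any two compactifications are dominated by a third one, isomorphic over $\Qpv$, with the exceptional locus lying over the boundary. So we may replace $(\Comp,\Div)$ by a log resolution of the closure of the graph of $f$ in $\Comp\times\Comp'$. After this replacement $f$ extends to a morphism $\bar f:\Comp\to\Comp'$ with $\bar f^{-1}(\Div')=\Div$ set-theoretically. Since $f$ is étale, $\bar f$ is finite over $\Qpv'$.

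\textbf{Step 1: the log pullback map.} The log pullback $\bar f^*\Omega^p_{\Comp'}(\log\Div')\to\Omega^p_{\Comp}(\log\Div)$ is injective, and it is an isomorphism over $\Qpv$ because $f$ is étale. Dually, $\Lambda^pT_{\Comp}(-\log\Div)\to\bar f^*\Lambda^pT_{\Comp'}(-\log\Div')$ is injective and generically an isomorphism. This gives a rational map $F:\Proj\Omega^p_{\Comp}(\log\Div)\dashrightarrow\Proj\Omega^p_{\Comp'}(\log\Div')$, regular over $\Qpv$ and equal to $f_{\Proj}$ there. Let $\pi:P\to\Proj\Omega^p_{\Comp}(\log\Div)$ be a resolution of the indeterminacy of $F$ that is an isomorphism over $\Proj\Omega^p_{\Qpv}$, and write $G:=F\circ\pi$. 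The pullback map then induces a morphism of line bundles $G^*\Tautpp\to\pi^*\Tautp$ on $P$, which is an isomorphism over $\pi^{-1}(\Proj\Omega^p_{\Qpv})$.

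\textbf{Step 2: inclusion "$\supset$".} Apply \Cref{lem:twolb} to the morphism $G^*\Tautpp\to\pi^*\Tautp$. This gives $\AugBL(\pi^*\Tautp)\cap U\subset\AugBL(G^*\Tautpp)$, where $U$ is the locus over $\Proj\Omega^p_{\Qpv}$. By \Cref{prop:birat}, $\AugBL(\pi^*\Tautp)\cap U=\AugBL(\Tautp)\cap\Proj\Omega^p_{\Qpv}$. Note that this step, applied in this direction, actually yields the inclusion $\AugBL(\Tautp)\cap\Proj\Omega^p_{\Qpv}\subset\AugBL(G^*\Tautpp)\cap U$. It therefore remains to compare $\AugBL(G^*\Tautpp)$ with $G^{-1}(\AugBL(\Tautpp))$ over $U$, where $G=f_{\Proj}$ is finite étale.

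\textbf{Step 3: finite pullbacks.} We need the following statement: for $G$ generically finite, with $G$ finite over an open set $U'$ and $U=G^{-1}(U')$,
\[ \AugBL(G^*\LB)\cap U=G^{-1}(\AugBL(\LB))\cap U. \]
For "$\supset$": a section of $\LB^m\otimes \Amp^{-1}$ pulls back to a section of $G^*\LB^m\otimes G^*\Amp^{-1}$. Since $G^*\Amp$ is only big and nef rather than ample, we handle this by characterising $\AugBL$ through $\LB^m-\epsilon E$, with $E$ an effective divisor supported away from $U$. For "$\subset$": given a section $s$ of $G^*\LB^m\otimes\Amp^{-1}$ nonvanishing at a point $y$, take the norm $N_G(s)$ over the open set where $G$ is finite, and multiply by a power of an ample divisor on the target that kills the non-flat locus. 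This yields a section of $\LB^{md}\otimes(\text{ample})^{-1}$ nonvanishing at $G(y)$. To guarantee nonvanishing at $G(y)$ rather than only at $y$, we first choose $s$ nonvanishing on the whole fibre $G^{-1}(G(y))$; this is possible because $\StBL$ is closed and sections of high powers separate finitely many points.

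Step 3, comparing augmented base loci under the generically finite map $G$ restricted to $U$, is the main obstacle. Equivalently, one can use the characterisation $\AugBL=\StBL$ of $\LB-\epsilon\Amp$ together with the fact that finite morphisms preserve the augmented base locus, i.e. $\AugBL(G^*\LB)=G^{-1}\AugBL(\LB)$ for finite $G$, proved by the norm and pullback arguments above. Combining Steps 2 and 3 gives the stated equality on $\Proj\Omega^p_{\Qpv}$.
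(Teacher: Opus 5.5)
\textbf{Main gap: your argument proves only one of the two inclusions.} Tracing your steps, what you actually establish is
\[
\AugBL(\Tautp)\cap\Proj\Omega^p_{\Qpv}=\AugBL(\pi^*\Tautp)\cap U\subset\AugBL(G^*\Tautpp)\cap U=G^{-1}(\AugBL(\Tautpp))\cap U .
\]
That is, you get $\AugBL(\Tautp)\cap\Proj\Omega^p_{\Qpv}\subset f_{\Proj}^{-1}(\AugBL(\Tautpp)\cap\Proj\Omega^p_{\Qpv'})$, and only for your modified compactification.

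The middle step is \Cref{lem:twolb}, which only works in one direction. On $P$ one has $G^*\Tautpp=\pi^*\Tautp(-F)$ with $F$ effective and supported over $\Div$, and in general $F\neq 0$. Already for $f=\mathrm{id}_{\Qpv}$, with $\Comp$ the blow-up of $\Comp'$ at a point where $\Div'$ is smooth and $p=\dim\Qpv=n$, one gets $K_{\Comp}+\Div=\bar f^*(K_{\Comp'}+\Div')+(n-1)E$. A section of $\pi^*\Tautp^{m}\otimes\mathcal{A}^{-1}$ that is nonzero at a point of $U$ has no reason to vanish along $mF$. So nothing in Steps 1--2 lets you pass from $[\alpha]\notin\AugBL(\Tautp)$ to $f_{\Proj}([\alpha])\notin\AugBL(\Tautpp)$. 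Step 3 cannot fill this gap, because it concerns $G^*\Tautpp$, not $\pi^*\Tautp$.

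What is missing is a descent argument for the logarithmic tensors themselves:
\begin{enumerate}
\item Pass to a Galois closure $W\to\Qpv'$ of $f$.
\item On a compactification of $W$, take a section of $\Sym^m\Omega^p(\log)$, twisted negatively by a bundle pulled back from $\Comp'$, that is nonvanishing along a whole Galois orbit.
\item Take its norm, and show that the resulting invariant tensor on $\Qpv'$ extends with only logarithmic poles along $\Div'$.
\end{enumerate}
Step 3 is where the boundary behaviour of the cover enters. Near a general point of $\Div'$ the cover is of Kummer type $z=w^e$, so $dz/z=e\,dw/w$, and invariant holomorphic coefficients are holomorphic in $z$. Hartogs then handles codimension $2$.

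Alternatively, choose a compactification on which $\bar f$ is log étale, e.g.\ a toroidal resolution of the normalization of $\Comp'$ in $\C(\Qpv)$. Then $\Tautp$ is literally $G^*\Tautpp$ for a morphism $G$, and no \Cref{lem:twolb} step is needed.

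\textbf{Further problems.}

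First, independence of the compactification does not follow from \Cref{prop:birat}. The tautological bundles of two log compactifications are not pullbacks of one another. Comparing them through a resolution and \Cref{lem:twolb} only shows that the augmented base locus over $\Qpv$ of the dominating compactification is contained in that of the dominated one. This is exactly the wrong direction for transporting your inclusion from the log resolution back to the original $(\Comp,\Div)$. The other direction requires two things:
\begin{itemize}
\item log tensors on the dominating compactification must push forward to log tensors on the dominated one (by Hartogs, since the inverse birational map is defined off codimension $2$);
\item the twist must be controlled, since $\phi^*A$ is not ample.
\end{itemize}

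Second, the norm argument of Step 3 needs $s$ to be nonvanishing on the entire fibre $G^{-1}(G(y))$. This presupposes that every point of the fibre lies outside $\AugBL(G^*L)$, which is essentially the saturation statement you are trying to prove. It is not automatic when $G$ is not Galois. This is why you need the Galois closure together with invariance of the augmented base locus under deck transformations, which itself rests on compactification independence.

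Smaller points:
\begin{itemize}
\item Multiplying the norm by sections of an ample divisor moves the twist in the wrong direction.
\item Your labels in Step 3 are swapped: pulling back sections proves ``$\subset$'', while the norm proves ``$\supset$''.
\end{itemize}

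For reference, the paper gives no independent argument for this statement; it defers to the $p=1$ proof of the cited work.
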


\subsection{Positivity of vector bundles}\label{subsection:augpos}

Let $\VB$ be a vector bundle on an irreducible complex projective variety $\Prv$. Denote by $\pi$ the projection $\Proj \VB \to \Prv$. The augmented base locus $\AugBL(\Taut)$ of the tautological line bundle $\Taut$ on $\Proj \VB$ is linked with positivity properties of $\VB$. See \cite{bauer2015positivity} for the proof of the following facts.

\bigskip

We have $\AugBL(\Taut) \neq \Proj \VB$ if and only if $\VB$ is \emph{L-big}, which means that $\Taut$ is big, or equivalently that there exists $\epsilon \in \R_{>0}$ such that for every $k \gg 1$, we have $\dim H^0(\Prv,\Sym^k\VB) \geq \epsilon k^{\dim(\Prv)+\rk(\VB)-1}$.

\bigskip

Let $U \subset \Prv$ be an open subset. We have $\pi(\AugBL(\Taut)) \cap U = \emptyset$ if and only if $\VB$ is \emph{ample with respect to} $U$. That means that there exists an ample line bundle $\Amp$ on $\Prv$, two integers $a, b \in \Ns$ and a morphism $\Amp^{\oplus a} \to \Sym^b\VB$ that is surjective over $U$. The vector bundle $\VB$ is said to be \emph{Viehweg-big} (with reference to \cite{viehweg1983weak}) if $\pi(\AugBL(\Taut)) \neq \Prv$, that is equivalent to the existence of a non-empty open subset with respect to which $\VB$ is ample. Viehweg-bigness is stronger than L-bigness, and for line bundles, both notions coincide with usual bigness.

\bigskip

Finally, $\VB$ is ample if and only if $\AugBL(\Taut) = \emptyset$. Brotbek and Deng proved that if $\Div$ is a non-empty normal crossing divisor in a connected smooth complex projective variety $\Comp$, then $\Omega_{\Comp}^1(\log \Div)$ is not ample (see \cite[Section 2.3]{brotbek2018positivity}). The maximal possible positivity in this case is ``almost ampleness'' (see \cite[Definition 2.1]{brotbek2018positivity}), that is stronger than ampleness with respect to $\Comp \setminus \Div$.

\subsection{Metric tools}\label{subsection:metrtools}

The bigness of a line bundle can be deduced from metric information thanks to Boucksom's following theorem. See \cite[Definition 2.1 \& 2.2]{brunebarbe2020hyperbolicity} for the definition of singular Hermitian metrics of semipositive curvature.

\begin{thm}[see {\cite[Theorem 1.2]{boucksom2002volume}}]\label{thm:boucbig}
	Let $\Anman$ be a connected compact Kähler manifold. Let $\LB$ be a line bundle on $\Anman$. Suppose that there exists a nonempty open subset $U \subset \Anman$ and a singular Hermitian metric of semipositive curvature on $\LB$ such that the restriction of its curvature to $U$ is a Kähler form (i.e. is smooth and positive definite). Then $\LB$ is big.
\end{thm}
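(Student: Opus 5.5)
The plan is to deduce bigness from Demailly's holomorphic Morse inequalities, after regularizing the singular metric so that the inequalities apply. Write $n := \dim \Anman$ and fix a Kähler form $\omega$ on $\Anman$. Let $T := \frac{i}{2\pi}\Theta$ be the curvature current of the given singular metric. It is a closed positive $(1,1)$-current in $c_1(\LB)$, and on $U$ it is a smooth form with $T \geq c\,\omega$ on some relatively compact open ball $B \Subset U$, for some $c > 0$.

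\textbf{Step 1: regularization.} By Demailly's regularization theorem, there is a sequence of currents $T_k = \frac{i}{2\pi}\Theta_{h_k} \in c_1(\LB)$, induced by singular metrics $h_k$ with analytic singularities, such that:
\begin{itemize}
\item $T_k \geq -\epsilon_k \omega$ with $\epsilon_k \to 0$;
\item $T_k \to T$ weakly;
\item on $B$, where $T$ is smooth, $T_k$ is smooth for $k$ large and converges to $T$ locally uniformly.
\end{itemize}
The last point holds because the Bergman-kernel regularization is local and $C^0$-convergent wherever the weight is smooth.

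\textbf{Step 2: resolution of the singularities.} Since the singularities are analytic (and can be taken with rational coefficients), there is a modification $\mu_k : \Anman_k \to \Anman$, with $\Anman_k$ compact Kähler, an effective $\Q$-divisor $E_k$ and a smooth closed form $\beta_k$ such that
\[
\mu_k^* T_k = [E_k] + \beta_k .
\]
Then $\beta_k$ represents $c_1(\mu_k^*\LB - E_k)$ and satisfies $\beta_k \geq -\epsilon_k \mu_k^*\omega$. After replacing $\LB$ by a multiple we may assume $E_k$ is integral. Above $B$, $\mu_k$ is an isomorphism and $E_k = 0$ for $k$ large, so there $\beta_k = T_k \geq \frac{c}{2}\omega$.

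\textbf{Step 3: Morse inequalities.} Apply Demailly's weak holomorphic Morse inequalities to $F_k := \mu_k^*\LB - E_k$ with the smooth metric whose curvature is $\beta_k$:
\[
h^0(\Anman_k, mF_k) \geq \frac{m^n}{n!}\int_{\Anman_k(\beta_k,\leq 1)} \beta_k^n - o(m^n).
\]
The integrand is positive on $B$, giving a contribution of at least $(c/2)^n \mathrm{vol}(B)$. Outside $B$, at points of index $\leq 1$ with $\beta_k \geq -\epsilon_k\mu_k^*\omega$, the eigenvalues of $\beta_k$ are bounded below by $-\epsilon_k$. So the negative contribution there is bounded by $n\,\epsilon_k \int \beta_k^{\,n-1}\wedge \mu_k^*\omega$ plus terms of higher order in $\epsilon_k$. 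These are bounded by $O(\epsilon_k)$ uniformly in $k$, because the cohomology classes involved are $c_1(\LB)$ and $[\omega]$, up to controlled corrections of $E_k$ paired against nef classes. Hence for $k$ large $F_k$ is big. Since $E_k$ is effective, $\mu_k^*\LB = F_k + E_k$ is big, and so $\LB$ is big because $H^0(\Anman, m\LB) = H^0(\Anman_k, m\mu_k^*\LB)$.

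\textbf{Main obstacle.} The delicate point is the uniform control in Step 3 of the negative part of the Morse integral. One needs the regularization to lose only $\epsilon_k\omega$ of positivity and the resulting error terms to be bounded in terms of intersection numbers that do not blow up with $k$. It is essentially here that one must argue carefully, as Boucksom does, that $\int_{\Anman} T_{\mathrm{ac}}^n \leq \mathrm{vol}(\LB)$. The smooth convergence on $B$ and the Morse inequality itself are standard.
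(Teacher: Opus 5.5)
Your route is the standard one for results of this kind: Demailly regularization with analytic singularities, principalization, Morse inequalities on the modification, then back to $\LB$ via $\mu_k^*\LB=F_k+E_k$. The paper itself gives no proof of this statement; it only quotes Boucksom. Step 2 and the final deduction are fine. (The lower bound on $h^0$ by the integral over $\Anman_k(\beta_k,\leq 1)$ is the \emph{strong} Morse inequality in degree $1$, not the weak one.)

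The gap is at the crux, which you defer to Boucksom: the uniform-in-$k$ control of the negative part is asserted, not proved, and your setup does not yield it. Splitting the error into $n\varepsilon_k\int\beta_k^{n-1}\wedge\mu_k^*\omega$ plus higher-order terms gives integrals of forms that are not semipositive. These are intersection numbers $(\mu_k^*c_1(\LB)-\{E_k\})^{n-1-j}\cdot(\mu_k^*\{\omega\})^{j+1}$, which involve self-intersections of $E_k$ on modifications that change with $k$. Nothing in the proposal bounds them.

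The missing idea is to keep the error as a single nonnegative form and use monotonicity of intersection numbers.
\begin{itemize}
\item Put $\gamma_k:=\mu_k^*\omega$ and $N_k:=\beta_k+\varepsilon_k\gamma_k\geq 0$. Take a point of $\Anman_k(\beta_k,1)$, with eigenvalues $-\varepsilon_k\leq\lambda_1<0<\lambda_2\leq\dots\leq\lambda_n$. Compute eigenvalues and volume forms with respect to $\gamma_k+\delta\omega_k$, where $\omega_k$ is a Hermitian form on $\Anman_k$, and then let $\delta\to 0$, since $\gamma_k$ is degenerate. This gives $\beta_k^n\geq-\varepsilon_k\lambda_2\cdots\lambda_n\,\gamma_k^n\geq-n\varepsilon_k\,N_k^{n-1}\wedge\gamma_k$.
\item The right-hand side is a nonnegative form, so you may integrate it over all of $\Anman_k$. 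The negative part is therefore at least $-n\varepsilon_k\{N_k\}^{n-1}\cdot\mu_k^*\{\omega\}$.
\item Fix $C>0$ such that $A:=c_1(\LB)+C\{\omega\}$ is Kähler. For $k$ large, $\mu_k^*A-\{N_k\}=\{E_k\}+(C-\varepsilon_k)\mu_k^*\{\omega\}$ is the class of a positive current. Also $\mu_k^*A$, $\{N_k\}$ and $\mu_k^*\{\omega\}$ all have smooth semipositive representatives. Hence
\[
(\mu_k^*A)^{n-1}\cdot\mu_k^*\{\omega\}-\{N_k\}^{n-1}\cdot\mu_k^*\{\omega\}=(\mu_k^*A-\{N_k\})\cdot\sum_{j=0}^{n-2}(\mu_k^*A)^{j}\cdot\{N_k\}^{n-2-j}\cdot\mu_k^*\{\omega\}\geq 0 .
\]
\item So $\{N_k\}^{n-1}\cdot\mu_k^*\{\omega\}\leq A^{n-1}\cdot\{\omega\}$, independently of $k$. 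Therefore $\int_{\Anman_k(\beta_k,\leq 1)}\beta_k^n\geq (c/2)^n\int_B\omega^n-n\varepsilon_k\,A^{n-1}\cdot\{\omega\}>0$ for $k\gg 1$.
\end{itemize}

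There is also a smaller gap in the third bullet of Step 1. Convergence on $B$ is not part of Demailly's theorem, and $C^0$ convergence of potentials does not give convergence of curvature forms. You can invoke local Bergman kernel asymptotics for the smooth strictly psh weight. Alternatively, sidestep the issue. The approximating potentials can be chosen to decrease to the potential of $T$, which is smooth on $U$, so by Dini they converge uniformly near $\bar B$. A regularized maximum with that potential, plus a small constant and a small bump, then gives approximants equal to $T$ on $B$. These keep the same analytic singularities and the same lower bound $-\varepsilon_k\omega$.
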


Information on the augmented base locus of a line bundle can be deduced from metric information in the following way. If $\Anman$ is a compact complex manifold, denote by $H_{\partial\bar{\partial}}^{1,1}(\Anman,\R)$ its real Bott-Chern cohomology group (if $\Anman$ is Kähler, this group is $H^{1,1}(\Anman) \cap H^2(\Anman, \R)$).

\begin{defi}[see {\cite[Definition 3.16]{boucksom2004divisorial}}]
	Let $\Anman$ be a compact complex manifold. The non-Kähler locus of a big class $c \in H_{\partial\bar{\partial}}^{1,1}(\Anman,\R)$ is the (closed) set of points of $\Anman$ at which the Lelong number of every Kähler current in the class $c$ is positive.
\end{defi}

In case of a non big class $c \in H_{\partial\bar{\partial}}^{1,1}(\Anman,\R)$, we will call non-Kähler locus of $c$ the whole manifold $\Anman$. The non-Kähler locus is interesting for us as it allows one to compute augmented base loci using singular metrics of semipositive curvature thanks to Boucksom's following theorem.

\begin{thm}[see {\cite[Corollaire 2.2.8]{boucksom2002cones}} or {\cite[Theorem 2.3]{tosatti2018}}]\label{thm:augnkl}
	Let $\Prv$ be a smooth complex projective variety. Let $\LB$ be a line bundle on $\Prv$. Then the non-Kähler locus of $c_1(\LB)$ is $\AugBL(\LB)$.
\end{thm}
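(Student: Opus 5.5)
The plan is to prove the two inclusions separately, after disposing of the non-big case. If $\LB$ is not big, then $\AugBL(\LB) = \Prv$: by Kodaira's lemma, $\LB^m \otimes \Amp^{-1}$ has no sections for any $m$. By our convention the non-Kähler locus is then also $\Prv$. So I assume from now on that $\LB$ is big.

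For the inclusion of the non-Kähler locus into $\AugBL(\LB)$, I take $x \notin \AugBL(\LB)$ and build a Kähler current in $c_1(\LB)$ that is smooth near $x$.
1. Choose an ample $\Amp$ with a smooth metric whose curvature form $\omega_\Amp$ is Kähler.
2. By \Cref{defi:augbl}, there is some $m$ with $x \notin \StBL(\LB^m \otimes \Amp^{-1})$. Replacing $m$ by a multiple $km$ and using $\Amp^{-1} = \Amp^{k-1}\otimes \Amp^{-k}$, I may assume there are sections $s_1, \dots, s_N$ of $\LB^{m} \otimes \Amp^{-1}$ with no common zero at $x$.
3. Set $T := \frac{1}{m}\bigl(\omega_\Amp + \frac{i}{\pi}\partial\bar\partial \log \sum_i |s_i|^2_{h}\bigr)$, where $h$ is the smooth metric on $\LB^m\otimes\Amp^{-1}$ induced by smooth metrics on $\LB$ and $\Amp$. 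This is a closed positive current in $c_1(\LB)$ and satisfies $T \geq \frac{1}{m}\omega_\Amp$, so it is a Kähler current.
4. Near $x$ the function $\log\sum|s_i|^2$ is smooth, so the Lelong number of $T$ at $x$ vanishes. Hence $x$ is not in the non-Kähler locus.

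For the reverse inclusion, let $T$ be a Kähler current in $c_1(\LB)$ with $\nu(T,x) = 0$. I will produce sections of $\LB^k \otimes \Amp^{-1}$ not vanishing at $x$.
1. Write $T \geq \epsilon \omega$ for a fixed Kähler form $\omega$ and some $\epsilon > 0$.
2. Apply Demailly's regularization to replace $T$ by a Kähler current $T'$ in the same class, with analytic singularities and with $T' \geq \frac{\epsilon}{2}\omega$. The key point is that the Lelong number at $x$ stays $0$, since the approximants are built from Bergman kernels of the multiplier ideals $\I(kT)$. By Skoda's lemma, $\nu(T,x)<2/k$ gives $\I(kT)_x = \Stru_x$, hence $T'$ is smooth near $x$.
3. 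Fix an ample $\Amp$ and a very ample $H$ such that $H \otimes K_\Prv^{-1} \otimes \Amp$ is positive enough for Castelnuovo–Mumford regularity. For $k$ large, equip $\LB^k \otimes H^{-n-1} \otimes K_\Prv^{-1} \otimes \Amp^{-1}$ with the singular metric of curvature $kT' - (\text{fixed smooth form})$. This curvature is still a Kähler current for $k \gg 1$.
4. Nadel vanishing then gives $H^q\bigl(\Prv, \LB^k \otimes \Amp^{-1} \otimes H^{-j} \otimes \I(kT')\bigr) = 0$ for $q > 0$ and $0\le j \le n$.
5. Castelnuovo–Mumford regularity then shows that $\LB^k \otimes \Amp^{-1} \otimes \I(kT')$ is globally generated. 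Since $\I(kT')_x = \Stru_x$, some section of $\LB^k\otimes\Amp^{-1}$ is nonzero at $x$. Therefore $x \notin \StBL(\LB^k\otimes\Amp^{-1})$, and so $x \notin \AugBL(\LB)$.

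I expect the second direction to be the main obstacle. Specifically, it requires checking that regularization keeps both the Kähler-current property and the vanishing Lelong number at $x$. It also requires arranging that $x$ lies outside the cosupport of $\I(kT')$ uniformly in $k$. For this I would use analytic singularities: the cosupport of $\I(kT')$ sits inside the singular set of $T'$, which does not depend on $k$ and avoids $x$. If this bookkeeping proves delicate, the fallback is to cite Boucksom's characterization of the non-Kähler locus as the intersection of singular loci of Kähler currents with analytic singularities, and then run the Nadel-vanishing step as above.
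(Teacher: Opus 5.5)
The paper gives no proof of this statement; it is quoted from Boucksom and Tosatti. There, the nontrivial inclusion of $\AugBL(\LB)$ in the non-Kähler locus comes from Boucksom's theorem that the non-Kähler locus of a big class is realized by a Kähler current with analytic singularities (via Demailly's regularization), which is then converted into sections of $\LB^m\otimes\Amp^{-1}$. Your proof is correct and takes a more direct route. You work with one Kähler current at one point and get a section of $\LB^k\otimes\Amp^{-1}$ not vanishing at $x$ from Nadel vanishing plus Castelnuovo--Mumford regularity. This avoids the structure theory of the non-Kähler locus, which in return gives more, e.g. that the locus is analytic and cut out by a single current. Your route is also easier than you expect: the regularization step is unnecessary. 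Since $\nu(T,x)=0$, Skoda's lemma gives $\I(kT)_x=\Stru_x$ for every $k$. Nadel vanishing holds for any singular metric whose curvature current is a Kähler current, with no analytic singularities needed. And you only need one sufficiently large $k$, so there is no uniformity issue.

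Three repairs are needed. First, in the first direction, $\log\sum_i|s_i|_h^2$ is a global function, so your $T$ as written lies in $\frac1m c_1(\Amp)$ and need not be positive. You must add the smooth form $c_1(\LB^m\otimes\Amp^{-1},h)$ and use $\frac{i}{2\pi}\partial\bar\partial\log\sum_i|s_i|_h^2$. That is, take $\frac1m$ times $\omega_\Amp$ plus the curvature current of the singular metric $h/\sum_i|s_i|_h^2$. Second, in the non-big case, calling the non-Kähler locus $\Prv$ ``by convention'' uses that $c_1(\LB)$ is not big when $\LB$ is not. This is exactly what your second direction gives, since a Kähler current has zero Lelong number wherever its local potential is finite. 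So the case is covered, but you should say so. Third, for Castelnuovo--Mumford you need Nadel vanishing for the finitely many bundles $\LB^k\otimes\Amp^{-1}\otimes H^{-q}\otimes K_\Prv^{-1}$, $1\le q\le\dim\Prv$. Each carries a metric with Kähler-current curvature once $k\gg1$, so no special positivity condition on $H\otimes K_\Prv^{-1}\otimes\Amp$ is required.
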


If a class is big and nef, then we can compute its non-Kähler locus using positive currents instead of Kähler currents thanks to Tosatti's following theorem.

\begin{thm}[see {\cite[Theorem 4.5]{tosatti2018}}]\label{thm:tosnak}
	Let $\Anman$ be a compact complex manifold. Let $c$ be a nef and big real $(1,1)$-class on $\Anman$. Let $T$ be a positive closed current in the class $c$. Let $U \subset \Anman$ be an open subset on which the restriction of $T$ is a Kähler form. Then the intersection of $U$ and of the non-Kähler locus of $c$ is empty.
\end{thm}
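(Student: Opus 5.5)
The plan is to reduce the statement to a numerical criterion on subvarieties, and then use $T$ to show that this criterion fails at every point of $U$. The key input is the theorem of Collins and Tosatti: for a nef and big real $(1,1)$-class $c$ on a compact complex manifold $\Anman$ (at least in the Kähler case, which is the case of interest here), the non-Kähler locus of $c$ equals the null locus
\[
\mathrm{Null}(c) = \bigcup \{ V \subset \Anman \text{ irreducible analytic}, \ \dim V = k > 0, \ \textstyle\int_V c^k = 0 \}.
\]
Since $\int_V c^k \geq 0$ for every such $V$ when $c$ is nef, it suffices to prove the following claim.

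\textbf{Claim.} For every irreducible positive-dimensional analytic subvariety $V$ with $V \cap U \neq \emptyset$, we have $\int_V c^{\dim V} > 0$.

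Points $x \in U$ are covered trivially: if $x$ lay in the null locus, it would lie on some $V$ with $\int_V c^{\dim V} = 0$, and such a $V$ meets $U$.

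To prove the claim, fix such a $V$ of dimension $k$, and take a resolution of singularities $\mu: \tilde V \to V \subset \Anman$. The class $\mu^* c$ is nef on $\tilde V$, and by the projection formula $\int_{\tilde V} (\mu^*c)^k = \int_V c^k$. For a nef class the top self-intersection equals the volume, so it is enough to show that $\mu^* c$ is big with positive volume. The current $T$ has local quasi-plurisubharmonic potentials. These are not identically $-\infty$ on $V$, because $V$ meets $U$ where $T$ is smooth. Hence the pullback $\mu^* T$ is a well-defined closed positive current in the class $\mu^* c$. On the nonempty open set $\mu^{-1}(U \cap V_{\mathrm{reg}})$, where $\mu$ is biholomorphic, $\mu^* T$ is a smooth Kähler form.

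I would then invoke Boucksom's volume inequality: for any closed positive current $S$ in a class $\beta$,
\[
\mathrm{vol}(\beta) \geq \int (S_{\mathrm{ac}})^{k},
\]
where $S_{\mathrm{ac}}$ is the absolutely continuous part of $S$. Applied to $S = \mu^*T$, this gives $\int_V c^k = \mathrm{vol}(\mu^*c) \geq \int_{\mu^{-1}(U)} (\mu^*T)^k > 0$. (Alternatively, since $\mu^*T$ is a Kähler form on an open set, Boucksom's bigness criterion, the analogue of \Cref{thm:boucbig}, directly makes $\mu^*c$ big, and for nef classes bigness is equivalent to positivity of the top intersection.)

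The main obstacle is the non-Kähler setting. Both the Collins–Tosatti identification of the non-Kähler locus with the null locus and Boucksom's volume inequality are cleanest on compact Kähler manifolds, while $\Anman$, and a fortiori $\tilde V$, are only assumed to be compact complex. For the applications in this paper ($\Anman$ will be $\Proj$ of a vector bundle over a projective variety, hence projective) this causes no difficulty. In full generality I would try to run the same argument with Tosatti's extension of these results to compact complex manifolds in the Fujiki class or with a big class, using that bigness of $c$ forces $\Anman$, and then each $\tilde V$ whose class $\mu^*c$ is big, to be of Fujiki class $\mathcal C$.
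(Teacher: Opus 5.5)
Your argument is correct on compact Kähler manifolds, and that covers the only place the paper uses the statement, the projective variety $P$ in the proof of \Cref{thm:main}. The paper gives no proof of its own here; it simply imports the result from Tosatti. Your route is the standard way to derive it from the Collins--Tosatti identification of the non-Kähler locus with $\mathrm{Null}(c)$. You pull $T$ back to a resolution of any subvariety $V$ meeting $U$; the pullback is well defined because the potentials are finite on $U$. Boucksom's volume bound together with $\mathrm{vol}=\int(\mu^*c)^{\dim V}$ for nef classes then gives $\int_V c^{\dim V}>0$. Make sure $\tilde V$ is chosen Kähler, for instance as a strict transform under blow-ups of the ambient Kähler manifold along smooth centres.

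The weak point is your last paragraph, on the general compact complex case. There you justify that $\tilde V$ is of Fujiki class $\mathcal C$ by the bigness of $\mu^*c$. That is circular, since this bigness is exactly what the volume argument is meant to prove. The fix is to use that $c$ big makes $\Anman$ carry a Kähler current, so $\Anman$ is in class $\mathcal C$ (Demailly--P\u{a}un). Class $\mathcal C$ passes to subvarieties and their resolutions (Fujiki), so $\tilde V$ admits a Kähler modification $\nu$. The pullback $\nu^*\mu^*T$ is still a Kähler form on a nonempty open set, so your Kähler-case argument applies on this modification. Combined with the extension of $E_{nK}=\mathrm{Null}$ to compact complex manifolds carrying a nef and big class, this gives the statement in full generality.
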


We consider now the case of a vector bundle. Let $\VB$ be a holomorphic vector bundle of rank $r \in \Ns$ on a complex manifold $\Anman$ of dimension $\di \in \N$. Let $h$ be a (smooth) Hermitian metric on $\VB$. Let $R$ be the curvature tensor of the Chern connection of $h$. Let $x \in \Anman$ and $e \in \VB_x$ be such that $h(e,e) = 1$. Then there exists an open neighborhood $U$ of $x$ in $\Anman$ and holomorphic sections $e_1, \dots, e_r$ of $\VB$ on $U$ such that $e_r(x) = e$, $h_x(e_k,e_l) = \delta_{kl}$ and $\dif h_x(e_k,e_l) = 0$ for every $1 \leq k, l \leq r$, with $\delta$ the Kronecker symbol (see \cite[Lemma 1 p.26]{mok1989metric}). Denote by $(e_1^*, \dots, e_r^*)$ the dual basis of $(e_1, \dots, e_r)$ (each $e_k^*$ is a holomorphic section of the dual $\VB^\vee$ of $\VB$). For $1 \leq k < r$, denote by $u_k$ the quotient $\frac{e_k^*}{e_r^*}$. Up to shrinking $U$, it admits holomorphic coordinates $z_1, \dots, z_{\di}$. Denote by $v_1, \dots, v_{\di}$ the associated basis of holomorphic vector fields.

\bigskip

Consider $\Proj \VB^\vee$ and the projection $\pi: \Proj \VB^\vee \to \Anman$. Then:
\[(\pi^*z_1, \dots, \pi^*z_{\di}, u_1, \dots, u_{r-1})\]
defines holomorphic coordinates in a neighborhood of $[e]$ in $\Proj \VB^\vee$. Let $\hat{h}$ be the Hermitian metric induced by $h$ on the tautological line bundle $\Taut$ on $\Proj \VB^\vee$. Denote by $C_1(\Taut, \hat{h})$ its first Chern form. Then at $[e] \in \Proj \VB^\vee$ we have the following equality (see \cite[Proposition 1 p.38]{mok1989metric}):
\begin{equation}\label{eq:cherntaut}
	C_1(\Taut, \hat{h})_{[e]} = \frac{1}{2 \pi} \sum_{k = 1}^{r-1} \dif u_k \wedge \dif \bar{u}_k - \frac{1}{2\pi} \sum_{1 \leq i, j \leq \di} h_{[e]}(\Ricu(v_i, \bar{v}_j).e, e) \dif z_i \wedge \dif \bar{z}_j.
\end{equation}

\begin{lem}\label{lem:calcnkl}
	Let $\Anman$ be a complex manifold. Let $\VB$ be a holomorphic vector bundle on $\Anman$. Let $h$ be a Hermitian metric on $\VB$. Let $R$ be the curvature tensor of the Chern connection of $h$, suppose it is seminegative in the sense of Griffiths. Let $1 \leq p \leq \rk \VB$. Let $\hat{h}$ be the Hermitian metric induced by $h$ on the tautological line bundle $\Tautp$ on $\Proj \Lambda^p \VB^\vee$. Let $C_1(\Tautp, \hat{h})$ be the first Chern form of $\hat{h}$. Let $x \in \Anman$, let $\alpha \in \Lambda^p \VB_{x} \setminus \{0\}$.
	
	Then the real $(1,1)$-form $C_1(\Tautp, \hat{h})$ is semipositive. Moreover, it is not positive definite at $[\alpha]$ if and only if there exists $v \in T_{\Anman,x} \setminus \{0\}$ such that $\alpha \in \Lambda^p K(v)$ with $K(v)$ the kernel of the Hermitian endomorphism $\Ricu(v, \bar{v})$ of $\VB_x$.
\end{lem}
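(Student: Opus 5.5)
The plan is to apply formula \eqref{eq:cherntaut} to the vector bundle $\Lambda^p \VB$ with the metric induced by $h$. Note that $\Proj \Lambda^p\VB^\vee$ is exactly $\Proj (\Lambda^p \VB)^\vee$, and points of it are lines in $\Lambda^p\VB_x$. Set $e := \alpha/|\alpha|$. Formula \eqref{eq:cherntaut} gives $C_1(\Tautp,\hat h)_{[\alpha]}$ as the sum of two blocks: a positive definite fiber part $\frac{1}{2\pi}\sum \dif u_k\wedge \dif\bar u_k$, and a horizontal part $-\frac{1}{2\pi}\sum_{i,j} h(\Ricu^{(p)}(v_i,\bar v_j)\alpha,\alpha)\,\dif z_i\wedge\dif\bar z_j$. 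Here $\Ricu^{(p)}$ is the curvature of the induced metric on $\Lambda^p\VB$. There are no mixed terms, so the form is semipositive iff the horizontal Hermitian form $v\mapsto -h(\Ricu^{(p)}(v,\bar v)\alpha,\alpha)$ is semipositive. It fails to be positive definite iff this form vanishes for some $v\neq0$; since it is semidefinite, vanishing on the diagonal is the right degeneracy criterion.

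Next I compute $\Ricu^{(p)}$. The Chern connection on $\Lambda^p\VB$ is the derivation extension of that on $\VB$, so $\Ricu^{(p)}(v,\bar v)$ acts as the derivation $D$ induced by the Hermitian endomorphism $A:=\Ricu(v,\bar v)$. Griffiths seminegativity means $A\le 0$. Diagonalize $A$ in an $h_x$-orthonormal basis $f_1,\dots,f_r$ with eigenvalues $\lambda_1,\dots,\lambda_r\le 0$. Then the wedges $f_I$ form an orthonormal eigenbasis of $D$ with eigenvalues $\lambda_I=\sum_{k\in I}\lambda_k\le0$. Writing $\alpha=\sum_I a_I f_I$, we get
\[
-h(\Ricu^{(p)}(v,\bar v)\alpha,\alpha)=-\sum_I \lambda_I |a_I|^2\ \ge 0 .
\]
This gives semipositivity.

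For the equivalence, this quantity vanishes iff $\lambda_I=0$ for every $I$ with $a_I\neq0$. That says every $k\in I$ has $\lambda_k=0$, i.e.\ $f_k\in K(v)$. Equivalently, $\alpha$ lies in the span of the $f_I$ with $I$ contained in the index set of the zero eigenvalues, which is exactly $\Lambda^pK(v)$. Conversely, if $\alpha\in\Lambda^pK(v)$ then $D\alpha=0$, so the form vanishes at $v$. Combined with the first paragraph, this gives the stated criterion.

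The main obstacle is justifying that \eqref{eq:cherntaut} applies verbatim to $\Lambda^p\VB$ and that its curvature is the derivation extension. Both are standard: the induced metric on $\Lambda^p\VB$ is Hermitian and holomorphic normal frames exist for it. One must also check the sign conventions, namely that Griffiths seminegativity means $h(\Ricu(v,\bar v)e,e)\le0$, consistent with the minus sign in \eqref{eq:cherntaut}.
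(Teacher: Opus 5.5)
Your proposal is correct and is essentially the paper's proof: apply formula (\ref{eq:cherntaut}) to $\Lambda^p\VB$ with the induced metric, observe that its curvature acts as the derivation extension of $\Ricu(v,\bar v)$ and is therefore seminegative, and identify its kernel with $\Lambda^p K(v)$. The paper justifies this last identification by citing the spectral theorem, and your orthonormal eigenbasis computation carries out exactly that step explicitly.
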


\begin{proof}
	Denote by $R_p$ the curvature tensor of the Chern connection of $\Lambda^p \VB$ with the Hermitian metric induced by $h$. For $v \in T_{\Anman,x}$ and $e_1, \dots, e_p \in \VB_{x}$, we have:
	\[ R_p(v,\bar{v}).(e_1 \wedge \dots \wedge e_p) = \sum_{i=1}^p e_1 \wedge \dots \wedge e_{i-1} \wedge R(v,\bar{v}).e_i \wedge e_{i+1} \wedge \dots \wedge e_p. \]
	As $R(v,\bar{v})$ is seminegative Hermitian, $R_p(v,\bar{v})$ is seminegative Hermitian too. Hence formula (\ref{eq:cherntaut}) gives that $C_1(\Tautp, \hat{h})$ is semipositive, and that it is not positive definite at $[\alpha]$ if and only if there exists $v \in T_{\Anman,x} \setminus \{0\}$ such that $\alpha \in \ker R_p(v,\bar{v})$. Finally, we have $\ker R_p(v,\bar{v}) = \Lambda^p \ker R(v,\bar{v})$ as it holds for every semipositive or seminegative Hermitian endomorphism by the spectral theorem.
\end{proof}

The following proposition shows how to use \Cref{lem:calcnkl} to compute non-Kähler loci. We will not directly use this proposition, as the hypotheses of compactness and nefness are too restrictive for our cases. However its proof gives the outline of the proof of \Cref{thm:main}.

\begin{prop}\label{prop:resnkl}
	Let $\Anman$ be a compact Kähler manifold. Suppose the curvature $\Ricu$ is seminegative in the sense of Griffiths. Let $1 \leq p \leq \dim \Anman$. Suppose the tautological line bundle $\Tautp$ on $\Proj \Omega_{\Anman}^p$ is nef. Let $x \in \Anman$ and $\alpha \in \Lambda^p T_{\Anman,x} \setminus \{0\}$, suppose $[\alpha]$ is in the non-Kähler locus of the first Chern class $c_1(\Tautp)$. Then there exists $v \in T_{\Anman,x} \setminus \{0\}$ such that $\alpha \in \Lambda^p K(v)$ with $K(v)$ the kernel of the Hermitian endomorphism $\Ricu(v, \bar{v})$ of $T_{\Anman,x}$.
\end{prop}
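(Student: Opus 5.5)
We argue by contraposition and combine \Cref{lem:calcnkl} with Tosatti's \Cref{thm:tosnak}. Here the curvature $\Ricu$ is that of a Kähler (in particular Hermitian) metric $h$ on $T_{\Anman}$ which is seminegative in the sense of Griffiths. Apply \Cref{lem:calcnkl} to $\VB = T_{\Anman}$, so that $\VB^\vee = \Omega^1_{\Anman}$ and $\Proj \Lambda^p \VB^\vee = \Proj \Omega^p_{\Anman}$. The induced metric $\hat{h}$ on $\Tautp$ then has a first Chern form $C_1(\Tautp,\hat{h})$ which is a smooth, closed, semipositive real $(1,1)$-form representing $c_1(\Tautp)$. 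Moreover, it fails to be positive definite at a point $[\beta]$ over $y \in \Anman$ exactly when $\beta \in \Lambda^p K(w)$ for some $w \in T_{\Anman,y}\setminus\{0\}$.

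Suppose that there is no $v \in T_{\Anman,x}\setminus\{0\}$ with $\alpha \in \Lambda^p K(v)$. By \Cref{lem:calcnkl}, $C_1(\Tautp,\hat{h})$ is positive definite at $[\alpha]$. Positive definiteness is an open condition for a continuous form, so there is an open neighbourhood $U$ of $[\alpha]$ in $\Proj\Omega^p_{\Anman}$ on which $T := C_1(\Tautp,\hat{h})$ restricts to a Kähler form. The form $T$ is a closed positive current in $c_1(\Tautp)$.

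We next need $c_1(\Tautp)$ to be big. Note that $\Proj\Omega^p_{\Anman}$ is a compact Kähler manifold, being a projective bundle over the compact Kähler manifold $\Anman$. Since $T$ is a smooth semipositive form (so $\hat{h}$ is in particular a singular metric of semipositive curvature) which is Kähler on the nonempty open set $U$, \Cref{thm:boucbig} shows that $\Tautp$ is big. The nefness of $c_1(\Tautp)$ is a hypothesis of the proposition.

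With $c_1(\Tautp)$ nef and big, \Cref{thm:tosnak} applies to $T$ and $U$: the non-Kähler locus of $c_1(\Tautp)$ does not meet $U$. In particular $[\alpha]$ is not in it, contradicting the hypothesis, and the contrapositive is proved.

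The only point needing care is this bigness step. The convention that a non-big class has the whole manifold as non-Kähler locus forces us to establish bigness before invoking \Cref{thm:tosnak}, and this is exactly what \Cref{thm:boucbig} provides. The remaining steps are direct applications of the earlier results.
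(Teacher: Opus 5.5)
Your proof is correct and follows the paper's argument. You use \Cref{lem:calcnkl} to get a smooth semipositive representative of $c_1(\Tautp)$ that is Kähler near $[\alpha]$, then \Cref{thm:boucbig} for bigness, and then \Cref{thm:tosnak} to exclude $[\alpha]$ from the non-Kähler locus; the only cosmetic difference is that you work with a small neighbourhood of $[\alpha]$ rather than the whole open set where the form is positive definite.
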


\begin{proof}
	Denote by $U$ the complement of the set of $[\alpha] \in \Proj \Omega_{\Anman}^p$ such that there exists $v \in T_{\Anman,x} \setminus \{0\}$ such that $\alpha \in \Lambda^p K(v)$. By \Cref{lem:calcnkl}, $U$ is open and the semipositive real $(1,1)$-form $C_1(\Tautp, \hat{h})$ is Kähler on $U$ (with $h$ the Kähler metric of $\Anman$ and $\hat{h}$ the metric induced on $\Tautp$).
	
	Suppose $U$ is not empty (otherwise there is nothing to prove). Then we have $\int_{\Proj \Omega_{\Anman}^p} C_1(\Tautp, \hat{h})^{\dim \Proj \Omega_{\Anman}^p} > 0$. Hence \Cref{thm:boucbig} gives that $\Tautp$ is a big line bundle.
	
	As the real $(1,1)$-class $c_1(\Tautp)$ is big and, by hypothesis, nef, \Cref{thm:tosnak} applied with the closed positive real $(1,1)$-from $C_1(\Tautp, \hat{h})$ gives that the non-Kähler locus of $c_1(\Tautp)$ does not intersect $U$.
\end{proof}

\begin{rem}\label{rem:symmetry}
	Let $\Anman$ be a Kähler manifold, suppose the curvature $\Ricu$ is seminegative in the sense of Griffiths. Let $x \in \Anman$ and $v, w \in T_{\Anman,x}$. Then $w \in K(v)$ if and only if $v \in K(w)$. It is obvious if $v$ and $w$ are linearly dependent. Otherwise, as $\Ricu(v, \bar{v})$ is seminegative, it vanishes on $w$ if and only if $\left < \Ricu(v, \bar{v})(w), w \right > = 0$, which means that the holomorphic bisectional curvature in the direction of the plane generated by $v$ and $w$ is zero.
\end{rem}

\subsection{Proof of the L-bigness}\label{subsection:proof1}

With \Cref{lem:calcnkl}, we can prove \Cref{thm:lbig}. The proof is close to the proof of \cite{brunebarbe2020hyperbolicity} for the cases $p = 1$ and $p = \dim \Qpv$. Recall that a $\C$-PVHS on a complex manifold $\Anman$ induces a Hermitian metric $h$ of seminegative curvature on the open subset $U$ of points of $\Anman$ at which the period map is immersive. The proof is based on the following fact due to Brunebarbe, Klingler and Totaro.

\begin{lem}[see {\cite[Lemma 1.4]{brunebarbe2013symmetric}}]\label{lem:brklto}
	For every $x \in U$, there exists $v \in T_{\Qpv,x} \setminus \{0\}$ such that $\ker \Ricu(v,\bar{v}) = \{0\}$ with $\Ricu$ the Riemann curvature tensor of $h$.
\end{lem}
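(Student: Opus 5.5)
The plan is to compare $\Ricu$ with the curvature of the Hodge metric through the Higgs field, and then reduce the statement to a linear-algebra statement about $\theta_x$ and $\theta_x^*$.

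\textbf{Step 1 (curvature bound).} Over $U$, the Higgs field identifies $T_{\Qpv}$ with a holomorphic subbundle of the graded piece $\End(\V)^{-1,1}$ of the Higgs bundle, and $h$ is the restricted Hodge metric. By the Griffiths formula, the curvature of the Hodge metric on this graded piece is given by brackets with $\theta$ and $\theta^*$. Curvature decreases in holomorphic subbundles. Combining the two, I expect to obtain, for $v,w \in T_{\Qpv,x}$,
\[ h(\Ricu(v,\bar v)w,w) \leq -\bigl\| [\theta_x^*(\bar v),\theta_x(w)] \bigr\|^2 \leq 0. \]
This is the estimate behind \Cref{prop:curvhodge}. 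Since $\Ricu(v,\bar v)$ is seminegative, a vector $w$ lies in its kernel exactly when $h(\Ricu(v,\bar v)w,w)=0$. The bound therefore gives $\ker\Ricu(v,\bar v) \subset C_{\V}(v)$ (\Cref{nota:c}). It then suffices to find $v \neq 0$ with $C_{\V}(v)=\{0\}$.

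\textbf{Step 2 (structure of $C_{\V}$).} Set $B(\bar v,w) := [\theta_x^*(\bar v),\theta_x(w)]$. This map is antilinear in $v$ and linear in $w$, so $C_{\V}(v)$ is the kernel of a linear map $B(\bar v,\cdot)$ depending antiholomorphically on $v$. Using adjoints for the polarization, the relation is symmetric: $w \in C_{\V}(v)$ if and only if $v \in C_{\V}(w)$. Taking $w=v$, the identity $[\theta_x(v)^*,\theta_x(v)]=0$ says that $\theta_x(v)$ is normal. Since $\theta_x(v)$ is nilpotent, it must then vanish, and immersivity forces $v=0$. Hence $v \notin C_{\V}(v)$ for $v \neq 0$.

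\textbf{Step 3 (generic $v$).} The dimension of $C_{\V}(v)$ is upper semicontinuous, so it equals its minimum $k$ on a dense open set of $v$. I would argue by contradiction, assuming $k>0$. Along a holomorphic curve $a(t)$ in $\overline{T_{\Qpv,x}}$ through a generic point, pick kernel vectors $w(t)$ of $B(a(t),\cdot)$. Differentiating $B(a(t),w(t))=0$ gives
\[ B(a',\ker B(a,\cdot)) \subset \operatorname{Im} B(a,\cdot) \quad \text{for all } a'. \]
This is the standard constraint on bilinear maps whose partial maps are everywhere degenerate. I would then combine this constraint with the positivity $\bigl\|B(\bar v,v)\bigr\| > 0$ for $v \neq 0$. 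Concretely, I would take $w \in C_{\V}(v)$ for generic $v$ and test $a' = \bar w$. I would then pair the resulting element of $\operatorname{Im} B(\bar v,\cdot)$ against $B(\bar w, w)$ using the trace form $\operatorname{tr}(XY^*)$. The aim is to show this pairing is simultaneously zero and equal to $\|B(\bar w,w)\|^2$, contradicting Step 2.

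I expect Step 3 to be the main obstacle. A naive real dimension count on the incidence set $\{([v],[w]) : w \in C_{\V}(v)\}$ is not sufficient. The polydisc case shows that the Hodge-theoretic positivity of Step 2 must genuinely enter. Accordingly, if the infinitesimal argument does not close, my fallback is the original argument of Brunebarbe--Klingler--Totaro. There the grading $\V_x=\oplus\V_x^i$ is used to take $v$ so that $\theta_x(v)$ has maximal rank on the extreme graded pieces, and one then checks $C_{\V}(v)=0$ directly.
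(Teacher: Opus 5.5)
Your proof is correct. It takes a different route from the paper, which gives no argument of its own here and simply imports the statement from Brunebarbe--Klingler--Totaro.

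Step 1 is the same reduction the paper makes later in \Cref{lem:knc}, there via \Cref{lem:crocurv} and here via the second-fundamental-form inequality. So everything reduces to finding $v$ with $C_{\V}(v)=\{0\}$.

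The step you flagged as the main obstacle does close. Write $N_y:=\theta_x(y)$ and $N_y^*$ for its Hodge adjoint. This is $\theta_x^*(\bar y)$ up to a convention-dependent sign that plays no role, so $B(\bar v,u)^*=\pm[N_u^*,N_v]$. Take $w\in C_{\V}(v)$ and any $u$. Use ad-invariance of the trace form, then the Jacobi identity with $[N_w,N_v]=0$, then ad-invariance once more:
\[ \operatorname{tr}\bigl([N_w^*,N_w][N_u^*,N_v]\bigr)=\operatorname{tr}\bigl(N_w^*[[N_w,N_u^*],N_v]\bigr)=\operatorname{tr}\bigl([N_v,N_w^*][N_w,N_u^*]\bigr)=0. \]
The last equality holds because $v\in C_{\V}(w)$ by your symmetry. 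Thus $B(\bar w,w)$ is orthogonal to $\operatorname{Im}B(\bar v,\cdot)$ for the positive definite form $\operatorname{tr}(XY^*)$.

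Your differentiation along $t\mapsto \bar v+t\bar w$ puts $B(\bar w,w)$ inside $\operatorname{Im}B(\bar v,\cdot)$. This is legitimate at every $v$ where $\dim C_{\V}(v)$ is minimal: there the kernel has locally constant dimension and can be followed smoothly. Hence $B(\bar w,w)=0$, and $w=0$ by Step 2.

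So you need neither the contradiction hypothesis nor the fallback. Every $v$ in the dense open set where $\dim C_{\V}(v)$ is minimal already satisfies $C_{\V}(v)=\{0\}$, hence $\ker\Ricu(v,\bar v)=\{0\}$.

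The citation keeps the paper short. Your argument uses only commutativity of the Higgs field, adjointness, and positivity of the Hodge metric, and it gives the slightly stronger statement that a generic $v$ works. It is also more economical than the route the paper's own machinery suggests. That route would apply \Cref{prop:decomp} to $\C v\oplus C_{\V}(v)$ and show, in the spirit of \Cref{lem:rk}, that for $0\neq u\in C_{\V}(v)$ some power of $\theta_x(v+tu)$ has strictly larger rank than the corresponding power of $\theta_x(v)$, contradicting genericity. Your trace-form identity bypasses that Jordan-type bookkeeping.
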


\begin{proof}[Proof of \Cref{thm:lbig}]
	By \cite[Proposition 2.4]{brunebarbe2020hyperbolicity}, $h$ extends as a singular Hermitian metric of seminegative curvature on $\Omega_{\Comp}^1(\log \Div)$. Hence it induces a singular Hermitian metric of seminegative curvature on $\Omega_{\Comp}^p(\log \Div)$, and then on $\Tautp$ (it is sufficient to check it over $U$ thanks to \cite[Lemma 2.3]{brunebarbe2020hyperbolicity}).

	Let $x$ be any point of $U$. Let $v$ be a vector as given by \Cref{lem:brklto} just above. Let $v_2, \dots, v_p$ be tangent vectors at $x$ such that $v, v_2, \dots, v_p$ are linearly independent. By \Cref{rem:symmetry}, for every $w \in T_{U,x} \setminus \{0\}$, we have $v \notin \ker \Ricu(w,\bar{w})$. Thus, $\alpha := v \wedge v_2 \wedge \dots \wedge v_p \notin \Lambda^p \ker \Ricu(w, \bar{w})$.
	
	Thus by \Cref{lem:calcnkl}, the curvature of the singular Hermitian metric of $\Tautp$ is Kähler on a Euclidean open neighborhood of $[\alpha]$ in $\Proj \Omega_{\Comp}^p(\log \Div)$. We conclude by applying \Cref{thm:boucbig}.
\end{proof}

\section{Varieties supporting a complex PVHS}

\subsection{Curvature of the Hodge metric}\label{subsection:curvhodge}

In this section, we prove \Cref{prop:curvhodge}.

\bigskip

As we will consider only \emph{complex} Hodge structures, it is not a loss of generality to consider that the weight is always $0$, and then to consider simple (and not double) gradings. Thus, a \emph{complex Hodge structure} ($\C$-HS) is a finite dimensional complex vector space $H$ with a grading $H = \bigoplus_{i \in \Z} H^i$. A \emph{polarization} $Q$ of this $\C$-HS is a conjugate-symmetric sesquilinear form on $H$ with respect to which the decomposition $\bigoplus_{i \in \Z} H^i$ is orthogonal and such that for every $i \in \Z$, the restriction of $(-1)^iQ$ to $H^i$ is positive definite. The \emph{Hodge metric} associated with $Q$ is the unique Hermitian product of $H$ with respect to which the decomposition $\bigoplus_{i \in \Z} H^i$ is orthogonal such that its restriction to $H^i$ is equal to $(-1)^iQ$ for every $i \in \Z$.

\bigskip

Let $\Anman$ be a complex manifold. Denote by $C^{\infty}$ the sheaf of rings of smooth complex functions on $\Anman$, by $A^1$ the $C^{\infty}$-module of smooth differential $1$-forms and by $A^{0,1}$ (resp. $A^{1,0}$) the submodule of $1$-forms of bidegree $(0,1)$ (resp. $(1,0)$). A \emph{complex variation of Hodge structure} ($\C$-VHS) on $\Anman$ is the data of a complex local system $\V$ on $\Anman$ and for every $x \in \Anman$, of a $\C$-HS $\bigoplus_{i \in \Z} \V_x^i$ on $\V_x$ such that for every $i \in \Z$:
\begin{itemize}
	\item the $\V_x^i$'s define a $C^{\infty}$-submodule $\V^i$ of $\V_{C^{\infty}} := \V \otimes_{\C} C^{\infty}$,
	\item if $\nabla$ is the flat connection induced by $\V$ on $\V_{C^{\infty}}$ and if $\sigma$ is a local section of $\V^i$, then the local section $\nabla \sigma$ of $\V \otimes_{\C} A^1$ is a section of $(\V^{i-1} \otimes_{C^{\infty}} A^{1,0}) \oplus (\V^i \otimes_{C^{\infty}} A^1) \oplus (\V^{i+1} \otimes_{C^{\infty}} A^{0,1})$.
\end{itemize}
A \emph{polarization} $Q$ of $\V$ is a conjugate-symmetric sesquilinear form $\V \otimes_{\C} \bar{\V} \to \C$ such that for every $x \in \Anman$, $Q_x$ is a polarization of the $\C$-HS $\V_x$. The \emph{Hodge metric} associated with $Q$ is the Hermitian metric defined on $\V_{C^{\infty}}$ by the Hodge metrics associated with the $Q_x$'s. A \emph{complex polarizable variation of Hodge structure} ($\C$-PVHS) is a $\C$-VHS that admits a polarization.

\bigskip

For $\V$ a $\C$-VHS, $i \in \Z$ and $\sigma$ a local section of $\V^i$, we have a decomposition $\nabla \sigma = \theta \sigma + \delta' \sigma + \delta'' \sigma + \theta^* \sigma$ where $\theta \sigma$ (resp. $\delta' \sigma$, $\delta'' \sigma$, $\theta^* \sigma$) is a section of $\V^{i-1} \otimes_{C^{\infty}} A^{1,0}$ (resp. $\V^i \otimes_{C^{\infty}} A^{1,0}$, $\V^i \otimes_{C^{\infty}} A^{0,1}$, $\V^{i+1} \otimes_{C^{\infty}} A^{0,1}$). A direct computation shows that $\theta$ (resp. $\theta^*$) is defined by a section of $\End_{\C}(\V) \otimes_{\C} A^{1,0}$ (resp. $\End_{\C}(\V) \otimes_{\C} A^{0,1}$). We call this section the \emph{Higgs field} (resp. \emph{conjugate Higgs field}). If $\V$ is polarizable, then $\theta$ and $\theta^*$ are adjoint with respect to every polarization.

\bigskip

The \emph{Higgs bundle} associated with a $\C$-VHS $\V$ is the data $(E,\theta)$ of the unique holomorphic vector bundle $E$ whose sheaf of smooth sections is $\V_{C^{\infty}}$ and whose Dolbeault operator is $\delta''$ defined above, and of the holomorphic global section of $\End_{\Stru_{\Anman}}(E) \otimes_{\Stru_{\Anman}} \Omega_{\Anman}^1$ defined by the Higgs field $\theta$. The vector bundle $E$ is equipped with a holomorphic grading, and for every tangent vector $v$, $\theta(v)$ is homogeneous of degree $-1$. If $\V$ is polarizable, then the Hodge metric associated with a polarization is a Hermitian metric on $E$.

\bigskip

In the following, $\Anman$ is a complex manifold, $\V$ a complex polarized variation of Hodge structure on $\Anman$, $(\VB, \theta)$ the associated Higgs bundle and $\theta^*$ the conjugate Higgs field.

\bigskip

The curvature of the Chern connection of the Hodge metric on $\VB$ is $- \theta \wedge \theta^* - \theta^* \wedge \theta$ (see \cite[Definition 4.2.8]{sabbah2019mhm}). The local system $\End_{\C}(\V)$ has an induced structure of $\C$-PVHS and an induced polarization. A direct computation shows that its Higgs field is given by:
\begin{equation}\label{eq:higgs}
	\Theta(v).f = [\theta(v), f]
\end{equation}
for $x \in \Anman$, $v \in T_{\Anman, x}$ and $f \in \End_{\C}(\VB_x)$. Hence the curvature tensor $\Ricu_{\VB}$ of $\End_{\Stru_{\Anman}}(\VB)$ over $\Anman$ is given for $x \in \Anman$, $v, w \in T_{\Anman, x}$ and $f \in \End_{\C}(\VB_x)$ by:
\begin{equation}\label{eq:curv}
	\Ricu_{\VB}(v,\bar{w}).f = - [[\theta(v), \theta^*(\bar{w})], f].
\end{equation}

\begin{lem}\label{lem:crocurv}
	Let $x \in \Anman$, $v \in T_{\Anman, x}$ and $f \in \End_{\C}(\VB_x)^{-1}$. Suppose $[\theta(v), f] = 0$. Then $[\theta^*(\bar{v}), f] = 0$ if and only if $\Ricu_{\VB}(v, \bar{v}).f = 0$.
\end{lem}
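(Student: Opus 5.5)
The plan is to combine the explicit curvature formula \eqref{eq:curv} with the Jacobi identity, and then to use the adjointness of the Higgs field and the conjugate Higgs field of $\End_{\C}(\V)$ with respect to the Hodge metric.

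\textbf{Step 1: the Jacobi identity.} By \eqref{eq:curv} we have $\Ricu_{\VB}(v,\bar v).f = -[[\theta(v),\theta^*(\bar v)],f]$. The Jacobi identity in $\End_{\C}(\VB_x)$ gives
\[
[[\theta(v),\theta^*(\bar v)],f] = [\theta(v),[\theta^*(\bar v),f]] - [\theta^*(\bar v),[\theta(v),f]].
\]
By hypothesis $[\theta(v),f]=0$, so the last term vanishes. Setting $g := [\theta^*(\bar v),f]$, this yields
\[
\Ricu_{\VB}(v,\bar v).f = -[\theta(v),g].
\]
The implication ``$\Rightarrow$'' follows at once: if $g=0$, then $\Ricu_{\VB}(v,\bar v).f=0$.

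\textbf{Step 2: the converse via adjointness.} Assume $[\theta(v),g]=0$; we want $g=0$. Since $\theta^*(\bar v)$ has degree $+1$ and $f\in \End_{\C}(\VB_x)^{-1}$, the endomorphism $g$ lies in $\End_{\C}(\VB_x)^{0}$. Consider the Hodge metric $\langle\cdot,\cdot\rangle$ on $\End_{\C}(\VB_x)$ induced by the polarization. The Higgs field of $\End_{\C}(\V)$ is $\Theta(v)=\operatorname{ad}\theta(v)$ by \eqref{eq:higgs}, and its conjugate Higgs field is $\operatorname{ad}\theta^*(\bar v)$. These two operators are adjoint with respect to the polarization of $\End_{\C}(\V)$. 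The Hodge metric differs from the polarization only by the sign $(-1)^k$ on the degree $k$ piece, so on the pieces involved here (degrees $0$ and $-1$) we get
\[
\langle [\theta(v),g],f\rangle = \epsilon\,\langle g,[\theta^*(\bar v),f]\rangle = \epsilon\,\langle g,g\rangle, \qquad \epsilon\in\{\pm1\}.
\]
The left-hand side is zero by assumption, so $\langle g,g\rangle=0$. Since the Hodge metric is positive definite, $g=0$, i.e.\ $[\theta^*(\bar v),f]=0$.

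\textbf{Main obstacle.} The only delicate point is justifying the adjointness relation in Step 2, with the correct sign, for the Hodge metric rather than the indefinite polarization. I would do this directly. On $\VB_x$, $\theta(v)$ and $\theta^*(\bar v)$ are adjoint for $Q$, and they shift degrees by $-1$ and $+1$ respectively. Hence, with respect to the Hodge metric $h$, we have $\theta(v)^{\dagger_h} = -\theta^*(\bar v)$, since the sign $(-1)^i$ flips between adjacent degrees. Now take the Hermitian metric $\langle a,b\rangle=\operatorname{tr}(a\,b^{\dagger_h})$ on $\End_{\C}(\VB_x)$. For this metric the adjoint of $\operatorname{ad}\theta(v)$ is $\operatorname{ad}(\theta(v)^{\dagger_h}) = -\operatorname{ad}\theta^*(\bar v)$. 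This metric is positive definite, and it is all the argument needs: whether or not it coincides exactly with the induced Hodge metric is irrelevant, since only positivity and this adjointness are used. With it, $\epsilon=-1$ above and the conclusion follows.
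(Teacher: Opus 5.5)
Your proof is correct. Its skeleton is the same as the paper's: the Jacobi identity turns $\Ricu_{\VB}(v,\bar v).f$ into $-[\theta(v),g]$ with $g=[\theta^*(\bar v),f]$, and an invariance computation for a definite form then shows this vanishes only if $g=0$. The difference lies in which definite form is used.

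The paper takes the real form $\su$ of $\sl(\VB_x)$ made of endomorphisms that are skew-adjoint for the \emph{indefinite} polarization, together with its Killing form $\beta$. Since $\su$ is noncompact, $\beta$ is definite only on the compactly embedded degree-zero part $\su_0$, which the paper gets from Helgason. This is exactly where the paper uses the hypothesis $f\in\End_{\C}(\VB_x)^{-1}$: it places $[\theta^*(\bar v),f]$ in $\sl_0$.

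You instead use the Hilbert--Schmidt form of the \emph{Hodge} metric, in effect the compact real form. It is positive definite on all of $\End_{\C}(\VB_x)$. For it, $\operatorname{ad}\theta(v)$ has adjoint $\pm\operatorname{ad}\theta^*(\bar v)$, because $\theta(v)$ shifts the degree by exactly one, so the sign discrepancy between $Q$ and the Hodge metric is uniform.

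Your route buys two things:
\begin{itemize}
\item it is more elementary, with no Lie-theoretic input;
\item it never uses that $f$ has degree $-1$, so it proves the equivalence for every $f$ commuting with $\theta(v)$.
\end{itemize}
The paper's formulation stays inside the polarization and the Lie algebra $\su$, which matches the Lie-theoretic viewpoint it uses later for locally symmetric varieties. For this lemma itself, your argument is at least as clean.
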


\begin{proof}
	The direct implication follows from the formula of the curvature (\ref{eq:curv}) and the Jacobi formula.
	
	For the converse implication, let $\sl \subset \End_{\C}(\VB_x)$ be the complex Lie subalgebra of endomorphisms of trace zero. Consider its real form $\su$ of endomorphisms of trace zero which are skew-adjoint with respect to the polarization. Denote by $f^*$ the conjugate of $f \in \sl$ for this real form. This is consistent with the notation $\theta^*$ in the sense that $(\theta(v))^* = \theta^*(\bar{v})$. Denote by $\sl_0$ (resp. $\su_0$) the intersection of $\sl$ (resp. $\su$) with $\End_{\C}(\VB_x)^0$. Note that $\su_0$ is a real form of $\sl_0$ and is compactly embedded in $\su$ (which means that $\su_0$ is the Lie algebra associated with a compact Lie subgroup of the real Lie group of inner automorphisms of $\su$). Let $\beta$ be the Killing form of $\su$. By \cite[Proposition 6.8]{helgason2001differential}, $\beta$ is negative definite on $\su_0$.
	
	Now suppose $\Ricu_{\VB}(v, \bar{v}).f = 0$. As $[\theta^*(\bar{v}), f] \in \sl_0$, by the foregoing, it is sufficient to prove $\beta([\theta^*(\bar{v}), f], [\theta(v), f^*]) = 0$. We have:
	\begin{align*}
		\beta([\theta^*(\bar{v}), f], [\theta(v), f^*]) &= \beta([[\theta^*(\bar{v}), f], \theta(v)], f^*)\\
		&= -\beta([[f, \theta(v)], \theta^*(\bar{v})], f^*) -\beta([[\theta(v), \theta^*(\bar{v})], f], f^*)\\
		&= -\beta([[\theta(v), \theta^*(\bar{v})], f], f^*)\\
		&= \beta(\Ricu_{\VB}(v, \bar{v}).f, f^*)\\
		&= 0.
	\end{align*}
\end{proof}

Now, suppose that the period map is immersive: equivalently, for every point $x \in \Anman$, the linear map $T_{\Anman,x} \to \End_{\C}(\VB_x)$ given by $\theta$ is injective. Consider the Hermitian metric on $T_{\Anman}$ defined as the pull back by $\theta$ of the Hodge metric on $\End_{\C}(\V)$. By \cite[Theorem 1.2]{lu1999geometry}, it is a Kähler metric. Denote by $\Ricu$ its Riemann curvature tensor.

\begin{lem}\label{lem:knc}
	Let $x \in \Anman$, $v, w \in T_{\Anman, x}$. If $\Ricu(v, \bar{v}).w = 0$, then $[\theta^*(\bar{v}), \theta(w)] = 0$
\end{lem}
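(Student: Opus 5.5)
The plan is to compare the curvature $\Ricu$ of the pulled-back metric on $T_{\Anman}$ with the curvature $\Ricu_{\VB}$ of $\End_{\C}(\VB)$. For this I use that $\theta$ realizes $T_{\Anman}$ as a holomorphic subbundle of the degree $-1$ part $\End_{\C}(\VB)^{-1}$, carrying the induced Hodge metric. I will then show that the relevant holomorphic bisectional curvature, computed in $\End_{\C}(\VB)$, equals minus the squared norm of $[\theta^*(\bar{v}), \theta(w)]$.

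\emph{Step 1 (Gauss equation).} By definition, $\theta : T_{\Anman} \to \End_{\C}(\VB)^{-1}$ is an isometric holomorphic embedding. The grading of $\End_{\C}(\VB)$ is holomorphic and orthogonal for the Hodge metric. Moreover, by (\ref{eq:curv}) the curvature $\Ricu_{\VB}(v,\bar{v})$ preserves degrees, since $[\theta(v),\theta^*(\bar v)]$ has degree $0$. Hence $\End_{\C}(\VB)^{-1}$ is a Hermitian holomorphic subbundle whose curvature is the compression of $\Ricu_{\VB}$. The curvature of a holomorphic subbundle decreases (Griffiths), so for $v,w \in T_{\Anman,x}$ I will write
\[
\langle \Ricu(v,\bar v)w, w\rangle = \langle \Ricu_{\VB}(v,\bar v)\theta(w), \theta(w)\rangle - \|\sigma(v)\theta(w)\|^2 ,
\]
where $\sigma$ is the second fundamental form of $\theta(T_{\Anman})$ in $\End_{\C}(\VB)^{-1}$.

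\emph{Step 2 (computing the ambient term).} Put $f = \theta(w)$. Integrability of the Higgs field gives $\theta \wedge \theta = 0$, that is $[\theta(v), f] = 0$. By the Jacobi identity, (\ref{eq:curv}) then becomes
\[
\Ricu_{\VB}(v,\bar v).f = -[\theta(v),[\theta^*(\bar v), f]] .
\]
With respect to the Hodge metric on $\End_{\C}(\VB_x)$, the adjoint of $\mathrm{ad}\,\theta(v)$ is $\mathrm{ad}\,\theta^*(\bar v)$; this is the statement that $\theta$ and $\theta^*$ are adjoint, transported to $\End$, which is the induced $\C$-PVHS with Higgs field (\ref{eq:higgs}). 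This yields
\[
\langle \Ricu_{\VB}(v,\bar v)f,f\rangle = -\|[\theta^*(\bar v), f]\|^2 .
\]
I expect this to be the main delicate point. One has to check carefully the sign conventions relating the polarization $(-1)^iQ$ to the Hodge metric on the graded pieces of $\End_{\C}(\VB_x)$, so that the adjointness holds with the correct sign.

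If the sign bookkeeping becomes awkward, I will fall back on \Cref{lem:crocurv}. Its hypothesis $[\theta(v),f]=0$ is exactly what we have, and $f \in \End_{\C}(\VB_x)^{-1}$. So it suffices to show $\Ricu_{\VB}(v,\bar v).f = 0$, using that $\Ricu_{\VB}(v,\bar v)$ is seminegative on the relevant subspace together with the vanishing of $\langle \Ricu_{\VB}(v,\bar v) f, f\rangle$ obtained below.

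\emph{Step 3 (conclusion).} Assume $\Ricu(v,\bar v).w = 0$. Then the left-hand side of the Gauss equation vanishes, so
\[
0 = -\|[\theta^*(\bar v),\theta(w)]\|^2 - \|\sigma(v)\theta(w)\|^2 .
\]
Both terms are nonpositive, so each vanishes. In particular $[\theta^*(\bar v),\theta(w)] = 0$, as claimed.
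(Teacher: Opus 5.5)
Your proof is correct. Step~1 uses the same ingredient as the paper (curvature decreases on the subbundle $\theta(T_{\Anman})$), but Steps~2--3 take a different route.

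\textbf{The paper's route.} It combines the decrease of curvature with a stated Griffiths seminegativity of $\End_{\Stru_{\Anman}}(\VB)$ to get $\theta(\ker \Ricu(v,\bar v)) \subset \ker \Ricu_{\VB}(v,\bar v)$. It then invokes \Cref{lem:crocurv}, whose converse direction is a Killing-form argument on the compact real form $\su_0$. That argument turns the vector equation $\Ricu_{\VB}(v,\bar v).\theta(w)=0$ into $[\theta^*(\bar v),\theta(w)]=0$.

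\textbf{Your route.} You never pass through the kernel of $\Ricu_{\VB}(v,\bar v)$. The Gauss equation only controls the quadratic form. Your identity $\langle \Ricu_{\VB}(v,\bar v)f,f\rangle=-\|[\theta^*(\bar v),f]\|^2$ for $[\theta(v),f]=0$ is exactly what is needed. It is essentially the Hilbert--Schmidt counterpart of the computation inside \Cref{lem:crocurv}.

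\textbf{The sign you flagged is fine.} Flatness of $Q$ gives $Q(\theta(v)a,b)=-Q(a,\theta^*(\bar v)b)$. Since $\theta(v)$ lowers the degree by one while the Hodge metric is $(-1)^iQ$ on $\V^i$, this says $\theta^*(\bar v)$ is the Hodge-metric adjoint of $\theta(v)$. The induced Hodge metric on $\End_{\C}(\VB_x)$ is the Hilbert--Schmidt metric, for which the adjoint of $[\theta(v),\cdot\,]$ is $[\theta^*(\bar v),\cdot\,]$.

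\textbf{What your version buys.} It only ever uses the quadratic form, and only on vectors commuting with $\theta(v)$, where it really is $\leq 0$. In general one has $\langle \Ricu_{\VB}(v,\bar v)f,f\rangle=\|[\theta(v),f]\|^2-\|[\theta^*(\bar v),f]\|^2$. So $\End_{\C}(\VB_x)$ is not seminegatively curved: take $f=\theta^*(\bar v)$ with $\theta(v)\neq 0$, and note that a nonzero nilpotent is never normal. Extracting the vector equation $\Ricu_{\VB}(v,\bar v).\theta(w)=0$ from the vanishing of a quadratic form therefore needs more than the seminegativity actually available. This applies both to the paper's one-line justification and to your fallback through \Cref{lem:crocurv}. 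Your Step~3 concludes directly, so the fallback is unnecessary. A posteriori, your identity together with the Jacobi identity also recovers the paper's inclusion $\theta(\ker \Ricu(v,\bar v)) \subset \ker \Ricu_{\VB}(v,\bar v)$.
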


\begin{proof}
	The Hodge metric on $\End_{\Stru_{\Anman}}(\VB)$ has Griffiths seminegative curvature. As the injection $\theta: T_{\Anman} \hookrightarrow \End_{\Stru_{\Anman}}(\VB)$ is locally split and the curvature decreases on direct factor subbundles, we have $\theta(\ker \Ricu(v,\bar{v})) \subset \ker \Ricu_{\VB}(v, \bar{v})$. Moreover, $\theta(v)$ and $\theta(w)$ commute. Hence we can apply \Cref{lem:crocurv}.
\end{proof}

\begin{proof}[Proof of \Cref{prop:curvhodge}]
	Apply \Cref{lem:calcnkl} to the tangent bundle and apply \Cref{lem:knc}.
\end{proof}

\subsection{Augmented base locus and Hodge metric}\label{subsection:augpvhs}

In this section, we prove \Cref{thm:main}, using the same method as for \Cref{prop:resnkl}. Let $\Qpv$ be a smooth complex quasiprojective variety. Let $(\Comp, \Div)$ be a smooth projective logarithmic compactification of $\Qpv$. Let $\V$ be a $\C$-PVHS on $\Qpv$. Suppose its local monodromy is unipotent. This hypothesis is not restrictive to prove \Cref{thm:main}. Indeed, for every $\C$-PVHS $\V$ with quasi-unipotent local monodromy on $\Qpv$, there exists a finite étale covering map $\Qpv' \to \Qpv$ such that the pull-back of $\V$ on $\Qpv'$ has unipotent local monodromy (see \cite[Proposition 8.2]{bakker2024linear}), and the augmented base locus is compatible with finite étale covering maps (\Cref{prop:fecover}).

\bigskip

Let $(\VB, \theta)$ be the Higgs bundle associated with $\V$. Denote by $\theta^*$ the conjugate Higgs field. Denote by $\Deg(\V)$ the \emph{degeneracy locus} of $\V$, that is the set of points $x \in \Qpv$ such that the linear map $T_{\Qpv,x} \to \End_{\C}(\VB_x)$ defined by $\theta$ is not injective. Equivalently, it is the set of points at which the period map is not immersive. It is a Zariski closed subset of $\Qpv$.

\bigskip

The following lemma is essentially \cite[Example 7.17.3]{hartshorne2013algebraic}.

\begin{lem}\label{lem:sublb}
	Let $k$ be a field. Let $\Qpv$ be an algebraic $k$-variety. Let $\VB$ be a locally free coherent $\Stru_{\Qpv}$-module. Let $\LB$ be an invertible coherent submodule of $\VB$. Let $U \subset \Qpv$ be the biggest open subset of $\Qpv$ on which $\LB$ is locally a direct factor of $\VB$. Then there exist a blow-up $b: \Qpv' \to \Qpv$ whose center does not intersect $U$ and an invertible coherent submodule $\LB'$ of $b^*\VB$ that is locally a direct factor, includes $b^*\LB$ and is equal to it above $U$.
\end{lem}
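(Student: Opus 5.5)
The plan is the classical blow-up of the ideal defining the degeneracy of $\LB \hookrightarrow \VB$. Since the statement is local on $\Qpv$ up to gluing, I will construct the blow-up center as a global ideal sheaf; gluing will then be automatic because everything is canonical. The inclusion $\LB \to \VB$ gives a morphism $\VB^\vee \to \LB^\vee$. Tensoring with $\LB$ gives $\VB^\vee \otimes \LB \to \Stru_{\Qpv}$, whose image is an ideal sheaf $\I \subset \Stru_{\Qpv}$. Locally, if $\LB$ is generated by a section $s = (s_1, \dots, s_r)$ in a trivialization of $\VB$, then $\I = (s_1, \dots, s_r)$. The key observation is that $\LB$ is locally a direct factor exactly where $\I = \Stru_{\Qpv}$, i.e.\ where the $s_k$ have no common zero. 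Indeed, $\LB_x \otimes k(x) \to \VB_x \otimes k(x)$ is injective if and only if $s(x) \neq 0$, and a locally free sub-line-bundle whose fibre map is injective is locally split. Hence the cosupport $Z$ of $\I$ is exactly $\Qpv \setminus U$.

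Next, I will let $b: \Qpv' \to \Qpv$ be the blow-up of $\I$, whose center $Z$ does not meet $U$. Then $b^{-1}\I \cdot \Stru_{\Qpv'}$ is an invertible ideal $\Stru_{\Qpv'}(-E)$, with $E$ an effective Cartier divisor supported over $Z$. The composite $b^*\VB^\vee \otimes b^*\LB \to \Stru_{\Qpv'}$ then has image exactly $\Stru_{\Qpv'}(-E)$. So we get a surjection $b^*\VB^\vee \twoheadrightarrow b^*\LB^\vee(-E)$. Dualizing this surjection onto a line bundle yields an injection $\LB' := b^*\LB(E) \hookrightarrow b^*\VB$ which is locally a direct factor, since a surjection of locally free sheaves splits locally and its dual is a split injection.

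Concretely, locally on $\Qpv'$ we have $b^*s_k = g \cdot t_k$, where $g$ is a local equation of $E$ and the $t_k$ generate the unit ideal. Then $\LB'$ is generated by $t = (t_1, \dots, t_r)$, which is nowhere vanishing. It remains to check the required properties. First, $b^*\LB \subset \LB'$, since $b^*s = g t$ and $b^*\LB \to \LB'$ is the natural inclusion $b^*\LB \to b^*\LB(E)$ given by the section $g$. Second, over $U$ we have $E \cap b^{-1}(U) = \emptyset$ and $b$ is an isomorphism there, so $\LB' = b^*\LB$ above $U$.

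The main point requiring care is the identification of the cosupport of $\I$ with $\Qpv \setminus U$, and making sure $b^*\LB \to b^*\VB$ remains injective. The latter holds if $\Qpv$ is integral (or at least $\LB \to \VB$ is generically split on each component), so that $b^*s$ is not a zero divisor. I would add this mild assumption, or note that the blow-up of $\I$ by construction makes $b^{-1}\I$ invertible, so that $b^*s = g t$ with $g$ a nonzerodivisor. One also checks that the blow-up does not depend on choices, so the local descriptions glue, which is immediate from the canonical definition of $\I$.
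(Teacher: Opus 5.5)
Your proposal is correct and is the same argument as the paper's: blow up the image ideal $\I$ of $\VB^\vee\otimes\LB\to\Stru_{\Qpv}$ and take $\LB' = \I'^{-1}\cdot b^*\LB = b^*\LB(E)$. The integrality assumption you suggest adding is unnecessary, as your own alternative remark shows: invertibility of $\I'$ makes $g$ a nonzerodivisor and forces $t$ to be unimodular, so $b^*\LB\to b^*\VB$ is injective in general.
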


\begin{proof}
	The injection of $\LB$ in $\VB$ induces a morphism $a: \VB^\vee \otimes \LB \to \Stru_{\Qpv}$. The image $\I$ of $a$ is a coherent sheaf of ideals whose restriction to $U$ is equal to the structural sheaf. Let $b: \Qpv' \to \Qpv$ be the blow-up with respect to $\I$. Let $\I' := b^{-1}\I \cdot \Stru_{\Qpv'}$ be the inverse image ideal sheaf of $ \I$. Then $\I'$ is the image of the morphism $b^*a: b^*\VB^\vee \otimes b^*\LB \to \Stru_{\Qpv'}$, and it is an invertible coherent $\Stru_{\Qpv'}$-module. Then define $\LB'$ as $\I'^{-1}.b^*\LB$.
\end{proof}

Schmid's nilpotent orbit theorem (see \cite{schmid1973variation}) gives a canonical extension of $\VB$ as a holomorphic vector bundle $\ext{\VB}$ on $\Comp$. The Higgs field defines a morphism of holomorphic vector bundles $\ext{\theta}: T_{\Comp}(-\log \Div) \to \End_{\Stru_{\Comp}}(\ext{\VB})$ called the \emph{log-Higgs field}. It induces a morphism $\ext{\theta}_p: \Lambda^p T_{\Comp}(-\log \Div) \to \Lambda^p \End_{\Stru_{\Comp}}(\ext{\VB})$.

\bigskip

We denote by $\pi$ the projection $\Proj \Omega_{\Comp}^p(\log \Div) \to \Comp$ and by $\Stru_p(-1)$ the dual of $\Tautp$. It is locally a direct factor of $\pi^*\Lambda^p T_{\Comp}(-\log \Div)$.

\begin{lem}\label{lem:lb}
	There exist a smooth projective variety $P$, a projective birational morphism $b: P \to \Proj \Omega_{\Comp}^p(\log \Div)$ which is an isomorphism above $\Qpv \setminus \Deg(\V)$ and such that $b^{-1}(\pi^{-1}(\Div \cup \Deg(\V)))$ is a normal crossing divisor, and an invertible coherent submodule $\LB$ of $b^*\pi^*\Lambda^p\End_{\Stru_{\Comp}}(\ext{\VB})$ that is locally a direct factor, includes $b^*\ext{\theta}_p(\Stru_p(-1))$ and is equal to it above $\Qpv \setminus \Deg(\V)$.
\end{lem}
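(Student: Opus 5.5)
The plan is to combine \Cref{lem:sublb} with a resolution of singularities. Set $\Prv := \Proj \Omega_{\Comp}^p(\log \Div)$, which is a smooth projective variety since it is a projective bundle over $\Comp$. On $\Prv$ consider the locally free sheaf $\VB_p := \pi^*\Lambda^p\End_{\Stru_{\Comp}}(\ext{\VB})$ and the morphism
\[ \pi^*\ext{\theta}_p|_{\Stru_p(-1)} : \Stru_p(-1) \to \VB_p . \]

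First I check that this morphism is injective as a map of sheaves, so that $\LB_0 := \pi^*\ext{\theta}_p(\Stru_p(-1))$ is an invertible coherent submodule of $\VB_p$. At any point of $\pi^{-1}(\Qpv \setminus \Deg(\V))$ the map $\theta_x$ is injective, so $\Lambda^p\theta_x$ is injective. Hence the morphism is injective on fibres there, and its image is a direct factor over this open set. Since $\Prv$ is integral and $\Stru_p(-1)$ is invertible, a morphism that is nonzero at the generic point is injective. The open set is nonempty because the period map is immersive somewhere, so injectivity holds. The same fibrewise injectivity shows that the largest open set $U$ on which $\LB_0$ is locally a direct factor contains $\pi^{-1}(\Qpv \setminus \Deg(\V))$.

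Next I apply \Cref{lem:sublb}. It gives a blow-up $b_1 : \Prv_1 \to \Prv$ whose center lies in the complement of $U$, hence inside $\pi^{-1}(\Div \cup \Deg(\V))$. It also gives an invertible submodule $\LB_1 \subset b_1^*\VB_p$ that is locally a direct factor, contains $b_1^*\LB_0$, and equals it over $U$. Note that $\Prv_1$ may be singular.

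I then take a resolution of singularities $r : P \to \Prv_1$, using Hironaka's embedded resolution (principalization). I choose it to be an isomorphism over the smooth locus of $\Prv_1$ that lies above $\pi^{-1}(\Qpv \setminus \Deg(\V))$; this locus is just a copy of an open subset of $\Prv$. I also choose it so that the total transform of $b_1^{-1}(\pi^{-1}(\Div \cup \Deg(\V)))$ is a normal crossing divisor. Set $b := b_1 \circ r$ and $\LB := r^*\LB_1$.

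The pullback of a local direct factor is again a local direct factor. Pullback is exact on locally split inclusions, so $\LB$ is an invertible submodule of $b^*\VB_p$. It contains $b^*\LB_0 = b^*\ext{\theta}_p(\Stru_p(-1))$: the inclusion $b_1^*\LB_0 \subset \LB_1$ pulls back to a morphism of invertible sheaves that is injective because it is generically an isomorphism on the integral variety $P$. Moreover $\LB$ and $b^*\LB_0$ coincide above $\Qpv \setminus \Deg(\V)$. Finally, $b$ is projective and birational, being a composition of a blow-up and a resolution, and it is an isomorphism above $\Qpv \setminus \Deg(\V)$.

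The main point needing care is that a blow-up along an arbitrary ideal produces a singular variety and does not by itself give normal crossings. The remedy is the final resolution step. In it I must make sure the resolution does not modify anything over $\pi^{-1}(\Qpv \setminus \Deg(\V))$, which is possible because $\Prv_1$ is smooth there and Hironaka's resolution can be chosen to be an isomorphism over the smooth locus away from the given divisor.
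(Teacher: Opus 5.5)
Your proposal is correct and is the same argument as the paper's, whose proof is simply to apply \Cref{lem:sublb} to the image of $\Stru_p(-1)$ and then take a desingularization of the resulting blow-up. You make explicit what the paper leaves implicit: that this image is an invertible submodule which is a direct factor over $\pi^{-1}(\Qpv \setminus \Deg(\V))$, that the desingularization can be taken to be a log resolution which is an isomorphism there, and that the sub-line-bundle pulls back with the required properties.
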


\begin{proof}
	Apply \Cref{lem:sublb} and define $P$ as a desingularization of $\Qpv'$.
\end{proof}

The following lemma shows that above $\Qpv \setminus \Deg(\V)$, the augmented base locus of $\LB^\vee$ constrains the augmented base locus of $\Tautp$.

\begin{lem}\label{lem:lbtaut}
	With the notations of the previous lemma, we have $\AugBL(\Tautp) \cap \Proj \Omega_{\Qpv \setminus \Deg(\V)}^p \subset b(\AugBL(\LB^\vee))$.
\end{lem}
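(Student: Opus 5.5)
The plan is to chain \Cref{prop:birat} with \Cref{lem:twolb}, applied on $P$ to the dual of the inclusion $b^*\ext{\theta}_p(\Stru_p(-1)) \subset \LB$. First observe that $\ext{\theta}_p$ is injective on $\Lambda^p T_{\Comp}(-\log \Div)$ over $\Qpv \setminus \Deg(\V)$. Indeed, there $\theta$ is injective, so $\Lambda^p \theta$ is injective and $\Lambda^p T_{\Qpv}$ embeds as a subbundle of $\Lambda^p \End(\VB)$. Hence $\ext{\theta}_p$ restricted to $\Stru_p(-1)$ gives a morphism of invertible sheaves $\Stru_p(-1) \to \pi^*\Lambda^p\End_{\Stru_{\Comp}}(\ext{\VB})$. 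This morphism is injective, since it is generically injective on a torsion-free sheaf. Its image is isomorphic to $\Stru_p(-1)$.

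Pulling back by $b$, we get an injective morphism of invertible sheaves $b^*\Stru_p(-1) \to \LB$. By \Cref{lem:lb}, it is an isomorphism above $\Qpv \setminus \Deg(\V)$. Dualizing gives $f: \LB^\vee \to b^*\Tautp$. Let $U := b^{-1}(\pi^{-1}(\Qpv \setminus \Deg(\V)))$. This $U$ is open, and $f$ is an isomorphism on it. \Cref{lem:twolb}, applied to $\LB^\vee$ and $\LB' = b^*\Tautp$ on the projective variety $P$, then gives
\[ \AugBL(b^*\Tautp) \cap U \subset \AugBL(\LB^\vee). \]

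Next, $P$ and $\Proj \Omega_{\Comp}^p(\log\Div)$ are smooth projective, and $b$ is birational. By \Cref{prop:birat}, $\AugBL(b^*\Tautp) = b^{-1}(\AugBL(\Tautp)) \cup C$, where $C$ is the exceptional locus. Since $b$ is an isomorphism above $\Proj \Omega_{\Qpv\setminus\Deg(\V)}^p$, the set $C$ does not meet $U$. So $b^{-1}(\AugBL(\Tautp)) \cap U \subset \AugBL(\LB^\vee)$.

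Now take a point $y \in \AugBL(\Tautp) \cap \Proj \Omega_{\Qpv \setminus \Deg(\V)}^p$. Its unique preimage $b^{-1}(y)$ lies in $U$ and in $b^{-1}(\AugBL(\Tautp))$, hence in $\AugBL(\LB^\vee)$. Therefore $y \in b(\AugBL(\LB^\vee))$.

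The only delicate point is checking the hypotheses. The map must really be a morphism of line bundles that is an isomorphism exactly over $U$; this is where the direct-factor statement of \Cref{lem:lb} is used. We also need $P$ smooth projective, which \Cref{lem:lb} provides, so that \Cref{prop:birat} applies. Everything else is formal.
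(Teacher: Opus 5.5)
Your proof is correct and follows the paper's argument. Like the paper, you combine the inclusion $b^{-1}(\AugBL(\Tautp)) \subset \AugBL(b^*\Tautp)$ from \Cref{prop:birat} with \Cref{lem:twolb}, applied to the dual $\LB^\vee \to b^*\Tautp$ of the inclusion $b^*\Stru_p(-1) \hookrightarrow \LB$, which is an isomorphism over $b^{-1}(\Proj \Omega_{\Qpv \setminus \Deg(\V)}^p)$. You additionally spell out the construction of this morphism, which the paper leaves implicit; your remark that the exceptional locus avoids $U$ is harmless but not needed.
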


\begin{proof}
	Because of \Cref{prop:birat}, we have:
	\[\AugBL(\Tautp) \cap \Proj \Omega_{\Qpv \setminus \Deg(\V)}^p \subset b(\AugBL(b^*\Tautp)).\]
	Then \Cref{lem:twolb} gives $\AugBL(b^*\Tautp) \cap b^{-1}(\Proj \Omega_{\Qpv \setminus \Deg(\V)}^p) \subset \AugBL(\LB^\vee)$.
\end{proof}

To conclude the proof of \Cref{thm:main}, we need to prove that $\LB^\vee$ has properties close to the ones needed for $\Tautp$ in the hypotheses of \Cref{prop:resnkl}. This is given by the two theorems below, that we can apply to $\LB$ because of the following lemma. The vector bundle $b^*\pi^*\Lambda^p\End_{\Stru_{\Comp}}(\ext{\VB})$ is the log-Higgs bundle associated with the $\C$-PVHS $b^*\pi^*\Lambda^p\End_{\C}(\V)$.

\begin{lem}\label{lem:vanish}
	The log-Higgs field of $b^*\pi^*\Lambda^p\End_{\Stru_{\Comp}}(\ext{\VB})$ vanishes on the line bundle $\LB$ defined in \Cref{lem:lb}.
\end{lem}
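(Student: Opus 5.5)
The plan is to check the vanishing on a dense Zariski open subset of $P$, where $\LB$ has an explicit description, and then extend it to all of $P$ by a torsion-freeness argument.

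\textbf{Identifying the log-Higgs field.} The $\C$-PVHS $b^*\pi^*\Lambda^p\End_{\C}(\V)$ lives on $P^\circ := P \setminus b^{-1}(\pi^{-1}(\Div))$, and $b^{-1}(\pi^{-1}(\Div))$ is a normal crossing divisor by \Cref{lem:lb}. Since the local monodromy is unipotent, the pull-back of the canonical extension is the canonical extension of the pull-back. Its log-Higgs field is therefore the pull-back of the log-Higgs field of $\Lambda^p \End_{\Stru_{\Comp}}(\ext{\VB})$. This is a morphism of vector bundles
\[\Theta_{p}: b^*\pi^*\Lambda^p\End_{\Stru_{\Comp}}(\ext{\VB}) \to b^*\pi^*\Lambda^p\End_{\Stru_{\Comp}}(\ext{\VB}) \otimes \Omega_P^1(\log b^{-1}\pi^{-1}\Div).\]
For a tangent vector $\xi$ of $P^\circ$ it is given by $\Theta_p(\xi) = \Theta_p'(\dif(\pi\circ b)(\xi))$. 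Here $\Theta_p'$ is the Higgs field of $\Lambda^p\End_{\C}(\V)$ on $\Qpv$. By (\ref{eq:higgs}), $\Theta_p'$ acts as a derivation:
\[\Theta_p'(u).(f_1\wedge\dots\wedge f_p) = \sum_{k=1}^p f_1\wedge\dots\wedge[\theta(u),f_k]\wedge\dots\wedge f_p.\]

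\textbf{Vanishing over the good locus.} Take a point of $b^{-1}(\Proj\Omega^p_{\Qpv\setminus\Deg(\V)})$; there $b$ is an isomorphism and we may view the point as $[\alpha]$. Over this locus $\LB = b^*\ext{\theta}_p(\Stru_p(-1))$ by \Cref{lem:lb}. Hence the fibre of $\LB$ at $[\alpha]$ is spanned by $\theta(w_1)\wedge\dots\wedge\theta(w_p)$ for $\alpha = w_1\wedge\dots\wedge w_p$, by multilinearity. If $\alpha$ is not decomposable, the fibre is spanned by the image of $\alpha$ under $\theta_p$, which is a linear combination of such wedges. The integrability relation $\theta\wedge\theta = 0$ says that $[\theta(u),\theta(w_k)]=0$ for all tangent vectors $u, w_k$ at the same point. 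So each term of the derivation formula vanishes, and $\Theta_p(\xi)$ kills the fibre of $\LB$ for every $\xi$. Thus the composite $\LB \hookrightarrow b^*\pi^*\Lambda^p\End(\ext{\VB}) \xrightarrow{\Theta_p} (\cdots)\otimes\Omega^1_P(\log)$ vanishes on the dense open set $b^{-1}(\Proj\Omega^p_{\Qpv\setminus\Deg(\V)})$.

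\textbf{Extension.} This composite is an $\Stru_P$-linear morphism from a line bundle to a locally free sheaf, and $P$ is irreducible because it is birational to $\Proj\Omega^p_{\Comp}(\log\Div)$. It vanishes on a dense open subset, so it vanishes identically, since a section of a locally free sheaf on an integral scheme that is zero generically is zero.

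The only delicate point is the first step. One must check that the log-Higgs field of the pulled-back variation on $P$ really is the pull-back of $\Lambda^p$ of the log-Higgs field, with the correct log poles along $b^{-1}\pi^{-1}(\Div)$. This is where unipotence of the local monodromy and the normal crossing property from \Cref{lem:lb} are used. Once the target is known to be locally free, the rest is formal.
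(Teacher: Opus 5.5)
Your proposal is correct and follows essentially the same route as the paper. Both arguments use the commutation $[\theta(u),\theta(w)]=0$ together with formula (\ref{eq:higgs}) and the Leibniz rule to kill the image of $\Lambda^p T_{\Qpv}$, and then conclude on $\LB$ by density of the good locus in $P$. Your choice to work over $\Qpv\setminus\Deg(\V)$, where $\LB$ coincides exactly with $b^*\ext{\theta}_p(\Stru_p(-1))$, makes the paper's density step slightly more careful.
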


\begin{proof}
	Let $x \in \Qpv$, let $v \in T_{\Qpv,x}$. As $\theta(v)$ commutes with $\theta(w)$ for every $w \in T_{\Qpv,x}$, the Higgs field $\Theta$ of $\End_{\Stru_{\Qpv}}(\VB)$ vanishes at $\theta(v)$ because of formula (\ref{eq:higgs}). Hence the Higgs field $\Theta^p$ of $\Lambda^p \End_{\Stru_{\Qpv}}(\VB)$ vanishes on the image of $\Lambda^p T_{\Qpv}$ in $\Lambda^p \End_{\Stru_{\Qpv}}(\VB)$ (defined by $\theta$ and Leibniz rule). Therefore, the Higgs field of $\pi^* \Lambda^p \End_{\Stru_{\Qpv}}(\VB)$ vanishes on the image of $\Stru_p(-1)$. By density of $b^{-1}(\Proj \Omega_{\Qpv}^p)$ in $P$, it is equivalent to the vanishing of the log-Higgs field of $b^*\pi^*\Lambda^p\End_{\Stru_{\Comp}}(\ext{\VB})$ on $\LB$.
\end{proof}

The following two theorems due to Zuo and Brunebarbe allow us to use $\LB^\vee$ instead of $\Tautp$ to mimic the proof of \Cref{prop:resnkl}.

\begin{thm}[see {\cite[Theorem 1.2]{zuo2000negativity}} or {\cite[Theorem 1.8]{brunebarbe2018symmetric}}]\label{thm:nef}
	Let $\LB$ be an invertible coherent submodule of $\ext{\VB}$ which is locally a direct factor. Suppose the log-Higgs field vanishes on $\LB$. Then the dual $\LB^\vee$ of $\LB$ is nef.
\end{thm}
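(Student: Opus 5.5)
The plan is the classical Griffiths--Zuo argument: show that the Hodge metric restricted to $\LB$ has seminegative curvature, and that it extends across the boundary with mild enough singularities that $\LB^\vee$ carries a singular Hermitian metric of semipositive curvature. Then, since the Lelong numbers of this singular metric are zero, $\LB^\vee$ is nef.

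\emph{Step 1 (curvature on the interior).} Over $\Qpv$, the Hodge metric on $\VB$ has Chern curvature $-\theta\wedge\theta^*-\theta^*\wedge\theta$. For a local holomorphic subbundle $\LB$, the curvature of the induced metric is the compression of this curvature to $\LB$ minus the second fundamental form term; the latter only makes the curvature more negative. Let $e$ be a local generator of $\LB$ and $v$ a tangent vector. Then
\[
h(\Ricu(v,\bar v)e,e)=-|\theta(v)e|^2+|\theta^*(\bar v)e|^2 .
\]
Since $\theta(v)e=0$ by hypothesis, what remains to control is the $|\theta^*(\bar v)e|^2$ term. The correct sign convention to use is Griffiths': $\Theta=\theta\theta^*+\theta^*\theta$ with $\theta^*$ the adjoint. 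This gives
\[
\langle \Theta(v,\bar v)e,e\rangle=|\theta^*(\bar v) e|^2-|\theta(v)e|^2 ,
\]
and with the sign $-\Theta$ for the curvature we get $\langle \Ricu e,e\rangle=|\theta(v)e|^2-|\theta^*(\bar v)e|^2$. With $\theta(v)e=0$ this is $\le 0$. Subtracting the second fundamental form keeps it $\le 0$. So the induced metric on $\LB$ is seminegatively curved, and the dual metric on $\LB^\vee$ is semipositively curved over $\Qpv$.

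\emph{Step 2 (behaviour near $\Div$).} Assume unipotent local monodromy; the quasi-unipotent case follows by passing to a finite cover ramified along $\Div$, since nefness can be checked after a finite surjective pullback. Near a point of $\Div$, Schmid's norm estimates (and those of Cattani--Kaplan--Schmid in the normal crossing case) compare the Hodge norm of a section of $\ext{\VB}$ with products of powers of $|\log|z_i||$. Hence for a local generator $e$ of $\LB$, $\log h(e,e)=O(\log\sum|\log|z_i||)$. The function $-\log h(e,e)$ is therefore a local weight for $\LB^\vee$ that is plurisubharmonic on the complement of $\Div$ and bounded above near $\Div$. By the Riemann extension theorem for psh functions it extends across $\Div$, so the metric on $\LB^\vee$ is a singular metric of semipositive curvature on $\Comp$. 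Its Lelong numbers vanish everywhere, because the singularity is only logarithmic in $\log|z|$.

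\emph{Step 3 (nefness).} A line bundle on a compact Kähler manifold with a singular semipositive metric whose Lelong numbers vanish identically is nef. This follows from Demailly regularization: the resulting smooth approximations have curvature $\ge -\epsilon\omega$. Alternatively, one can argue curve by curve. For a curve $C\not\subset\Div$, $\deg \LB^\vee|_C\ge 0$ by integrating the restricted current, whose restriction is still well defined because the local weight is bounded above and not $-\infty$ on $C$. Curves contained in $\Div$ require restricting the VHS to boundary strata via the limiting mixed Hodge structure, or, more simply, the regularization argument.

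\emph{Main obstacle.} The analytic core is Step 2, in particular obtaining the norm estimate uniformly for sections of the canonical extension near the intersections of components of $\Div$; the several-variable estimates needed there are exactly those of Cattani--Kaplan--Schmid. A second delicate point is that $\LB$ need not be $\theta$-stable in the boundary directions, so the argument must rely only on the interior vanishing of $\theta$ on $\LB$.
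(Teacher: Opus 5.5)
Your route is the standard argument behind the results the paper cites without proof (Zuo, Brunebarbe). It combines Griffiths seminegativity of the Hodge metric on the kernel of the Higgs field, Cattani--Kaplan--Schmid estimates showing that the induced singular metric on $\LB^\vee$ has vanishing Lelong numbers, and Demailly's regularization to conclude nefness. Step 1 is fine after your self-correction ($h(R(v,\bar v)e,e)=|\theta(v)e|^2-|\theta^*(\bar v)e|^2$ is the right sign), and so is Step 3. The extension argument in Step 2, however, is not correct as written.

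\textbf{The extension across $\Div$.} The plurisubharmonic weight is $\varphi=\log h(e,e)$, not $-\log h(e,e)$: seminegativity of $\LB$ means $i\partial\bar\partial\log h(e,e)\geq 0$. More substantively, a bound $\log h(e,e)=O(\log\sum_i|\log|z_i||)$ does not make $\varphi$ (nor $-\varphi$) bounded above near $\Div$, since $\log\sum_i|\log|z_i||\to+\infty$. So you cannot invoke the Riemann extension theorem for psh functions directly. The fix uses only that the growth is sublogarithmic. Write $\Div=\{z_1\cdots z_k=0\}$ locally. Then $\varphi+\epsilon\log|z_1\cdots z_k|$ is bounded above near $\Div$, hence extends. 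The maximum principle on a small polydisc gives $\varphi(z)+\epsilon\log|z_1\cdots z_k|\leq\max_{|z_1|=\dots=|z_n|=r}\varphi+k\epsilon\log r$. Letting $\epsilon\to 0$ shows that $\varphi$ is locally bounded above; this is also exactly what \Cref{thm:metsing} provides.

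\textbf{Where ``locally a direct factor'' is used.} You should make this explicit: it is needed for the lower bound $h(e,e)\geq c\,(\sum_i\log(1/|z_i|))^{-N}$, which is what gives zero Lelong numbers. Complete $e$ to a local frame of $\ext{\VB}$ and let $e^\vee$ be the dual section of $\ext{\VB}^\vee$, which is the canonical extension of the dual variation. Then apply the moderate-growth upper bound to $e^\vee$, together with $1=e^\vee(e)\leq\|e^\vee\|\,\|e\|$. The hypothesis is essential. Take the trivial rank-one variation and $\LB=\mathcal{O}(-E)\subset\mathcal{O}$ with $E$ a $(-1)$-curve: the Higgs field vanishes on $\LB$, but $\LB^\vee=\mathcal{O}(E)$ is not nef.

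\textbf{The quasi-unipotent reduction.} It is unnecessary, since the section assumes unipotent monodromy. It would also need more care than stated, because canonical extensions do not commute with ramified base change.
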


\begin{thm}[see {\cite[Theorem 1.4]{brunebarbe2017semipositivityhiggsbundles}}]\label{thm:metsing}
	Let $\LB$ be an invertible coherent submodule of $\ext{\VB}$. Suppose the log-Higgs field vanishes on $\LB$. Then the restriction of the Hodge metric of $\VB$ to $\LB$ is a singular Hermitian metric of seminegative curvature.
\end{thm}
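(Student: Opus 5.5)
The plan is to reduce the statement to a local plurisubharmonicity statement for $\log$ of the Hodge norm of a local generator of $\LB$. We first prove it on $\Qpv$ by a curvature computation, and then extend it across $\Div$ using Hodge norm estimates. Recall that a singular metric on $\LB$ has seminegative curvature if and only if, for every local holomorphic generator $s$ of $\LB$ on an open set $W \subset \Comp$, the function $\varphi := \log |s|_h^2$ is plurisubharmonic. Here $h$ is the Hodge metric and $s$ is viewed as a section of $\ext{\VB}$. So we fix such $W$ and $s$.

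\emph{Step 1: on $W \cap \Qpv$ away from the zeros of $s$.} There $s$ spans a line subbundle of $\VB$. We use the standard formula
\[
\sqrt{-1}\,\partial\bar\partial \log|s|_h^2(v,\bar v) = -\frac{h(\Ricu_{\VB}^0(v,\bar v)s,s)}{|s|_h^2} + \frac{|D_v s|^2|s|^2 - |h(D_v s,s)|^2}{|s|_h^4},
\]
where $D$ is the Chern connection of $h$ on $\VB$, $\Ricu_{\VB}^0 = -(\theta\wedge\theta^* + \theta^*\wedge\theta)$ is its curvature, and signs are normalized so that the second term is $\geq 0$ by Cauchy--Schwarz. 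Since the Higgs field vanishes on $\LB$, we have $\theta(v)s = 0$. This gives
\[
h(\Ricu_{\VB}^0(v,\bar v)s,s) = -h(\theta(v)\theta^*(\bar v)s,s) = -|\theta^*(\bar v)s|_h^2 \leq 0,
\]
using that $\theta(v)$ and $\theta^*(\bar v)$ are adjoint for the Hodge metric (up to the convention on signs, which we fix so that this term has the sign coming from Griffiths seminegativity in the direction $s$). Hence $\sqrt{-1}\,\partial\bar\partial\varphi \geq 0$ there.

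\emph{Step 2: across the zeros of $s$ in $\Qpv$.} On $\Qpv$ the Hodge metric is smooth. In a local holomorphic frame, $\varphi = \log\|s\|^2 + O(1)$, so $\varphi$ is bounded above near the analytic set $\{s=0\}$. A function that is plurisubharmonic outside an analytic subset and locally bounded above near it extends uniquely as a plurisubharmonic function (Grauert--Remmert). Thus $\varphi$ is plurisubharmonic on $W \cap \Qpv$.

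\emph{Step 3: across $\Div$.} This is the main obstacle, since the Hodge metric degenerates along $\Div$. We would use the Schmid / Cattani--Kaplan--Schmid norm estimates for sections of the canonical extension $\ext{\VB}$. Near a point of $\Div$ with local equation $t_1\cdots t_k = 0$, a holomorphic section of $\ext{\VB}$ satisfies
\[
|s|_h \leq C \prod_i |\log|t_i||^{N},
\]
and here unipotence of the local monodromy is used so that the canonical extension is the one to which the estimates apply. Hence $\varphi \leq \log C + 2N\sum_i \log|\log|t_i||$. This bound is not literally bounded above, so the extension theorem of Step 2 cannot be applied directly. We would therefore use the sharper form of the extension result: a plurisubharmonic function on the complement of a divisor whose growth is $o(\log(1/|t_i|))$ extends plurisubharmonically. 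Such growth makes the Lelong-type obstruction vanish. Equivalently, one can apply the Riemann extension for psh functions bounded above after adding $\epsilon\log|t_i|^2$ and letting $\epsilon \to 0$.

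This proves that $\varphi$ is plurisubharmonic on $W$, hence the claimed seminegativity of the curvature. Restricting to one-dimensional slices if needed, Step 3 reduces to the one-variable $\mathrm{SL}_2$/nilpotent-orbit norm estimate, which is where the real input lies.
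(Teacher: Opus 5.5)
Your proposal is correct. The paper gives no proof of its own and only quotes Brunebarbe; your argument is the standard route to that result. Over $\Qpv$ the Hodge curvature $-(\theta\wedge\theta^*+\theta^*\wedge\theta)$ is Griffiths seminegative along sections killed by $\theta$, so $\log|s|_h^2$ is plurisubharmonic there. It then extends across $\Div$ because sections of the canonical extension have Hodge norm of moderate (sub-$\lvert t_i\rvert^{-\epsilon}$) growth.

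The one step you leave implicit is the uniform upper bound needed when letting $\epsilon\to 0$. It follows from the maximum principle on a small polydisc, by iterating the one-variable argument out to the distinguished boundary, so this is a detail rather than a gap.
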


\begin{proof}[Proof of \Cref{thm:main}]
	Denote by $U$ the complement of the set of $[\alpha] \in \Proj \Omega_{\Qpv \setminus \Deg(\V)}^p$ such that there exists $v \in T_{\Qpv \setminus \Deg(\V)}$ with $v \neq 0$ such that $\alpha \in \Lambda^p C(v)$. We have to prove that $U$ does not intersect $\AugBL(\Tautp)$. By \Cref{lem:lbtaut}, it is sufficient to prove that $b^{-1}(U)$ does not intersect $\AugBL(\LB^\vee)$ with $b$ and $\LB$ defined in \Cref{lem:lb}.
	
	By \Cref{thm:metsing} applied to $\LB$ thanks to \Cref{lem:vanish}, the Hodge metric induces a singular metric $\hat{h}$ of semipositive curvature on $\LB^\vee$. It means that $C_1(\LB^\vee, \hat{h})$ defines a positive closed real $(1,1)$-current on $P$ whose de Rham class is the first Chern form $c_1(\LB^\vee)$. By \Cref{prop:curvhodge} applied on $\Qpv \setminus \Deg(\V)$, and because $\LB^\vee$ and $\Tautp$ are equal above it, $b^{-1}(U)$ is included in a Euclidean open subset $U' \subset P$ on which $C_1(\LB^\vee, \hat{h})$ is Kähler.
	
	Suppose $U'$ is not empty (otherwise there is nothing to prove). Then we have $\int_P C_1(\LB^\vee, \hat{h})^{\dim P} > 0$. Hence \Cref{thm:boucbig} gives that $\LB^\vee$ is a big line bundle.
	
	The real $(1,1)$-class $c_1(\LB^\vee)$ is big, and nef by \Cref{thm:nef} (applied to $\LB$ thanks to \Cref{lem:vanish}). Hence we can apply \Cref{thm:tosnak} with the positive closed real $(1,1)$-form $C_1(\LB^\vee, \hat{h})$. It gives that the non-Kähler locus of $c_1(\LB^\vee)$ does not intersect $U'$. Thanks to \Cref{thm:augnkl}, the non-Kähler locus of $c_1(\LB^\vee)$ is the augmented base locus of $\LB^\vee$.
\end{proof}

\subsection{Decomposition of polarized infinitesimal variations of Hodge structure}\label{subsection:decomp}

To deduce the positivity properties of the logarithmic cotangent bundles from \Cref{thm:main}, we will need \Cref{prop:decomp} below. To state and prove it, we need to define a notion of complex polarizable infinitesimal variation of Hodge structure. 

\begin{defi}\label{defi:pivhs}
	Let $T$ be a complex vector space. A \emph{complex polarizable infinitesimal variation of Hodge structure} ($\C$-PIVHS) on $T$ is the data of $(H, H^i, \delta, \delta^*)$ with $(H, H^i)$ a $\C$-HS, $\delta: T \to \End_{\C}(H, H^i)^{-1}$ a linear map and $\delta^*: T \to \End_{\C}(H, H^i)^1$ an antilinear map such that:
	\begin{itemize}
		\item for every $v, w \in T$, $\delta(v)$ and $\delta(w)$ commute,
		\item there exists a polarization $Q$ of $(H, H^i)$ with respect to which for every $v \in T$, $\delta(v)$ and $\delta^*(v)$ are adjoint.
	\end{itemize}
	
	A \emph{morphism of $\C$-PIVHS} $(H, H^i, \delta, \delta^*) \to (H', H'^i, \delta', \delta'^*)$ on $T$ is a morphism $f: H \to H'$ of $\C$-HS which is equivariant for $\delta$ and $\delta'$, and for $\delta^*$ and $\delta'^*$ (i.e. for every $v \in T$ and $a \in H$, we have $f(\delta(v).a) = \delta'(v).f(a)$ and $f(\delta^*(v).a) = \delta'^*(v).f(a)$).
\end{defi}

\begin{exs}
	\begin{itemize}
		\item Consider $\Anman$ a complex manifold, $x \in \Anman$, and $\V$ a $\C$-PVHS on $\Anman$. Denote by $\V_x$ the fiber of $\V$ above $x$, $\V_x^i$ its $\C$-HS, $\theta_x$ the Higgs field and $\theta_x^*$ the conjugate Higgs field at $x$. Then $(\V_x, \V_x^i, \theta_x, \theta_x^*)$ is a $\C$-PIVHS on $T_{\Anman,x}$.
		\item If $(H_\Z, H^{i,j}, Q, T, \delta)$ is an infinitesimal variation of Hodge structure of weight $n \in \Z$ in the sense of Carlson-Green-Griffiths-Harris \cite{carlson1983infinitesimal}, then $(H_\Z \otimes_\Z \C, H^{i,n-i}, \delta, \bar{\delta})$ is a $\C$-PIVHS on $T$, where $\bar{\delta}$ is the conjugate of $\delta$ for the natural real form of $H_\Z \otimes_\Z \C$.
	\end{itemize}
\end{exs}

\begin{rem}
	The definition is made so that we can apply Schur's lemma. In particular:
	\begin{itemize}
		\item We do not use the ``integral'' infinitesimal variations of Hodge structure defined by Carlson, Green, Griffiths and Harris. Indeed, even if we are only interested by integral, rational or real variations of Hodge structure, it is necessary to have a $\C$-linear category to apply Schur's lemma.
		\item We do not use the ``naive'' complex analog of integral infinitesimal variations of Hodge structure, that would be the data of $(H, H^i, \delta)$ without $\delta^*$. Indeed, the category of ``naive'' $\C$-PIVHSs on a complex vector space $T$ is not semi-simple. Consider for instance a line $T = \C v$, a $\C$-HS $H = H^0 \oplus H^1$ with $H^0$ and $H^1$ two lines, and $\delta(v)$ an isomorphism $H^1 \xrightarrow{\sim} H^0$. Then $H^0$ is stable by $\delta(v)$ but has no complement in $H$ stable by $\delta(v)$.
	\end{itemize}
\end{rem}

\begin{prop}\label{prop:semis}
	Let $T$ be a complex vector space. The category $\Cat_T$ of $\C$-PIVHSs on $T$ is semi-simple Abelian.
\end{prop}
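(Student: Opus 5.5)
The plan is to show that $\Cat_T$ is Abelian and that every object is a finite direct sum of simple objects. The tool is orthogonal complements for a polarization: subobjects of a polarized object always admit complements.

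\textbf{Additivity and kernels.} Direct sums are formed componentwise, with $\delta \oplus \delta'$, $\delta^* \oplus \delta'^*$ and the orthogonal sum of polarizations; this is again a $\C$-PIVHS. For a morphism $f: H \to H'$, the kernel $\ker f$ and the image $f(H)$ are graded subspaces, because $f$ preserves the gradings. They are stable under all $\delta(v)$ and $\delta^*(v)$, because $f$ is equivariant for both. It therefore suffices to prove one claim.

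\emph{Claim:} if $W \subset H$ is a graded subspace stable under all $\delta(v)$ and $\delta^*(v)$, then $W$ with the induced data is a $\C$-PIVHS, and it admits a complement in $\Cat_T$.

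\textbf{Proof of the claim.} Fix a polarization $Q$ adapted as in \Cref{defi:pivhs}, and let $h$ be its Hodge metric.
\begin{itemize}
\item \emph{Restriction.} Since $W = \bigoplus_i (W \cap H^i)$, the restriction of $(-1)^iQ$ to $W \cap H^i$ is positive definite. Hence $Q|_W$ polarizes $W$. Adjointness of $\delta(v)|_W$ and $\delta^*(v)|_W$ is inherited, and the commutation of the $\delta(v)$ is clear.
\item \emph{Complement.} Set $W^\perp$ to be the $Q$-orthogonal of $W$. Since $Q$ is nondegenerate, $W^\perp$ equals the $h$-orthogonal of $W$, because $W$ is graded and $h$ differs from $Q$ only by signs on the orthogonal pieces $H^i$. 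In particular $W^\perp$ is graded and $H = W \oplus W^\perp$.
\item \emph{Stability of $W^\perp$.} Let $a \in W^\perp$ and $b \in W$. Then $Q(\delta(v)a, b) = Q(a, \delta^*(v)b) = 0$, since $\delta^*(v)b \in W$. So $\delta(v)$ preserves $W^\perp$, and symmetrically so does $\delta^*(v)$. Here the presence of $\delta^*$ in the data is essential, as the remark preceding the proposition explains.
\item \emph{Conclusion.} $W^\perp$ is a $\C$-PIVHS by the restriction step, and $H \cong W \oplus W^\perp$ in $\Cat_T$.
\end{itemize}

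\textbf{Abelian structure.} With the claim, $\ker f$ and $f(H)$ are objects of $\Cat_T$. Cokernels exist as $H'/f(H) \cong f(H)^\perp$, using the induced structure. The canonical map from the coimage to the image is the bijective graded equivariant map $H/\ker f \cong (\ker f)^\perp \to f(H)$. Its inverse is automatically graded and equivariant, so it is an isomorphism in $\Cat_T$. Hence $\Cat_T$ is Abelian. One point to check is that the kernel in $\Cat_T$ agrees with the linear kernel: this holds because any morphism into $H$ with zero composite lands in $\ker f$.

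\textbf{Semi-simplicity.} Argue by induction on $\dim H$. If $H$ is not simple, choose a nonzero proper subobject $W$. By the claim $H \cong W \oplus W^\perp$, and both summands have smaller dimension. Thus every object is a finite direct sum of simple objects, so every short exact sequence splits and $\Cat_T$ is semi-simple.

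The only genuine obstacle is the stability of $W^\perp$, including the subtle point that orthogonality for the indefinite form $Q$ still yields a complement. This is handled by the gradedness of $W$, which makes $Q$-orthogonality coincide with $h$-orthogonality. Everything else is formal.
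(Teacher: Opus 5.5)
Your proof is correct and follows the paper's argument. Both take the orthogonal complement of a subobject for a polarization adapted to $\delta, \delta^*$, use adjointness to show it is stable, and deduce $H = W \oplus W^{\perp}$, which gives quotients and semi-simplicity. You also spell out what the paper calls easy: gradedness of $W^{\perp}$, since $Q$- and Hodge-orthogonality agree on graded subspaces, and the coimage--image isomorphism.
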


\begin{proof}
	It is easy to see that $\Cat_T$ is additive and admits kernels. Let $H$ be an object of $\Cat_T$ and $H' \subset H$ a sub-object, i.e. a sub-$\C$-HS stable by $\delta(v)$ and $\delta^*(v)$ for every $v \in T$. Consider the orthogonal $H'^{\bot}$ for a polarization of $H$ as in \Cref{defi:pivhs}. It is a sub-$\C$-HS stable by $\delta(v)$ (resp. $\delta^*(v)$) because $H'$ is stable by its adjoint $\delta^*(v)$ (resp. $\delta(v)$), for every $v$. Hence we have $H = H' \oplus H'^{\bot}$ in the category $\Cat_T$. This proves semi-simplicity and the existence of quotients.
\end{proof}

Let $H$ and $H'$ be two objects of $\Cat_T$. We can define $H \otimes H'$ as an object of $\Cat_T$ by letting $v \in T$ act by $\delta(v) \otimes \id_{H'} + \id_H \otimes \delta(v)$ and $\delta^*(v) \otimes \id_{H'} + \id_H \otimes \delta^*(v)$. If $T'$ is a complex vector space, then a linear map $f: T \to T'$ induces an exact functor $f^*: \Cat_{T'} \to \Cat_T$. Hence, using the pullbacks by the projections, for $S$ a complex vector space, an object $H$ of $\Cat_S$ and an object $H'$ of $\Cat_T$ define an object $H \otimes H'$ of $\Cat_{S \oplus T}$.

\begin{prop}\label{prop:decomp}
	Let $S, T$ be two complex vector spaces. Let $(H, H^i, \delta, \delta^*)$ be a $\C$-PIVHS on $S \oplus T$. Then the following are equivalent.
	\begin{itemize}
		\item For every $v \in S$ and $w \in T$, $\delta^*(v)$ and $\delta(w)$ commute.
		\item There exists a finite set $A$ and a family $(V_a)$ (resp $(W_a)$) of $\C$-PIVHS on $S$ (resp. $T$) indexed by $A$ such that $H$ is isomorphic to $\bigoplus_{a \in A} V_a \otimes W_a$ (in the category $\Cat_{S \oplus T}$).
	\end{itemize}
\end{prop}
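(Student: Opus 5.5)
The easy direction is the converse. On $V_a \otimes W_a$, an element $v \in S$ acts by $\delta^*(v) \otimes \id_{W_a}$ and $w \in T$ acts by $\id_{V_a} \otimes \delta(w)$. These commute, so the first condition holds on each summand and hence on the direct sum. Isomorphisms in $\Cat_{S \oplus T}$ intertwine $\delta$ and $\delta^*$, so the condition transfers to $H$.

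For the direct implication I would use Schur's lemma in the semisimple category of \Cref{prop:semis}. Let $\iota_S, \iota_T$ be the inclusions. I decompose the restriction $\iota_S^*H$ in $\Cat_S$ into isotypic components $\bigoplus_{a \in A} U_a$, where $U_a \cong V_a^{\oplus n_a}$ with $V_a$ simple and pairwise non-isomorphic. Take $w \in T$. The endomorphisms $\delta(w)$ and $\delta^*(w)$ of $H$ commute with $\delta(v)$ by the commutativity axiom. They commute with $\delta^*(v)$ by hypothesis, and also after taking adjoints, since $[\delta(w),\delta^*(v)]=0$ gives $[\delta(v),\delta^*(w)]=0$. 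However, they are not degree $0$ maps, so they are not morphisms in $\Cat_S$. This is the main obstacle.

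To handle it, I would introduce a twist. For a Hodge structure, let $H(k)$ denote the same PIVHS with grading shifted by $k$. Then $\delta(w)$ is a morphism $\iota_S^*H \to \iota_S^*H(-1)$ or $(+1)$ in $\Cat_S$, with the sign fixed by convention. Up to shift, the polarization sign $(-1)^i$ changes, but $Q$ can be replaced by $-Q$, so shifts stay in $\Cat_S$. I would then define the isotypic decomposition with respect to simple objects up to shift. Concretely, group the simples into classes $V$ modulo shifts, and let $U_V$ be the sum of all subobjects isomorphic to some $V(k)$. Because $\delta(w)$ and $\delta^*(w)$ are morphisms into shifted objects, they preserve each $U_V$. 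This gives an orthogonal decomposition $H = \bigoplus U_V$ in $\Cat_{S\oplus T}$.

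Within one class, I would write $U_V = \bigoplus_k V(k) \otimes M_k$, where $M_k := \Hom_{\Cat_S}(V(k), U_V)$. By Schur's lemma, $\End(V(k)) = \C$. I also need $V(k) \not\cong V(k')$ for $k \neq k'$, which holds by comparing the lowest degrees of the nonzero graded pieces.

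Next I set $W := \bigoplus_k M_k$, graded with $M_k$ in degree $k$. Define $\delta_W(w)$ by composing a homomorphism with $\delta(w)$: a map $V(k) \to U_V$ is sent to $\delta(w)\circ$ it, which is a map $V(k\pm1) \to U_V$ after the shift identification. Define $\delta_W^*$ in the same way. This makes $W$ an object of $\Cat_T$. The evaluation map $\bigoplus_k V(k)\otimes M_k \to U_V$ then becomes an isomorphism $V \otimes W \cong U_V$ in $\Cat_{S\oplus T}$, where the degrees add correctly.

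It remains to check that $W$ is polarizable. I would put on $M_k$ the inner product $\langle f,g\rangle$ determined by $g^\dagger f = \langle f, g\rangle\,\id$. Here $g^\dagger$ is the Hodge-metric adjoint, which is a morphism in $\Cat_S$ because adjoints of morphisms in $\Cat_S$ are again morphisms, using $\delta^* = \delta^\dagger$. This inner product is positive definite. Twisting it by the sign $(-1)^k$ gives a polarization for which $\delta_W(w)$ and $\delta_W^*(w)$ are adjoint, inheriting adjointness from $H$. Taking $A$ to be the set of classes $V$ finishes the proof.
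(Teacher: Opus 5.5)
Your proposal is correct and is essentially the paper's proof. The paper also takes $A$ to be the set of shift-orbits of simple objects of $\Cat_S$ occurring in $H$. Its $W_a$, the space of all linear maps $V_a \to H$ equivariant for $\delta_{\vert S}$ and $\delta^*_{\vert S}$ and graded by degree, is exactly your $\bigoplus_k \Hom_{\Cat_S}(V(k),U_V)$, with $T$ acting through post-composition with $\delta(w)$ and $\delta^*(w)$ by Schur's lemma. The differences are cosmetic: you first split $H$ into the shift-isotypic pieces $U_V$ and check their $T$-stability explicitly, and you define the polarization on $W$ via Schur's lemma ($g^\dagger f=\langle f,g\rangle\,\id$) rather than as the polarization induced from $\Hom_\C(V_a,H)$, which agrees with yours up to the factor $\dim V$.
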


\begin{proof}
	We prove the direct implication, the converse is obvious. Denote $H_S := i_S^*H$ with $i_S: S \hookrightarrow S \oplus T$ the inclusion and $H_T := i_T^*H$ with $i_T: T \hookrightarrow S \oplus T$ the inclusion. The group $\Z$ acts freely by shift of the grading on the set of isomorphism classes of simple objects of $\Cat_S$. Let $A$ be the finite set of orbits under this action which are represented by a simple sub-$\C$-PIVHS of $H_S$. For every $a \in A$, choose a $\C$-PIVHS $V_a$ on $S$ which represents $a$.
	
	Let $W_a$ be the vector space of linear maps $V_a \to H$ which are equivariant for $\delta_{\vert S}$ and $\delta_{\vert S}^*$ (but that are not necessarily morphisms of $\C$-HS). Then $W_a$ is naturally a $\C$-HS, so it can be considered as an object of $\Cat_S$ with zero $\delta$ and $\delta^*$.
	
	Consider the following morphism of $\C$-PIVHS on $S$.
	\[\application{\alpha}{\bigoplus_{a \in A} V_a \otimes W_a}{H_S}{v_a \otimes w_a}{w_a(v_a).}\]
	Schur's lemma gives the injectivity of $\alpha$, and \Cref{prop:semis} gives the surjectivity.
	
	For every $w \in T$, $\delta(w)$ is an element of $\End_\C(H)^{-1}$. By definition of a $\C$-PIVHS, $\delta(w)$ is equivariant for $\delta_{\vert S}$. By hypothesis, it is equivariant for $\delta_{\vert S}^*$. Hence, by Schur's lemma, using the isomorphism $\alpha$, $\delta(w)$ can be considered as an element of $\bigoplus_{a \in A} \End_\C(W_a)^{-1}$. The same applies for $\delta^*(w)$, which can be considered as an element of $\bigoplus_{a \in A} \End_\C(W_a)^1$. Denote by $\delta_{W_a}$ and $\delta_{W_a}^*$ the morphisms $T \to \End_\C(W_a)$ defined in this way for every $a \in A$.
	
	It only remains to check that for every $a \in A$, the structure $(W_a, W_a^i, \delta_{W_a}, \delta_{W_a}^*)$ that we just defined is a $\C$-PIVHS on $T$, by proving that it admits a convenient polarization. A polarization $Q$ of $\C$-HS of $H$ and a polarization $Q'$ of $\C$-HS of $V_a$ induce a polarization $Q''$ of $\C$-HS of $W_a$, as it is a sub-$\C$-HS of $\Hom_\C(V_a, H)$. By definition of $\C$-PIVHS, we can choose $Q$ such that for every $w \in T$, $\delta(w)$ and $\delta^*(w)$ are adjoint. Moreover, one can easily check that for every $w \in T$ and $f \in W_a$, we have $\delta_{W_a}(w).f = \delta(w) \circ f$ and $\delta_{W_a}^*(w).f = \delta^*(w) \circ f$, so $\delta_{W_a}(w)$ and $\delta_{W_a}^*(w)$ are adjoint with respect to $Q''$.
\end{proof}

\subsection{Rank criterion}\label{subsection:conseqdec}

Let $(H, H^i, \delta, \delta^*)$ be a $\C$-PIVHS on a complex vector space $T$. For $v \in T, i \geq j \in \Z$, we will denote by $v^{i,j}$ the restriction $H^i \to H^j$ of the iterate $\delta(v)^{i-j}$ and by $\delta^{i,j}$ the map $v \mapsto v^{i,j}$.

\subsubsection{Decomposed PIVHS and rank}

The existence of the decomposition given by \Cref{prop:decomp} allows one to prove properties of the rank of $\theta_x^{i,j}(w)$ for $w \in C_{\V}(v)$ and for some integers $i$ and $j$, in the context of \Cref{thm:main}.

\begin{lem}\label{lem:rk}
	Let $(H, H^i, \delta, \delta^*)$ be a $\C$-PIVHS on a complex vector space $T$. Let $S \subset T$ be a vector subspace. Let $v \in T$ be such that $\delta(v) \neq 0$ and for every $w \in S$, $[\delta^*(v),\delta(w)] = 0$. Let $i \in \Z$ be maximal such that $v^{i,i-1} \neq 0$. Then there exists $j \in \Z$, $\lambda \in \C \setminus \{0\}$ and $u \in S$ such that $j < i$ and for every $w \in S$, $\rk((\lambda v + u)^{i,j}) > \rk(w^{i,j})$.
\end{lem}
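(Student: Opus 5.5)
The natural tool is \Cref{prop:decomp}. I would apply it to the pullback of $(H, H^i, \delta, \delta^*)$ along the linear map $\C v \oplus S \to T$, $(\lambda v, w) \mapsto \lambda v + w$. The hypothesis $[\delta^*(v),\delta(w)] = 0$ for $w \in S$, together with antilinearity of $\delta^*$, is exactly the first condition of \Cref{prop:decomp} for the pair $(\C v, S)$. We then get
\[ H \cong \bigoplus_{a \in A} V_a \otimes W_a, \]
where each $V_a$ is a $\C$-PIVHS on the line $\C v$ and each $W_a$ is a $\C$-PIVHS on $S$. In this decomposition $v$ acts by $\delta_{V_a}(v)\otimes \id$, and $w \in S$ acts by $\id \otimes \delta_{W_a}(w)$. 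Consequently, for $\lambda \in \C$ and $u \in S$, each bigraded piece $V_a^p \otimes W_a^q$ is sent by $(\lambda v + u)^{N}$ into
\[ \bigoplus_{k=0}^{N} V_a^{p-k}\otimes W_a^{q-N+k}, \]
and the $k$-th component is $\binom{N}{k}\lambda^k\,\delta_{V_a}(v)^k \otimes \delta_{W_a}(u)^{N-k}$. These targets are distinct direct summands of $H^{i-N}$.

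\emph{Choice of $j$ and $u$.} First fix $u \in S$ generic, so that for every $N \geq 1$ and every $a$ the ranks of $\delta_{W_a}(u)^N$ between every pair of graded pieces are maximal among elements of $S$. This is a nonempty Zariski open condition. Then $\rk(w^{i,j}) \leq \rk(u^{i,j})$ for all $w \in S$, since $w^{i,j} = \bigoplus_{a,p+q=i} \id_{V_a^p}\otimes w^{q,q-(i-j)}$ and rank is lower semicontinuous.

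Since $\delta(v)\neq 0$, some $V_a$ has nonzero $v$-action. Let $k_0 \geq 1$ be maximal such that $\delta_{V_a}(v)^{k_0}$ is nonzero on a piece $V_a^p$ with $p + q = i$ for some $q$ with $W_a^q \neq 0$. Let $N_0 \geq 0$ be maximal such that $\delta_{W_a}(u)^{N_0}\neq 0$ on $W_a^q$ (with $q$ chosen to maximize this). Set $j := i - k_0 - N_0 < i$.

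\emph{Rank comparison.} The map $(\lambda v + u)^{i,j}$ is block-triangular with respect to the bigrading. Its block with $k = k_0$, from $V_a^p\otimes W_a^q$ to $V_a^{p-k_0}\otimes W_a^{q-N_0}$, has rank $\rk(\delta_{V_a}(v)^{k_0})\cdot\rk(\delta_{W_a}(u)^{N_0}) > 0$, using $\rk(A\otimes B)=\rk A\cdot\rk B$. The blocks with $k=0$ reproduce $u^{i,j}$. I would then show that, for generic $\lambda \neq 0$, the rank of the full map is at least $\rk(u^{i,j})$ plus the rank of this extra block. To do so, I would use that the extra block lands in summands $V_a^{p-k_0}\otimes W_a^{\ast}$ that $u^{i,j}$ cannot reach from $V_a^p\otimes W_a^{q}$, because $u$ does not lower the $V$-degree. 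Maximality of $i$ (the condition $v^{i',i'-1}=0$ for $i'>i$) and of $k_0$, $N_0$ is what should prevent cancellation: the image of $\delta(v)^{k_0}$ from the top degree meets the image of $u^{i,j}$ only along pieces of lower $V$-degree.

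The main obstacle is this last strictness step, namely making sure the contributions of different $k$ and different $a$ do not cancel, so that the rank genuinely increases. I would first try to order the bigraded pieces by $V$-degree and show that $(\lambda v+u)^{i,j}$ is block upper-triangular with diagonal blocks equal to $u^{i,j}$. The extra $k_0$-block is then an off-diagonal block landing in a row where the diagonal part vanishes, by maximality of $k_0$. A Schur-complement or generic-$\lambda$ argument should then give the strict inequality. If that fails, I would fall back to choosing $a$, $p$, $q$ so that $V_a^p\otimes W_a^q$ is the unique source reaching $V_a^{p-k_0}\otimes W_a^{q-N_0}$, and argue directly on that summand.
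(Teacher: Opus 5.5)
\textbf{There is a genuine gap in the strictness step.} Your setup matches the paper's: \Cref{prop:decomp} applied on $\C v\oplus S$, a generic $u$ with $\rk(u^{i,j})\geq\rk(w^{i,j})$ for all $w\in S$, and $j$ defined through maximal iterates. But the strict inequality, which is the whole content of the lemma, is never proved. The mechanisms you sketch for it do not work as stated. An off-diagonal block landing in a row whose diagonal block vanishes need not raise the rank: $\begin{pmatrix} a & 0 \\ c & 0 \end{pmatrix}$ with $a$ invertible has the same rank as $\begin{pmatrix} a & 0 \\ 0 & 0 \end{pmatrix}$. You also need the \emph{source} of that block to be killed by the rest of the map. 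Your ``unique source'' fallback has the same defect, as the same example shows.

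There are two smaller slips. First, $a$ must be chosen with $v^{i,i-1}|_{X_a}\neq 0$ (where $X_a=V_a\otimes W_a$), not merely with $\delta_{V_a}(v)\neq 0$; otherwise no $k_0\geq 1$ need exist. Second, defining $k_0$ and $N_0$ with ``$q$ chosen to maximize this'' is ill-posed as written, because $q$ is already tied to $k_0$.

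\textbf{The missing idea is to exploit the maximality of $i$.} Let $s$ be the top degree of $W_a$. If $\delta_{V_a}(v)\neq 0$ on $V_a^{p'}$, then $v^{p'+s,p'+s-1}\neq 0$ on $X_a$, so $p'+s\leq i$. Hence in degree $i$ the only piece of $X_a$ on which $v$ acts nontrivially is $V_a^{p}\otimes W_a^{s}$ with $p=i-s$; in particular $q=s$ is forced. Moreover $v$ kills the other degree-$i$ pieces $V_a^{p''}\otimes W_a^{i-p''}$ with $p''>p$.

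Now take $k_0$ maximal with $\delta_{V_a}(v)^{k_0}\neq 0$ on $V_a^p$, $N_0$ maximal with $\delta_{W_a}(u)^{N_0}\neq 0$ on $W_a^s$, and $j=i-k_0-N_0$.
\begin{itemize}
\item On $V_a^p\otimes W_a^s$, the binomial expansion of $(\lambda v+u)^{i,j}$ collapses to the single term $\binom{k_0+N_0}{k_0}\lambda^{k_0}\delta_{V_a}(v)^{k_0}\otimes\delta_{W_a}(u)^{N_0}\neq 0$. Meanwhile $u^{i,j}$ vanishes there, since $k_0+N_0>N_0$.
\item On the other degree-$i$ pieces, $(\lambda v+u)^{i,j}$ coincides with $u^{i,j}$ and lands in $V_a^{p''}\otimes W_a^{j-p''}$, never in $V_a^{p-k_0}\otimes W_a^{s-N_0}$.
\end{itemize}
So the map on $X_a$ is block diagonal. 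Its rank exceeds $\rk(u^{i,j}|_{X_a})$ by $\rk\delta_{V_a}(v)^{k_0}\cdot\rk\delta_{W_a}(u)^{N_0}>0$ for every $\lambda\neq 0$. On the other summands, a generic $\lambda$ gives rank at least that of $u^{i,j}$. Ranks simply add over the invariant summands $X_b$, so there is no cross-$a$ cancellation to worry about. This completes your argument.

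\textbf{How the paper does it.} The paper reaches the same point more directly by refining the decomposition so that each $V_a$ is simple. Then $i$ is the top degree of $X_a$, and $X_a^i=V_a^k\otimes W_a^s$ is a single piece. It chooses $j=l+t$ so that $w^{i,j}|_{X_a}=0$ for all $w\in S$. Strictness then reduces to a positive rank versus $0$, with $(\lambda,u)$ taken jointly generic for all summands and all $j<i$.
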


\begin{proof}
	Applying \Cref{prop:decomp} to the pull-back of $H$ on $\C v \oplus S$, we get a decomposition $H = \bigoplus_{a \in A} X_a$ (with $A$ a finite set) and for every $a \in A$, $X_a = V_a \otimes W_a$ with $V_a$ a $\C$-PIVHS on $\C v$ and $W_a$ a $\C$-PIVHS on $S$. We can refine the decomposition to suppose that every $V_a$ is simple. For every $a \in A$ and $j \in \Z$ such that $j < i$, the set of $(\lambda, u) \in \C \times S$ for which $\rk((\lambda v + u)_{\vert X_a}^{i,j})$ is maximal is a non-empty Zariski open subset of $\C \times S$. Hence, the intersection of these subsets is a non-empty Zariski open subset of $\C \times S$. In particular, there exists $(\lambda, u) \in (\C \setminus \{0\}) \times S$ such that for every $a \in A$, every $j \in \Z$ such that $j < i$ and every $w \in S$, we have $\rk((\lambda v + u)_{\vert X_a}^{i,j}) \geq \rk(w_{\vert X_a}^{i,j})$. Now, it is sufficient to prove that there exists $a$ and $j$ such that $(\lambda v + u)_{\vert X_a}^{i,j} \neq 0$ and for every $w \in S$, we have $w_{\vert X_a}^{i,j} = 0$.
	
	By definition of $i$, there exists $a \in A$ such that the restriction of $v^{i,i-1}$ to $X := X_a$ is non-zero. From now on, we denote $V := V_a$, $W := W_a$ and the endomorphisms of $H$ defined by $\delta$ are considered in restriction to $X$. We denote by $v_V$ the action of $v$ on $V$ and by $w_W$ the action of an element $w \in S$ on $W$.
	
	Let $s \in \Z$ be maximal such that $W^s \neq 0$, let $t \in \Z$ with $t \leq s$ be minimal such that there exists $w \in S$ such that $w_W^{s,t} \neq 0$. Let $k \in \Z$ be maximal such that $V^k \neq 0$. By simplicity of $V$, $k$ is also maximal such that $v_V^{k,k-1} \neq 0$. Let $l \in \Z$ with $l < k$ be minimal such that $v_V^{k,l} \neq 0$. Then we have $i = k + s$. Let $j := l + t$: because $l < k$ and by definition of $t$, we have $w^{i,j} = 0$ for every $w \in S$. It remains to check that $(\lambda v + u)^{i,j} \neq 0$:
	\begin{align*}
	(\lambda v + u)^{i,j} &= (\lambda v_V \otimes \id_W + \id_V \otimes u_W)^{i,j}\\
	&= \sum_{l'+t'=j} \binom{i-j}{k-l'} \lambda^{k-l'} v_V^{k,l'} \otimes u_W^{s,t'} \\
	&= \binom{i-j}{k-l} \lambda^{k-l} v_V^{k,l} \otimes u_W^{s,t}\\
	&\neq 0.
	\end{align*}
\end{proof}

The following two propositions are simplified forms of the previous lemma.

\begin{prop}\label{prop:rkpivhs}
	Let $(H, H^i, \delta, \delta^*)$ be a $\C$-PIVHS on a complex vector space $T$. Let $S \subset T$ be a vector subspace. Suppose there exists $v \in T$ such that $\delta(v) \neq 0$ and for every $w \in S$, $[\delta^*(v),\delta(w)] = 0$.
	
	Then there exists $i, j \in \Z$ and $z \in T$ such that $j < i$ and for every $w \in S$, $\rk(z^{i,j}) > \rk(w^{i,j})$.
\end{prop}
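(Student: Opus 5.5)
This is a direct specialization of \Cref{lem:rk}, so the plan is to check its hypotheses and read off the conclusion. We are given a vector $v \in T$ with $\delta(v) \neq 0$ and $[\delta^*(v),\delta(w)] = 0$ for every $w \in S$. These are exactly the hypotheses of \Cref{lem:rk}, except that the lemma also needs the integer $i$.

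First, we produce $i$. Since $\delta(v)$ is nonzero and homogeneous of degree $-1$, there is some $i'$ with $v^{i',i'-1} \neq 0$. Only finitely many $H^{i'}$ are nonzero, so a maximal such integer exists; call it $i$. Applying \Cref{lem:rk} with this $i$ gives $j \in \Z$ with $j < i$, a scalar $\lambda \in \C \setminus \{0\}$ and $u \in S$ such that $\rk((\lambda v + u)^{i,j}) > \rk(w^{i,j})$ for every $w \in S$.

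Second, we set $z := \lambda v + u$. This lies in $T$ because $v \in T$ and $u \in S \subset T$. It satisfies the required strict rank inequality for the same pair $(i,j)$ with $j < i$, which is the statement of \Cref{prop:rkpivhs}.

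No step here is a real obstacle, since all the content sits in \Cref{lem:rk}. The only point to verify is that the maximal $i$ exists, which follows from the finiteness of the grading and from $\delta(v) \neq 0$. The proposition simply forgets the finer information that $z$ has the form $\lambda v + u$ with $\lambda \neq 0$.
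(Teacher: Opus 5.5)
Your proof is correct and is the paper's argument: the paper also takes $z := \lambda v + u$ from \Cref{lem:rk}. Your check that a maximal $i$ with $v^{i,i-1} \neq 0$ exists (because $\delta(v)$ is a nonzero endomorphism of degree $-1$ and the grading is finite) is a detail the paper leaves implicit.
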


\begin{proof}
	Take $z := \lambda v + u$ as given by the previous lemma.
\end{proof}

\begin{prop}\label{prop:rkcy}
	In the setting of the previous proposition, let $i \in \Z$ be maximal such that $\delta^{i,i-1} \neq 0$.
	
	If $\delta^{i,i-1}$ is injective, then the conclusion of the proposition can be realized with this $i$.
	
	If moreover $\dim H^i = 1$, then the conclusion of the proposition can be realized with this $i$ and for $j$ the minimal integer such that $\delta^{i,j} \neq 0$. In this case this conclusion is equivalent to: for every $w \in S$, $w^{i,j} = 0$.
\end{prop}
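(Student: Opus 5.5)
The plan is to deduce everything from \Cref{lem:rk}, applied to the vector $v$ given by the hypothesis of \Cref{prop:rkpivhs}, by checking that the index $i$ produced by that lemma (the maximal index with $v^{i,i-1} \neq 0$, for this particular $v$) coincides with the global index defined here (the maximal index with $\delta^{i,i-1} \neq 0$).

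\emph{First assertion.} Let $i$ be maximal with $\delta^{i,i-1} \neq 0$. For every $i' > i$ we have $\delta^{i',i'-1} = 0$, so in particular $v^{i',i'-1} = 0$. Since $\delta(v) \neq 0$, the vector $v$ is nonzero. Injectivity of $\delta^{i,i-1}$ then gives $v^{i,i-1} \neq 0$. Hence the maximal index attached to $v$ in \Cref{lem:rk} is exactly this $i$. The lemma then yields $j < i$ and $z := \lambda v + u$ with $\rk(z^{i,j}) > \rk(w^{i,j})$ for every $w \in S$, which is the required conclusion for this $i$.

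\emph{Second assertion.} Assume moreover $\dim H^i = 1$ and let $j_0$ be minimal with $\delta^{i,j_0} \neq 0$; note $j_0 < i$ since $\delta^{i,i-1} \neq 0$. Every rank $\rk(w^{i,j})$ is then $0$ or $1$. The first assertion gives some $j$ and $z$ with $z^{i,j} \neq 0$ and $w^{i,j} = 0$ for all $w \in S$.
\begin{itemize}
\item Since $z^{i,j} \neq 0$, minimality of $j_0$ forces $j \geq j_0$.
\item For every $w \in S$, the factorization $w^{i,j_0} = w^{j,j_0} \circ w^{i,j}$, where $w^{j,j_0}$ is the restriction of $\delta(w)^{j-j_0}$ to $H^j$, shows that $w^{i,j_0} = 0$.
\item Since $\delta^{i,j_0} \neq 0$, there is $z' \in T$ with $z'^{i,j_0} \neq 0$, so $\rk(z'^{i,j_0}) = 1 > 0 = \rk(w^{i,j_0})$ for all $w \in S$.
\end{itemize}
This realizes the conclusion with $i$ and $j_0$.

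\emph{Equivalence.} Ranks are in $\{0,1\}$ and $\delta^{i,j_0} \neq 0$. So the existence of $z$ with $\rk(z^{i,j_0}) > \rk(w^{i,j_0})$ for all $w \in S$ holds exactly when $w^{i,j_0} = 0$ for every $w \in S$.

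The only point requiring care is the identification of the two notions of ``maximal $i$'' in the first step, which is precisely where the injectivity hypothesis is used. The factorization step in the second assertion is routine once one remembers that $H^i$ is a line, so that ranks are at most one.
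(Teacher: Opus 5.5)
Your proof is correct and follows the paper's argument. Injectivity of $\delta^{i,i-1}$ forces $v^{i,i-1}\neq 0$, so \Cref{lem:rk} applies with this $i$. For the second part you use $\dim H^i=1$ to get $w^{i,j}=0$, compose with $w^{j,j_0}$, and pick $z'$ with $z'^{i,j_0}\neq 0$; you also make explicit the inequality $j\geq j_0$ needed for that composition, which the paper leaves implicit.
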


\begin{proof}
	For the first part, the injectivity of $\delta^{i,i-1}$ guarantees that $v^{i,i-1} \neq 0$, so $i$ is the one in the hypothesis of \Cref{lem:rk}.
	
	For the second part, we have by the first part $j' \in \Z$ and $z' \in T$ such that $j' < i$ and for every $w \in S$, $\rk(z'^{i,j'}) > \rk(w^{i,j'})$. As $\dim H^i = 1$, this implies $w^{i,j'} = 0$. Composing with $w^{j',j}$, we have $w^{i,j} = 0$. By definition of $j$, there exists $z \in T$ such that $z^{i,j} \neq 0$. Hence $\rk(z^{i,j}) = 1 > 0 = \rk(w^{i,j})$.
\end{proof}

\subsubsection{Augmented base locus and rank}\label{subsubsection:augblrk}

Combining the study of infinitesimal variations of Hodge structures of the previous sections with \Cref{thm:main}, we obtain the following descriptions of the augmented base loci associated with $\Qpv$.

\begin{prop}\label{prop:blrk}
	Let $\Comp$ be a smooth complex projective variety. Let $\Div \subset \Comp$ be a normal crossing divisor. Let $\V$ be a $\C$-PVHS on $\Qpv := \Comp \setminus \Div$. Suppose $\V$ has quasi-unipotent local monodromy. Let $\theta$ be the associated Higgs field. Let $x \in \Qpv \setminus \Deg(\V)$. Let $p \in \N$ be such that $1 \leq p \leq \dim \Qpv$. Let $\Tautp$ be the tautological line bundle on $\Proj \Omega_{\Comp}^p(\log \Div)$. Let $[\alpha] \in \AugBL(\Tautp)$ be above $x$.
	
	Then there exists a vector subspace $S \subset T_{\Qpv, x}$, two integers $i$ and $j$ with $i > j$ and $z \in T_{\Qpv, x}$ such that $\alpha \in \Lambda^p S$ and for every $w \in S$, we have $\rk \theta^{i,j}(w) < \rk \theta^{i,j}(z)$.
\end{prop}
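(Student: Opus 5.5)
The plan is to combine \Cref{thm:main} with \Cref{prop:rkpivhs}, applied to the $\C$-PIVHS $(\V_x, \V_x^i, \theta_x, \theta_x^*)$ on $T := T_{\Qpv,x}$.

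First, since $x \notin \Deg(\V)$, the period map is immersive at $x$. Since $[\alpha] \in \AugBL(\Tautp)$ lies above $x$, \Cref{thm:main} applies and gives a vector $v \in T_{\Qpv,x} \setminus \{0\}$ with $\alpha \in \Lambda^p C_{\V}(v)$. We set $S := C_{\V}(v)$. This is a vector subspace of $T_{\Qpv,x}$, because the condition $[\theta_x^*(\bar v), \theta_x(w)] = 0$ is linear in $w$. By construction, $\alpha \in \Lambda^p S$.

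Next, we check the hypotheses of \Cref{prop:rkpivhs} for this $S$ and this $v$. The commutation condition $[\delta^*(v), \delta(w)] = 0$ for all $w \in S$ holds by the definition of $C_{\V}(v)$. Here $\delta^* = \theta_x^*$ is viewed as an antilinear map on $T$ via $v \mapsto \theta_x^*(\bar v)$, which matches the convention of \Cref{defi:pivhs}. The remaining hypothesis is $\delta(v) = \theta_x(v) \neq 0$. This follows from immersivity of the period map at $x$: $\theta_x$ is injective on $T_{\Qpv,x}$ and $v \neq 0$.

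\Cref{prop:rkpivhs} then yields integers $i > j$ and $z \in T_{\Qpv,x}$ such that $\rk(w^{i,j}) < \rk(z^{i,j})$ for every $w \in S$. With the paper's notation, $w^{i,j} = \theta_x^{i,j}(w)$, which is exactly the claimed inequality.

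There is no real obstacle, since all the work is in \Cref{thm:main} and \Cref{lem:rk}. The only points needing care are the dictionary between the Higgs-field objects and the abstract PIVHS (conjugate-linearity of $\theta^*$, and the fact that $\V_x$ with $\theta_x, \theta_x^*$ is a $\C$-PIVHS, as in the examples after \Cref{defi:pivhs}), and the nonvanishing of $\theta_x(v)$.
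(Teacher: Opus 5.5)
Your proposal is correct and follows the paper's proof exactly: invoke \Cref{thm:main} to get $v \neq 0$ with $\alpha \in \Lambda^p C_{\V}(v)$, then apply \Cref{prop:rkpivhs} to the $\C$-PIVHS $(\V_x, \V_x^i, \theta_x, \theta_x^*)$ with $S := C_{\V}(v)$. You also spell out a hypothesis the paper leaves implicit, namely $\theta_x(v) \neq 0$, which follows from the injectivity of $\theta_x$ since $x \notin \Deg(\V)$.
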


\begin{proof}
	Apply \Cref{thm:main}: then the conclusion results from \Cref{prop:rkpivhs} applied to the $\C$-PIVHS defined by $\V$ on $T_{\Qpv,x}$, with $C_{\V}(v)$ as $S$.
\end{proof}

\begin{prop}\label{prop:blrkcy}
	In the setting of the previous proposition, let $i \in \Z$ be the biggest integer such that $\theta_x^{i,i-1}$ is nonzero. If:
	
	$(*)$ $\theta_x^{i,i-1}$ is injective,
	
	then the conclusion of the proposition can be realized with this $i$.
	
	If moreover:
	
	$(**)$ $\dim \V_x^i = 1$,
	
	then the conclusion of the proposition can be realized with $i$ and for $j$ the minimal integer such that $\theta_x^{i,j} \neq 0$.  In this case this conclusion is equivalent to: for every $w \in S$, $\theta_x^{i,j}(w) = 0$.
\end{prop}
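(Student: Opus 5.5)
The plan is to run the proof of \Cref{prop:blrk} again, but to replace the appeal to \Cref{prop:rkpivhs} by an appeal to \Cref{prop:rkcy}. Take $[\alpha] \in \AugBL(\Tautp)$ above $x \in \Qpv \setminus \Deg(\V)$. By \Cref{thm:main} (quasi-unipotence is assumed, and $x$ is a point where the period map is immersive) there is $v \in T_{\Qpv,x} \setminus \{0\}$ with $\alpha \in \Lambda^p C_{\V}(v)$. We then set $S := C_{\V}(v)$ and work with the $\C$-PIVHS $(\V_x, \V_x^i, \theta_x, \theta_x^*)$ on $T := T_{\Qpv,x}$.

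Next I check the hypotheses of \Cref{prop:rkpivhs}. Immersivity at $x$ means $\theta_x$ is injective, so $\theta_x(v) \neq 0$ because $v \neq 0$. By the definition of $C_{\V}(v)$, every $w \in S$ satisfies $[\theta_x^*(\bar v), \theta_x(w)] = 0$. In the PIVHS language, $\delta^*(v)$ is the antilinear map $v \mapsto \theta_x^*(\bar v)$, so this is exactly the required commutation. The integer $i$ in the statement, the largest with $\theta_x^{i,i-1} \neq 0$, is the same as the $i$ of \Cref{prop:rkcy} with $\delta = \theta_x$.

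The first claim then follows directly from \Cref{prop:rkcy}. Under $(*)$, it gives $j < i$ with this $i$ and $z \in T$ such that $\rk \theta_x^{i,j}(w) < \rk \theta_x^{i,j}(z)$ for every $w \in S$. The subspace $S$ and $\alpha \in \Lambda^p S$ are unchanged. The mechanism is the one in \Cref{lem:rk}: injectivity forces $v^{i,i-1} \neq 0$, so the $i$ of that lemma is the global one.

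For the second claim, assume also $(**)$. \Cref{prop:rkcy} then lets us take $j$ minimal with $\theta_x^{i,j} \neq 0$. For the equivalence, note that $\dim \V_x^i = 1$ means every $\theta_x^{i,j}(w)$ has rank $0$ or $1$. Suppose $\rk \theta_x^{i,j}(w) < \rk \theta_x^{i,j}(z)$ for all $w \in S$. Then $\rk \theta_x^{i,j}(z) = 1$ and every $\theta_x^{i,j}(w)$ vanishes. Conversely, suppose $\theta_x^{i,j}(w) = 0$ for all $w \in S$. Since $\theta_x^{i,j} \neq 0$ by the choice of $j$, some $z$ has $\theta_x^{i,j}(z) \neq 0$, which recovers the rank inequality.

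I expect no real obstacle. The only points needing care are the bookkeeping between $\theta_x^*(\bar v)$ and $\delta^*(v)$, and checking that the integers $i$ and $j$ defined from $\theta_x$ on all of $T_{\Qpv,x}$ coincide with those in \Cref{prop:rkcy}. Both hold by definition.
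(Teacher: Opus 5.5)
Your proposal is correct and matches the paper's proof. The paper's proof is the one-line instruction to apply \Cref{prop:rkcy}: rerun the argument of \Cref{prop:blrk} (\Cref{thm:main} with $S = C_{\V}(v)$), and invoke \Cref{prop:rkcy} in place of \Cref{prop:rkpivhs}; your checks ($\theta_x(v) \neq 0$ by immersivity, the matching of $i$ and $j$, and the rank-$0$/$1$ equivalence under $(**)$) are exactly the details that citation leaves implicit.
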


\begin{proof}
	Apply \Cref{prop:rkcy}.
\end{proof}

\subsubsection{Positivity and rank}\label{subsubsection:poslogcot}

Finally, we can deduce \Cref{thm:vbig} and its generalization \Cref{thm:general} from the properties of the augmented base locus proven in the previous section.

\begin{thm}\label{thm:general}
	Let $\Comp$ be a connected smooth complex projective variety. Let $\Div \subset \Comp$ be a normal crossing divisor. Let $\V$ be a $\C$-PVHS on $\Qpv := \Comp \setminus \Div$. Suppose $\V$ has quasi-unipotent local monodromy. Let $x \in \Qpv \setminus \Deg(\V)$. Let $m$ be the maximum dimension for a vector subspace $S \subset T_{\Qpv,x}$ such that there exist two integers $j < i$ and a vector $z \in T_{\Qpv,x}$ such that for every $w \in S$, $\rk \theta_x^{i,j}(w) < \rk \theta_x^{i,j}(z)$. Let $p \in \N$ be such that $m < p \leq \dim \Qpv$. Then $\Omega_{\Comp}^p(\log \Div)$ is Viehweg-big.
\end{thm}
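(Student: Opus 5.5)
The plan is to show that the fibre of $\pi:\Proj\Omega_{\Comp}^p(\log\Div)\to\Comp$ over the chosen point $x$ does not meet $\AugBL(\Tautp)$. Once this holds, $\pi(\AugBL(\Tautp))\neq\Comp$, and by the characterisation recalled in \Cref{subsection:augpos} (from \cite{bauer2015positivity}), $\Omega_{\Comp}^p(\log\Div)$ is Viehweg-big. This uses that $\Comp$ is connected and irreducible, which is the hypothesis of the theorem.

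We argue by contradiction. Suppose $[\alpha]\in\AugBL(\Tautp)$ lies above $x$, with $\alpha\in\Lambda^pT_{\Qpv,x}\setminus\{0\}$. Since $x\notin\Deg(\V)$, the period map is immersive at $x$ and the monodromy is quasi-unipotent, so \Cref{prop:blrk} applies. It gives a subspace $S\subset T_{\Qpv,x}$, integers $j<i$ and a vector $z\in T_{\Qpv,x}$ such that $\alpha\in\Lambda^pS$ and $\rk\theta_x^{i,j}(w)<\rk\theta_x^{i,j}(z)$ for every $w\in S$.

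By the definition of $m$, this subspace satisfies $\dim S\le m<p$, hence $\Lambda^pS=0$. This forces $\alpha=0$, which is a contradiction. Therefore no point of $\AugBL(\Tautp)$ lies above $x$, so $x\notin\pi(\AugBL(\Tautp))$, and the theorem follows.

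There is essentially no obstacle: all the substantive work is contained in \Cref{thm:main} and \Cref{prop:rkpivhs}, which are combined in \Cref{prop:blrk}. Two points need checking. First, $\AugBL(\Tautp)$ is computed on $\Proj\Omega^p_{\Comp}(\log\Div)$, and the fibre above $x\in\Qpv$ is exactly $\Proj\Omega^p_{\Qpv,x}$, so the hypothesis $[\alpha]$ above $x$ in \Cref{prop:blrk} is met. Second, the maximum defining $m$ is well defined, since dimensions are bounded by $\dim\Qpv$. If no subspace qualifies, then \Cref{prop:blrk} already yields a contradiction directly, and the conclusion holds for every $p\ge1$.
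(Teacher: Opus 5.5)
Your proposal is correct and follows the paper's own argument. The paper's proof simply says to combine \Cref{prop:blrk} with the criterion of \Cref{subsection:augpos} that $\Omega_{\Comp}^p(\log\Div)$ is Viehweg-big if and only if $\pi(\AugBL(\Tautp))\neq\Comp$, and you have spelled out the step it leaves implicit: a point $[\alpha]\in\AugBL(\Tautp)$ above $x$ would give $\alpha\in\Lambda^pS$ with $\dim S\le m<p$, hence $\alpha=0$, a contradiction.
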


\begin{proof}
	Combine \Cref{prop:blrk} with \Cref{subsection:augpos}.
\end{proof}

\begin{rem}
	In the statement of the previous theorem, $m$ depends on $x$. If the inequality $p > m$ is realized for every $x$ in an open subset $U \subset \Qpv \setminus \Deg(\V)$, then $\Omega_{\Comp}^p(\log \Div)$ is ample with respect to $U$.
\end{rem}

\begin{proof}[Proof of \Cref{thm:vbig}]
	Combine \Cref{prop:blrkcy} with \Cref{subsection:augpos}.
\end{proof}

\subsection{Example: the moduli space of quintic threefolds}

Let $n$ and $d$ be two positive integers. Denote by $\mathcal{M}$ the fine moduli space of smooth hypersurfaces of degree $d$ of $\Proj_{\C}^{n+1}$ polarized by the tautological line bundle $\Taut$. It is a smooth Deligne-Mumford stack of finite type over $\C$. If $U \subset \Proj H^0(\Proj_{\C}^{n+1}, \Stru(d))^\vee$ is the open subset of points $[f]$ such that $\{f= 0\}$ is smooth, then $\mathcal{M}$ is the stack quotient $[\PGL_{n+2} \backslash U]$. Let $M$ be a connected smooth complex quasiprojective variety with a finite étale covering map $M \to \mathcal{M}$. Javanpeykar and Loughran proved that if $d \geq 3$ and $(n, d) \neq (1, 3)$, then there exists $a \in \Ns$ such that for every $N \geq 3$ coprime to $a$, the moduli space of hypersurfaces with a level $N$ structure gives an example of such a $M$ (see \cite[Theorem 3.2]{javanpeykar2017moduli}). We will apply \Cref{thm:vbig} to $M$.

\bigskip

Let $f$ be a non-zero homogeneous polynomial of $\C[X_0, \dots, X_{n+1}]$ of degree $d$. Consider the hypersurface $Y := \{f = 0\}$ polarized by $\Taut_{\vert Y}$. For $p,q \in \Z$, denote by $\Hprim^{p,q}(Y)$ the primitive cohomology of $Y$ of bidegree $(p,q)$. Denote by $I := \left ( \frac{\partial f}{\partial X_0}, \dots, \frac{\partial f}{\partial X_{n+1}} \right )$ the Jacobian ideal of $f$ and by $R := \C[X_0, \dots, X_{n+1}] / I$ its Jacobian ring. As $I$ is graded, $R$ has a structure of graded ring. By a theorem of Griffiths, there are canonical isomorphisms $\Hprim^{n-q,q}(Y) \cong R_{d(q+1)-n-2}$ for every $0 \leq q \leq n$ and $T_{M,x} \cong R_d$ (see \cite[Example p.186]{carlson1983infinitesimal}).

\bigskip

Let $\Prv$ be the pull-back on $M$ of the universal family of $\mathcal{M}$. The primitive cohomology of degree $n$ of fibers of $\Prv$ define an integral polarized variation of Hodge structure $\V_{\Z}$ on $M$. Let $x \in M$ be a point. Let $f \in H^0(\Proj_{\C}^{n+1}, \Stru(d))$ be such that $\Prv_x$ and $\{f = 0\}$ are isomorphic as polarized varieties. Griffiths proved that the following square commutes:
\[
\begin{tikzcd}
T_{M,x} \otimes \Hprim^{n-q,q}(\Prv_x) \arrow{r}{\sim} \arrow{d} & R_d \otimes R_{d(q+1)-n-2} \arrow{d} \\
\Hprim^{n-q-1,q+1}(\Prv_x) \arrow{r}{\sim} & R_{d(q+2)-n-2}
\end{tikzcd}
\]
where the horizontal maps are given by the isomorphisms mentioned above, the left vertical map by the Higgs field and the right vertical map by the product in $R$.

\bigskip

Now, suppose $d = n + 2$. Then $\Hprim^{n,0} \cong R_0$ has dimension $1$. Let $x \in M$ be a point corresponding to the Fermat hypersurface $\{X_0^{n+2} + \dots + X_{n+1}^{n+2} = 0\}$. In this case we have $R = \C[X_0, \dots, X_{n+1}] / \left ( X_0^{n+1}, \dots, X_{n+1}^{n+1} \right )$. The iterate $\theta_x^{n,0}: T_{M,x} \to \Hom_{\C}(\Hprim^{n,0}(\Prv_x), \Hprim^{0,n}(\Prv_x))$ of the Higgs field is non-zero. Indeed, consider $v \in T_{M,x}$ corresponding to $X_0 \dots X_{n+1} \in R_{n+2}$ by the isomorphism $T_{M,x} \cong R_{n+2}$ mentioned above. Then we have $\theta_x^{n,0}(v) \neq 0$ because $(X_0 \dots X_{n+1})^n \neq 0$ in $R$.

\bigskip

Hence we can apply \Cref{thm:vbig} to $\V_{\Z} \otimes_{\Z} \C$ with $i = n$ and $j = 0$. Let $m$ be the biggest dimension for a vector subspace $S \subset R_{n+2}$ such that for every $g \in S$, we have $g^n = 0$. Then $\Omega_{\bar{M}}^p(\log \Div)$ is Viehweg-big for every $p \in \N$ such that $m < p \leq \dim M$, with $(\bar{M}, \Div)$ a smooth projective logarithmic compactification of $M$. To compute $m$, we use the following lemma.

\begin{lem}
	Let $Y$ be a hypersurface of degree at least $2$ in a projective space $\Proj^n$ ($n \in \Ns$). Let $c$ be the codimension in $\Proj^n$ of the singular locus of $Y$. Let $c'$ be the minimal codimension in $\Proj^n$ for a linear subspace included in $Y$. Then $c \leq 2 c'$.
\end{lem}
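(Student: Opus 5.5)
We need to prove: for a hypersurface $Y$ of degree $\geq 2$ in $\Proj^n$, if $Y$ contains a linear subspace $L$ of codimension $c'$, then the singular locus has codimension $c \leq 2c'$. Equivalently, $\dim \mathrm{Sing}(Y) \geq n - 2c'$.

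The plan is to work directly with the defining equation in coordinates adapted to $L$. Choose homogeneous coordinates $X_0, \dots, X_n$ so that $L = \{X_0 = \dots = X_{c'-1} = 0\}$, which has dimension $n - c'$. Since $L \subset Y = \{F = 0\}$, the polynomial $F$ lies in the ideal $(X_0, \dots, X_{c'-1})$. We can therefore write
\[ F = \sum_{k=0}^{c'-1} X_k G_k \]
with $G_k$ homogeneous of degree $\deg F - 1 \geq 1$.

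At a point of $L$, every partial derivative of $F$ with respect to $X_l$ for $l \geq c'$ vanishes, since each term still contains a factor $X_k$ with $k < c'$. The partial derivative with respect to $X_k$ for $k < c'$ restricts on $L$ to $G_k|_L$. Hence
\[ \mathrm{Sing}(Y) \cap L = \{ y \in L : G_0(y) = \dots = G_{c'-1}(y) = 0 \}. \]
This is cut out in $L \cong \Proj^{n-c'}$ by $c'$ homogeneous equations of positive degree. By the projective dimension theorem it is nonempty and of dimension at least $n - 2c'$ whenever $n - c' \geq c'$. If instead $n - 2c' < 0$, then $2c' > n \geq c$ and the inequality holds trivially. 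In characteristic zero, a point where all partial derivatives vanish lies on $Y$ by Euler's formula, and is a singular point of $Y$ (if $F$ is nonreduced, the singular locus only gets bigger, so the bound persists). Therefore $\dim \mathrm{Sing}(Y) \geq n - 2c'$, that is, $c \leq 2c'$.

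The argument is short. The only points needing care are the degree-positivity of the $G_k$, which uses $\deg Y \geq 2$ and is needed for the dimension theorem to apply, and the conventions: the empty set should be given codimension $n+1$, and the trivial case $2c' > n$ handled separately. No step is a real obstacle. The role of the lemma afterwards is to bound $m$: the $g \in R_{n+2}$ with $g^n = 0$ form a cone whose linear subspaces are bounded in dimension via this codimension estimate.
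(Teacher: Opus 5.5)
Your proof is correct and is essentially the argument the paper intends. The paper cites Huybrechts' smooth-case remark: you write $F=\sum_k X_k G_k$ in the ideal of $L$, note that $\mathrm{Sing}(Y)\cap L$ is cut out in $L\cong\Proj^{n-c'}$ by the $c'$ forms $G_k|_L$ of positive degree, and the same dimension count gives $\dim \mathrm{Sing}(Y)\geq n-2c'$ in general. One cosmetic fix: in the case $2c'>n$ write $c\le n+1\le 2c'$ rather than $c\le n$, to stay consistent with your convention that an empty singular locus has codimension $n+1$.
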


\begin{proof}
	See \cite[Remark 1.3.3]{huybrechts2023geometry} for the proof in the smooth case. The proof easily generalizes to give the general case.
\end{proof}

Consider the hypersurface $N := \{[g], g^n = 0\} \subset \Proj R_{n+2}^{\vee}$ (it is a hypersurface because $\dim R_{n(n+2)} = 1$). By the Jacobian criterion, the singular locus of $N$ is the set of $[g] \in N$ such that for every $h \in R_{n+2}$, we have $g^{n-1}h = 0$. By Macaulay's theorem, it is equivalent to $g^{n-1} = 0$. Hence, if $k$ denotes the dimension of $\{g, g^{n-1} = 0\} \subset R_{n+2}$, we have $m \leq \frac{k + \dim R_{n+2}}{2}$.

\bigskip

Consider the case $n = 3$ of quintic threefolds. We have $\dim M = \dim R_{n+2} = 101$. Computing in the Jacobian ring of the Fermat quintic threefold with a computer algebra system (see the SageMath code in Appendix \ref{section:appendix}), we get $k \leq 79$, hence $m \leq 90$. Thus we get \Cref{thm:quintthree}: $\Omega_{\bar{M}}^p(\log \Div)$ is Viehweg-big for every $p \in \N$ such that $91 \leq p \leq 101$.

\begin{rems}
	\begin{itemize}
		\item In the statement of \Cref{thm:quintthree}, the ``$91$'' may not be optimal. It is probable that actually $k < 79$: $79$ is only the limit of our computer algebra system and our patience.
		\item The proof shows that the statement of \Cref{thm:quintthree} holds more generally for every connected smooth complex quasiprojective variety $M$ that parametrizes a smooth polarized family $\pi: W \to M$ of quintic threefolds such that there exists a closed point $x \in M$ such that:
		\begin{itemize}
			\item the fiber $\pi^{-1}(\{x\})$ is isomorphic to the Fermat quintic threefold,
			\item the period map of the $\C$-PVHS $R^3\pi_*\underline{\C}_{W}$ is immersive at $x$.
		\end{itemize}
	\end{itemize}
\end{rems}

\section{The case of locally symmetric varieties}

In this section, we prove \Cref{prop:lsavcharac} and Theorems \ref{thm:lsav} and \ref{thm:lsavbig}.

\bigskip

Let $\di \in \N$, let $\Dom \subset \C^\di$ be a bounded symmetric domain. The biholomorphism group of $\Dom$ has a unique structure of real Lie group. We denote by $\Hol(\Dom)^+$ its identity component. It is a connected semisimple real Lie group with trivial center. Let $\Latt \subset \Hol(\Dom)^+$ be a torsion-free lattice. Denote by $\Lsav$ the quotient $\Latt \backslash \Dom$. It has a natural structure of complex manifold. In the study of $\Lsav$, we can often reduce to the case where $\Latt$ is irreducible thanks to the following lemma. Recall that when $\Latt$ is irreducible, by Margulis's arithmeticity theorem, either $\Latt$ is arithmetic or $\Dom$ is biholomorphic to a complex ball (see \cite[Theorem (1') p.4]{margulis1991discrete}).

\begin{lem}\label{lem:reduc}
	There exist a finite family $(\Dom_i)$ of bounded symmetric domains, a finite family $(\Latt_i)$ of irreducible torsion-free lattices with $\Latt_i \subset \Hol(\Dom_i)^+$ for every $i$ and a holomorphic finite étale covering map $\prod_i \Latt_i \backslash \Dom_i \to \Lsav$.
\end{lem}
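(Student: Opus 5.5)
The plan is to reduce to the de Rham decomposition of $\Dom$ combined with the structure of lattices in semisimple Lie groups. First, I write $\Dom = \prod_{k} \Dom_k$ as a product of irreducible bounded symmetric domains. Then $\Hol(\Dom)^+ = \prod_k \Hol(\Dom_k)^+$, and each factor is a connected simple real Lie group with trivial center. Let $\Group := \Hol(\Dom)^+$. For a lattice $\Latt \subset \Group$ there is a standard result (see e.g. Raghunathan, \emph{Discrete subgroups of Lie groups}, Theorem 5.22, or Margulis's book): since $\Group$ is semisimple with trivial center and has no compact factors, there is a decomposition $\Group = \prod_i \Group_i$, where each $\Group_i$ is a product of some of the simple factors, such that $\Latt_i := \Latt \cap \Group_i$ is an irreducible lattice in $\Group_i$ and $\prod_i \Latt_i$ has finite index in $\Latt$. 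I would invoke this directly.

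Grouping the irreducible factors $\Dom_k$ accordingly gives bounded symmetric domains $\Dom_i$ with $\Hol(\Dom_i)^+ = \Group_i$ and $\Dom = \prod_i \Dom_i$. The lattices $\Latt_i \subset \Hol(\Dom_i)^+$ are torsion-free, because they are subgroups of $\Latt$. The group $\Latt' := \prod_i \Latt_i$ acts on $\Dom$ factor by factor, so $\Latt' \backslash \Dom = \prod_i \Latt_i \backslash \Dom_i$. Since $\Latt' \subset \Latt$ has finite index and $\Latt$ acts freely and properly discontinuously on $\Dom$ (it is torsion-free and discrete, and stabilizers are compact), the natural map $\Latt' \backslash \Dom \to \Latt \backslash \Dom = \Lsav$ is a holomorphic finite étale covering map.

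The main obstacle is the lattice decomposition statement, in particular the finite index claim. I would cite it rather than prove it. If I had to sketch it, I would take the minimal subproducts $\Group_i$ for which $\Latt \cap \Group_i$ is a lattice in $\Group_i$. The key point is that the projection of $\Latt$ to a complementary factor is discrete exactly when $\Latt \cap \Group_i$ is a lattice. Irreducibility within each $\Group_i$ then holds by minimality, and finite index follows by comparing covolumes. One more point needs checking: the identification of the group generated by the $\Hol(\Dom_k)^+$ for $k$ in a given subset with $\Hol$ of the corresponding product domain. This holds because the identity component of the holomorphism group of a product of irreducible bounded symmetric domains is the product of the identity components.
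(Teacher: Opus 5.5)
Your proposal is correct and follows the same route as the paper. Both apply Raghunathan's Theorem 5.22 to the centerless semisimple group $\Hol(\Dom)^+$, which has no compact factors, to write it as a product of normal subgroups $H_i$ such that the $\Latt \cap H_i$ are irreducible lattices whose product has finite index in $\Latt$, and then take $\Dom_i$ to be the symmetric domain of $H_i$; you additionally spell out details the paper leaves implicit (grouping the irreducible factors of $\Dom$, torsion-freeness of the $\Latt_i$, and why $\prod_i \Latt_i \backslash \Dom_i \to \Lsav$ is a holomorphic finite \'etale covering).
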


\begin{proof}
	As the center of $\Hol(\Dom)^+$ is trivial, there exists a finite family of commuting normal subgroups $H_i \subset \Hol(\Dom)^+$ such that the induced morphism $\prod_i H_i \to \Hol(\Dom)^+$ is an isomorphism and the image of $\prod_i \Latt_i$ has finite index in $\Latt$, with $\Latt_i := \Latt \cap H_i$ (see \cite[Theorem 5.22]{raghunathan1972discrete}). Define $\Dom_i$ as the symmetric space associated with the real Lie group $H_i$.
\end{proof}

An application of the previous lemma is that $\Lsav$ admits a unique structure of connected smooth complex quasiprojective variety compatible with its structure of complex manifold. When $\Latt$ is irreducible, the algebraic structure is given by Baily-Borel in the case where $\Latt$ is arithmetic (see \cite[III.4]{borel2006compactifications}) and by Siu-Yau and Mok in the case where $\Dom$ is a ball (see \cite{siu1982compactification,mok2011projective}). For the general case, see Baldi-Ullmo \cite[Theorem 3.1.12]{baldi2023special}.


\subsection{Locally homogeneous variations of Hodge structure}\label{subsection:zucker}

In his interpretation of bounded symmetric domains as moduli spaces of Hodge structures, Deligne gives a way to construct variations of Hodge structure on $\Dom$ (see \cite[Corollaire 1.1.17]{deligne1979varietes}). We recall here this construction, following the more explicit description given by Zucker (\cite{zucker1981locally}, see this article for the proofs).

\bigskip

Let $\Group$ be a connected semisimple real Lie group. Let $\alpha: \Group \to \Hol(\Dom)^+$ be a surjective morphism with compact kernel. Let $A$ be a finite dimensional complex vector space and $\rho: \Group \to \GL(A)$ a representation of $\Group$. We will construct a $\C$-PVHS denoted by $\tV(\Group, A)$ on $\Dom$ starting from these data.

\bigskip

Denote by $\Group^{\ad}$ the quotient of $\Group$ by its (finite) center. As the center of $\Hol(\Dom)^+$ is trivial, $\alpha$ factorizes by a morphism $\alpha': \Group^{\ad} \to \Hol(\Dom)^+$. As $\Hol(\Dom)^+$ is semisimple of non-compact type and $\ker \alpha'$ is semisimple compact, the surjection $\alpha'$ is naturally split and $\Group^{\ad}$ is naturally isomorphic to $\Hol(\Dom)^+ \times N$ with $N := \ker(\alpha')$. In particular, the Lie algebra $\hlie$ associated with $\Hol(\Dom)^+$ naturally identifies with a direct factor of the Lie algebra $\glie$ associated with $\Group$. This gives a natural representation of $\hlie$ on $A$.

\bigskip

Denote by $\U$ the real Lie group of complex numbers of modulus $1$. The group of characters of $\U$ is freely generated by the inclusion $\chi: \U \hookrightarrow \C^*$. For every $x \in \Dom$, there exists a unique real Lie group morphism $c_x: \U \hookrightarrow \Group$ such that:
\begin{itemize}
	\item $c_x$ is injective,
	\item $p_N \circ c_x$ is trivial, with $p_{N}$ the morphism $\Group \to N$ induced by the decomposition $\Group^{\ad} \cong \Hol(\Dom)^+ \times N$,
	\item the action of $\U$ on $\Dom$ \textit{via} $\alpha \circ c_x$ fixes $x$,
	\item the weights of the action of $\U$ on the holomorphic tangent vector space $T_{\Dom,x}$ \textit{via} $\dif_x \circ \alpha \circ c_x$ are \emph{positive} powers of $\chi$.
\end{itemize}

\bigskip

Suppose $A$ is a simple representation of $\Group$. Consider the set $W$ of weights that appear in $A$ for the representation $\rho \circ c_x$ of $\U$ ($W$ does not depend on $x$). There exists $\lambda \in \Z$, $\mu \in \Ns$ and $m \in \N$ such that $W = \{\chi^{\lambda - k \mu}, 0 \leq k \leq m\}$. Denote by $\tV(\Group, A)_x^k$ the weight space of $A$ for the weight $\chi^{\lambda - k \mu}$ under the representation $\rho \circ c_x$. This gives to $A$ a $\C$-HS $\tV(\Group, A)_x$ which depends on $x \in \Dom$. Those $\C$-HSs vary smoothly with $x$. This gives a $\C$-PVHS $\tV(\Group, A)$ on $\Dom$, with the constant local system of fiber $A$ as underlying local system.

\bigskip

In general, we choose $\bigoplus_i A_i$ a decomposition of $A$ as a sum of irreducible representations. Then the $\C$-PVHS $\tV(\Group, A) := \bigoplus_i \tV(\Group, A_i)$ does not depend on the choice of the decomposition. Moreover, the action of $\Group$ on $\Dom$ extends as an action on $\tV(\Group, A)$, and $\tV(\Group, A)$ admits a polarization that is equivariant for this action.

\bigskip

Now, we give a formula to describe the Higgs field and the conjugate Higgs field of $\tV(\Group, A)$. Consider the Lie algebra $\hlie$ associated with $\Hol(\Dom)^+$ equipped with the adjoint representation $\Ad$ of $\Hol(\Dom)^+$. For every $x \in \Dom$, denote by $\llie_x$ the real Lie subalgebra of $\hlie$ which is tangent to the stabilizer $\Isot_{x} \subset \Hol(\Dom)^+$ of $x$. There exists a unique $\Isot_x$-subrepresentation $\mlie_x$ of $\hlie$ such that $\hlie = \llie_x \oplus \mlie_x$.

\bigskip

Apply the foregoing to $\hlie_\C := \hlie \otimes_{\R} \C$. We have $W = \{\chi, 1, \chi^{-1}\}$. On the one hand, we have $\tV(\Hol(\Dom)^+, \hlie_\C)_x^1 = \llie_{x,\C} := \llie_x \otimes_\R \C$. On the other hand, by differentiating the action $\Hol(\Dom)^+ \times \Dom \to \Dom$ at $(1,x)$, we get a $\R$-linear map $\hlie \to T_{\Dom,x}^\R$ (where $T_{\Dom,x}^{\R}$ is the real tangent vector space). Its kernel is $\llie_x$. Thus it induces an isomorphism $e_x: \mlie_{x,\C} \xrightarrow{\sim} T_{\Dom,x}^\C$ with $\mlie_{x,\C} := \mlie_x \otimes_{\R} \C$ and $T_{\Dom,x}^\C := T_{\Dom,x}^\R \otimes_{\R} \C$. We have $e_x(\tV(\Hol(\Dom)^+, \hlie_\C)_x^0) = T_{\Dom,x}$ and $e_x(\tV(\Hol(\Dom)^+, \hlie_\C)_x^2) = T_{\Dom,x}^{0,1}$. Denote by $\iota_x: T_{\Dom,x}^\C \xrightarrow{\sim} \mlie_{x,\C}$ the inverse of $e_x$. Then $\iota_x$ induces two isomorphisms $T_{\Dom,x} \xrightarrow{\sim} \tV(\Hol(\Dom)^+, \hlie_\C)_x^0 \subset \hlie_\C$ and $T_{\Dom,x}^{0,1} \xrightarrow{\sim} \tV(\Hol(\Dom)^+, \hlie_\C)_x^2 \subset \hlie_\C$. An easy computation gives the following lemma.

\begin{lem}\label{lem:higgslsav}
	Let $x \in \Dom$. Let $\theta_x: T_{\Dom,x} \to \End_{\C}(A)$ be the Higgs field of $\tV(\Group, A)$ at $x$ and $\theta_x^*: T_{\Dom,x}^{0,1} \to \End_{\C}(A)$ the conjugate Higgs field. Let $v \in T_{\Dom,x}$, let $a \in A$.
	
	Then $\theta_x(v).a = \iota_x(v).a$ and $\theta_x^*(\bar{v}).a = \iota_x(\bar{v}).a$.
\end{lem}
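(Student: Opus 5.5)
The plan is to compute the Higgs field of $\tV(\Group, A)$ directly from the $\C$-VHS decomposition $\nabla = \theta + \delta' + \delta'' + \theta^*$, using homogeneity to reduce to a computation at the single point $x$.

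\textbf{Reduction to the base point.} Since $\Group$ acts on $\tV(\Group,A)$ compatibly with its action on $\Dom$, and the underlying local system is constant with fiber $A$, the Hodge filtration at a nearby point $gx$ is $\rho(g)\tV(\Group,A)^k_x$. I would therefore parametrize a neighborhood of $x$ by $\exp(\mlie_x) \to \Dom$, $X \mapsto \exp(X)\cdot x$. This map is a local diffeomorphism whose differential at $0$ is $e_x$. To keep the $\Group$-action on $A$ meaningful, I would work through the direct factor $\hlie \subset \glie$ described above.

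\textbf{Differentiating a section of a Hodge piece.} Take $a \in \tV(\Group,A)^k_x$ and the local smooth section $\sigma(\exp(X)x) := \rho(\exp X)a$ of $\V^k$. Because the connection is the trivial one on the constant bundle $A$, we get $\nabla_{e_x(X)}\sigma = X\cdot a$ for $X \in \mlie_x$. Extending $\C$-linearly, for $\xi \in T^\C_{\Dom,x}$ this reads $\nabla_\xi \sigma = \iota_x(\xi)\cdot a$.

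\textbf{Sorting by weight.} Next I decompose $\iota_x(\xi)$ by $\U$-weights under $\Ad\circ c_x$. If $\xi = v \in T_{\Dom,x}$ is of type $(1,0)$, then $\iota_x(v)$ lies in the weight-$\chi$ part $\tV(\Hol(\Dom)^+,\hlie_\C)^0_x$. Since $\rho(c_x(z))\,Y\,\rho(c_x(z))^{-1} = \rho(\Ad(c_x(z))Y)$, the element $\iota_x(v)$ sends the $\chi^{\lambda-k\mu}$-weight space to the $\chi^{\lambda-k\mu+1}$-weight space. For an irreducible $A$ with $\mu \geq 1$, this forces $\mu = 1$ whenever $\iota_x(v)$ acts nontrivially. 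Once the shift is identified with the index shift in the $\C$-VHS convention (degree $-1$ for $\theta$), $\iota_x(v)\cdot a$ is homogeneous of the degree of $\theta$. The $(0,1)$ case works the same way: $\iota_x(\bar v)$ lies in the weight-$\chi^{-1}$ part and shifts the degree in the direction of $\theta^*$.

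\textbf{Matching components.} By uniqueness of the decomposition of $\nabla\sigma$ into its components of types $\V^{k-1}\otimes A^{1,0}$, $\V^k\otimes A^1$ and $\V^{k+1}\otimes A^{0,1}$, the $(1,0)$ part lands entirely in the $\theta$ component, giving $\theta_x(v)\cdot a = \iota_x(v)\cdot a$. Likewise the $(0,1)$ part lands in the $\theta^*$ component, giving $\theta_x^*(\bar v)\cdot a = \iota_x(\bar v)\cdot a$. In both cases the $\delta'$ and $\delta''$ components vanish at $x$ for this particular section.

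\textbf{Main obstacle.} I expect the delicate step to be the sign and indexing bookkeeping. Concretely, I must check that the convention $\tV^k_x$ for weight $\chi^{\lambda-k\mu}$, together with the positivity of the $\U$-weights on $T_{\Dom,x}$, makes $\iota_x(v)$ decrease $k$ by exactly one, in agreement with $\theta$ having degree $-1$. I also expect to need $\mu = 1$ in the nontrivial case. That follows from the weight-shift argument together with the fact that, for $A$ irreducible and nontrivial on the $\hlie$-factor, $\mlie_{x,\C}$ acts nontrivially.
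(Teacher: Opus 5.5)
Your route is the computation the paper leaves implicit (it only says ``an easy computation''). Two of your steps are sound: the derivative $\nabla_\xi\sigma(x)=\iota_x(\xi).a$ of the section $\sigma(\exp(X)x)=\rho(\exp X)a$, and the final matching of components.

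The weight bookkeeping in between is wrong as written. You assert that $\iota_x(v)$ has weight $\chi$ for $\Ad\circ c_x$, and deduce $\mu=1$. But weight $\chi$ is what the paper records for the cocharacter of $\Hol(\Dom)^+$ acting on $\hlie_\C$. It is not the weight for the cocharacter $c_x$ of $\Group$ that defines the grading of $A$. Since $\Ad(c_x(z))$ restricted to $\hlie_\C$ equals $\Ad(\alpha(c_x(z)))$, and $e_x$ is $\Isot_x$-equivariant, $\iota_x(v)$ has weight $\chi^{\mu_0}$. Here $\chi^{\mu_0}$ is the character by which $\alpha\circ c_x$ acts on $T_{\Dom,x}$. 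Because $c_x$ is required to be injective into $\Group$, $\mu_0$ need not be $1$.

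For $\Group=SL_2(\R)$ acting on the disc (or $Sp_{2g}(\R)$ on the Siegel upper half-space), the admissible $c_x$ is $e^{it}\mapsto\cos t+J\sin t$. It acts on $T_{\Dom,x}$ by $\chi^2$, and the standard representation has weights $\chi^{\pm 1}$, so $\mu=2$. This is why the paper allows $\mu\in\Ns$ and writes $m\mu=2\lambda$ in the real case. So the claim that $\iota_x(v)$ raises the weight by exactly one, hence $\mu=1$, already fails in the most basic examples. The same slip affects your $(0,1)$ case.

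The repair is short. We have $\rho(c_x(z))\,\iota_x(v).a=z^{\lambda-k\mu+\mu_0}\,\iota_x(v).a$ with $\mu_0>0$, so $\iota_x(v).a$ has no component in $\tV(\Group,A)^k_x$. Since $\tV(\Group,A)$ is a $\C$-VHS (part of the setup, following Zucker), Griffiths transversality gives $\nabla_v\sigma(x)\in\tV(\Group,A)^{k-1}_x\oplus\tV(\Group,A)^k_x$. Hence $\iota_x(v).a$ lies in $\tV(\Group,A)^{k-1}_x$, the $\delta'$-component vanishes at $x$, and $\theta_x(v).a=\iota_x(v).a$. The same argument with weight $\chi^{-\mu_0}$ gives $\theta_x^*(\bar v).a=\iota_x(\bar v).a$.

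If you prefer not to invoke transversality, you can instead prove $\mu=\mu_0$ for a simple $A$ on which $\hlie$ acts nontrivially. The Lie algebra $\glie_\C$ only carries the weights $\chi^{-\mu_0},1,\chi^{\mu_0}$. So the weights of $A$ must form a single unbroken string of step $\mu_0$; otherwise a suitable sum of weight spaces would be a proper $\glie_\C$-stable, hence $\Group$-stable, subspace.
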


Now we recall Zucker's construction of locally homogeneous variations of Hodge structure (see \cite{zucker1981locally}). Let $\Lambda \subset \Group$ be a torsion-free lattice. The group $\Lambda$ acts by $\alpha$ on $\Dom$ and by $\rho$ on $A$. This gives an action of $\Lambda$ on $\Dom$ equipped with the constant local system of fiber $A$. The $\C$-PVHS $\tV(\Group, A)$ is equivariant under this action, and admits an equivariant polarization. Hence it descends as a $\C$-PVHS on $\Lsav = \Lambda \backslash \Dom$ that we will denote by $V(\Group, A, \Lambda)$.

\bigskip

In the case where $A$ is of the form $A_\R \otimes_\R \C$ with $A$ a simple real representation of $\Group$, the set $W = \{\chi^{\lambda - k \mu}, 0 \leq k \leq m\}$ of weights defined above is stable under inversion, i.e. $m \mu = 2\lambda$. For $0 \leq k \leq m$, let $\tV(\Group, A_\R)_x^{k,m-k} := \tV(\Group, A)_x^k$. This defines a real polarizable variation of Hodge structure ($\R$-PVHS) $\tV(\Group, A_\R)$ of weight $m$ on $\Dom$, with the constant local system of fiber $A_\R$ as underlying local system. If $A$ is of the form $A_\R \otimes_\R \C$ with $A$ a real representation of $\Group$, we define the $\R$-PVHS $\tV(\Group, A_\R)$ as the direct sum $\bigoplus_i \tV(\Group, A_{i,\R})$ with $\bigoplus_i A_{i,\R}$ a decomposition of $A_\R$ as a sum of simple representations. Again, the action of $\Group$ on $\Dom$ extends as an action on $\tV(\Group, A_\R)$, and there exists an equivariant polarization. Thus, $\tV(\Group, A_\R)$ descends as an $\R$-PVHS denoted by $V(\Group, A_\R, \Lambda)$ on $\Lambda \backslash \Dom$.

\begin{defi}
	A $\R$-PVHS $\V_\R$ on $\Lsav$ is \emph{locally homogeneous} if there exists a connected semisimple real Lie group $\Group$, a surjective morphism with compact kernel $\alpha: \Group \to \Hol(\Dom)^+$, a real representation $A_\R$ of $\Group$ and a torsion-free lattice $\Lambda \subset \Group$ such that $\alpha(\Lambda) = \Latt$ and $\V_\R \cong V(\Group, A_\R, \Lambda)$.
\end{defi}

\subsection{A useful PVHS}\label{subsection:pvhslsav}

We start by proving that up to a finite étale cover, every locally symmetric variety admits a locally homogeneous $\R$-PVHS with good properties. In particular, we will use in the proof of \Cref{prop:lsavcharac} the existence of a $\C$-PVHS with immersive period map and quasi-unipotent local monodromy, and we will use in the proof of \Cref{thm:lsav} the existence of a $\R$-PVHS with immersive period map, unipotent local monodromy and such that the log-Higgs field is compatible with some metrics and is locally split injective.

\bigskip

We denote $\tilde{\V}_\R := \tV(\Hol(\Dom)^+, \hlie)$ and $\tilde{\V} := \tilde{\V}_\R \otimes_\R \C$. Denote by $(\tilde{\VB}, \tilde{\theta})$ the Higgs bundle associated with $\tilde{\V}$.

\bigskip

Recall that the domain $\Dom$ admits a canonical Kähler metric called the \emph{Bergman metric}. The Bergman metric is equivariant under the action of $\Hol(\Dom)^+$. If $\Dom$ is irreducible, then up to a constant multiple, the Bergman metric is the only Hermitian metric equivariant under the action of $\Hol(\Dom)^+$. If $\Dom$ is biholomorphic to a product of bounded symmetric domains, then its Bergman metric is the sum of the pull-backs of the Bergman metrics of each factor.

\begin{lem}\label{lem:metric}
	There exists a polarization of $\tilde{\V}_\R$ equivariant under the action of $\Hol(\Dom)^+$ and such that the pull-back by $\tilde{\theta}$ of the Hodge metric on $\End_{\Stru_{\Dom}}(\tilde{\VB})$ is the canonical metric on $T_{\Dom}$.
\end{lem}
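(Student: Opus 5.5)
Since both the Bergman metric and the Hodge-metric construction are $\Hol(\Dom)^+$-equivariant, I will check everything at a single base point $x \in \Dom$ and then propagate by homogeneity. The first step is to choose the polarization. On $\hlie_\C$ take $Q(a,b) := -\beta(a, \tau(b))$, where $\beta$ is the Killing form of $\hlie_\C$ and $\tau$ is the conjugation of $\hlie_\C$ with respect to the compact real form $\llie_x \oplus i\mlie_x$. Up to the sign conventions, this is the standard polarization coming from the Cartan involution at $x$. It is $\Ad(\Hol(\Dom)^+)$-invariant because $\beta$ is invariant, and on the Hodge pieces $\llie_{x,\C}$, $\iota_x(T_{\Dom,x})$ and $\iota_x(T^{0,1}_{\Dom,x})$ it has the alternating signs required for a polarization. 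This is essentially Deligne's/Zucker's polarization, so I would cite \cite{zucker1981locally} for equivariance and positivity. If $\Dom$ is reducible, I would first decompose $\hlie = \bigoplus_k \hlie_k$ into simple ideals, which is orthogonal for $\beta$, and work factor by factor. I would also keep the freedom of rescaling $Q$ independently on each factor.

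The second step is to compute the pullback metric. For $v \in T_{\Dom,x}$, \Cref{lem:higgslsav} gives $\tilde\theta_x(v) = \ad(\iota_x(v))$ acting on $\hlie_\C$. The Hodge metric on $\End(\hlie_\C)$ induced by $Q$ is the Hilbert--Schmidt norm with respect to the Hodge metric $h_Q$, so
\[ \|\tilde\theta_x(v)\|^2 = \operatorname{tr}\big(\ad(\iota_x(v))^{*}\ad(\iota_x(v))\big), \]
where the adjoint is taken with respect to $h_Q$. Because $h_Q$ is defined by the Cartan involution, the $h_Q$-adjoint of $\ad(a)$ is $-\ad(\tau(a))$, and hence this trace equals $-\beta(\iota_x(v),\tau\iota_x(v))$ up to a sign. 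So the pulled-back metric at $x$ is $v \mapsto c\,\beta(\iota_x(v), \overline{\iota_x(v)})$ for a nonzero constant $c$ determined by the conventions, taken factor by factor. On each simple factor this is, up to a positive constant, the Killing-form expression of the Bergman metric on $\mlie_x \cong T^\R_{\Dom,x}$.

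The third step is to conclude. Both the pulled-back metric and the Bergman metric are $\Hol(\Dom)^+$-invariant Hermitian metrics. Invariance of the pullback holds because $Q$ is equivariant and $\tilde\theta$ is equivariant. On each irreducible factor such a metric is unique up to a positive scalar, so the two metrics agree up to one positive constant per factor. Rescaling $Q$ on the corresponding summand $\hlie_k$ by the inverse constant fixes this, because scaling $Q$ on $\hlie_{k,\C}$ by $\lambda$ scales the induced Hodge metric on $\End(\hlie_{k,\C})$ by $\lambda\lambda^{-1}$. That ratio is $1$, so I must be more careful at this point. Note that the endomorphism norm $\|f\|^2 = \operatorname{tr}(f^*f)$ is scale-invariant in $Q$.

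So the rescaling must instead be done differently. I would use that $\operatorname{tr}(f^*f)$ on $\End(\hlie_\C)$ depends only on the summand, and restrict $\tilde\theta_x(v)$, for $v$ tangent to the $k$-th factor, to the summand $\hlie_{k,\C}$, since it kills the other summands. If necessary, I would replace $\tilde\V_\R$ by $\bigoplus_k \hlie_k^{\oplus n_k}$, or compare with the normalization of the canonical metric used in the paper, which is defined as the Bergman metric itself. The main obstacle is exactly this bookkeeping of constants: checking that the Killing-form trace equals the Bergman metric with matching normalizations, or else absorbing the discrepancy by choosing multiplicities. I would try first to verify directly that the Bergman metric equals $\beta(\iota v,\overline{\iota v})$ up to a universal factor, via the standard Helgason formula, and then adjust the weights.
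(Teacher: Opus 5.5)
Your Step~2 is correct, and it is in fact the whole content of the lemma: with $a=\iota_x(v)$ you get exactly $\|\tilde\theta_x(v)\|^2=\operatorname{tr}(\ad(a)^*\ad(a))=-\beta(a,\tau_x a)$.

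There is a slip in Step~1. The form $-\beta(a,\tau_x b)$ is the Hodge metric at $x$: it is positive definite and only $\Isot_x$-invariant, so it is not a polarization. The flat, $\Ad(\Hol(\Dom)^+)$-invariant polarization is $Q(a,b)=\beta(a,\bar b)$, with the bar denoting conjugation with respect to $\hlie$. It is negative on $\llie_{x,\C}$ and positive on the two other Hodge pieces.

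Your objection in Step~3 is correct, and it applies verbatim to the paper's own proof. The paper decomposes $\Dom=\prod_i\Dom_i$ and observes that the pull-back by $\tilde\theta_i$ of the Hodge metric on $\End(\tilde\VB_i)$ is $H_i$-invariant, hence a multiple of the Bergman metric. It then concludes by ``replacing $\tilde Q_i$ by a constant multiple''. That step does nothing: the Hodge metric of $\End_\C(\tilde{\V})=\tilde{\V}^\vee\otimes\tilde{\V}$ comes from $Q^\vee\otimes Q$, which is unchanged under $Q\mapsto\lambda Q$.

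More precisely, by Schur every equivariant polarization of $\tilde\V_\R$ has the form $\sum_k\lambda_k\beta_k(a,\bar b)$. For $v$ tangent to $\Dom_k$, the endomorphism $\ad(\iota_x(v))$ kills $\hlie_{j,\C}$ for $j\neq k$. Hence the pull-back does not depend on the polarization at all: it is canonically the Killing-form metric $v\mapsto-\beta(\iota_x v,\tau_x\iota_x v)$. The lemma is therefore a pure normalization identity that no choice of $Q$ can adjust.

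That identity is what your proposal leaves unproved, and the fallback you suggest does not work. Replacing $\tilde\V_\R$ by $\bigoplus_k\hlie_k^{\oplus n_k}$ changes the object in the statement, and it can only multiply the pull-back by integers $n_k\geq 1$. With the usual normalization $g_{i\bar j}=\partial_i\partial_{\bar j}\log K$ of the Bergman metric, the factor actually needed is $1/2$.

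\begin{itemize}
\item For the unit disc, with $SU(1,1)$ acting by $z\mapsto(az+b)/(\bar bz+\bar a)$, one has $\iota_0(\partial_z)=E_{12}\in\mathfrak{sl}_2(\C)$. Then $-\beta(E_{12},\tau_0E_{12})=\beta(E_{12},E_{21})=4$, whereas $\partial_z\partial_{\bar z}\log K=2(1-|z|^2)^{-2}$ equals $2$ at $0$.
\item For $\mathbb{B}^n$ one gets $2(n+1)$ against $n+1$.
\end{itemize}

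So unless the canonical metric is normalized by the Killing form, the lemma holds only up to a universal positive constant. The correct statement is that the pull-back is a constant multiple of the canonical metric. That weaker form is all the paper uses later:
\begin{itemize}
\item \Cref{rem:quotberg} only involves $\ker\Ricu(v,\bar v)$, which does not change under constant rescaling.
\item In \Cref{lem:locsplitinj}, the constant is absorbed into $C$ in (\ref{eq:good}).
\item In the proof of \Cref{thm:lsav}, rescaling the metric on $T_\Lsav$ by a constant leaves $C_1(\Tautp,\cdot)$ unchanged.
\end{itemize}
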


\begin{proof}
	Consider the decomposition in irreducible components $\Dom \cong \prod_{i} \Dom_i$, and the associated decompositions in simple factors $\Hol(\Dom)^+ = \prod_i H_i$ and $\hlie = \bigoplus_i \hlie_i$. Denote by $(\tilde{\VB}_i, \tilde{\theta}_i)$ the Higgs bundle associated with $\tV(H_i, \hlie_i)$ on $\Dom_i$. As the period map of $\tV(H_i, \hlie_i)$ on $\Dom_i$ is immersive (by \Cref{lem:higgslsav}), $\tilde{\theta}_i$ is locally split injective. Thus the pull-back by $\tilde{\theta}_i$ of the Hodge metric on $\End_{\Stru_{\Dom_i}}(\tilde{\VB}_i)$ induced by a polarization of $\tV(H_i, \hlie_i)$ is a Hermitian metric on $\Dom_i$.
	
	Let $\tilde{Q}_i$ be a polarization of $\tV(H_i, \hlie_i)$ equivariant for the action of $H_i$. Denote by $\tilde{h}_i$ the Hodge metric of $\End_{\Stru_{\Dom_i}}(\tilde{\VB}_i)$ induced by $\tilde{Q}_i$. It is easy to check that $\tilde{\theta}_i$ is equivariant for the action of $H_i$. Thus the pull-back of $\tilde{h}_i$ by $\tilde{\theta}_i$ is a Hermitian metric equivariant by the action of $H_i$. Hence we can replace $\tilde{Q}_i$ by a constant multiple to suppose that this pull-back is the Bergman metric of $\Dom_i$.

	As $\tV(\Hol(\Dom)^+, \hlie)$ is isomorphic to the sum of the pull-backs of the $\tV(H_i, \hlie_i)$, we can define a polarization $\tilde{Q}$ of $\tV(\Hol(\Dom)^+, \hlie)$ as the sum of the pull-backs of the $\tilde{Q}_i$'s. Let $\tilde{h}$ be the Hodge metric induced by $\tilde{Q}$ on $\End_{\Stru_{\Dom}}(\tilde{\VB})$. The pull-back of $\tilde{h}$ by $\tilde{\theta}$ is the Bergman metric, and $\tilde{Q}$ is equivariant under the action of $\Hol(\Dom)^+$.
\end{proof}

We denote $\V_\R := V(\Hol(\Dom)^+, \hlie, \Latt)$ and $\V = \V_\R \otimes_\R \C$. As the polarization given by the previous lemma is equivariant under the action of $\Hol(\Dom)^+$, it descends as a polarization of $\V_\R$.

\begin{lem}\label{lem:unipmonod}
	There exist a finite étale covering map $\pi: \Lsav' \to \Lsav$ and a smooth projective logarithmic compactification of $\Lsav'$ such that $\pi^*\V$ has unipotent local monodromy.
\end{lem}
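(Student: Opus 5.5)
We first arrange quasi-unipotence, then pass to a finite étale cover to make it unipotent. The local system underlying $\V$ is the descent of the constant local system $\hlie_\C$ along $\Dom \to \Lsav$. Its monodromy representation is therefore the composite
\[\Latt \hookrightarrow \Hol(\Dom)^+ \xrightarrow{\Ad} \GL(\hlie_\C).\]

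\textbf{Reduction to irreducible lattices.} By \Cref{lem:reduc}, we may assume that $\Lsav$ is a finite étale quotient of a product of quotients $\Latt_i \backslash \Dom_i$ with $\Latt_i$ irreducible. Under this decomposition, $\V$ pulls back to a direct sum of pull-backs of the corresponding $\C$-PVHSs on the factors. A product of smooth projective logarithmic compactifications is again one. Unipotence of the local monodromy of a direct sum can be checked summand by summand. So it suffices to treat each irreducible factor. A finite étale cover of a factor induces one on the product.

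\textbf{Quasi-unipotence.} Fix an irreducible factor, and an arbitrary smooth projective logarithmic compactification $(\bar{\Lsav}, \Div)$. If $\Latt$ is arithmetic, $\Ad(\Latt)$ preserves a $\Q$-structure on $\hlie$ and, after passing to a finite-index subgroup, a lattice in it. Borel's theorem on quasi-unipotence of local monodromy for arithmetic quotients then applies. Alternatively, since $\V$ then comes from a $\Z$-PVHS (up to finite index), Borel's monodromy theorem for $\Z$-PVHS applies. If $\Latt$ is not arithmetic, Margulis's theorem says that $\Dom$ is a ball. In that case, the local monodromy around a boundary divisor is generated by parabolic elements of $\Latt$. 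By Siu--Yau and Mok, the toroidal compactification has boundary given by cusps, with unipotent parabolic stabilizers up to finite index, so the local monodromies are quasi-unipotent. More uniformly, one can also invoke the general fact that a $\C$-PVHS underlying a local system defined over the ring of integers of a number field (Deligne's rigidity gives this here for rigid $\Latt$) has quasi-unipotent local monodromy. I expect the non-arithmetic ball case to be the main point requiring care. The cleanest route is to observe directly that the eigenvalues of $\Ad(\gamma)$, for $\gamma$ a loop around a boundary component, have modulus $1$ by the nilpotent orbit theorem/Schmid. They are also algebraic integers of bounded degree because $\Latt$ is finitely generated and $\Ad(\Latt)$ lies in $\GL_n$ of a number ring (Selberg/Weil local rigidity for $\dim \ge 2$). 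Kronecker's theorem then makes them roots of unity. The case $\dim=1$ is handled by curves: cusps give parabolic, hence unipotent, elements of $\mathrm{PSL}_2(\R)$.

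\textbf{Unipotence.} Once the local monodromy of $\V$ is quasi-unipotent, \cite[Proposition 8.2]{bakker2024linear} (already used in \Cref{subsection:augpvhs}) provides a finite étale covering map $\pi: \Lsav' \to \Lsav$ such that $\pi^*\V$ has unipotent local monodromy with respect to a suitable smooth projective logarithmic compactification of $\Lsav'$. Alternatively, by Selberg's lemma, we can choose a normal finite-index subgroup $\Latt' \subset \Latt$ that is neat. For neat $\Latt'$, every element of $\Ad(\Latt')$ has eigenvalues generating a torsion-free group. So each quasi-unipotent local monodromy becomes unipotent on $\Latt'\backslash\Dom$, with any smooth projective logarithmic compactification.
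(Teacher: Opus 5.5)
Your overall skeleton is sound, but it is organized differently from the paper. You reduce to irreducible $\Latt$, prove quasi-unipotence, and then pass to a finite \'etale cover using \cite[Proposition 8.2]{bakker2024linear} or a neat subgroup of $\Ad(\Latt)$. The paper does everything in one step. It passes to a finite-index subgroup whose quotient has a toroidal compactification (Ash--Mumford--Rapoport--Tai in the arithmetic case, Mok for balls). It then quotes that a small loop around a boundary component is a unipotent element of $\Hol(\Dom)^+$ (Mumford, resp.\ \cite[Proposition 3.3.3]{baldi2023special}), so its image under $\Ad$ is unipotent.

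Your ball case via cusp stabilizers is essentially the paper's argument. Your arithmetic case instead uses an integral structure on the local system and Borel's monodromy theorem. This avoids the explicit geometry of toroidal boundaries and works for any log compactification, at the cost of the extra cover step. The paper later needs this compactification to be toroidal (\Cref{lem:locsplitinj}). Your route is compatible with that, because unipotence of local monodromy does not depend on the smooth log compactification. To see this, dominate two compactifications by a third: a loop around a boundary divisor of the third is a product of commuting local monodromies of the first, and every boundary divisor of the second is a strict transform.

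Several of the alternatives you list are wrong and should be removed.

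\textbf{The $\Q$-structure claim.} For arithmetic $\Latt$ it is not true in general that $\Ad(\Latt)$ preserves a $\Q$-structure on $\hlie$. If the defining $\Q$-group has compact factors at infinity, this fails: for an arithmetic Fuchsian group coming from a quaternion algebra over a totally real field $F\neq\Q$, the traces of $\Ad(\gamma)$ generate $F$. This is harmless only because such $\Latt$ are cocompact, where there is nothing to prove. For non-cocompact $\Latt$, the $\Q$-simple group $G$ is $\Q$-isotropic by Godement's criterion, hence has no compact factor at infinity, and $\mathrm{Lie}(G)_\R\cong\hlie$. You need to say this.

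\textbf{Your ``cleanest route''.} This does not work. Local rigidity only conjugates $\Latt$ into $G(K)$ for a number field $K$, so it gives no integrality of the eigenvalues. Moreover, Kronecker's theorem requires every Galois conjugate of an eigenvalue to lie on the unit circle. Borel/Schmid controls only the given complex embedding, since the Galois-conjugate local systems need not underlie a PVHS.

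\textbf{The ``general fact'' about $\mathcal{O}_K$.} For the same reason, the claimed general fact about $\C$-PVHS defined over $\mathcal{O}_K$ is false. A counterexample is the rank-one unitary local system on $\mathbb{P}^1\setminus\{0,1,\infty\}$ whose monodromy around $0$ is a conjugate of a Salem number lying on the unit circle.

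So the non-arithmetic ball case must rest on the cusp argument: boundary loops lie in cusp stabilizers, and the unipotent part of a cusp stabilizer has finite index in it. That is exactly what the paper uses.
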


\begin{proof}
	Thanks to \Cref{lem:reduc}, we can suppose $\Latt$ irreducible. We choose $\pi$ and $\Lsav'$ such that $\Lsav'$ admits a toroidal compactification $(\bar{\Lsav}', \Div)$ as constructed by Ash, Mumford, Rapoport and Tai when $\Latt$ is arithmetic (see \cite{ash2010smooth}) and by Mok when $\Dom$ is a ball (see \cite{mok2011projective}). Then, a small loop around an irreducible component of $\Div$ corresponds to a unipotent element of $\Hol(\Dom)^+$ (see Mumford \cite[p.255-256]{mumford1977hirzebruch} when $\Latt$ is arithmetic and Baldi-Ullmo \cite[Proposition 3.3.3]{baldi2023special} when $\Dom$ is a ball).
\end{proof}

As the Bergman metric is equivariant for the action of $\Latt$, it descends as Kähler metric on $\Lsav$ called the \emph{canonical metric}.

\begin{lem}\label{lem:locsplitinj}
	Consider the finite étale covering map $\pi: \Lsav' \to \Lsav$ and the smooth projective logarithmic compactification $(\bar{\Lsav}', \Div)$ of $\Lsav'$ given by \Cref{lem:unipmonod}. If $(\ext{\VB}, \ext{\theta})$ denote the log Higgs bundle defined by $\pi^*\V$ on $\bar{\Lsav}'$, then the log Higgs field $\ext{\theta}: T_{\bar{\Lsav}'}(-\log \Div) \to \End_{\Stru_{\bar{\Lsav}'}}(\ext{\VB})$ is locally split injective.
\end{lem}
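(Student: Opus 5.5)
Local split injectivity is a statement about a morphism of vector bundles, so it is local on $\bar{\Lsav}'$. The claim amounts to: for every point $y \in \bar{\Lsav}'$, the fiber map $\ext{\theta}_y: T_{\bar{\Lsav}'}(-\log \Div)_y \to \End_{\C}(\ext{\VB}_y)$ is injective. Indeed, an injective map of vector bundles whose fibers are all injective has locally free cokernel, which is what locally split means.

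Over $\Lsav'$ this is immediate. By \Cref{lem:higgslsav}, $\theta_x(v) = \iota_x(v)$ acts on $\hlie_\C$ by the adjoint action. Since $\hlie$ is semisimple with trivial center, $\ad$ is injective, so the period map is immersive everywhere. The whole difficulty is at the boundary points $y \in \Div$.

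At a boundary point $y$, lying on components $\Div_1,\dots,\Div_k$, take local coordinates $z_1,\dots,z_n$ with $\Div_l = \{z_l = 0\}$. By Schmid's nilpotent orbit theorem, in the canonical extension the log-Higgs field evaluated at $y$ is
\[
\ext{\theta}_y\Bigl(z_l\frac{\partial}{\partial z_l}\Bigr) = -\tfrac{1}{2\pi i}N_l \quad (l \le k),
\]
up to the choice of normalization, where $N_l$ is the logarithm of the unipotent local monodromy. In the other directions it is given by the derivative of the limiting period map along the boundary stratum. I would then use the explicit structure of the toroidal compactification of \Cref{lem:unipmonod}. Near $y$, the lift of the uniformization lies in a Siegel domain of the third kind attached to a rational boundary component $F$, with coordinates $(z', u, w)$ adapted to $U(F)\cdot$ translations. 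The monodromies $\exp(N_l)$ generate a sublattice of the center $U_F$ of the unipotent radical of the parabolic, and the $N_l$ correspond to the edge vectors of a regular (smooth) cone of the fan. Since these vectors are linearly independent in $\mathrm{Lie}(U_F)$ and $\ad$ is injective on $\hlie$, the $\ad N_l$ are linearly independent in $\End(\hlie_\C)$.

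For the remaining directions, I would compute in the Siegel domain realization that, as $z_l \to 0$, the Higgs field tends to a limiting endomorphism-valued map. This limit is the infinitesimal action of the elements of $\mathrm{Lie}(P_F)$ (the $U(F)/U_F$ directions and the directions of the boundary component $F$) on the limiting mixed Hodge structure. The ordering I would follow is:
\begin{enumerate}
\item Write the holomorphic trivialization of $\ext{\VB}$ as $\exp(-\sum \log(z_l) N_l/2\pi i)$ applied to flat sections.
\item Express $\ext{\theta}$ in this frame as $\ad$ of an explicit $\hlie_\C$-valued holomorphic 1-form.
\item Check that at $z_l = 0$ this 1-form restricts to an injective map $T(-\log\Div)_y \to \hlie_\C$. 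This uses the decomposition $\mathrm{Lie}(P_F)_\C \supset \mathrm{Lie}(U_F)_\C \oplus \mathrm{Lie}(V_F)_\C \oplus \mathfrak{f}$ and a direct sum argument.
\item Conclude injectivity by faithfulness of $\ad$.
\end{enumerate}
By \Cref{lem:reduc} one first reduces to $\Latt$ irreducible, treating the ball case via Mok's compactification, where $U_F$ is one-dimensional and the same argument applies.

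The main obstacle is step 3: showing that the log-tangent directions transverse to the monodromy cone give endomorphisms independent from the $N_l$ at the limit. I would first try to identify the limit Higgs field at $y$ with the Higgs field of the associated $\mathrm{SL}_2$-orbit, or boundary-component VHS, tensored with the $N_l$. I would then use that the graded pieces of the weight filtration $W(N)$ separate the $N_l$ (weight $-2$) from the $V_F$ directions (weight $-1$) and the $F$ directions (weight $0$). Linear independence then follows degree by degree.
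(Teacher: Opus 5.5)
You have the fibrewise-injectivity reduction and the interior case right, but the proposal has a genuine gap at boundary points, exactly where the lemma has content.

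\textbf{The boundary formula.} Your displayed formula conflates the residue of the logarithmic flat connection on Deligne's extension of $\V$ with the log-Higgs field. $\ext{\VB}$ is the \emph{graded} bundle, so its fibre at $y$ is $\mathrm{Gr}_{F_{\mathrm{lim}}}\hlie_\C$. Accordingly, $\ext{\theta}_y(z_l\partial_{z_l})$ is, up to a constant, the map induced by $\ad(N_l)$ on $\mathrm{Gr}_{F_{\mathrm{lim}}}$, not $N_l$ itself. Linear independence of the $\ad(N_l)$ in $\End_\C(\hlie_\C)$ is therefore not enough. What you need is that no nonzero combination $\sum_l a_l N_l$ ($a_l\in\C$) lies in the Lie algebra of the stabilizer of $F_{\mathrm{lim}}$ in $H_\C$. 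This is true, since $\mathrm{Lie}(U_F)_\C$ is of Hodge type $(-1,-1)$ for the limiting mixed Hodge structure, but it has to be proved.

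\textbf{Step 3.} More seriously, step 3 is only announced, and it is the whole statement. To carry it out you must:
\begin{itemize}
\item compute $F_{\mathrm{lim}}$ at $y$ relative to $P_F$;
\item show that the directions tangent to the stratum go to $\ad$ of elements of $W_{-2}$, $W_{-1}$, $W_0$ of $\hlie$;
\item argue by descending induction through $\mathrm{Gr}^W_0$, $\mathrm{Gr}^W_{-1}$, $\mathrm{Gr}^W_{-2}$. These endomorphisms are only $W$-filtered, not homogeneous, so ``degree by degree'' is not automatic.
\end{itemize}
Each stage of the induction needs its own input:
\begin{itemize}
\item at $\mathrm{Gr}^W_0$, immersivity of the pure variation on the boundary component $F$;
\item at $\mathrm{Gr}^W_{-1}$, that the $V_F$-directions are not absorbed into the stabilizer of $F_{\mathrm{lim}}$;
\item at $\mathrm{Gr}^W_{-2}$, that all of $\mathrm{Gr}^W_{-2}\hlie_\C=\mathrm{Lie}(U_F)_\C$ is of type $(-1,-1)$.
\end{itemize}
When the cone of $y$ is not of maximal dimension, directions of $\mathrm{Lie}(U_F)_\C$ tangent to the stratum also have weight $-2$. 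So the weight filtration does not separate them from the $N_l$, and your weight-by-weight argument does not handle them. None of this is carried out.

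\textbf{The paper's route.} The paper avoids boundary Lie theory altogether. The source of $\ext{\theta}$ has rank $\di=\dim\Lsav'$, so it suffices that $\sigma:=\Lambda^{\di}\ext{\theta}(v)$ does not vanish at $y$, where $v$ is a local frame of $\Lambda^{\di}T_{\bar{\Lsav}'}(-\log\Div)$. By \Cref{lem:metric}, $\theta$ is an isometric embedding from the canonical metric into the Hodge metric, so $\lVert v\rVert_{g^{\di}}=\lVert\sigma\rVert_h$ on $\Lsav'$. Mumford's goodness of the invariant metric (Mok's volume estimate in the ball case) gives $\lVert v_z\rVert_{g^{\di}}\geq C\bigl(\sum_i-\ln\lvert z_i\rvert\bigr)^{-n}$. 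If instead $\sigma(y)=0$, the Cattani--Kaplan--Schmid norm estimates give $\lVert\sigma_z\rVert_h\leq C'\bigl(\sum_i\lvert z_i\rvert\bigr)^{a}$ for every $a<1$, which contradicts the lower bound.

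If completed, your route would describe the boundary log-Higgs field explicitly in terms of $P_F$. The lemma, however, only needs one determinant not to vanish along $\Div$, and the growth comparison gives this without any knowledge of $F_{\mathrm{lim}}$.
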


\begin{proof}
	Let $\iota: \Disk^\di \hookrightarrow \bar{\Lsav}'$ be an open embedding of a polydisc such that $\iota^{-1}(\Div) \subset \{z_1 \cdot \ldots \cdot z_{\di} = 0\}$ (with $z_1, \dots, z_\di$ the coordinates of $\Disk^\di$). Let $v$ be a trivializing global section of the line bundle $\iota^* \Lambda^\di T_{\bar{\Lsav}'}(-\log \Div)$. Let $\sigma$ be the image of $v$ by $\iota^*\Lambda^\di \ext{\theta}$. It is a section of $\iota^* \Lambda^\di \End_{\Stru_{\bar{\Lsav}'}}(\ext{\VB})$. To prove the result, it is sufficient to prove that $\sigma$ does not vanish at $0$.
	
	Denote by $g^\di$ the Hermitian metric induced on $\Lambda^\di T_{\bar{\Lsav}'}(-\log \Div)$ by the canonical metric of $\Lsav'$. On the one hand, because the compactification is toroidal, then by Mumford \cite[Proposition 3.4.a)]{mumford1977hirzebruch} (arithmetic case) and Mok's description of the volume form in \cite[Theorem 1]{mok2011projective} (ball case), there exist $C \in \R_{>0}$ and $n \in \N$, such that for every $z = (z_1, \dots, z_\di) \in \Disk^\di$ in a neighborhood of $0$, we have:
	\begin{equation}\label{eq:good}
		\lVert v_z \rVert_{g^\di} \geq C \left ( \sum_{i=1}^\di - \ln \lvert z_i \rvert \right )^{-n}.
	\end{equation}
	On the other hand, $\lVert v \rVert_{g^\di}$ is equal to $\lVert \sigma \rVert_h$ with $h$ the Hodge metric of $\iota^*\Lambda^\di\End_\R(\V_\R)$ induced by the polarization of $\V_\R$ given by \Cref{lem:metric}. Suppose that $\sigma_0 = 0$. Then, as a consequence of the  Hodge metric estimates of Cattani, Kaplan and Schmid (see \cite[Theorem 5.21]{cattani1986degeneration}), we obtain that for every $a \in \R$ with $a < 1$, there exists $C' \in \R_{>0}$ such that for every $z = (z_1, \dots, z_\di) \in \Disk^\di$ in a neighborhood of $0$, we have:
	\begin{equation}\label{eq:cks}
		\lVert v_z \rVert_{g^\di} \leq C' \left ( \sum_{i=1}^\di \lvert z_i \rvert \right )^a.
	\end{equation}
	There is a contradiction between formulas (\ref{eq:good}) and (\ref{eq:cks}), thus $\sigma(0) \neq 0$.
\end{proof}

Combining all the previous lemmas, we finally get the following.

\begin{prop}\label{prop:existspvhs}
	Let $\Dom$ be a bounded symmetric domain, $\Hol(\Dom)^+$ the neutral component of its biholomorphism group, $\hlie$ the associated (semisimple real) Lie algebra, $\Latt$ a torsion-free lattice of $\Hol(\Dom)^+$ and $\Lsav$ be the quotient $\Latt \backslash \Dom$.
	
	Then there exist:
	\begin{itemize}
		\item a finite index subgroup $\Latt'$, inducing a finite étale cover $\Lsav' := \Latt' \backslash \Dom$ of $\Lsav$,
		\item a smooth projective logarithmic compactification $(\bar{\Lsav}', \Div)$ of $\Lsav'$,
		\item a polarization $Q$ of the locally homogeneous $\R$-PVHS $\V_\R := V(\Hol(\Dom)^+, \hlie, \Latt')$ on $\Lsav'$,
	\end{itemize}
	such that:
	\begin{itemize}
		\item $\V_\R$ has unipotent local monodromy (on $\bar{\Lsav}'$),
		\item if $(\VB, \theta)$ denote the Higgs bundle defined by $\V := \V_\R \otimes_\R \C$ on $\Lsav'$, then the pull-back by $\theta$ of the Hodge metric induced by $Q$ on $\End_{\Stru_{\Lsav'}}(\VB)$ is the canonical metric on $T_{\Lsav'}$,
		\item if $(\ext{\VB}, \ext{\theta})$ denote the log-Higgs bundle defined by $\V$ on $\bar{\Lsav}'$, then the map $\ext{\theta}: T_{\bar{\Lsav}'}(-\log \Div) \to \End_{\Stru_{\bar{\Lsav}'}}(\ext{\VB})$ is locally split injective.
	\end{itemize}
\end{prop}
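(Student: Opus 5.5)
The plan is to assemble the proposition from \Cref{lem:reduc}, \Cref{lem:metric}, \Cref{lem:unipmonod} and \Cref{lem:locsplitinj}, checking at each step that the data stay compatible when we pass to finite index subgroups.

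\emph{Step 1: choice of the cover and compactification.} Apply \Cref{lem:unipmonod} to obtain a finite étale cover $\pi: \Lsav' \to \Lsav$ and a smooth projective logarithmic compactification $(\bar{\Lsav}', \Div)$ on which $\pi^*\V$ has unipotent local monodromy. The cover produced there is a toroidal one, coming from a neat finite index subgroup. If $\Lsav$ is first replaced via \Cref{lem:reduc} by a product, the corresponding lattice $\prod_i \Latt_i$ acts on the same domain $\Dom \cong \prod_i \Dom_i$ and is a finite index subgroup of $\Latt$. Hence in every case we may write $\Lsav' = \Latt' \backslash \Dom$ with $\Latt' \subset \Latt$ of finite index.

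\emph{Step 2: identifying the PVHS.} Since Zucker's construction is functorial with respect to restriction of the lattice, $\pi^*\V_\R = \pi^*V(\Hol(\Dom)^+, \hlie, \Latt)$ is canonically isomorphic to $V(\Hol(\Dom)^+, \hlie, \Latt')$. Both are obtained by descending the same $\tilde{\V}_\R$ on $\Dom$, and the descent to $\Latt'\backslash\Dom$ factors through the descent to $\Latt\backslash\Dom$. This yields the first bullet: unipotent local monodromy on $\bar{\Lsav}'$.

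\emph{Step 3: the polarization and the metric.} Take $Q$ to be the descent to $\Lsav'$ of the $\Hol(\Dom)^+$-equivariant polarization $\tilde{Q}$ of \Cref{lem:metric}; equivariance under $\Latt' \subset \Hol(\Dom)^+$ is what allows the descent. The second bullet is a local statement. Locally on $\Lsav'$ the Higgs bundle, the Higgs field and the Hodge metric are those of $\tilde{\V}$ on $\Dom$, while the canonical metric is locally the Bergman metric. So \Cref{lem:metric} gives it directly.

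\emph{Step 4: local split injectivity.} \Cref{lem:locsplitinj}, applied to the cover and compactification fixed in Step 1, gives the third bullet. The one point needing care is that the Hodge metric used in its proof must be the one induced by the polarization of \Cref{lem:metric}, so that the norm of $\Lambda^\di$ of the log tangent bundle equals the Hodge norm of $\sigma$. This is exactly our choice of $Q$ in Step 3.

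\emph{Main obstacle.} The only delicate point is consistency across steps. The same finite étale cover and compactification must serve simultaneously for unipotent monodromy (Step 2) and for the toroidal volume estimates behind \Cref{lem:locsplitinj} (Step 4). They must also be compatible with the reduction of \Cref{lem:reduc} to irreducible lattices. I would handle this by fixing, in Step 1, a single neat finite index subgroup admitting a toroidal compactification (for the product, a product of toroidal compactifications, which is still toroidal and snc), and invoking both lemmas for that same choice.
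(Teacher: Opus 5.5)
Your proposal is correct and follows the paper's proof. The paper likewise takes the cover and compactification from \Cref{lem:unipmonod}, descends the equivariant polarization of \Cref{lem:metric}, and reads off the three properties from \Cref{lem:unipmonod}, \Cref{lem:metric} and \Cref{lem:locsplitinj}. Your additional points are the identification of $\pi^*V(\Hol(\Dom)^+,\hlie,\Latt)$ with $V(\Hol(\Dom)^+,\hlie,\Latt')$, and the use of a single toroidal choice (a product of toroidal compactifications in the reducible case) for both unipotence and the estimates in \Cref{lem:locsplitinj}; these make explicit what the paper leaves implicit.
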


Note moreover that the period map of $\V$ is immersive.

\begin{proof}
	We choose $\Lsav'$ and $(\bar{\Lsav}', \Div)$ as given by \Cref{lem:unipmonod}. We define $Q$ as the quotient of the polarization given by \Cref{lem:metric}. The three properties are verified thanks to \Cref{lem:unipmonod}, \Cref{lem:metric} and \Cref{lem:locsplitinj}.
\end{proof}

\subsection{The case of higher rank}\label{subsection:higher}

If we assume that $\Latt$ is irreducible and $\Dom$ has rank at least $2$, then every $\C$-PVHS comes from locally homogeneous $\C$-PVHSs in some sense (\Cref{prop:classification} below). In preparation for this statement, we define $\Group$-$\C$-HS to define more $\C$-PVHS on $\Lsav$.

\begin{defi}
	Let $\Group$ be a Lie group. A \emph{$\Group$-$\C$-HS} is a $\C$-HS equipped with an action of $\Group$ such that $\Group$ acts by automorphism of $\C$-HS.
\end{defi}

A $\C$-HS $A$ with an action of an algebraic group $\Group$ is a $\Group$-$\C$-HS if and only if for every $i \in \Z$, $A^i$ is a subrepresentation of $\Group$. Now, let $\Group$ be a connected semisimple real Lie group with a surjective morphism with compact kernel $\alpha: \Group \to \Hol(\Dom)^+$ (as in \Cref{subsection:zucker}), and let $A$ be a $\Group$-$\C$-HS. Consider the $\C$-PVHS $\tV(\Group, A^i)$ on $\Dom$, for every $i \in \Z$. Shift the grading of $\tV(\Group, A^i)$ by $i$. Sum the obtained $\C$-PVHS for every $i$: we get a $\C$-PVHS $\tV(\Group, A)$ on $\Dom$.

\bigskip

Let $\Lambda \subset \Group$ be a torsion-free lattice. Then $\tV(\Group, A)$ descends as a $\C$-PVHS $V(\Group, A, \Lambda)$ on $\Lambda \backslash \Dom$. The monodromy representation of the local system underlying $V(\Group, A, \Lambda)$ is $A$ with the action of $\Group$ restricted to $\Lambda$. If $\Dom$ has rank at least $2$ and $\Latt$ is irreducible, then up to a finite étale cover, every $\C$-PVHS on $\Lsav$ has the form $V(\Group, A, \Lambda)$ for some group $\Group$, some $\Group$-$\C$-HS $A$ and some lattice $\Lambda$.

\begin{prop}\label{prop:classification}
	Let $\Dom$ be a bounded symmetric domain of rank at least $2$. Let $\Latt \subset \Hol(\Dom)^+$ be an irreducible torsion-free lattice.

	Then there exists a connected semisimple real Lie group $\Group$, a surjective morphism with compact kernel $\alpha: \Group \to \Hol(\Dom)^+$ and a torsion-free lattice $\Lambda \subset \Group$ such that $\alpha(\Lambda)$ is a finite index subgroup of $\Latt$ and such that the following holds.
	
	Let $\V$ be a $\C$-PVHS on $\Latt \backslash \Dom$. Then there exists a $\Group$-$\C$-HS $A$ and a finite index subgroup $\Lambda' \subset \Lambda$ such that the pull-back of $\V$ on $\Lambda' \backslash \Dom$ is isomorphic to $V(\Group, A, \Lambda')$.
\end{prop}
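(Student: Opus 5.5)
The plan is to combine Margulis's superrigidity theorem, applied to the monodromy representation, with a rigidity argument for the Hodge decomposition. First, by Margulis's arithmeticity theorem and rank $\geq 2$, $\Latt$ is arithmetic. So there is a semisimple algebraic group $\mathbf{H}$ over $\Q$ with $\mathbf{H}(\R)^+ \to \Hol(\Dom)^+$ surjective with compact kernel and $\Latt$ commensurable with the image of $\mathbf{H}(\Z)$. I will take for $\Group$ the simply connected cover of $\mathbf{H}(\R)^+$, or more concretely the identity component of the real points of the simply connected cover of the Weil restriction that appears. Then I take $\Lambda$ to be a torsion-free finite index subgroup of the preimage of an arithmetic subgroup, chosen so that $\alpha(\Lambda) \subset \Latt$ has finite index. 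This is standard arithmetic-group bookkeeping, and it is done once and for all, independently of $\V$.

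Next, let $\V$ be a $\C$-PVHS with monodromy $\rho: \Latt \to \GL(V_0)$, and pull it back to $\Lambda$. Since $\V$ is polarizable, $\V$ is semisimple as a local system, so I may decompose it into irreducibles; by uniqueness of the Hodge structure up to shift on an irreducible local system (Simpson/Deligne), it suffices to treat each irreducible summand separately. Margulis superrigidity (in the form for irreducible lattices in higher rank, via restriction of scalars) gives a finite index $\Lambda' \subset \Lambda$ such that $\rho_{|\Lambda'}$ is the restriction of a continuous representation $\rho_\Group: \Group \to \GL(V_0)$ times a representation with bounded (precompact) image. The bounded part is trickier. I will use the compact factors of the arithmetic group $\mathbf{H}$ at other archimedean places: the Galois-conjugate factors that are compact can be absorbed by enlarging $\Group$ to include them, which is allowed since $\alpha$ has compact kernel. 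The remaining finite-image part disappears after passing to a further finite index subgroup. This is why $\Group$ must be chosen to include all archimedean factors of the Weil restriction, including the compact ones.

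Finally, I compare Hodge structures. On $\Lambda' \backslash \Dom$ I now have two $\C$-PVHSs with the same underlying local system $V_0$ (as a $\Group$-representation, restricted to $\Lambda'$). One is $\V$; the other is $\tV(\Group, V_0)$ descended, that is $V(\Group, V_0, \Lambda')$. For each irreducible $\Group$-summand, restriction to $\Lambda'$ remains irreducible by Borel density, since $\Lambda'$ is Zariski dense in the noncompact part. The compact factors need care here: the summand is irreducible for $\Group$, and $\Lambda'$ is dense in $\Group$ up to the compact factors, by irreducibility of the lattice projected to the compact factors. By uniqueness of a $\C$-VHS structure on an irreducible local system up to shift of grading, which follows from Simpson's correspondence or directly from the rigidity of harmonic metrics, $\V$ restricted to that summand equals $V(\Group, A_i, \Lambda')$ with the grading shifted by some $i$. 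Isotypic components are handled with multiplicity spaces carrying a constant $\C$-HS. Collecting these summands with their shifts gives exactly a $\Group$-$\C$-HS $A$, in which $A^i$ is a $\Group$-subrepresentation, with $\V \cong V(\Group, A, \Lambda')$. I expect the main obstacle to be the bounded part of superrigidity together with the multiplicity bookkeeping. The key point there is that for isotypic components the flat subbundle $\Hom_\Group(A_i, \V)$ is a $\C$-PVHS with trivial monodromy, hence constant, which yields the constant $\C$-HS on the multiplicity space.
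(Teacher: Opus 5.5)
Your proposal is correct and follows essentially the paper's route: arithmeticity, a simply connected $\Q$-group $\Group_\Q$ with $\Group = \Group_\Q(\R)$, Margulis superrigidity to virtually extend the monodromy to $\Group$, and Deligne's uniqueness up to shift of the $\C$-PVHS structure on an irreducible local system (with constant Hodge structures on multiplicity spaces). Your Borel-density argument for irreducibility replaces the paper's passage to a finite index subgroup, and both work.

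Two small corrections. First, $\Group$ must be the real points of the \emph{algebraic} simply connected cover: the topological universal cover of $\mathbf{H}(\R)^+$ has infinite center for Hermitian groups, so $\alpha$ would not have compact kernel. Second, the paper avoids your ``bounded part'' discussion by citing superrigidity in its arithmetic form, which already gives a virtual extension to a representation of $\Group_\Q$, compact factors included; you should cite that directly rather than sketch it.
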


\begin{proof}
	As $\Dom$ has rank at least $2$ and $\Latt$ is irreducible, $\Latt$ is arithmetic by Margulis's arithmeticity theorem (see \cite[Theorem (1') p.4]{margulis1991discrete}). There exist a connected $\Q$-simple $\Q$-group $\Group_\Q$, a surjective morphism of real Lie groups $\alpha: \Group_\Q(\R) \to \Hol(\Dom)^+$ with compact kernel and an arithmetic lattice (in the sense of $\Q$-groups) $\Lambda \subset \Group_\Q(\Q)$ such that $\alpha(\Lambda)$ is a finite index subgroup of $\Latt$. By the existence of algebraic universal covers for algebraic groups, we can suppose $\Group_\Q$ simply connected. By Malcev's theorem, we can suppose that $\Lambda$ is torsion-free. Denote by $\pi$ the natural finite étale covering map $\Lambda \backslash \Dom \to \Latt \backslash \Dom$.

	By \cite[Proposition 1.13]{deligne1987theoreme}, a $\C$-PVHS decomposes as a direct sum of $\C$-PVHSs with irreducible underlying local system. Hence one can suppose the local system underlying $\pi^*\V$ irreducible. We can replace $\Lambda$ by a finite index subgroup to suppose moreover that it remains irreducible after pull-back by every finite étale cover. By \cite[Proposition 1.13]{deligne1987theoreme} again, it is sufficient to prove that up to a finite étale covering map, the local system underlying $\pi^*\V$ is isomorphic to the local system underlying $V(\Group, A, \Lambda')$ for some representation $A$ of $\Group$ and some finite index subgroup $\Lambda' \subset \Lambda$. It is equivalent to prove that the representation of $\Lambda$ given by the monodromy of $\pi^*\V$ coincides with a representation of $\Group$ on a finite index subgroup $\Lambda' \subset \Lambda$. As $\Lambda$ is an arithmetic lattice of a simply connected semisimple $\Q$-group of rank at least $2$, it is given by the superrigidity theorem of Margulis for arithmetic lattices (see \cite[Theorem (6)(iii) p.5]{margulis1991discrete}).
\end{proof}

\subsection{Higher degree characteristic subvarieties}\label{subsection:hcsv}

In this section, we define the higher degree characteristic subvarieties. We prove that they are Zariski closed (that we will need to prove \Cref{thm:lsav}). For this proof, we will need the notion of Borel dual (see \cite{wolf1972fine} for the proofs). For every $x \in \Dom$, let $\Isot_{x,\C}$ be the connected complex Lie subgroup of $\GL(T_{\Dom,x})$ whose associated Lie algebra is $\llie_{x,\C}$ (the complexification of the Lie algebra $\llie_x$ tangent to the stabilizer $\Isot_x \subset \Hol(\Dom)^+$ of $x$). Let $H_\C$ be the adjoint complex Lie group of the semisimple complex Lie algebra $\hlie_\C$. The faithful action of $\Hol(\Dom)^+$ on $\hlie_\C$ by adjunction induces an injective real Lie group morphism $\Hol(\Dom)^+ \hookrightarrow H_\C$. Moreover, the restriction $\Isot_x \hookrightarrow H_\C$ extends as an injective complex Lie group morphism $i_x: \Isot_{x,\C} \hookrightarrow H_\C$. Define the \emph{Borel dual} $\Dual$ of $\Dom$ as the orbit of the morphism $i_x$ under the action of $H_\C$ by conjugation (this orbit does not depend on $x$). It admits a unique structure of complex manifold such that $H_\C$ acts biholomorphically. It is a smooth projective variety. The map $i: x \mapsto i_x$ is a holomorphic open embedding of $\Dom$ in $\Dual$, equivariant under the action of $\Hol(\Dom)^+$. Denote by $P_x$ the stabilizer of $i_x$ under the action of $H_\C$. Then the image of $P_x$ in $\Aut(T_{\Dom,x})$ is equal to $\Isot_{x,\C}$.

\bigskip

We will also need some o-minimal geometry (see \cite{van1998tame}). In this theory, some subsets of $\Qpv(\R)$ of any real algebraic variety $\Qpv$ are said \emph{semi-algebraic}, and some subsets are said \emph{definable in $\Ranexp$}. Those two classes of subsets are stable by finite unions, finite intersections, Cartesian products, complement, direct and inverse images by a morphism of real algebraic varieties, and they contain the ``trivial'' subset $\Qpv(\R)$ of $\Qpv(\R)$ for every real algebraic variety $\Qpv$. The class of semi-algebraic subsets can be defined as the smallest class that verifies these properties. The class of subsets definable in $\Ranexp$ is the smallest class of subsets that verifies the previous properties and that contains the following subsets:
\begin{itemize}
	\item the graph of the real exponential (as a subset of $\mathbb{A}^2(\R)$),
	\item the subset $\{(x, f(x)) \vert x \in [0,1]^{n}\}$  of $\mathbb{A}^{n+1}(\R)$ for every $n \in \Ns$ and every real analytic function $f$ defined in a Euclidean open neighborhood of $[0,1]^{n}$.
\end{itemize}
A map between semi-algebraic (resp. definable in $\Ranexp$) subsets is said to be \emph{semi-algebraic} (resp. \emph{definable in $\Ranexp$}) if its graph is. For every complex algebraic variety $\Qpv$, the set of closed points of $\Qpv$ canonically identifies with the set of real points of its real Weil restriction of scalar. Hence, the notions of semi-algebraic subsets and subsets definable in $\Ranexp$ of $\Qpv(\C)$ are well-defined.

\bigskip

In the following, we will only need the following facts.
\begin{itemize}
	\item Constructible subsets and morphisms of complex algebraic varieties are semi-algebraic. Semi-algebraic subsets and maps are definable in $\Ranexp$.
	\item The class of semi-algebraic (resp. definable in $\Ranexp$) subsets is stable under finite union, finite intersection, Cartesian product, complement, direct and inverse image by a semi-algebraic (resp. definable in $\Ranexp$) map.
	\item Peterzil-Starchenko's o-minimal Chow lemma: an analytic closed subset definable in $\Ranexp$ is Zariski closed (see \cite[Corollary 4.5]{peterzil2009complex}, that we can apply here because $\Ranexp$ is an o-minimal structure by \cite{van1994real}).
	\item The open subset $i(\Dom) \subset \Dual$ is semi-algebraic.
	\item The following theorem of Klingler, Ullmo and Yafaev.
\end{itemize}

\begin{thm}[see {\cite[Theorem 1.9]{klingler2016hyperbolic}}]\label{thm:kuy}
	Suppose the lattice $\Latt$ arithmetic. Then there exists a semi-algebraic subset $\Siegel \subset \Dom$ such that the restriction to $\Siegel$ of the quotient map $\Dom \to \Lsav$ is surjective and definable in $\Ranexp$.
\end{thm}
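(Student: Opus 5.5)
This is the cited theorem of Klingler, Ullmo and Yafaev, and the paper uses it as a black box. If I had to prove it, I would combine reduction theory with an explicit description of the uniformization map near the boundary of a toroidal compactification. \emph{First step.} By \Cref{lem:reduc} and a finite étale cover (harmless, since definability is preserved under finite covers and finite unions), I would reduce to $\Latt$ irreducible, arithmetic and neat. Fix a $\Q$-group $\Group_\Q$ with $\Group_\Q(\R)^+ \to \Hol(\Dom)^+$ as in the proof of \Cref{prop:classification}. Borel's reduction theory then gives finitely many Siegel sets $\Siegel_k = \omega_k A_{t,k} K x_0$ (for rational parabolics $P_k$, with $\omega_k$ compact in the unipotent-Levi part and $A_{t,k}$ a cone in the split torus) whose union $\Siegel$ surjects onto $\Lsav$.

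\emph{Second step: semi-algebraicity of $\Siegel$.} The map $g \mapsto g.x_0$ from $\Hol(\Dom)^+$ to $\Dual$ is algebraic through the Borel embedding $i$, and $\Hol(\Dom)^+$ is a real algebraic group. The sets $\omega_k$ may be taken semi-algebraic (for instance compact boxes in unipotent coordinates times compact semi-algebraic pieces), and $A_{t,k}$ is defined by polynomial inequalities on the characters. Hence each $\Siegel_k$ is the image of a semi-algebraic set under a semi-algebraic map, and so is semi-algebraic in $\Dom \subset \Dual$.

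\emph{Third step: definability of $\Siegel \to \Lsav$.} I would cover $\Siegel$ by finitely many pieces according to the cusp it approaches. On the relatively compact part, the quotient map is real-analytic on a compact semi-algebraic set, hence restricted analytic and definable in $\Ranexp$. Near a cusp attached to $P_k$, I would use the realization of $\Dom$ as a Siegel domain of the third kind, $\Dom \cong \{(z,u,t)\}$, with $z$ in a tube domain $V + iC$. Mumford's construction of the toroidal compactification then writes the quotient by the center $U_k(\Z)$ of the unipotent radical as the partial exponential $z \mapsto (e^{2\pi i \ell_j(z)})_j$ into a torus embedding. On $\Siegel_k$ the real parts $\operatorname{Re} z$ and the coordinates $u$ range over bounded sets, and the imaginary parts lie in a cone. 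So the map is a composition of restricted $\sin$/$\cos$ on bounded intervals, real $\exp$ of semi-algebraic functions, and an analytic map from a neighbourhood of a compact subset of the smooth compactification to $\bar{\Lsav}$. All of these are definable in $\Ranexp$, so the restriction of the quotient map to $\Siegel_k$ is definable.

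\emph{Main obstacle.} I expect the main difficulty to be this last step: one must check that a Siegel set is mapped by these partial Fourier–Jacobi coordinates into a \emph{compact} region of a chart of the toroidal compactification. This requires compatibility between the cone $A_{t,k}$ and a rational polyhedral cone of the chosen admissible fan; it may be necessary to refine $\Siegel_k$ into finitely many semi-algebraic pieces, one per top-dimensional cone. I would first try to deduce this from the Ash–Mumford–Rapoport–Tai description, covering the cone $C$ by finitely many translates of the fan's cones. Failing that, I would reduce to the Siegel modular case via a symplectic embedding and cite Peterzil–Starchenko's definability of $\mathfrak{H}_g \to \mathcal{A}_g$ on Siegel sets.
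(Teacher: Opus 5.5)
The paper gives no argument for this statement. It is imported from Klingler--Ullmo--Yafaev and used as a black box, only in the proof of \Cref{lem:def} for irreducible arithmetic $\Latt$, so your first sentence is all the paper does.

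Your sketch beyond that is broadly the strategy of the cited work: reduce to $\Latt$ irreducible and neat, get semi-algebraicity of Siegel sets from the Borel embedding, and use partial exponentials in AMRT charts. Your first two steps are fine. As a proof, though, it stops exactly where the content of the theorem lies, and the fix you propose for the third step does not work as stated.

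\emph{The finite-cover fix.} You cannot cover $C$ by finitely many cones of the fan. An admissible fan is finite only modulo $\Gamma_\ell(F)$, and its cones lie in $\bar C$. A finite cover would therefore make the convex cone $\bar C$ polyhedral, which fails already for $\mathcal{A}_g$ with $g \geq 2$, where $\bar C$ is the cone of positive semi-definite symmetric matrices.

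\emph{What is actually needed.} The values of $\operatorname{Im} z - h_t(u,u)$ on $\Siegel_k$ form (essentially) a Siegel set of $C$ for the linear group $G_\ell(F)$. You need that this set meets only finitely many cones of the fan. You also need the lower bounds coming from $A_{t,k}$, which keep the image away from points of the closure of $U(F)(\Z)\backslash\Dom$ that are not in the partial compactification. Already for $\mathfrak{H}_1$ one needs $|q| \leq e^{-2\pi t} < 1$, not just $|q| \leq 1$. The finiteness is a genuine input from reduction theory for $\Gamma_\ell(F)$ acting on $C$, not a formal consequence of admissibility. Without it, your ``one piece per top-dimensional cone'' may produce infinitely many pieces, and definability is lost.

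\emph{Which cusp.} ``The cusp attached to $P_k$'' must be taken to be the minimal rational boundary component $F$ of the flag fixed by the minimal parabolic $P_k$. Since $F$ then has no rational boundary components, the $F$-coordinate $t$ stays in a compact set. For a larger member of the flag, the Siegel set still runs to infinity inside $F$ and is not relatively compact in the corresponding chart.

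\emph{The fallback.} Going through a symplectic embedding and Peterzil--Starchenko cannot give the statement in the generality the paper needs. \Cref{lem:def} applies \Cref{thm:kuy} to every irreducible arithmetic quotient that is not a ball quotient. This includes domains of types V and VI ($E_6$, $E_7$), which admit no holomorphic equivariant embedding into a Siegel space. Even for domains of abelian type you would need two extra inputs:
\begin{itemize}
\item that Siegel sets of $\Group_\Q$ land in finitely many Siegel sets of $Sp_{2g}$;
\item a way to recover $\Dom \to \Lsav$ from its composite with the finite, generally non-injective, morphism to $\mathcal{A}_g$.
\end{itemize}
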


Let $x \in \Dom$, let $v \in T_{\Dom,x}$. Consider the vector space:
\[C(v) := \{w \in T_{\Dom,x}, [\iota_x(\bar{v}),\iota_x(w)] = 0 \} \subset T_{\Dom,x}. \]

\begin{rem}\label{rem:berg}
	Denote by $\tilde{\Ricu}$ the Riemann curvature tensor of the Bergman metric. It is seminegative in the sense of Griffiths. By \cite[(6.8.8) and (6.8.5)]{kobayashi2013hyperbolic}, we have $C(v) = \ker \tilde{\Ricu}(v,\bar{v})$.
\end{rem}

As $\iota_x$ is equivariant under the action of $\Isot_x$, for $g \in \Isot_{x,\C}$, we have:
\begin{equation}\label{eq:action}
	C(g.v) = \bar{g}.C(v).
\end{equation}
Denote:
\[\Charac_x^{\Gr} := \{C(v) \vert v \in T_{\Dom,x} \setminus \{0\}\} \subset \Gr(T_{\Dom,x})\]
with $\Gr(T_{\Dom,x}) := \bigsqcup_{k=0}^{\dim \Dom} \Gr_k(T_{\Dom,x})$ where $\Gr_k(T_{\Dom,x})$ is the Grassmannian variety of vector subspaces of dimension $k$ of $T_{\Dom,x}$.

\begin{lem}\label{lem:constr}
	The set $\Charac_x^{\Gr}$ is a finite union of orbits of $\Gr(T_{\Dom,x})$ under the algebraic action of the algebraic group $\Isot_{x,\C}$. In particular, it is a constructible subset of $\Gr(T_{\Dom,x})$.
\end{lem}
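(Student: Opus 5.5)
Using \eqref{eq:action}, the map $v \mapsto C(v)$ from $T_{\Dom,x} \setminus \{0\}$ to $\Gr(T_{\Dom,x})$ intertwines the action of $\Isot_{x,\C}$ on $T_{\Dom,x}$ with the action twisted by complex conjugation on $\Gr(T_{\Dom,x})$. The point to check is that $\bar{g}$ still lies in $\Isot_{x,\C}$, or at least normalizes the relevant orbits. Since $\Isot_{x,\C}$ is the complexification of the compact group $\Isot_x$, it is stable under the conjugation defined by the compact real form, and $\bar{g} \in \Isot_{x,\C}$. Consequently the image of an $\Isot_{x,\C}$-orbit in $T_{\Dom,x} \setminus \{0\}$ is a single $\Isot_{x,\C}$-orbit in $\Gr(T_{\Dom,x})$. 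Moreover, $C(\lambda v) = C(v)$ for $\lambda \in \C^*$ because $C(v)$ depends on $\bar v$ linearly. Hence it suffices to show that $T_{\Dom,x} \setminus \{0\}$, or its projectivization, has only finitely many $\Isot_{x,\C}$-orbits.

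This finiteness is the classical fact from the structure theory of Hermitian symmetric spaces (Wolf \cite{wolf1972fine}, Mok \cite{mok1989metric}). By \Cref{lem:reduc} and the decomposition $\Dom \cong \prod_i \Dom_i$, we have $T_{\Dom,x} = \bigoplus_i T_{\Dom_i,x_i}$ and $\Isot_{x,\C} = \prod_i \Isot_{x_i,\C}$. A finite number of orbits on each factor, including the zero vector, therefore gives finitely many orbits on the product. On an irreducible factor of rank $r$, the isotropy representation of $\Isot_{x,\C}$ on $T_{\Dom,x} \cong \mlie^+$ has exactly $r+1$ orbits. These are the rank strata: fixing a maximal set of strongly orthogonal noncompact roots, every $v$ is $\Isot_x$-conjugate to $\sum_{k} a_k e_{\gamma_k}$ with $a_k \geq 0$, and the complexified group can rescale each nonzero $a_k$ to $1$ via the complexified torus. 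Therefore every orbit contains some $e_{\gamma_1} + \dots + e_{\gamma_s}$ with $0 \leq s \leq r$. I expect this to be the main step. I would cite \cite{wolf1972fine} for it, or derive it via Harish-Chandra's polydisc theorem combined with the action of the complexified maximal torus $\exp(\mathfrak{a}_\C)$.

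It remains to show constructibility. $\Isot_{x,\C}$ is a connected linear algebraic group acting algebraically on each Grassmannian $\Gr_k(T_{\Dom,x})$; it is the identity component of the complexification of a compact group acting linearly. By Chevalley's theorem, each orbit is the image of the algebraic group under an orbit morphism and is therefore constructible. A finite union of constructible sets is constructible, which gives both assertions of the lemma.
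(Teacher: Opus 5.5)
Your proposal is correct and follows the paper's argument. The equation \eqref{eq:action}, together with $C(\lambda v)=C(v)$, reduces the lemma to the finiteness of the orbits of $\Isot_{x,\C}$ on $T_{\Dom,x}$ (the paper uses $\Isot_{x,\C}\times\C^*$ and cites Mok), and Chevalley's theorem then gives constructibility; your explicit check that $\bar g\in\Isot_{x,\C}$ fills in a point the paper leaves implicit.

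One small correction to your sketch of the finiteness: the independent rescaling of the $e_{\gamma_k}$ must use the complexified torus of $\Isot_x$ generated by the elements $iH_{\gamma_k}$, which lies in $\Isot_{x,\C}$. It cannot use $\exp(\mathfrak{a}_\C)$ if $\mathfrak{a}$ is the usual maximal abelian subspace of the noncompact part, since that group is not contained in $\Isot_{x,\C}$.
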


\begin{proof}
	The vector space $T_{\Dom,x}$ has finitely many orbits under the action of the algebraic group $\Isot_{x,\C} \times \C^*$, where $\Isot_{x,\C}$ acts by adjunction and $\C^*$ by homothety (see \cite[(1.1)]{mok2002characterization}). Hence, the first statement results from (\ref{eq:action}). Then the second statement results from Chevalley's theorem.
\end{proof}

Let $1 \leq p \leq \dim \Dom$. Denote:
\[\Charac_{x}^p := \bigcup_{C \in \Charac_x^{\Gr}} \{[\alpha] \vert \alpha \in \Lambda^p C \setminus \{0\}\} \subset \Proj \Omega_{\Dom,x}^{p}. \]
We use the following lemma to prove that $\Charac_{x}^p$ is closed.

\begin{lem}\label{lem:closed}
	Let $V,W$ be two vector spaces. Let $1 \leq p \leq \dim V$. Let $F$ be the subset of $\Proj \Lambda^p V^\vee \times \Hom(V,W)$ defined by:
	\[ F := \{([\alpha], f) \vert \alpha \in \Lambda^p \ker f \setminus \{0\} \}. \]
	Then $F$ is Zariski closed.
\end{lem}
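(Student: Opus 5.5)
The plan is to express the condition $\alpha \in \Lambda^p \ker f$ as the vanishing of a map $d_f(\alpha)$ that is bilinear in $(\alpha, f)$. Then $F$ is cut out by polynomial equations in $(\alpha, f)$, homogeneous in $\alpha$, and is therefore Zariski closed in $\Proj \Lambda^p V^\vee \times \Hom(V,W)$.

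Recall that, with Grothendieck's convention, a point of $\Proj \Lambda^p V^\vee$ is a line $[\alpha]$ with $\alpha \in \Lambda^p V \setminus \{0\}$. For $f \in \Hom(V,W)$, I define the contraction
\[ d_f : \Lambda^p V \to W \otimes \Lambda^{p-1} V, \quad v_1 \wedge \dots \wedge v_p \mapsto \sum_{i=1}^p (-1)^{i-1} f(v_i) \otimes v_1 \wedge \dots \wedge \widehat{v_i} \wedge \dots \wedge v_p . \]
This map is well defined because the right-hand side is multilinear and alternating in the $v_i$. The expression $d_f(\alpha)$ is bilinear in $(f, \alpha)$. Hence, in coordinates, the set $\{([\alpha], f) \mid d_f(\alpha) = 0\}$ is defined by finitely many polynomials, each homogeneous of degree $1$ in $\alpha$ and of degree $1$ in $f$, so it is Zariski closed.

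It remains to prove that $\ker d_f = \Lambda^p \ker f$ for every $f$. Put $K := \ker f$.

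The inclusion $\Lambda^p K \subset \ker d_f$ is immediate, since every term of $d_f(v_1 \wedge \dots \wedge v_p)$ contains some $f(v_i) = 0$ when all $v_i \in K$.

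For the converse, I choose a complement $U$ of $K$ in $V$ with basis $u_1, \dots, u_r$. Let $u_1^*, \dots, u_r^* \in V^\vee$ be the linear forms that vanish on $K$ and are dual to the $u_j$. A computation on decomposable elements adapted to the splitting $V = K \oplus U$ gives
\[ d_f(\alpha) = \sum_{j=1}^r f(u_j) \otimes \iota_{u_j^*} \alpha , \]
where $\iota$ denotes interior product. The vectors $f(u_1), \dots, f(u_r)$ are linearly independent in $W$ because $f$ is injective on $U$. Therefore $d_f(\alpha) = 0$ forces $\iota_{u_j^*} \alpha = 0$ for every $j$.

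Finally, I use the decomposition $\Lambda^p V = \bigoplus_k \Lambda^{p-k} K \otimes \Lambda^k U$. The contractions $\iota_{u_j^*}$ act only on the $\Lambda^k U$ factor. An element of $\Lambda^k U$ with $k \geq 1$ that is killed by all the $\iota_{u_j^*}$ must be zero, as one sees by expanding in the basis of wedge monomials. Hence $\alpha$ lies in the summand $k = 0$, which is $\Lambda^p K$.

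This proves $\ker d_f = \Lambda^p K$. Consequently $F = \{([\alpha], f) \mid d_f(\alpha) = 0\}$, which is Zariski closed.

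The only step requiring care is the identity $\ker d_f = \Lambda^p \ker f$, and specifically the reverse inclusion. There, the key points are the injectivity of $f$ on the complement $U$ and the standard characterisation of $\Lambda^p K$ as the set of $p$-vectors annihilated by all contractions with forms in $K^\perp$.
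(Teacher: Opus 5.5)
Your proof is correct and follows the paper's argument: the paper uses the same contraction $\Lambda^p V \to W \otimes \Lambda^{p-1} V$, pulls back the closed incidence set $\{(\beta,g) \mid g(\beta)=0\}$ along the linear map $f \mapsto d_f$, and then passes to the projectivization. The only difference is that the paper simply asserts $\ker d_f = \Lambda^p \ker f$, whereas you verify it. One small imprecision in your last step: it should be applied to elements of $\Lambda^{p-k}\ker f \otimes \Lambda^k U$ rather than of $\Lambda^k U$, where the contractions act up to a sign, but injectivity clearly survives tensoring with $\Lambda^{p-k}\ker f$.
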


\begin{proof}
	For $f \in \Hom(V,W)$, consider the linear map:
	\[ \app{\Lambda^p V}{W \otimes \Lambda^{p-1}V}{v_1 \wedge \dots \wedge v_p}{\sum_{i=1}^p (-1)^{i-1} f(v_i) \otimes v_1 \wedge \dots \wedge v_{i-1} \wedge v_{i+1} \wedge \dots \wedge v_p}. \]
	Its kernel is $\Lambda^p \ker f$. This procedure defines a linear map $\chi: \Hom(V,W) \to \Hom(V', W')$ with $V' := \Lambda^p V$ and $W' := W \otimes \Lambda^{p-1}V$. The following subset of $V' \times \Hom(V', W')$ is closed:
	\[G := \{(\beta, g) \vert g(\beta) = 0 \}.\]
	Pulling-back by $\chi$, we obtain that the following subset of $\Lambda^p V \times \Hom(V,W)$ is closed:
	\[F' := \{(\alpha, f) \vert \alpha \in \Lambda^p \ker f \}.\]
	Hence $F$ is closed in $\Proj \Lambda^p V^\vee \times \Hom(V,W)$.
\end{proof}

\begin{lem}\label{lem:charponczar}
	The set $\Charac_{x}^p$ is a Zariski closed subset of $\Proj \Omega_{\Dom,x}^{p}$. It is stable under the action of $\Isot_{x,\C}$.
\end{lem}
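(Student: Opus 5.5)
We prove Zariski closedness by writing $\Charac_x^p$ as the projection of a closed set along a proper map. The only subtlety is that $\Charac_x^{\Gr}$ is merely constructible, not closed.

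First, describe $C(v)$ as a kernel. Let $W := \hlie_\C$, and for $v \in T_{\Dom,x}$ let $f_v \in \Hom(T_{\Dom,x}, W)$ be the map $w \mapsto [\iota_x(\bar{v}), \iota_x(w)]$. Then $C(v) = \ker f_v$. The map $v \mapsto f_v$ is antilinear, so it is not algebraic over $\C$. We repair this in one of two ways:
\begin{itemize}
\item regard $\bar{v}$ as the variable, i.e. parametrize by $u := \bar{v} \in T_{\Dom,x}^{0,1}$, on which $u \mapsto f_u$ is $\C$-linear;
\item or note that $v \mapsto \bar{v}$ is a bijection, so $\{\ker f_v\}_{v \neq 0} = \{\ker f_u\}_{u \in T^{0,1}_{\Dom,x} \setminus \{0\}}$ as sets.
\end{itemize}
Since $f_u$ depends linearly on $u$ and kernels are unchanged under scaling, $\Charac_x^p$ is the image under the first projection of
\[ F'' := \{([\alpha],[u]) \in \Proj \Omega^p_{\Dom,x} \times \Proj T^{0,1}_{\Dom,x} \mid \alpha \in \Lambda^p \ker f_u \setminus\{0\}\}. \]

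Next, show $F''$ is closed. It is the preimage of the set $F$ of \Cref{lem:closed} (with $V = T_{\Dom,x}$, $W = \hlie_\C$) under the map $([\alpha],u) \mapsto ([\alpha], f_u)$ on $\Proj\Omega^p_{\Dom,x} \times (T^{0,1}_{\Dom,x}\setminus\{0\})$. This preimage is closed, and it is $\C^*$-invariant in $u$, so it descends to a closed subset of $\Proj \Omega^p_{\Dom,x} \times \Proj T^{0,1}_{\Dom,x}$. Because $\Proj T^{0,1}_{\Dom,x}$ is projective, the first projection is proper, and so its image $\Charac_x^p$ is Zariski closed.

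This properness argument is the step that avoids the main obstacle. \Cref{lem:constr} alone gives only constructibility: a limit of subspaces $C(v_n)$ need not itself be of the form $C(v)$. Taking the limit in $\Proj T^{0,1}$ instead (upper semicontinuity of $\ker f_u$) settles this: any limit $\alpha$ lies in $\Lambda^p$ of a kernel $\ker f_{u_\infty}$, with $u_\infty \neq 0$ in projective space.

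For stability, let $g \in \Isot_{x,\C}$ and $\alpha \in \Lambda^p C(v)$. By (\ref{eq:action}) applied with $g' := \bar{g}$ (also in $\Isot_{x,\C}$, since that group is stable under conjugation with respect to the real form $\Isot_x$), we get $C(\bar{g}.v) = g.C(v)$. Hence $g.\alpha \in \Lambda^p C(\bar{g}.v)$, and $\bar{g}.v \neq 0$. We will check that complex conjugation preserves $\Isot_{x,\C}$, using that its Lie algebra $\llie_{x,\C}$ is the complexification of the real algebra $\llie_x$. Alternatively, this follows directly from \Cref{lem:constr}, whose proof shows $\Charac_x^{\Gr}$ is a union of $\Isot_{x,\C}$-orbits.
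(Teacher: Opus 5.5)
Your proof is correct. For the closedness it takes a cleaner route than the paper's.

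\textbf{The paper's route.} The paper argues in two halves.
\begin{enumerate}
\item It shows that $\Charac_x^p$ is constructible and $\Isot_{x,\C}$-stable, as the projection of an incidence set built from the orbit description of $\Charac_x^{\Gr}$ in \Cref{lem:constr}, together with Chevalley's theorem.
\item It shows Euclidean closedness. It pulls back the set $F$ of \Cref{lem:closed} along $v \mapsto (w \mapsto [\iota_x(\bar v),\iota_x(w)])$, which it treats only as a Euclidean continuous map because it is antilinear in $v$. It then projects along the compact factor $\Proj \Omega^1_{\Dom,x}$.
\end{enumerate}
Zariski closedness then follows because the set is both constructible and Euclidean closed.

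\textbf{Your route.} You parametrize by $u=\bar v \in T^{0,1}_{\Dom,x}$. This makes $u \mapsto f_u$ $\C$-linear, so the same incidence set becomes a Zariski closed subset of $\Proj \Omega^p_{\Dom,x} \times \Proj T^{0,1}_{\Dom,x}$, cut out by bihomogeneous equations. The main theorem of elimination theory (properness of the projection) then gives Zariski closedness in one step. You need neither \Cref{lem:constr}, nor Chevalley, nor the comparison of the Zariski and Euclidean topologies.

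\textbf{What the paper's route buys.} The orbit description yields stability and constructibility at once. It is also reused anyway for $\Charac^{\Gr}$ in \Cref{lem:charancl}.

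\textbf{Stability.} Your argument via (\ref{eq:action}) applied to $\bar g$ is the same mechanism that underlies that orbit description. The point you defer, namely $\bar g \in \Isot_{x,\C}$, is immediate. The $\bar g$ in (\ref{eq:action}) is the conjugation of the connected group $\Isot_{x,\C}$ with respect to its compact real form $\Isot_x$. Concretely, it is $g \mapsto (g^*)^{-1}$ with the adjoint taken for the Bergman metric at $x$. This map preserves $\Isot_{x,\C}$ because $X \mapsto -X^*$ preserves $\llie_{x,\C} = \llie_x \oplus i\llie_x$.
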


\begin{proof}
	Let $S \subset \Proj \Omega_{\Dom,x}^{p} \times \Gr(T_{\Dom,x})$ be defined as the set of points $([\alpha], C)$ such that $\alpha \in \Lambda^p C$ and $C \in \Charac_x^{\Gr}$. By \Cref{lem:constr}, it is a constructible subset stable under the action of $\Isot_{x,\C}$. Its projection on $\Proj \Omega_{\Dom,x}^{p}$ is $\Charac_{x}^p$. Hence, $\Charac_{x}^p$ is stable under the action of $\Isot_{x,\C}$ and, by Chevalley's theorem, it is constructible.
	
	Then, consider the Euclidean continuous map:
	\[\application{c}{T_{\Dom,x}}{\End_{\C}(T_{\Dom,x})}{v}{w \mapsto [\iota_x(\bar{v}),\iota_x(w)].}\]
	It induces a Euclidean continuous map $c': \Proj \Omega_{\Dom,x}^{p} \times T_{\Dom,x} \to \Proj \Omega_{\Dom,x}^{p} \times \End_{\C}(T_{\Dom,x})$. Now, let $S' \subset \Proj \Omega_{\Dom,x}^{p} \times T_{\Dom,x}$ be defined as the set of points $([\alpha], v)$ such that $\alpha \in C(v)$. By definition, it is the pull-back by $c'$ of the set $F$ of \Cref{lem:closed} (with $V = W = T_{\Dom,x}$). Hence, by this lemma, $S'$ is a Euclidean closed subset of $\Proj \Omega_{\Dom,x}^{p} \times T_{\Dom,x}$. As it is stable under the action of $\C^*$ on the second factor, it defines a Euclidean closed subset $S''$ of $\Proj \Omega_{\Dom,x}^{p} \times \Proj \Omega_{\Dom,x}^{1}$. By definition, $\Charac_{x}^p$ is the image of $S''$ by the projection on $\Proj \Omega_{\Dom,x}^{p}$. As $\Proj \Omega_{\Dom,x}^{1}$ is Euclidean compact, $\Charac_{x}^p$ is Euclidean closed. As $\Charac_{x}^p$ is both constructible and Euclidean closed, it is Zariski closed.
\end{proof}

If $\Anman$ is a complex manifold or an algebraic variety, we denote by $\Gr_k(T_{\Anman})$ the projective bundle on $\Anman$ that parametrizes vector subspace of dimension $k$ of every tangent spaces, and $\Gr(T_{\Anman}) := \bigsqcup_{k=0}^{\dim \Anman} \Gr_k(T_{\Anman})$. Denote by $\Charac_\Dom^p \subset \Proj \Omega_{\Dom}^p$ (resp. $\Charac_\Dom^{\Gr} \subset \Gr(T_\Dom)$) the disjoint union of each $\Charac_x^p$ (resp. $\Charac_x^{\Gr}$) for every $x \in \Dom$. Define $\Charac_{\Dual}^p$ as $H_\C.\Charac_{x}^p \subset \Proj \Omega_{\Dual}^p$ and $\Charac_{\Dual}^{\Gr}$ as $H_\C.\Charac_{x}^{\Gr} \subset \Gr(T_{\Dual})$. We have $\Charac_\Dom^p = \Charac_{\Dual}^p \cap \Proj \Omega_{\Dom}^p$ and $\Charac_\Dom^{\Gr} = \Charac_{\Dual}^{\Gr} \cap \Gr(T_{\Dom})$. Indeed, by \Cref{lem:charponczar} (resp. \Cref{lem:constr}), $\Charac_{x}^p$ (resp. $\Charac_{x}^{\Gr}$) is stable under the action of $\Isot_{x,\C}$, hence under the action of $P_x$.

\begin{lem}\label{lem:charancl}
	The subset $\Charac_{\Dual}^p$ is a Zariski closed subset and a holomorphic subbundle of $\Proj \Omega_{\Dual}^p$ over $\Dual$. In particular, the subset $\Charac_\Dom^p$ is a semi-algebraic subset and a closed holomorphic subbundle of $\Proj \Omega_{\Dom}^p$ over $\Dom$.

	The subset $\Charac_{\Dual}^{\Gr}$ is a constructible subset of $\Gr(T_{\Dual})$. In particular, the subset $\Charac_\Dom^{\Gr}$ is a semi-algebraic subset of $\Gr(T_\Dom)$.
\end{lem}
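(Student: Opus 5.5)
The plan is to use homogeneity under $H_\C$ and transfer the fibrewise results (\Cref{lem:charponczar}, \Cref{lem:constr}) to the whole flag variety $\Dual$. Fix $x \in \Dom$ and write $\Dual \cong H_\C / P_x$, where $P_x$ is the stabilizer of $i_x$. Since $\Charac_x^p$ is stable under $\Isot_{x,\C}$, it is also stable under $P_x$, because the action of $P_x$ on $T_{\Dual,x}$ factors through its image $\Isot_{x,\C}$. We then consider the associated bundle construction. The bundle $\Proj \Omega_{\Dual}^p$ is $H_\C$-equivariantly isomorphic to $H_\C \times^{P_x} \Proj \Omega^p_{\Dom,x}$, and $\Charac_{\Dual}^p = H_\C.\Charac_x^p$ corresponds to the sub-fibre-bundle $H_\C \times^{P_x} \Charac_x^p$. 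So $\Charac_\Dual^p$ is a locally trivial subbundle whose fibre is the Zariski closed subset $\Charac_x^p$.

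The first step is to check that $\Charac_{\Dual}^p$ is Zariski closed. Consider the morphism $m: H_\C \times \Proj \Omega^p_{\Dom,x} \to \Proj \Omega^p_{\Dual}$, $(g,[\alpha]) \mapsto g.[\alpha]$. The set $\Charac_\Dual^p$ is the image of $H_\C \times \Charac_x^p$, so Chevalley's theorem shows it is constructible. For closedness, we use the quotient morphism $H_\C \to H_\C/P_x$, which admits local sections in the Zariski topology, since $P_x$ is parabolic and the big cell gives such sections. Over an open set $U$ with a section $s$, the map $(y,[\alpha]) \mapsto s(y).[\alpha]$ gives an isomorphism $U \times \Proj\Omega^p_{\Dom,x} \cong \Proj\Omega^p_{\Dual}|_U$. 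Under this isomorphism $\Charac^p_\Dual|_U$ corresponds to $U \times \Charac_x^p$: for $g = s(y)q$ with $q \in P_x$, we have $g.\Charac_x^p = s(y).\Charac_x^p$ by $P_x$-stability. This set is closed, so $\Charac^p_\Dual$ is Zariski closed and is a holomorphic (indeed algebraic) locally trivial subbundle. The $\Gr$ case is identical: $\Charac_\Dual^{\Gr}$ is locally $U \times \Charac_x^{\Gr}$, and $\Charac_x^{\Gr}$ is constructible by \Cref{lem:constr}, which gives constructibility.

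The second step passes to $\Dom$. We use $\Charac^p_\Dom = \Charac^p_\Dual \cap \Proj\Omega^p_\Dom$, as noted before the statement, together with $\Proj \Omega^p_\Dom = \Proj\Omega^p_\Dual|_{i(\Dom)}$. Since $i(\Dom)$ is semi-algebraic, its preimage in $\Proj\Omega^p_\Dual$ is semi-algebraic, being the inverse image under an algebraic morphism. Intersecting it with the Zariski closed set $\Charac^p_\Dual$ gives a semi-algebraic set. Restricting the subbundle to the open set $\Dom$ gives a closed holomorphic subbundle of $\Proj\Omega^p_\Dom$. The argument for $\Charac^{\Gr}_\Dom$ is the same, using the constructible set $\Charac^{\Gr}_\Dual$.

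The main obstacle is the identification of the fibre of $\Charac_\Dual^p$ over each point $g.i_x$ with $g.\Charac_x^p$. This needs the stabilizer $P_x$ to preserve $\Charac_x^p$, which requires carefully identifying the isotropy action of $P_x$ on $T_{\Dual,i_x} \cong T_{\Dom,x}$ with the action of $\Isot_{x,\C}$ for which \Cref{lem:charponczar} was proved. This is exactly the statement quoted above that the image of $P_x$ in $\Aut(T_{\Dom,x})$ is $\Isot_{x,\C}$. Once this is in place, everything else is formal.
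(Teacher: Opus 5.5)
Your proof is correct and follows the paper's approach. You trivialize $\Proj \Omega_{\Dual}^p$ locally by translating with local sections of $H_\C \to \Dual$, use the $P_x$-stability of $\Charac_x^p$ to see that $\Charac_{\Dual}^p$ is locally $U \times \Charac_x^p$, and then intersect with the semi-algebraic set $\Proj \Omega_{\Dom}^p$. The only difference is how Zariski closedness is reached: your big-cell sections are Zariski-local, so Zariski closedness (and constructibility of $\Charac_{\Dual}^{\Gr}$) follows directly from the local product structure; the paper instead uses an analytic chart $v \mapsto \exp(v).x$ near $0$ in $\mlie^+$ to get a closed analytic subbundle, and then deduces Zariski closedness from constructibility via Chevalley's theorem.
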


\begin{proof}
	Let $x \in \Dual$: we prove that in a neighborhood of $x$, $\Charac_{\Dual}^p$ is a closed holomorphic subbundle of $\Proj \Omega_{\Dual}^p$ over $\Dual$. By applying an element of $H_\C$, we can suppose $x \in \Dom$. Let $\plie_x$ be complex Lie subalgebra of $\hlie_\C$ defined by the complex Lie subgroup $P_x \subset H_\C$. Let $\mlie^+$ be a complement of $\plie_x$ in $\hlie_\C$. There exists an open neighborhood $U$ of $0$ in $\mlie^+$ such that $v \mapsto \exp(v).x$ is a biholomorphism from $U$ to an open neighborhood $V$ of $x$ in $\Dom$. This biholomorphism induces a local trivialization $\Proj \Omega_{\Dual,x}^p \times U \cong \Proj \Omega_{V}^p$ of the holomorphic bundle $\Proj \Omega_{\Dual}^p$ over $\Dual$. This trivialization restricts as a bijection $\Charac_x^p \times U \cong \Charac_{\Dual}^p \times_{\Dual} V$. As $\Charac_x^p$ is Zariski closed in $\Proj \Omega_{\Dual,x}^p$ by \Cref{lem:charponczar}, this proves that $\Charac_{\Dual}^p$ is a closed holomorphic subbundle of $\Proj \Omega_{\Dual}^p$.
	
	In particular, it is a closed analytic subset. Moreover, as $H_\C$ and $\Charac_x^p$ are algebraic varieties (see \Cref{lem:charponczar}) and $\Charac_x^{\Gr}$ is constructible (see \Cref{lem:constr}), $\Charac_{\Dual}^p$ and $\Charac_{\Dual}^{\Gr}$ are constructible by Chevalley's theorem. Hence $\Charac_{\Dual}^p$ is Zariski closed.
	
	As $i: \Dom \hookrightarrow \Dual$ is a holomorphic open embedding with semi-algebraic image, the same is true for the induced injection $\Proj \Omega_{\Dom}^p \hookrightarrow \Proj \Omega_{\Dual}^p$. Hence the statements on $\Charac_\Dom^p$ and $\Charac_\Dom^{\Gr}$ are immediate consequences of the foregoing.
\end{proof}

\begin{defi}\label{defi:hcsv}
	The \emph{higher degree characteristic subvariety} $\Charac^p \subset \Proj \Omega_{\Lsav}^p$ is the set of $[\alpha] \in \Proj \Omega_{\Lsav}^p$ such that there exist $x \in \Dom$, $v \in T_{\Dom,x} \setminus \{0\}$ and $\beta \in \Lambda^p T_{\Dom,x}$ such that $\alpha$ is the image of $\beta$ in $\Lambda^p T_{\Lsav}$ and such that $\beta \in \Lambda^p C(v)$.
\end{defi}

Recall that $C(v) = \{w \in T_{\Dom,x}, [\iota_x(\bar{v}),\iota_x(w)] = 0\}$ by definition and $C(v) = \ker \tilde{\Ricu}(v,\bar{v})$ by \Cref{rem:berg}. Denote by $\Charac^{\Gr}(\Lsav) \subset \Gr(T_\Lsav)$ (or $\Charac^{\Gr}$ if there is no ambiguity) the set of $C \in \Gr(T_\Lsav)$ such that there exist $x \in \Dom$ and $v \in T_{\Dom,x} \setminus \{0\}$ such that $C$ is the image of $C(v)$. Note that $\Charac^p$ (resp. $\Charac^{\Gr}$) is the quotient of $\Charac_\Dom^p$ (resp. $\Charac_\Dom^{\Gr}$) by the action of the fundamental group $\Latt$ of $\Lsav$.

\begin{rem}\label{rem:quotberg}
	Denote by $\Ricu$ the curvature of the canonical metric of $\Lsav$. Then $\Charac^p \subset \Proj \Omega_{\Lsav}^p$ is the set of $[\alpha] \in \Proj \Omega_{\Lsav}^p$ such that there exists $v \in T_{\Lsav}$ with $v \neq 0$ such that $\alpha \in \Lambda^p \ker \Ricu(v,\bar{v})$. In particular, this proves that $\Charac^1$ is the characteristic subvariety that Mok denotes by $\Charac_{r-1}$.
\end{rem}

\begin{lem}\label{lem:def}
	The subsets $\Charac^p \subset \Proj \Omega_{\Lsav}^p$ and $\Charac^{\Gr} \subset \Gr(T_\Lsav)$ are definable in $\Ranexp$.
\end{lem}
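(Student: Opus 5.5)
We reduce to the arithmetic case and then push a semi-algebraic set forward along a definable map. By \Cref{lem:reduc}, there is a finite étale covering map $\prod_i \Latt_i \backslash \Dom_i \to \Lsav$ with each $\Latt_i$ irreducible. Each $\Latt_i$ is either arithmetic, or $\Dom_i$ is a ball. In the ball case with $\Latt_i$ non-arithmetic we cannot invoke \Cref{thm:kuy} directly. However, Klingler--Ullmo--Yafaev's result (and its extension by Bakker--Klingler--Tsimerman / Baldi--Ullmo to all lattices via the Siegel sets of the toroidal description) still provides a semi-algebraic fundamental set $\Siegel_i \subset \Dom_i$ on which the uniformization is definable. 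I would first argue that such a set exists for every factor. The product $\Siegel := \prod_i \Siegel_i$ is then a semi-algebraic subset of $\Dom = \prod_i \Dom_i$ whose image under $\Dom \to \prod_i \Latt_i \backslash \Dom_i$ is surjective and whose restricted quotient map is definable in $\Ranexp$. Composing with the finite étale map to $\Lsav$, which is algebraic and hence semi-algebraic, we get a definable surjection $q: \Siegel \to \Lsav$ that is a local biholomorphism.

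Next, lift $q$ to the bundles. The map $q$ induces $\dif q: T_\Dom|_\Siegel \to T_\Lsav$ and hence maps $q_p: \Proj \Omega_\Dom^p|_\Siegel \to \Proj \Omega_\Lsav^p$ and $q_{\Gr}: \Gr(T_\Dom)|_\Siegel \to \Gr(T_\Lsav)$. I need these to be definable in $\Ranexp$. The graph of $\dif q$ is obtained from the graph of $q$ by taking derivatives, and derivatives of definable maps are definable, since the difference quotient and limit are first-order expressible. Working in algebraic local trivializations of $T_\Dual$ and $T_{\bar\Lsav}$, this gives definability of $q_p$ and $q_{\Gr}$, using that finitely many affine charts suffice on both sides.

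Finally, because $\Charac^p$ is the quotient of $\Charac_\Dom^p$ by $\Latt$ and $\Siegel$ surjects onto $\Lsav$, we have $\Charac^p = q_p(\Charac_\Dom^p \cap \Proj\Omega_\Dom^p|_\Siegel)$, and similarly $\Charac^{\Gr} = q_{\Gr}(\Charac_\Dom^{\Gr} \cap \Gr(T_\Dom)|_\Siegel)$. By \Cref{lem:charancl}, $\Charac_\Dom^p$ and $\Charac_\Dom^{\Gr}$ are semi-algebraic, and the restriction to the semi-algebraic $\Siegel$ keeps them semi-algebraic. Their images under the definable maps $q_p$ and $q_{\Gr}$ are therefore definable in $\Ranexp$.

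The main obstacle is the first step, namely producing a definable fundamental set. This is straightforward when $\Latt$ is arithmetic via \Cref{thm:kuy}, but it needs an additional reference or argument for non-arithmetic ball quotients. If such a reference is unavailable, I would fall back on Mok's/Baldi--Ullmo's description of cusp neighbourhoods of ball quotients by Siegel domains to build $\Siegel_i$ by hand.
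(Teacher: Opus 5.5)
Your handling of the arithmetic case matches the paper. You intersect the semi-algebraic set $\Charac_\Dom^{\Gr}$ (or $\Charac_\Dom^p$) of \Cref{lem:charancl} with the fibres over the Klingler--Ullmo--Yafaev set $\Siegel$, then push it forward along the induced definable map to $\Gr(T_\Lsav)$. Your remark that $\dif q$ is definable makes explicit something the paper uses tacitly. The gap is the step you flag yourself. For a non-arithmetic ball factor you need a semi-algebraic $\Siegel_i$ on which the uniformization is surjective and definable in $\Ranexp$. \Cref{thm:kuy} only covers arithmetic lattices, and you neither prove this nor pin down a reference for it. Without it your $\Siegel = \prod_i \Siegel_i$ is not available, and the ball case of your argument is not established.

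The missing idea is that ball factors need no fundamental set at all. The Bergman metric of a ball has strictly negative holomorphic bisectional curvature. So by \Cref{rem:berg}, $C(v) = \ker \tilde{\Ricu}(v,\bar v) = \{0\}$ for every $v \neq 0$. Hence for a ball quotient $\Charac^{\Gr}$ is the zero section $\Gr_0(T_\Lsav)$, which is Zariski closed, and $\Charac^p = \emptyset$.

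To combine this with the arithmetic factors, replace the product Siegel set by a product formula, as the paper does. Since $\hlie = \hlie' \oplus \hlie''$, the bracket splits and $C(v',v'') = C'(v') \oplus C''(v'')$, where $C'(0)$ is the whole tangent space of the first factor. So $\Charac^{\Gr}(\Lsav' \times \Lsav'')$ is the union of the sets of subspaces $C' \oplus C''$, $C' \oplus T_{\Lsav''}$ and $T_{\Lsav'} \oplus C''$, with $C', C''$ in the factors' characteristic sets. This union is definable as soon as those sets are.

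Together with \Cref{lem:reduc}, pushing forward along the algebraic map induced by the finite étale cover, this reduces to irreducible $\Latt$. Then either $\Dom$ is a ball, which is the trivial case, or $\Latt$ is arithmetic, which your argument handles. Definability of $\Charac^p$ follows from that of $\Charac^{\Gr}$ via the Zariski closed incidence set $\{([\alpha],C) : \alpha \in \Lambda^p C\setminus\{0\}\}$.
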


\begin{proof}
	The subset $\{([\alpha], C) \vert C \in \Gr(T_\Lsav), \alpha \in \Lambda^p C \setminus \{0\} \}$ of $\Proj \Omega_{\Lsav}^p \times_\Lsav \Gr(T_\Lsav)$ is Zariski closed. In particular, it is definable in $\Ranexp$. Thus the definability of $\Charac^p$ is a consequence of the definability of $\Charac^{\Gr}$.

	Let $\Lsav'$ and $\Lsav''$ be two locally symmetric varieties such that $\Lsav \cong \Lsav' \times \Lsav''$. Then, it follows from the definition of $\Charac^{\Gr}$:
	\begin{align*}
		\Charac^{\Gr}(\Lsav) = &\{C' \oplus C'' \vert C' \in \Charac^{\Gr}(\Lsav'), C'' \in \Charac^{\Gr}(\Lsav'')\}\\
		& \cup \{C' \oplus T_{\Lsav'', x} \vert C' \in \Charac^{\Gr}(\Lsav'), x \in \Lsav'' \}\\
		& \cup \{T_{\Lsav', x} \oplus C'' \vert x \in \Lsav', C'' \in \Charac^{\Gr}(\Lsav'') \}.
	\end{align*}
	Thus $\Charac^{\Gr}(\Lsav)$ is definable in $\Ranexp$ if and only if $\Charac^{\Gr}(\Lsav')$ and $\Charac^{\Gr}(\Lsav'')$ are definable. Hence, thanks to \Cref{lem:reduc}, we can suppose that $\Latt$ is irreducible.

	If $\Dom$ is a ball then $\Charac^{\Gr} = \Gr_0(T_\Lsav)$ so the result is obvious. Otherwise, $\Latt$ is arithmetic, so we can apply \Cref{thm:kuy}. Combined with the semi-algebraicity of $\Charac_{\Dom}^{\Gr}$ (see \Cref{lem:charancl}), we obtain that $\Charac_\Dom^{\Gr} \times_\Dom \Siegel$ is a semi-algebraic subset of $\Gr(T_\Dom) \times_\Dom \Siegel$, with $\Siegel$ defined in \Cref{thm:kuy}. By definition of $\Siegel$, the image of $\Charac_\Dom^{\Gr} \times_\Dom \Siegel$ in $\Gr(T_\Lsav)$ is definable in $\Ranexp$ and is equal to the image of $\Charac_\Dom^{\Gr}$. Hence, $\Charac^{\Gr}$ is definable in $\Ranexp$.
\end{proof}

Finally, the following proposition will be crucial to prove \Cref{thm:lsav}.

\begin{prop}\label{prop:charac}
	The subset $\Charac^p \subset \Proj \Omega_{\Lsav}^p$ is Zariski closed. It is a holomorphic subbundle of $\Proj \Omega_{\Lsav}^p$ on $\Lsav$.
\end{prop}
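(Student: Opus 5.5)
The proof has two parts: first show that $\Charac^p$ is a closed analytic subset of $\Proj \Omega_{\Lsav}^p$, and in fact a holomorphic subbundle over $\Lsav$. Then combine this with the definability from \Cref{lem:def} and apply Peterzil--Starchenko's o-minimal Chow lemma to get Zariski closedness.

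For the analytic part, I would work on the universal cover. By \Cref{lem:charancl}, $\Charac_\Dom^p$ is a closed holomorphic subbundle of $\Proj \Omega_\Dom^p$ over $\Dom$. It is moreover stable under the action of $\Latt$: for $g \in \Latt \subset \Hol(\Dom)^+$, the differential of $g$ intertwines $\iota_x$ and $\iota_{g.x}$ through $\Ad(g)$, so it sends $C(v)$ to $C(\dif g(v))$. Alternatively, this follows from \Cref{rem:berg}, since the Bergman metric and hence its curvature tensor are $\Hol(\Dom)^+$-invariant. Since $\Latt$ is torsion-free, the quotient map $\Dom \to \Lsav$ is a holomorphic covering, and $\Proj \Omega_\Dom^p \to \Proj \Omega_\Lsav^p$ is the pulled-back covering. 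The image of a $\Latt$-stable closed holomorphic subbundle is therefore a closed holomorphic subbundle of $\Proj \Omega_{\Lsav}^p$. Indeed, locally over an evenly covered open set $V \subset \Lsav$, the preimage of $\Charac^p$ in $\Proj\Omega^p_V$ is identified with $\Charac_\Dom^p$ over one sheet. This is exactly the quotient description of $\Charac^p$ noted after \Cref{defi:hcsv}. In particular, $\Charac^p$ is a closed analytic subset of $\Proj \Omega_{\Lsav}^p$.

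For the algebraic part, \Cref{lem:def} says that $\Charac^p$ is definable in $\Ranexp$, with $\Proj \Omega_{\Lsav}^p$ viewed with its algebraic structure, which exists since $\Lsav$ is quasiprojective. Since $\Ranexp$ is o-minimal, Peterzil--Starchenko's Chow lemma \cite[Corollary 4.5]{peterzil2009complex} applies to the closed analytic definable subset $\Charac^p$ and gives that it is Zariski closed.

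The main obstacle is compatibility with the definable structure. The Chow lemma requires the ambient complex analytic variety to carry the definable structure induced by its algebraic structure, and $\Charac^p$ to be analytic closed in that ambient space. \Cref{lem:def} was proved using the algebraic structure on $\Proj\Omega^p_\Lsav$, but through the KUY fundamental set $\Siegel$ and the reduction to irreducible lattices via \Cref{lem:reduc}. So I would check that the algebraic structure used there is the one on $\Lsav$ given by Baily--Borel (or Baldi--Ullmo). I would also check that the product decomposition of \Cref{lem:reduc} is algebraic, which holds because it is a finite étale cover and closedness descends along it: the image of a Zariski closed set under a finite map is closed. If needed, I would first prove the statement on the product cover $\prod_i \Latt_i\backslash\Dom_i$ and then push it forward.
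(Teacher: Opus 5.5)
Your proposal is correct and follows the paper's proof. The paper obtains the subbundle structure and analytic closedness directly from \Cref{lem:charancl}, descended along $\Dom \to \Lsav$. You spell out the $\Latt$-equivariance $C(\dif g\, v) = \dif g\, C(v)$ that this descent needs; the paper leaves it implicit, and it also follows from $\Charac_\Dom^p = \Charac_{\Dual}^p \cap \Proj\Omega_\Dom^p$ together with $\Hol(\Dom)^+ \subset H_\C$. Zariski closedness then comes, as in the paper, from \Cref{lem:def} and the Peterzil--Starchenko o-minimal Chow lemma; the compatibility checks in your last paragraph are already absorbed into \Cref{lem:def}.
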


\begin{proof}
	The second statement and the analytic closure are immediate consequences of \Cref{lem:charancl}. By o-minimal Chow lemma, the first statement follows thanks to \Cref{lem:def}.
\end{proof}

\subsection{Augmented base locus of locally symmetric varieties}\label{subsection:lsavaug}

In this section, we prove \Cref{prop:lsavcharac} and \Cref{thm:lsav,thm:lsavbig}.

\begin{proof}[Proof of \Cref{prop:lsavcharac}]
	Let $\Latt' \subset \Latt$ be the finite index subgroup, $\Lsav'$ the finite étale cover of $\Lsav$, $(\bar{\Lsav}', \Div)$ the smooth projective logarithmic compactification and $\V_\R := V(\Hol(\Dom)^+, \hlie, \Latt')$ the $\R$-PVHS on $\Lsav'$ given by \Cref{prop:existspvhs}. By definition, $\V := \V_\R \otimes_\R \C$ has unipotent local monodromy on $\bar{\Lsav}'$.
	
	Denote by $p$ the quotient map $\Dom \to \Lsav'$. Let $x \in \Dom$, let $v,w \in T_{\Dom,x}$. Denote $y := p(x)$, $v' := \dif p_x.v$ and $w' := \dif p_x.w$. Let $\theta_{y}: T_{\Lsav,y} \to \End_{\C}(H_y)$ be the Higgs field of $\V$ at $y$, let $\theta_{y}^*$ be the conjugate Higgs field.
	
	The adjoint action of $\hlie$ on itself is faithful. Thus by \Cref{lem:higgslsav}, $[\theta_{y}^*(\bar{v}'),\theta_{y}(w')] = 0$ (in $\End_\C(\V_y)$) if and only if $[\iota_x(\bar{v}),\iota_x(w)] = 0$ (in $\hlie_\C$, notations of \Cref{subsection:zucker}). Hence we obtain the statement of the proposition by definition of $\Charac^p$ and of $C_{\V}$.
\end{proof}

In the case where $\Dom$ is irreducible and of rank at least $2$, the correspondence between $\Charac^p$ and $C_{\V}$ holds for every non-unitary $\C$-PVHS. Recall that the Hodge filtration of a $\C$-PVHS $\V$ is flat if and only if the Higgs field of $\V$ vanishes. In the case where the $\C$-PVHS is defined on a connected smooth complex quasiprojective variety, this is also equivalent to the relative compactness of the image of the monodromy representation of $\V$ in the linear group of a fiber (it is a consequence of \cite[Proposition 1.13]{deligne1987theoreme}). In this case, $\V$ is said to be \emph{unitary}.

\begin{prop}\label{prop:lsavcharac2}
	Let $\Dom$ be an irreducible bounded symmetric domain of rank at least $2$. Let $\Lsav$ be the quotient of $\Dom$ by the action of a torsion-free lattice.
	
	Then for every non-unitary $\C$-PVHS $\V$ on $\Lsav$ and every $1 \leq p \leq \dim \Lsav$, $\Charac^p$ is the set of $[\alpha] \in \Proj \Omega_{\Lsav}^p$ such that there exists $v \in T_{\Lsav,x} \setminus \{0\}$ such that $\alpha \in \Lambda^p C_{\V}(v)$ (see \Cref{nota:c}).
\end{prop}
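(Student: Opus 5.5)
The plan is to reduce to the locally homogeneous case through \Cref{prop:classification}, and then compare commutators in $\End_\C(A)$ with commutators in $\hlie_\C$, as in the proof of \Cref{prop:lsavcharac}.

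\textbf{Reduction.} Both sides of the claimed equality can be computed after pulling back to a finite étale cover $\Lsav' \to \Lsav$. For $\Charac^p$ this is clear from its definition via $\Dom$. The condition $\alpha \in \Lambda^p C_{\V}(v)$ is pointwise and is preserved under pull-back by a local biholomorphism. Non-unitarity is also preserved, since the Higgs field pulls back. By \Cref{prop:classification} we may therefore assume $\V = V(\Group, A, \Lambda')$, where $A$ is a $\Group$-$\C$-HS.

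In the irreducible case the $\C$-PVHSs $\tV(\Group, A^i)$ are shifted and summed. We may further assume that the summand representations are nontrivial on the $\hlie$ factor wherever they matter.

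\textbf{Commutator formula.} By \Cref{lem:higgslsav}, applied to each summand $\tV(\Group, A^i)$ (the shift of grading does not affect the formula), we have $\theta_x(w) = \rho(\iota_x(w))$ and $\theta_x^*(\bar v) = \rho(\iota_x(\bar v))$. Here $\rho: \hlie_\C \to \End_\C(A)$ is the differential of the action of the $\hlie$ factor, and it preserves each $A^i$. Hence
\[
[\theta_x^*(\bar v), \theta_x(w)] = \rho([\iota_x(\bar v), \iota_x(w)]).
\]
This shows at once that $C(v) \subset C_{\V}(v)$. For the reverse inclusion we need $\rho$ to be injective on $\hlie_\C$.

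\textbf{Injectivity.} Since $\Dom$ is irreducible, $\hlie$ is simple. The kernel of $\rho$ restricted to $\hlie$ is an ideal, so it is either $0$ or all of $\hlie$.

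If it were all of $\hlie$, the group $\Group$ would act on $A$ through its compact factor $N$ only. Then the weights for $c_x$ would be trivial in every $A^i$, since $p_N \circ c_x$ is trivial. Each $\tV(\Group, A^i)$ would then be concentrated in a single degree, so $\theta = 0$. Equivalently, the monodromy would have relatively compact image, so $\V$ would be unitary, which contradicts the hypothesis.

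Hence $\rho|_{\hlie}$ is faithful, and so is its complexification $\rho|_{\hlie_\C}$. Therefore $C_{\V}(v) = C(v)$ for every $x$ and every $v$. The description of $\Charac^p$ then follows from \Cref{defi:hcsv}, exactly as in the proof of \Cref{prop:lsavcharac}.

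\textbf{Main obstacle.} The delicate point is to justify cleanly the dichotomy "$\rho|_{\hlie} = 0$ implies $\V$ unitary" once the reduction has been made. This requires checking that non-unitarity survives passing to the cover and to the decomposition of \Cref{prop:classification}. It also requires checking that the trivial-on-$\hlie$ part gives vanishing Higgs field. The first thing to try is the weight argument via $c_x$ given above.
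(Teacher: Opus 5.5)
Your proposal is correct and is essentially the paper's proof: reduce via \Cref{prop:classification} to $V(\Group, A, \Lambda')$, use \Cref{lem:higgslsav} to see that non-unitarity makes the $\hlie$-action on $A$ nontrivial, hence faithful since $\hlie$ is simple, and conclude as in \Cref{prop:lsavcharac}. The ``main obstacle'' you flag follows at once from your own formula $\theta_x = \rho\circ\iota_x$ (if $\rho|_{\hlie}=0$ then $\theta=0$), which is exactly how the paper argues, so the weight argument via $c_x$ is not needed; one small addition is that passing from faithfulness on $\hlie$ to faithfulness on $\hlie_\C$ requires $\hlie_\C$ to be simple, which holds because $\hlie$ is of Hermitian type.
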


\begin{proof}
	The domain $\Dom$ has rank at least $2$, moreover $\Dom$ is irreducible, so $\Latt$ is irreducible too. Thus we can apply \Cref{prop:classification}: for every $\C$-PVHS $\V$ on $\Lsav$, there exist a connected semisimple real Lie group $\Group$, a surjective morphism with compact kernel $\alpha: \Group \to \Hol(\Dom)^+$, a $\Group$-$\C$-HS $A$ and a torsion-free lattice $\Lambda' \subset \Group$ such that $\alpha(\Lambda')$ is a finite index subgroup of $\Latt$ and that the pull-back of $\V$ on $\Lambda' \backslash \Dom$ and $V(\Group, A, \Lambda')$ are isomorphic.

	As mentioned in \Cref{subsection:zucker}, there is a natural representation of the real Lie algebra $\hlie$ on $A$. As $\V$ is non-unitary, its Higgs field is not zero. Thus by \Cref{lem:higgslsav}, the action of $\hlie$ on $A$ is not trivial. As $\Dom$ is irreducible, $\hlie$ is simple. Hence its action on $A$ is faithful. Now we can conclude as in the proof of \Cref{prop:lsavcharac} just above.
\end{proof}

To apply Nakamaye's theorem in the proof of \Cref{thm:lsav}, we need the following lemma.

\begin{lem}\label{lem:nef}
	In \Cref{prop:existspvhs}, the tautological line bundle $\Tautp$ on $\Proj \Omega_{\bar{\Lsav}'}^p(\log \Div)$ is nef.
\end{lem}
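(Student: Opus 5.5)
We use \Cref{thm:nef}, applied to the $\C$-PVHS $\Lambda^p\End_{\C}(\V)$ on $\bar{\Lsav}'$ rather than to $\V$ itself. Here $\V$ is the PVHS of \Cref{prop:existspvhs}. Its log-Higgs bundle is $\Lambda^p\End_{\Stru_{\bar{\Lsav}'}}(\ext{\VB})$ on $\bar{\Lsav}'$, and it has unipotent local monodromy because $\V$ does.

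The log-Higgs field $\ext{\theta}: T_{\bar{\Lsav}'}(-\log \Div) \to \End_{\Stru_{\bar{\Lsav}'}}(\ext{\VB})$ is locally split injective by \Cref{prop:existspvhs}. Hence its $p$-th exterior power
\[\ext{\theta}_p: \Lambda^p T_{\bar{\Lsav}'}(-\log \Div) \to \Lambda^p \End_{\Stru_{\bar{\Lsav}'}}(\ext{\VB})\]
is also locally split injective: a local left inverse of $\ext{\theta}$ induces one of $\Lambda^p\ext{\theta}$. Pulling back along $\pi: \Proj \Omega_{\bar{\Lsav}'}^p(\log \Div) \to \bar{\Lsav}'$, recall that $\Stru_p(-1)$ is locally a direct factor of $\pi^*\Lambda^p T_{\bar{\Lsav}'}(-\log \Div)$. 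Therefore its image $\LB := \pi^*\ext{\theta}_p(\Stru_p(-1))$ is an invertible subsheaf of $\pi^*\Lambda^p\End_{\Stru_{\bar{\Lsav}'}}(\ext{\VB})$ which is locally a direct factor, and $\LB \cong \Stru_p(-1)$. This is the key simplification compared with \Cref{lem:lb}: no blow-up $b$ is needed, because the map is already split injective along the boundary as well as over $\Deg(\V) = \emptyset$.

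Next we check that the log-Higgs field of $\pi^*\Lambda^p\End(\ext{\VB})$ vanishes on $\LB$. This is exactly the argument of \Cref{lem:vanish}. Over $\Lsav'$, formula (\ref{eq:higgs}) and the commutativity of the $\theta(v)$'s show that the Higgs field of $\End(\VB)$ kills $\theta(T)$, and hence, by the Leibniz rule, the Higgs field of $\Lambda^p\End(\VB)$ kills the image of $\Lambda^pT$. The vanishing on all of $\Proj\Omega^p_{\bar{\Lsav}'}(\log\Div)$ then follows by density of $\pi^{-1}(\Lsav')$ and continuity of the log-Higgs field. One detail needs a short justification: \Cref{thm:nef} is stated on the base of the variation, but the pullback of a PVHS with unipotent monodromy along $\pi$ is a PVHS on $\pi^{-1}(\Lsav')$. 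Its complement is the normal crossing divisor $\pi^{-1}(\Div)$, since $\pi$ is a smooth projective bundle. The canonical extension also commutes with pullback by the smooth morphism $\pi$, so the theorem applies on $\Proj \Omega_{\bar{\Lsav}'}^p(\log \Div)$.

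By \Cref{thm:nef}, $\LB^\vee \cong \Tautp$ is nef. The step most likely to cause trouble is this last compatibility of canonical extensions and log-Higgs fields with pullback by $\pi$ (together with the identification of the log-Higgs bundle of $\Lambda^p\End(\V)$ with $\Lambda^p\End(\ext{\VB})$). I would handle it with the local description of Deligne's canonical extension via the nilpotent orbit theorem: in local coordinates both operations are given by the same formulas. The local splitness of $\ext{\theta}$, already established in \Cref{lem:locsplitinj}, does the real work.
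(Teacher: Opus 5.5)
Your proposal is correct and follows essentially the same route as the paper. The paper also uses the locally split injectivity of $\ext{\theta}$ from \Cref{prop:existspvhs} to get a locally split injection $\Stru_p(-1) \hookrightarrow \pi^*\Lambda^p \End_{\Stru_{\bar{\Lsav}'}}(\ext{\VB})$, invokes \Cref{lem:vanish} with $b = \id$ for the vanishing of the Higgs field, and concludes with \Cref{thm:nef}; your remarks on the compatibility of the canonical extension with pullback along $\pi$ only spell out what the paper asserts just before \Cref{lem:vanish}.
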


\begin{proof}
	The log-Higgs field of $\V$ gives a locally split injection $\Lambda^p T_{\bar{\Lsav}'}(-\log \Div) \hookrightarrow \Lambda^p \End_{\Stru_{\bar{\Lsav}'}}(\ext{\VB})$. Hence it induces a locally split injection $\Stru_p(-1) \hookrightarrow \pi^*\Lambda^p \End_{\Stru_{\bar{\Lsav}'}}(\ext{\VB})$ with $\pi$ the projection $\Proj \Omega_{\bar{\Lsav}'}^p(\log \Div) \to \bar{\Lsav}'$. Because of \Cref{lem:vanish} (with $b = \id$ and $\LB = \Stru_p(-1)$), the Higgs field vanishes on $\Stru_p(-1)$. Now \Cref{thm:nef} gives our statement.
\end{proof}

\begin{proof}[Proof of \Cref{thm:lsav}]
	Because of \Cref{prop:fecover}, it is sufficient to prove \Cref{thm:lsav} up to a finite étale covering map. Hence, we can apply \Cref{prop:existspvhs} and suppose without loss of generality that $\Lsav' = \Lsav$. The inclusion $\AugBL(\Tautp) \cap \Proj \Omega_{\Lsav}^p \subset \Charac^p$ is given by \Cref{thm:main} and \Cref{prop:lsavcharac} applied to $\V$.
	
	Denote by $\Charac^{p,\reg}$ the smooth locus of $\Charac^p$. Let $x = [\alpha] \in \Charac^{p,\reg}$. Let $y := \pi(x)$ with $\pi$ the projection $\Proj \Omega_{\bar{\Lsav}}^p(\log \Div) \to \bar{\Lsav}$. By \Cref{prop:charac}, there exists a Euclidean open neighborhood $U$ of $y$ in $\Lsav$ and a biholomorphism $b: \Proj \Omega_{U}^p \cong \Proj \Omega_{\Lsav,y}^p \times U$ such that $b(\Charac^p \cap \Proj \Omega_{U}^p) = \Charac_y^p \times U$. This biholomorphism gives decompositions $T_{\Proj \Omega_{\Lsav}^p, x} \cong T_{\Proj \Omega_{\Lsav,y}^p, x} \oplus T_{\Lsav,y}$ and $T_{\Charac^{p,\reg},x} \cong T_{\Charac_y^{p},x} \oplus T_{\Lsav,y}$.
	
	Denote by $\came$ the canonical metric. On the one hand, consider formula (\ref{eq:cherntaut}): the kernel of the form $\omega := C_1(\Tautp,\came)_x$ on $T_{\Proj \Omega_{\Lsav}^p, x}$ is included in $T_{\Lsav,y}$. Hence, the dimension of this kernel stays the same after restriction to $\Charac^{p,\reg}$. In particular, $\omega_{\vert \Charac^{p,\reg}}$ is not definite positive at $x$ if and only if $\omega$ is not definite positive at $x$. By \Cref{lem:calcnkl}, this is equivalent to the existence of $v \in T_{\Lsav, y} \setminus \{0\}$ such that $\alpha \in \Lambda^p \ker \Ricu(v, \bar{v})$. The latter holds because $x \in \Charac^p$. Thus $\omega_{\vert \Charac^{p,\reg}}$ has not maximal rank at $x$. Therefore:
	\begin{equation}\label{eq:zero}
		\omega_{\vert \Charac^{p,\reg}}^{\dim \Charac^p} = 0.
	\end{equation}
	
	On the other hand, consider a desingularization $d: \hat{\Charac}^p \to \overline{\Charac}^p$, with $\overline{\Charac}^p$ the Zariski closure of $\Charac^p$ in $\Proj \Omega_{\bar{\Lsav}}^p(\log \Div)$. We have the equality:
	\begin{equation}\label{eq:cw}
		\int_{\hat{\Charac}^p} C_1(d^*\Tautp,d^*\came)^{\dim \Charac^p} = \int_{\hat{\Charac}^p} c_1(d^*\Tautp)^{\dim \Charac^p}
	\end{equation}
	If $\Lsav = \bar{\Lsav}$, then (\ref{eq:cw}) is given by Chern-Weil theory. In the general case, the integrability of $C_1(d^*\Tautp,d^*\came)^{\dim \Charac^p}$ and the equality (\ref{eq:cw}) are given by \cite[Theorem 5.20]{kollar1987subadditivity}, that extends Chern-Weil theory to some singular metrics that come from $\R$-PVHS. To apply it, we have to check that $d^*\Tautp$ is a subquotient of a Deligne extension of a real polarized variation of Hodge structure with unipotent local monodromy, and that $d^*\came$ is induced by the Hodge metric. This is given by \Cref{prop:existspvhs}: take the pull-back of the $\R$-PVHS $\Lambda^p\End_\R(\V_\R)$.
	
	Combining (\ref{eq:zero}) with (\ref{eq:cw}), we have $\int_{\hat{\Charac}^p} c_1(d^*\Tautp)^{\dim \Charac^p} = 0$, so $\int_{\overline{\Charac}^p} c_1(\Tautp)^{\dim \Charac^p} = 0$. By \Cref{lem:nef} $\Tautp$ is nef, so we can apply \Cref{thm:cas}, which gives $\Charac^p \subset \AugBL(\Tautp)$.
\end{proof}

\begin{proof}[Proof of \Cref{thm:lsavbig}]
	It is an immediate consequence of \Cref{thm:lsav} and \Cref{subsection:augpos}.
\end{proof}

\begin{rems}\label{rems:end}
	\begin{enumerate}
		\item The $l$ of \Cref{thm:lsavbig} is also the maximum dimension of boundary components of $\Dom$ (see \cite[Theorem 6.8.17]{kobayashi2013hyperbolic}). The values of $l$ are computed in \cite{wolf1965generalized}. If $\Dom$ is irreducible, the value of $l$ is given in the following table (that comes from \cite[p.329]{kobayashi2013hyperbolic}).\label{item:tab}
		\begin{center}
		\begin{tabular}{|c|c|c|c|}
			\hline
			& Domain & Dimension & $l$ \\
			\hline
			I &$ SU(p,q)/S(U(p) \times U(q))$ & $pq$ & $(p-1)(q-1)$ \\
			\hline
			II & $Sp(m,\R)/U(m)$ & $\frac{m(m+1)}{2}$ & $\frac{(m-1)m}{2}$ \\
			\hline
			III & $SO^*(2m)/U(m)$ & $\frac{m(m-1)}{2}$ & $\frac{(m-2)(m-3)}{2}$ \\
			\hline
			IV & $SO_0(m,2)/(SO(m) \times SO(2))$ & $m$ & $1$ \\
			\hline
			V & $E_6/(SO(10) \cdot SO(2))$ & $16$ & $1$ \\
			\hline
			VI & $E_7/(E_6 \cdot SO(2))$ & $27$ & $8$ \\
			\hline
		\end{tabular}
		\end{center}
		In general, $\Dom$ is a product $\prod_i \Dom_i$ of irreducible bounded symmetric domains. Then it follows from the definition of $l$ that:
		\[l(\Dom) = \max_i \left( l(\Dom_i) + \sum_{j \neq i} \dim \Dom_j \right).\]
		\item In \cite[Theorem 2.2]{sheng2010polarized}, Sheng and Zuo constructed a $\R$-PVHS on $\Lsav$ that verifies the hypotheses $(*)$ and $(**)$ of \Cref{prop:blrkcy}. Using the notations of this proposition, they prove that the set of $[v] \in \Proj \Omega_{\Lsav}^1$ above $x \in \Lsav$ such that $\theta_x^{i,j}(v) = 0$ is the characteristic variety $\Charac^1$. From there, \Cref{prop:blrkcy} gives the inclusion $\AugBL(\Stru_1(1)) \subset \Charac^1$. Note that Sheng and Zuo also obtain the other ``Mok's characteristic subvarieties'' of $\Proj \Omega_{\Lsav}^1$ by considering $\theta^{i,k}$ for $j \leq k \leq i$ (see \cite[Theorem 3.3]{sheng2010polarized}).
		\item As a consequence of \Cref{thm:lsav}, we have that for every $x \in \Lsav$ and for every $[\alpha] \in \AugBL(\Tautp)_x$, there exists a vector subspace $E \subset T_{\Lsav,x}$ such that $\alpha \in \Lambda^p E$ and the subset $\Proj E^\vee$ of $\Proj \Omega_{\Lsav,x}^1$ is included in $\AugBL(\Stru_1(1))$.
		\item In the computation of the augmented base loci, it is possible not to use \Cref{thm:main} and to use the canonical metric instead of the Hodge metric (see \cite[Chapter 6.8]{kobayashi2013hyperbolic}). However, for the non-compact case, there are two points where we do not know how to avoid the use of variational Hodge theory: to prove the nefness of the tautological line bundle, and to use Chern-Weil theory in the proof of \Cref{thm:lsav}.\label{item:nopvhs}
	\end{enumerate}
\end{rems}

\appendix

\section{SageMath code}\label{section:appendix}

\begin{lstlisting}
from sage.all import *
from itertools import product

degrees5 = [vector(t) for t in product(range(4), repeat=5) if sum(t) == 5]
# List of multidegrees of monomials generating R5

degrees10 = [vector(t) for t in product(range(4), repeat=5) if sum(t) == 10]
# List of multidegrees of monomials generating R10

index_map = {tuple(v): i for i, v in enumerate(degrees5)}
# Dictionary to find the index of a monomial in degrees5 from its multidegree

S = PolynomialRing(QQ, 101, 'x')

def coordProd(i):
    """
    Computes one of the coordinates of the product R5 x R5 -> R10.

    Parameters
    ----------
    i : int
        Index of the coordinate (between 0 and 100).

    Returns
    -------
    res : polynomial
        Quadratic form giving the i-th coordinate of the bilinear map R5 x R5 -> R10.
    """
    
    res = S.zero()
    monom10 = degrees10[i]
    for j in range(101):
        monom5 = degrees5[j]
        diff = monom10 - monom5
        if min(diff) >= 0:
            k = index_map.get(tuple(diff))
            if k is not None:
                res += S.gen(j) * S.gen(k)
    return res

chosenCoord = [24, 10, 45, 89, 50, 63, 9, 22, 58, 78, 29, 82, 1, 0, 2, 3, 4, 7, 8, 100, 99, 98]
# The calculation time depends heavily on the choice of the coordinates. This list is the best that we have found for performing calculations in a reasonable amount of time (2-3 minutes on our computer. A few hours were not enough to compute the dimension with one more coordinate).

quadForms = [coordProd(i) for i in chosenCoord]

I = S.ideal(quadForms)
# Ideal included in the ideal generated by all the coordProd(i).

print(I.dimension())
# Prints "79", giving an upper bound to the dimension of the variety defined by the vanishing of the square R5 -> R10.
\end{lstlisting}

\printbibliography

\end{document}